\newtheorem{thm}{Theorem}[section]
\newtheorem{obs}{Observation}[section]
\newtheorem{lem}{Lemma}[section]
\newtheorem{cor}{Corollary}[section]
\newtheorem{claim}{Claim}[section]
\newtheorem{definition}{Definition}[section]
\begin{document}
\title{Spectral extremal graphs on closed surfaces of fixed Euler genus \footnote{Supported by
the National Natural Science Foundation of China (Nos. \!12571369, \!12271162, \!12501471).}}
\vspace{-0.5cm}
\author{{\bf Mingqing Zhai$^a$}, {\bf Longfei Fang$^{b,c}$},
{\bf Huiqiu Lin$^{b}$}\thanks{Corresponding author: huiqiulin@126.com
(H. Lin)}
\\
\footnotesize $^{a}$ School of Mathematics and Statistics, Nanjing University of Science and Technology, \\
\footnotesize   Nanjing, Jiangsu 210094, China\\
\footnotesize $^{b}$ School of Mathematics, East China University of Science and Technology, 
\footnotesize  Shanghai 200237, China \\
\footnotesize $^{c}$ School of Mathematics and Finance, Chuzhou University, Chuzhou, Anhui 239012, China
}

\date{}

\vspace{-0.5cm}

\maketitle
\vspace{-10pt}
{\flushleft\large\bf Abstract}
{\small{
We study a classical extremal problem in spectral graph theory and topological graph theory,
initiated by Hong and further advanced by Ellingham and Zha in the 1990s.
Let $spex(n,\gamma)$ denote the maximum spectral radius among all $n$-vertex graphs embeddable on a closed surface of Euler genus $\gamma$. 
Our central theorem establishes a strong rigidity principle:
for every fixed nonnegative integer \(\gamma\) and all sufficiently large \(n\), 
every graph attaining $spex(n,\gamma)$ is necessarily edge-extremal,
and is obtained from the join \(K_2\nabla P_{n\!-\!2}\) by adding exactly \(3\gamma\) edges inside the $(n-2)$--path.
Moreover, all additional edges are confined to a bounded core of order at most $9\gamma$,
while the remaining vertices form two pendant paths whose lengths differ by at most one. 
Consequently, the spectral extremal problem is reduced to finitely many local configurations. 

As a consequence, we derive asymptotically tight estimates for \(\operatorname{spex}(n,\gamma)\), 
strengthening the classical bound of Ellingham and Zha [J. Combin. Theory B, 2000].
We further resolve the extremal structures for the projective plane and torus, 
sharpening earlier bounds of Hong [J. Combin. Theory B, 1995].
In the planar case, we obtain an explicit threshold version of the Tait--Tobin theorem, 
confirming the Boots--Royle--Cao--Vince conjecture for $n\geq 4.5\times 10^6$. 
Our approach combines extremal constructions of cellular embeddings, topological edge-switching techniques,
and walk-counting methods on large substructures,
revealing a robust bridge between spectral extremality and surface topology.
}}

\begin{flushleft}
\textbf{Keywords:} Spectral radius; Surface; Genus; Projective plane; Torus
\end{flushleft}
\vspace{-1mm}
\textbf{AMS Classification:} 05C35; 05C50

\section{Introduction}
The interplay between graph structure and topological embeddings is a central topic in structural graph theory. 
The celebrated theorem of Archdeacon--Huneke--
Robertson--Seymour \cite{ARC1,ROB} asserts that 
graphs embeddable on closed compact surfaces of fixed Euler genus can be characterized by a finite set of excluded minors, 
a result recognized by Lov\'{a}sz as a milestone in graph theory \cite{LL}. 
In particular, 
the influence of topological constraints arising from surface embeddings on spectral extremal configurations has attracted sustained attention since the 1990s.

Let $\mathbb{G}(n,\gamma)$ denote the family of simple graphs on $n$ vertices that can be embedded on a surface with Euler genus $\gamma$. 
The spectral radius of a graph $G$, denoted by $\rho(G)$, is the largest eigenvalue of its adjacency matrix. We define
\[
spex(n,\gamma)=\max\{\rho(G): G\in \mathbb{G}(n,\gamma)\}.
\]
We also write $EX(n,\gamma)$ and $SPEX(n,\gamma)$ for the families of edge-extremal and spectral-extremal graphs in $\mathbb{G}(n,\gamma)$, respectively.

For planar graphs, the spectral extremal problem is closely related to a classical conjecture on the structure of graphs maximizing the spectral radius.
In the 1990s, Boots and Royle \cite{BOOT} and, independently, Cao and Vince \cite{CAO} conjectured
that $\rho(G)\leq \rho(K_2\nabla P_{n-2})$ for every planar graph $G$ of order $n\geq9$,
with equality if and only if $G\cong K_2\nabla P_{n-2}$.
Since then, this conjecture has attracted considerable attention, 
and various related results have been established; see, for example, \cite{CAO,Gui,Hong1,Hong2,Tait}. 
In particular, Ellingham and Zha \cite{Elli} proved the general bound $\rho(G)\leq2\!+\!\sqrt{2n\!-\!6}$ for every planar graph $G$.
Later, Tait and Tobin \cite{Tait1} confirmed the conjecture for sufficiently large $n$.
Moreover, using a decomposition theorem of Gonçalves Gon\c{c}alves \cite{HS},
Dvo\v{r}\'{a}k and Mohar \cite{DM} established that $\rho(G)\leq\sqrt{8\Delta\!-\!16}\!+\!2\sqrt{3}$
for every planar graph $G$ with maximum degree $\Delta\geq2$.

Beyond the planar case, much less is known about spectral extremal structures on higher-genus surfaces. 
Hong initiated the study of $spex(n,\gamma)$ for general Euler genus and obtained the following upper bound \cite{Hong2}:
$$spex(n,\gamma)\leq 1+\sqrt{3n+6\gamma-8}.$$
Subsequently, Ellingham and Zha \cite{Elli} improved this estimate to 
$$spex(n,\gamma)\leq2+\sqrt{2n+8\gamma-6}.$$ 
Earlier, Hong \cite{Hong1} established upper bounds in the special cases of projective planar and toroidal graphs, 
showing that $$\rho(G)\leq2\sqrt{2}+\sqrt{\frac{10}{3}\big(n-1\big)}$$ for every projective planar graph $G$,
and $$\rho(G)\leq 2\sqrt{2}+\sqrt{\frac{12}{7}\big(2n+1\big)}$$ for every toroidal graph $G$.
However, despite these developments, a complete structural description of spectral extremal graphs on high-genus surfaces remains widely open, 
even in the asymptotic regime.

The main contribution of this paper is the resolution of this structural problem within a unified framework.
We prove that for every fixed $\gamma$ and sufficiently large $n$, 
spectral extremal graphs are rigid and admit a unique global structure.

\begin{thm}\label{thm1.2}
For $n\geq50\times(300+180\gamma+24\gamma^2)^2$,
we have $SPEX(n,\gamma)\subseteq EX(n,\gamma)$, and every graph $G$ in $SPEX(n,\gamma)$
must be obtained from $K_2\nabla P_{n-2}$ by adding exactly $3\gamma$ edges.
Furthermore, if $\gamma\geq1$, then
$$
\rho_0+\frac{3\gamma-1}{n}<spex(n,\gamma)<\rho_0+\frac{3\gamma-0.95}{n},
$$
where $\rho_0=\frac32+\sqrt{2n\!-\!\frac{15}4}$.
\end{thm}

In Theorem \ref{thm1.2}, we also derive asymptotically sharp estimates for $spex(n,\gamma)$, 
strengthening the classical bound of Ellingham and Zha \cite{Elli} . 
When $\gamma=0$,
Theorem \ref{thm1.2} yields an explicit threshold of $n$ for the Tait--Tobin theorem \cite{Tait1},
confirming the Boots--Royle--Cao--Vince conjecture for all $n\ge 4.5\times 10^6$.

\begin{cor}\label{cor1.1}
For $n\geq4.5\times10^6$, the graph
$K_2\nabla P_{n-2}$ uniquely achieves the maximum spectral radius among all planar graphs of order $n$.
\end{cor}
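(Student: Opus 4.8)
The plan is to obtain Corollary~\ref{cor1.1} as the specialization of Theorem~\ref{thm1.2} to the case $\gamma=0$, since by the remark preceding the corollary the family $\mathbb{G}(n,0)$ is exactly the set of planar graphs on $n$ vertices. The only genuine content beyond invoking Theorem~\ref{thm1.2} is a sanity check on the numerology and a verification that the candidate extremal graph actually lies in the class under consideration.

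First I would record that $K_2\nabla P_{n-2}$ is planar: drawing $P_{n-2}$ as a horizontal segment, placing the two vertices of $K_2$ respectively above and below the segment with each joined to every path vertex, and routing the single $K_2$-edge around one end of the path yields a plane drawing. In fact $e(K_2\nabla P_{n-2})=1+(n-3)+2(n-2)=3n-6$, so this graph is a maximal planar graph; in particular $K_2\nabla P_{n-2}\in\mathbb{G}(n,0)$, and $SPEX(n,0)$ is a nonempty finite family. Next I would check that for $\gamma=0$ the threshold in Theorem~\ref{thm1.2} reads $n\geq 50\times(300+0+0)^2=50\times 90000=4.5\times 10^6$, which is precisely the hypothesis of the corollary. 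Finally, take any $G\in SPEX(n,0)$; by Theorem~\ref{thm1.2} the graph $G$ is obtained from $K_2\nabla P_{n-2}$ by adding exactly $3\gamma=0$ edges, hence $G\cong K_2\nabla P_{n-2}$. Thus $K_2\nabla P_{n-2}$ is the unique planar graph of order $n$ attaining the maximum spectral radius, which is the assertion of Corollary~\ref{cor1.1} (and confirms the Boots--Royle--Cao--Vince conjecture for $n\geq 4.5\times 10^6$).

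There is no real obstacle here beyond Theorem~\ref{thm1.2} itself: all the structural and spectral work is carried out in the proof of that theorem, and the corollary amounts to observing that its $\gamma=0$ instance collapses ``adding $3\gamma$ edges'' to ``adding no edges'' and that the stated constant $4.5\times 10^6$ is exactly the $\gamma=0$ value of the threshold $50\times(300+180\gamma+24\gamma^2)^2$.
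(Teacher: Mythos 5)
Your proposal is correct and matches the paper's (implicit) derivation exactly: the paper presents Corollary~\ref{cor1.1} as an immediate consequence of Theorem~\ref{thm1.2} with $\gamma=0$, where the threshold $50\times300^2=4.5\times10^6$ and the fact that adding $3\gamma=0$ edges forces $G\cong K_2\nabla P_{n-2}$ give the result. Your additional checks (planarity and maximality of $K_2\nabla P_{n-2}$) are harmless and consistent with the paper.
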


Let $H$ be a graph obtained from a spanning path $u_1u_2\ldots u_{n-2}$ by adding $3\gamma$ edges.
A \emph{$k$-vertex} in $H$ is defined as a vertex of degree $k$. In particular, a \emph{fork} is a $k$-vertex with $k\geq3$.
A 2-vertex $u_i$ is termed \emph{contractible} if $u_{i-1}u_{i+1}\notin E(H)$ and there exist two forks, $u_{i_1}$ and $u_{i_2}$, such that $i_1<i<i_2$.
A fork $u_i$ is \emph{separate} if neither $u_{i-1}$ nor $u_{i+1}$ is a fork.
Let $\mathbb{H}_0(a,b)$ be the family of graphs obtained from a based graph $H_0$
by attaching two pendant paths of lengths $a$ and $b$ to two distinct vertices of $H_0$.

\begin{thm}\label{thm1.3}
Let $H^*$ be a graph obtained from a spanning path by adding $3\gamma$ edges, with $K_2\nabla H^*\in SPEX(n,\gamma)$.
For $\gamma\geq1$ and sufficiently large $n$, the following statements hold:\\
(i) $H^*$ contains neither contractible 2-vertices nor separate forks;\\
(ii) There exists a graph $H_0$ of order $n_0\leq 9\gamma$ such that $H^*\in \mathbb{H}_0(\lfloor\frac{n\!-\!2\!-\!n_0}{2}\rfloor,\!\lceil\frac{n\!-\!2\!-\!n_0}{2}\rceil)$.
\end{thm}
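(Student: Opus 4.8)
\section*{Proof proposal}

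The plan is to route everything through Theorem~\ref{thm1.2}. It already gives $G=K_2\nabla H^*$ with $H^*$ a spanning path $u_1u_2\cdots u_{n-2}$ plus exactly $3\gamma$ chords and $e(G)=3n-6+3\gamma$; by Euler's formula this is extremal, so a minimal embedding of $G$ triangulates the underlying surface $\Sigma$, and the theorem pins down $\rho:=\rho(G)$ to within $O(1/n)$ of $\tfrac32+\sqrt{2n-\tfrac{15}4}$, so $\rho=\Theta(n^{1/2})$. The first task is a package of Perron-vector estimates. Writing $\mathbf z$ for the positive Perron vector normalised by $\max_v z_v=1$, the maximum is attained in $K_2$, and one shows $z_x=1\ge z_y$ with $1-z_y=O(1/n)$, $\sum_{u\in V(H^*)}z_u=\rho-z_y$, and $z_u=\Theta(n^{-1/2})$ for all $u\in V(H^*)$; every $2$-vertex $u_i$ of $H^*$ satisfies $\rho z_{u_i}=z_x+z_y+z_{u_{i-1}}+z_{u_{i+1}}$, whose solutions take the form $z^*+A\mu^{\,j}+B\mu^{-j}$ with $z^*:=(z_x+z_y)/(\rho-2)=\Theta(n^{-1/2})$ and $\mu=\tfrac12(\rho+\sqrt{\rho^2-4})$. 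Consequently, within a block of consecutive $2$-vertices the entries equal $z^*$ up to $o(z^*)$ except within $O(1)$ steps of a fork; each fork carries a small positive excess $z_u-z^*=\Theta(1/n)$; and along each pendant path of $H^*$ the entries are strictly monotone, decreasing from the fork-excess value through $z^*$ down to $z^*-\Theta(1/n)$ at the free end. These all follow by substituting $A(G)\mathbf z=\rho\mathbf z$ repeatedly.

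For part~(i), suppose $u_i$ is a contractible $2$-vertex, so $u_{i-1}u_{i+1}\notin E(G)$ and $u_i$ has degree $4$ with link the $4$-cycle $u_{i-1}\,x\,u_{i+1}\,y$. Contracting $u_i$ onto a neighbour gives a triangulation $G^-$ of $\Sigma$ on $n-1$ vertices in which $u_{i-1}u_{i+1}$ is now an edge; then re-inserting a degree-$4$ vertex (the freed slot of $u_i$) by subdividing a path edge $u_ku_{k+1}$ of $G^-$ and joining the new vertex to $x$ and $y$ — legitimate because that edge bounds the two faces $u_ku_{k+1}x$ and $u_ku_{k+1}y$ — produces $G'\in\mathbb{G}(n,\gamma)$. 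A direct computation with the old $\mathbf z$ gives, for $u_i$ deep between two forks, \[\mathbf z^{\top}(A(G')-A(G))\mathbf z=-2\,(z_{u_i}-z_{u_k})(z_{u_i}-z_{u_{k+1}})+o(1/n).\] Choosing $u_ku_{k+1}$ to be the edge of a long pendant path at which the strictly-monotone entries straddle the value $z_{u_i}\,(=z^*+o(1/n))$ makes the leading term strictly positive of order $1/n$; since $\mathbf z$ is then not an eigenvector of $A(G')$, Rayleigh's principle yields $\rho(G')>\rho(G)$, contradicting $G\in SPEX(n,\gamma)$. An analogous switch — deleting the chord at a separate fork and re-attaching its endpoint beside an existing fork so as to cluster the chords — excludes separate forks.

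Granting~(i), part~(ii) is mostly bookkeeping. Since $\sum_{v}(\deg_{H^*}(v)-2)=2e(H^*)-2(n-2)=6\gamma-2$ and $H^*$ has at most two degree-$1$ vertices, $\sum_{\mathrm{forks}}(\deg_{H^*}(v)-2)\le6\gamma$, so there are at most $6\gamma$ forks. Let $i_{\min}<i_{\max}$ be the extreme fork indices (they differ since $\gamma\ge1$ forces a chord) and set $H_0:=H^*[\{u_{i_{\min}},\dots,u_{i_{\max}}\}]$. Every chord has both ends among the forks, so $H_0$ carries all $3\gamma$ chords; and every $2$-vertex $u_i$ with $i_{\min}<i<i_{\max}$ has forks on both sides, hence by~(i) satisfies $u_{i-1}u_{i+1}\in E(H^*)$, i.e.\ is the apex of a triangle using the chord $u_{i-1}u_{i+1}$, and distinct such $2$-vertices use distinct chords, so there are at most $3\gamma$ of them. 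Thus $n_0:=|V(H_0)|\le6\gamma+3\gamma=9\gamma$, and $H^*$ is exactly $H_0$ with two pendant paths of lengths $a=i_{\min}-1$ and $b=n-2-i_{\max}$ (hence $a+b=n-2-n_0$) attached at the distinct vertices $u_{i_{\min}},u_{i_{\max}}$ of $H_0$, so $H^*\in\mathbb{H}_0(a,b)$. Finally, if $|a-b|\ge2$, move the free endpoint of the longer pendant path to extend the shorter one (relocating a degree-$3$ end vertex, which plainly preserves $\Sigma$-embeddability); the geometric decay of $\mathbf z$ along the pendant paths and the standard path-balancing computation show $\rho$ strictly increases, a contradiction. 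Hence $|a-b|\le1$, i.e.\ $\{a,b\}=\{\lfloor\tfrac{n-2-n_0}{2}\rfloor,\lceil\tfrac{n-2-n_0}{2}\rceil\}$.

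The main obstacle is the switching step in part~(i). One must simultaneously keep the modified graph embeddable on $\Sigma$ — which is exactly where the triangulation structure of a minimal embedding and an explicit description of the local link patterns around $2$-vertices, forks, and pendant-path ends are indispensable — and show the modification strictly increases $\rho$, where the subtlety is that all path entries of $\mathbf z$ are $\Theta(n^{-1/2})$ so the naive first-order change cancels; the genuinely positive $\Theta(1/n)$ contribution (from a re-routed chord, or from moving a $2$-vertex across the Perron gradient on a pendant path) must be isolated and controlled against $o(1/n)$ errors, so the refined eigenvector estimates — the strict monotonicity along pendant paths and the exact size of the fork excess — are the crux. By contrast, the counting $n_0\le9\gamma$ and the path-balancing in~(ii) are routine once~(i) is in hand.
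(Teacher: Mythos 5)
Your overall architecture (use Theorem \ref{thm1.2} to get $K_2\nabla H^*$ with $H^*$ a path plus $3\gamma$ chords, then do local switchings inside the triangulation) matches the paper, but your spectral engine is different: you compare $\rho$ via the Rayleigh quotient with refined second-order Perron-vector asymptotics, whereas the paper compares walk counts $w^{(\ell)}$ through Zhang's identity (Lemmas \ref{lem3.2}--\ref{lem3.4}), which turns the comparison into an integer inequality and avoids the cancellations you are fighting. This matters for your contractible-2-vertex switch: relocating $u_i$ preserves the degree sequence, so the first-order change vanishes and the exact change is
$\tfrac12\mathbf z^{\top}(A(G')-A(G))\mathbf z=(z_{u_i}-z_{u_{i-1}})(z_{u_i}-z_{u_{i+1}})-(z_{u_i}-z_{u_k})(z_{u_i}-z_{u_{k+1}})$.
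Your claim that the second term is ``strictly positive of order $1/n$'' is wrong: it is a product of two consecutive-entry gaps on a pendant path, hence at best $\Theta(1/n^2)$, and when $u_i$ sits next to a fork the first term can be negative of a comparable or not obviously smaller order. Nothing in your sketch controls this competition, and the ``$o(1/n)$'' error bookkeeping does not resolve it. The paper instead moves an entire block of $2$-vertices to the pendant end $u_{n-2}$ (Claims \ref{clm3.3} and \ref{clm3.4}), which changes $w^{(2)}$ or $w^{(3)}$ by an integer $\geq 2$ and makes the comparison robust.

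Two further gaps. First, the exclusion of separate forks is only asserted: deleting a chord at a separate fork and re-attaching it elsewhere is not ``analogous'' to your vertex relocation, and, crucially, adding a new chord between two arbitrary path vertices need not be realizable on the same surface -- you give no embeddability argument. The paper does not use a switch here at all: it observes (Claim \ref{clm3.6}) that if a fork $u_i$ has two $2$-vertex path-neighbours, the four faces $u^*u_{i\pm1}u_i$, $u^{**}u_{i\pm1}u_i$ would form a wheel of order $5$ at $u_i$, contradicting Lemma \ref{lem3.1} since $d_{G^*}(u_i)+1\geq 6$. Second, your part (ii) tacitly assumes $d_{H^*}(u_1)=d_{H^*}(u_{n-2})=1$: if $u_1$ were the endpoint of exactly one chord it would have degree $2$, would not be a fork, and would not be an internal (hence not a contractible) $2$-vertex, so your statement ``every chord has both ends among the forks'' fails and the piece of $H^*$ outside $[i_{\min},i_{\max}]$ is not a pendant path. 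The paper proves this degree condition separately (Claim \ref{clm3.2}) by a dedicated switch comparing $w^{(2)}$; you need some substitute for it. The remaining bookkeeping ($n_0\leq 9\gamma$) and the path-balancing step are fine and agree with the paper.
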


By Theorems \ref{thm1.2} and \ref{thm1.3}, every graph in $SPEX(n,\gamma)$ can be obtained from the canonical base graph $K_2\nabla P_{n-2}$ 
by inserting exactly $3\gamma$ additional edges, all of which are confined to a bounded core of order at most $9\gamma$. 
Outside this core, the graph consists of two nearly balanced pendant paths, whose lengths differ by at most one. 
This reduces the spectral extremal problem to finitely many local configurations.

Let $K_r^n$ be a graph of order $n$ obtained
by linking two paths to two distinct vertices of the complete graph $K_r$,
one of which has length $\lfloor\frac{n-r}2\rfloor$ and the other
has length $\lceil\frac{n-r}2\rceil$.
If $H_0\cong K_r$ and $|a-b|\leq1$, then $\mathbb{H}_0(a,b)=\{K_r^{a+b+r}\}.$

As consequences of Theorems \ref{thm1.2} and \ref{thm1.3}, 
we obtain a complete characterization of the exact extremal graphs for the projective plane and the torus,
respectively, 
thereby sharpening the aforementioned bounds of Hong \cite{Hong1}.

\begin{figure}[H]
\centering
\begin{tikzpicture}[scale=0.6, x=1.25mm, y=1.00mm, inner xsep=0pt, inner ysep=0pt, outer xsep=0pt, outer ysep=0pt]
\definecolor{L}{rgb}{0,0,0}
\definecolor{F}{rgb}{0,0,0}
\node[circle,fill=green,draw=green,inner sep=0pt,minimum size=1.5mm] (s3) at (0,10) {};
\node[circle,fill=green,draw=green,inner sep=0pt,minimum size=1.5mm] (s2) at (0,-10) {};
\node[circle,fill=green,draw=green,inner sep=0pt,minimum size=1.5mm,label={[label distance=1mm]270:$v_1$}] (v1) at (10,0) {};
\node[circle,fill=green,draw=green,inner sep=0pt,minimum size=1.5mm,label={[label distance=1mm]270:$u_1$}] (u1) at (-10,0) {};
\node[circle,fill=green,draw=green,inner sep=0pt,minimum size=1.5mm,label={[label distance=1mm]270:$u_2$}] (u2) at (-25,0) {};

\node[circle,fill=green,draw=green,inner sep=0pt,minimum size=0.5mm] (x1) at (-31,0) {};
\node[circle,fill=green,draw=green,inner sep=0pt,minimum size=0.5mm] (x2) at (-35,0) {};
\node[circle,fill=green,draw=green,inner sep=0pt,minimum size=0.5mm] (x3) at (-39,0) {};

\node[circle,fill=green,draw=green,inner sep=0pt,minimum size=1.5mm] (u3) at (-45,0) {};
\node[circle,fill=green,draw=green,inner sep=0pt,minimum size=1.5mm,label={[label distance=1mm]270:$u_{\lfloor\frac{n\!-\!4}2\rfloor}$}] (u4) at (-60,0) {};

\definecolor{L}{rgb}{0,0,0}
\path[line width=0.3mm, draw=green] (v1) -- (s2);
\path[line width=0.3mm, draw=green] (v1) -- (s3);
\path[line width=0.3mm, draw=green] (v1) -- (u1);
\path[line width=0.3mm, draw=green] (s2) -- (s3);
\path[line width=0.3mm, draw=green] (s2) -- (u1);
\path[line width=0.3mm, draw=green] (s3) -- (u1);

\path[line width=0.3mm, draw=green] (u1) -- (u2);
\path[line width=0.3mm, draw=green] (u3) -- (u4);

\node[circle,fill=green,draw=green,inner sep=0pt,minimum size=1.5mm,label={[label distance=1mm]270:$v_2$}] (v2) at (25,0) {};

\node[circle,fill=green,draw=green,inner sep=0pt,minimum size=0.5mm] (y1) at (31,0) {};
\node[circle,fill=green,draw=green,inner sep=0pt,minimum size=0.5mm] (y2) at (35,0) {};
\node[circle,fill=green,draw=green,inner sep=0pt,minimum size=0.5mm] (y3) at (39,0) {};

\node[circle,fill=green,draw=green,inner sep=0pt,minimum size=1.5mm] (v3) at (45,0) {};
\node[circle,fill=green,draw=green,inner sep=0pt,minimum size=1.5mm,
label={[label distance=1mm]270:$v_{\lceil\frac{n\!-\!4}{2}\rceil}$}] (v4) at (60,0) {};

\path[line width=0.3mm, draw=green] (v1) -- (v2);
\path[line width=0.3mm, draw=green] (v3) -- (v4);

\node[circle,fill=red,draw=red,inner sep=0pt,minimum size=1.5mm,label={[label distance=1mm]90:$w_1$}] (w1) at (-30,30) {};

\path[line width=0.3mm, draw=red] (w1) -- (u1);
\path[line width=0.3mm, draw=red] (w1) -- (u2);
\path[line width=0.3mm, draw=red] (w1) -- (u3);
\path[line width=0.3mm, draw=red] (w1) -- (u4);
\path[line width=0.3mm, draw=red] (w1) -- (v1);
\path[line width=0.3mm, draw=red] (w1) -- (v2);
\path[line width=0.3mm, draw=red] (w1) -- (v3);
\path[line width=0.3mm, draw=red] (w1) -- (v4);
\path[line width=0.3mm, draw=red] (w1) -- (s2);
\path[line width=0.3mm, draw=red] (w1) -- (s3);

\node[circle,fill=blue,draw=blue,inner sep=0pt,minimum size=1.5mm,label={[label distance=1mm]90:$w_2$}] (w2) at (30,30) {};

\path[line width=0.3mm, draw=blue] (w2) -- (u1);
\path[line width=0.3mm, draw=blue] (w2) -- (u2);
\path[line width=0.3mm, draw=blue] (w2) -- (u3);
\path[line width=0.3mm, draw=blue] (w2) -- (u4);
\path[line width=0.3mm, draw=blue] (w2) -- (v1);
\path[line width=0.3mm, draw=blue] (w2) -- (v2);
\path[line width=0.3mm, draw=blue] (w2) -- (v3);
\path[line width=0.3mm, draw=blue] (w2) -- (v4);
\path[line width=0.3mm, draw=blue] (w2) -- (s2);
\path[line width=0.3mm, draw=blue] (w2) -- (s3);

\path[line width=0.3mm, draw=red] (w1) -- (w2);

\end{tikzpicture}
\caption{The extremal graph $K_2\nabla K_4^{n\!-\!2}$ embeddable on the projective plane.}{\label{fig-1.2}}
\end{figure}

\begin{thm}\label{thm1.4}
For sufficiently large $n$, any graph $G$ of order $n$ embedded on the projective plane satisfies
$\rho(G)\leq \rho(K_2\nabla K_4^{n\!-\!2})$,
with equality if and only if $G\cong K_2\nabla K_4^{n\!-\!2}$.
\end{thm}

\begin{thm}\label{thm1.5}
For sufficiently large $n$, any graph $G$ of order $n$ embedded on the torus satisfies $\rho(G)\leq \rho(K_2\nabla K_5^{n\!-\!2})$,
with equality if and only if $G\cong K_2\nabla K_5^{n\!-\!2}$.
\end{thm}

Our approach combines extremal constructions of cellular embeddings, topological edge-switching techniques, and walk-counting arguments on large substructures.

\section{Preliminaries}

A \emph{surface} is a compact connected 2-dimensional manifold. 
Besides the plane and the sphere, typical examples include the cylinder, the M\"{o}bius band, and the torus. 
A surface is called \emph{closed} if it is compact and has no boundary. 
Note that the M\"{o}bius band has a boundary homeomorphic to a circle and is therefore not closed.

Let $S_1$ be a sphere with two disjoint discs $D_1$ and $D_2$, and let $S_2$ be a sphere with two disjoint discs $D_3$ and $D_4$, where all discs have equal radii. 
Adding a handle to $S_1$ consists of removing the interiors of $D_1$ and $D_2$ and identifying their boundary components. 
More generally, a surface obtained by adding $k$ pairwise disjoint handles to a sphere is denoted by $S_k$, and $k$ is called its \emph{genus}. 
In particular, the torus is homeomorphic to $S_1$. 
Every closed orientable surface is homeomorphic to $S_k$ for some $k\ge 0$.

Similarly, let $N_k$ denote the surface obtained from a sphere by adding $k$ cross-caps, where $k$ is called the \emph{cross-cap number}. 
The projective plane is homeomorphic to $N_1$, and every closed nonorientable surface is homeomorphic to $N_k$ for some $k\ge 1$. 
The classification theorem states that every closed surface is homeomorphic to either $S_k$ or $N_k$ for some integer $k$. 
Moreover, a surface obtained by adding both handles and cross-caps to a sphere is homeomorphic to $N_{2k+\ell}$, where $k,\ell>0$.

An embedding $\widetilde{G}$ of a graph $G$ on a surface $\Sigma$ is \emph{cellular} if every component of $\Sigma \setminus \widetilde{G}$ is homeomorphic to an open disc. 
These components are called \emph{faces}, and their number is denoted by $f(\widetilde{G})$. 
An embedding is \emph{$k$-representative} if every non-contractible closed curve on the surface intersects $\widetilde{G}$ at least $k$ times. 
A cellular embedding is equivalent to a connected embedding that is either on the sphere or is 1-representative.

Since most results on graph embeddings concern cellular embeddings, polygonal representations are often used to visualize embeddings on low-genus surfaces. 
For the torus $S_1$, one may use a rectangle with opposite sides identified; identifying one pair yields a cylinder, and identifying the remaining pair yields the torus. 
Figure \ref{fig.01} ($a$) shows a cellular embedding of $K_7$ on the torus. 
Similarly, nonorientable surfaces can be represented by polygons with appropriate edge identifications. 
For example, the projective plane can be represented by a disc with antipodal boundary identification. 
Figure \ref{fig.01} ($b$) shows a cellular embedding of $K_6$ on the projective plane.

\begin{figure}
\centering
\begin{tikzpicture}[x=1.00mm, y=1.00mm, inner xsep=0pt, inner ysep=0pt, outer xsep=0pt, outer ysep=0pt]
\path[line width=0mm] (32.58,52.90) rectangle +(140.42,44.89);
\definecolor{L}{rgb}{0,0,0}
\definecolor{F}{rgb}{0,0,0}

\path[line width=0.45mm, draw=green] (40.00,90.00) -- (50.00,75.00);
\path[line width=0.45mm, draw=green] (50.00,90.00) -- (50.00,75.00);
\path[line width=0.45mm, draw=green] (40.00,80.00) -- (50.00,75.00);
\path[line width=0.45mm, draw=red] (40.00,90.00) -- (40.00,60.00);
\path[line width=0.45mm, draw=red] (40.00,90.00) -- (70.00,90.00);
\path[line width=0.45mm, draw=red] (70.00,60.00) -- (70.00,90.00);
\path[line width=0.45mm, draw=red] (70.00,60.00) -- (40.00,60.00);
\path[line width=0.45mm, draw=green] (40.00,70.00) -- (50.00,75.00);
\path[line width=0.45mm, draw=green] (60.00,75.00) -- (50.00,75.00);
\path[line width=0.45mm, draw=green] (50.00,90.00) -- (60.00,75.00);
\path[line width=0.45mm, draw=green] (50.00,90.00) -- (70.00,80.00);
\path[line width=0.45mm, draw=green] (60.00,75.00) -- (70.00,80.00);
\path[line width=0.45mm, draw=green] (60.00,90.00) -- (70.00,80.00);
\path[line width=0.45mm, draw=green] (40.00,70.00) -- (50.00,60.00);
\path[line width=0.45mm, draw=green] (40.00,70.00) -- (60.00,60.00);
\path[line width=0.45mm, draw=green] (60.00,60.00) -- (50.00,75.00);
\path[line width=0.45mm, draw=green] (60.00,75.00) -- (60.00,60.00);
\path[line width=0.45mm, draw=green] (60.00,75.00) -- (70.00,60.00);
\path[line width=0.45mm, draw=green] (60.00,75.00) -- (70.00,70.00);

\path[line width=0.45mm, draw=red, fill=red] (40.00,90.00) circle (0.80mm);
\path[line width=0.45mm, draw=red, fill=red] (40.00,80.00) circle (0.80mm);
\path[line width=0.45mm, draw=red, fill=red] (40.00,70.00) circle (0.80mm);
\path[line width=0.45mm, draw=red, fill=red] (40.00,60.00) circle (0.80mm);
\path[line width=0.45mm, draw=red, fill=red] (50.00,90.00) circle (0.80mm);
\path[line width=0.45mm, draw=red, fill=red] (60.00,90.00) circle (0.80mm);
\path[line width=0.45mm, draw=red, fill=red] (70.00,90.00) circle (0.80mm);
\path[line width=0.45mm, draw=red, fill=red] (70.00,80.00) circle (0.80mm);
\path[line width=0.45mm, draw=red, fill=red] (70.00,70.00) circle (0.80mm);
\path[line width=0.45mm, draw=red, fill=red] (70.00,60.00) circle (0.80mm);
\path[line width=0.45mm, draw=red, fill=red] (50.00,60.00) circle (0.80mm);
\path[line width=0.45mm, draw=red, fill=red] (60.00,60.00) circle (0.80mm);
\path[line width=0.45mm, draw=green, fill=green] (50.00,75.00) circle (0.80mm);
\path[line width=0.45mm, draw=green, fill=green] (60.00,75.00) circle (0.80mm);

\draw(35.5,92.3) node[anchor=base west]{\fontsize{14.23}{17.07}\selectfont $v_1$};
\draw(35.5,55.98) node[anchor=base west]{\fontsize{14.23}{17.07}\selectfont $v_1$};
\draw(47.8,92.3) node[anchor=base west]{\fontsize{14.23}{17.07}\selectfont $v_2$};
\draw(47.8,55.98) node[anchor=base west]{\fontsize{14.23}{17.07}\selectfont $v_2$};
\draw(57.8,92.3) node[anchor=base west]{\fontsize{14.23}{17.07}\selectfont $v_3$};
\draw(57.8,55.98) node[anchor=base west]{\fontsize{14.23}{17.07}\selectfont $v_3$};
\draw(70.47,55.98) node[anchor=base west]{\fontsize{14.23}{17.07}\selectfont $v_1$};
\draw(70.47,92.3) node[anchor=base west]{\fontsize{14.23}{17.07}\selectfont $v_1$};
\draw(34.5,79.5) node[anchor=base west]{\fontsize{14.23}{17.07}\selectfont $v_4$};
\draw(34.5,69) node[anchor=base west]{\fontsize{14.23}{17.07}\selectfont $v_5$};
\draw(71,79.5) node[anchor=base west]{\fontsize{14.23}{17.07}\selectfont $v_4$};
\draw(71,69) node[anchor=base west]{\fontsize{14.23}{17.07}\selectfont $v_5$};
\draw(47.5,70.98) node[anchor=base west]{\fontsize{14.23}{17.07}\selectfont $v_6$};
\draw(56,71.5) node[anchor=base west]{\fontsize{14.23}{17.07}\selectfont $v_7$};
\draw(52,48) node[anchor=base west]{\fontsize{14.23}{17.07}\selectfont $(a)$};

\path[line width=0.45mm, draw=green] (103.95,90.00) -- (103.95,81.00);
\path[line width=0.45mm, draw=green] (92.53,84.57) -- (103.95,81.00);
\path[line width=0.45mm, draw=green] (115.72,84.57)-- (103.95,81.00);
\path[line width=0.45mm, draw=green] (98.75,72.00) -- (92.53,84.57);
\path[line width=0.45mm, draw=green] (98.75,72.00) -- (92.53,65.25);
\path[line width=0.45mm, draw=green] (98.75,72.00) -- (103.95,60.00);
\path[line width=0.45mm, draw=green] (103.95,60.00) -- (109.10,72.00);
\path[line width=0.45mm, draw=green] (115.72,65.25) -- (109.10,72.00);
\path[line width=0.45mm, draw=green] (115.72,84.57) -- (109.10,72.00);
\path[line width=0.45mm, draw=green] (98.75,72.00) -- (109.10,72.00);
\path[line width=0.45mm, draw=green] (98.75,72.00) -- (103.95,81.00);
\path[line width=0.45mm, draw=green] (109.20,72.00) -- (103.95,81.00);

\path[line width=0.45mm, draw=red] (103.95,75.00) circle (15.00mm);
\path[line width=0.45mm, draw=green, fill=green] (98.75,72.00) circle (0.80mm);
\path[line width=0.45mm, draw=green, fill=green] (109.20,72.00) circle (0.80mm);
\path[line width=0.45mm, draw=green, fill=green] (103.95,81.00) circle (0.80mm);
\path[line width=0.45mm, draw=red, fill=red] (103.95,60.00) circle (0.80mm);
\path[line width=0.45mm, draw=red, fill=red] (103.95,90.00) circle (0.80mm);
\path[line width=0.45mm, draw=red, fill=red] (92.53,84.57) circle (0.80mm);
\path[line width=0.45mm, draw=red, fill=red] (115.72,84.57) circle (0.80mm);
\path[line width=0.45mm, draw=red, fill=red] (92.53,65.25) circle (0.80mm);
\path[line width=0.45mm, draw=red, fill=red] (115.72,65.25) circle (0.80mm);

\draw(102.3,76.00) node[anchor=base west]{\fontsize{14.23}{17.07}\selectfont $v_1$};
\draw(93,71.00) node[anchor=base west]{\fontsize{14.23}{17.07}\selectfont $v_2$};
\draw(111,71.00) node[anchor=base west]{\fontsize{14.23}{17.07}\selectfont $v_3$};
\draw(102,92.00) node[anchor=base west]{\fontsize{14.23}{17.07}\selectfont $v_4$};
\draw(102,56.00) node[anchor=base west]{\fontsize{14.23}{17.07}\selectfont $v_4$};
\draw(87.53,85.57) node[anchor=base west]{\fontsize{14.23}{17.07}\selectfont $v_5$};
\draw(87.53,63.25) node[anchor=base west]{\fontsize{14.23}{17.07}\selectfont $v_6$};
\draw(116.42,85.57) node[anchor=base west]{\fontsize{14.23}{17.07}\selectfont $v_6$};
\draw(116.72,63.25) node[anchor=base west]{\fontsize{14.23}{17.07}\selectfont $v_5$};
\draw(101.5,48) node[anchor=base west]{\fontsize{14.23}{17.07}\selectfont $(b)$};

\path[line width=0.45mm, draw=green] (150.00,90.00) -- (140.00,60.00);
\path[line width=0.45mm, draw=green] (150.00,90.00) -- (160.00,60.00);
\path[line width=0.45mm, draw=green] (140.00,60.00) -- (160.00,60.00);
\path[line width=0.45mm, draw=green] (150.00,85.00) -- (140.00,60.00);
\path[line width=0.45mm, draw=green] (150.00,85.00) -- (160.00,60.00);

\path[line width=0.45mm, draw=green] (150.00,70.00) -- (140.00,60.00);
\path[line width=0.45mm, draw=green] (150.00,65.00) -- (140.00,60.00);
\path[line width=0.45mm, draw=green] (150.00,65.00) -- (160.00,60.00);
\path[line width=0.45mm, draw=green] (150.00,70.00) -- (160.00,60.00);
\path[line width=0.45mm, draw=green] (150.00,90.00) -- (150.00,85.00);
\path[line width=0.45mm, draw=green] (150.00,70.00) -- (150.00,65.00);

\path[line width=0.45mm, draw=green, fill=green] (160.00,60.00) circle (0.80mm);
\path[line width=0.45mm, draw=green, fill=green] (140.00,60.00) circle (0.80mm);
\path[line width=0.45mm, draw=green, fill=green] (150.00,90.00) circle (0.80mm);
\path[line width=0.45mm, draw=green, fill=green] (150.00,85.00) circle (0.80mm);
\path[line width=0.45mm, draw=green, fill=green] (150.00,65.00) circle (0.80mm);
\path[line width=0.45mm, draw=green, fill=green] (150.00,70.00) circle (0.80mm);
\path[line width=0.45mm, draw=green, fill=green] (150.00,73.75) circle (0.250mm);
\path[line width=0.45mm, draw=green, fill=green] (150.00,77.50) circle (0.250mm);
\path[line width=0.45mm, draw=green, fill=green] (150.00,81.25) circle (0.250mm);
\draw(138.00,56.00) node[anchor=base west]{\fontsize{14.23}{17.07}\selectfont $w_1$};
\draw(158.00,56.00) node[anchor=base west]{\fontsize{14.23}{17.07}\selectfont $w_2$};
\draw(148.00,61.00) node[anchor=base west]{\fontsize{14.23}{17.07}\selectfont $w_{3}$};
\draw(148.00,92.00) node[anchor=base west]{\fontsize{14.23}{17.07}\selectfont $w_{t+2}$};

\draw(148,48) node[anchor=base west]{\fontsize{14.23}{17.07}\selectfont $(c)$};

\end{tikzpicture}%
\caption{Embeddings of graphs on torus, projective plane and sphere.}{\label{fig.01}}
\end{figure}
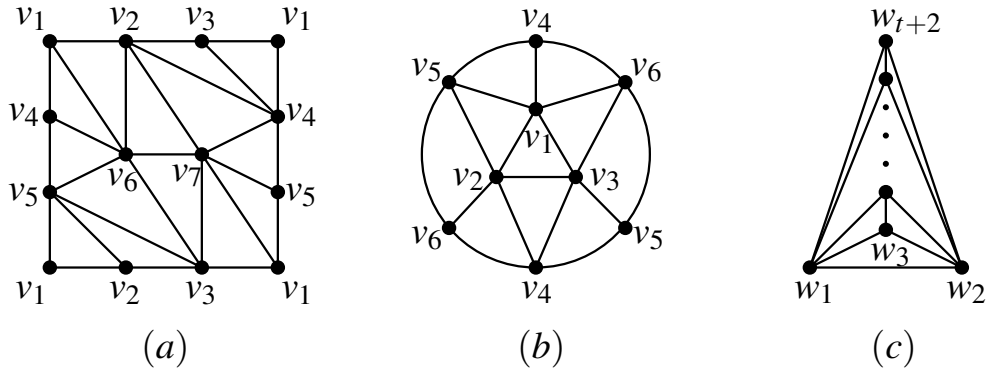

The \emph{(minimum) orientable genus} of a graph $G$, denoted by $\gamma_1(G)$, is the minimum integer $k$ such that $G$ embeds on $S_k$. 
Similarly, the \emph{(minimum) nonorientable genus} $\gamma_2(G)$ is the minimum $k$ such that $G$ embeds on $N_k$. 
If $G$ is planar, then $\gamma_1(G)=\gamma_2(G)=0$. 
The \emph{Euler genus} of $G$ is defined as $\gamma(G)=\min\{2\gamma_1(G),\gamma_2(G)\}$.
On the other hand, the maximum orientable genus of a connected graph is the largest $k$ such that it admits a cellular embedding on $S_k$. 
The maximum nonorientable genus and maximum Euler genus are defined analogously.

The Euler genus of a surface is the sum of its cross-caps plus twice the number of handles. 
An embedding of $G$ on a surface with Euler genus $\gamma(G)$ is called a \emph{minimal embedding}. 
Every minimal embedding of a connected graph is known to be cellular.

The genus of a graph is a fundamental parameter in topological graph theory. 
A central problem in this field is to determine the minimum and maximum Euler genus of a graph, 
which together characterize all orientable surfaces on which the graph admits a cellular embedding. 
This follows from the interpolation theorem of Duke \cite{DUKE}, which states that if a graph embeds on surfaces of genera $a$ and $b$, then it embeds on all intermediate genera. 
An analogous result for nonorientable surfaces was proved by Stahl \cite{Stahl}.

\section{A structural theorem}

In this section, our main objective is to establish a structural theorem that serves as a key lemma for Theorem \ref{thm1.2}.
Before proceeding, we will first present some lemmas.


\begin{lem}\emph{(see \cite{Bondy2008}, Theorem 10.4, P. 247)}\label{lem2.2}
A graph $G$ is embeddable on the plane if and only if it is embeddable on the sphere.
\end{lem}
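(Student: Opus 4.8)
The plan is to prove the equivalence by exhibiting a homeomorphism between the plane and the sphere minus a single point, then transferring any embedding across this homeomorphism and checking that the graph-theoretic structure (vertices as points, edges as arcs with the correct incidences) is preserved. The key classical tool is \emph{stereographic projection}: fix a point $N$ on the sphere $S^2$ (the ``north pole'') and let $\pi\colon S^2\setminus\{N\}\to\mathbb{R}^2$ send each point $x\neq N$ to the intersection of the line through $N$ and $x$ with a fixed plane not passing through $N$. This map is a homeomorphism, so a graph embeds on $S^2\setminus\{N\}$ if and only if it embeds on $\mathbb{R}^2$.

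For the forward direction, suppose $G$ is embedded on the sphere. Since $G$ is finite, its image $\widetilde{G}$ is a compact subset of $S^2$, hence does not cover all of $S^2$; pick a point $N \in S^2 \setminus \widetilde{G}$. Then $\widetilde{G}\subseteq S^2\setminus\{N\}$, and composing with $\pi$ yields an embedding of $G$ in $\mathbb{R}^2$, i.e.\ a planar embedding. For the converse, suppose $G$ is embedded in $\mathbb{R}^2$. Composing with $\pi^{-1}$ places $\widetilde{G}$ inside $S^2\setminus\{N\}\subseteq S^2$, giving an embedding of $G$ on the sphere. In both directions one must check that $\pi$ (resp.\ $\pi^{-1}$), being a homeomorphism, carries the finitely many vertex-points to vertex-points and each edge-arc homeomorphically to an edge-arc, with endpoints and interior-disjointness preserved; this is immediate because homeomorphisms preserve all of these incidence and topological properties.

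The only genuine subtlety — and the step I would flag as the main point rather than an obstacle — is the choice of the omitted point $N$ in the forward direction: one must know that the embedded graph, being a finite union of points and arcs (each arc a continuous image of a compact interval), is a proper closed subset of $S^2$, so that such an $N$ exists. This is a standard compactness argument. Since this lemma is quoted directly from Bondy and Murty, \emph{Graph Theory} (Theorem 10.4, p.~247), the cleanest presentation is simply to invoke stereographic projection and cite that reference for the routine verification; no new ideas beyond the homeomorphism are required.
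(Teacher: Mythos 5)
Your proof is correct and is exactly the standard stereographic-projection argument used in the cited reference (Bondy--Murty); the paper itself offers no proof, merely the citation. Nothing further is needed.
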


Let $f(G)$ denote the number of faces of $G$. The well-known Euler's formula states that
$|V(G)|-e(G)+f(G)=2-\gamma$ for every graph $G$ cellularly embedded on a surface with Euler genus $\gamma$.
The following lemma is a consequence of Euler's formula.

\begin{lem}\label{lem2.3}\emph{(\cite{Elli})}
Let $G$ be a pseudograph (a graph where loops and multiple
edges are allowed) cellularly embedded on a surface with Euler genus $\gamma$,
such that each face has degree at least $g$, where $g\geq3$. Then, the following inequality holds:
$$e(G)\leq \frac{g}{g-2}(|V(G)|-2+\gamma).$$
\end{lem}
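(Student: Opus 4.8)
The plan is to combine Euler's formula with a double-counting of edge--face incidences, in the standard way. First I would use the hypothesis that the embedding is \emph{cellular}: this guarantees that Euler's formula holds with equality, namely $|V(G)|-e(G)+f(G)=2-\gamma$, so that $f(G)=2-\gamma-|V(G)|+e(G)$. Next I would count the total face degree $\sum_{F}\deg(F)$, where the sum runs over all faces $F$ and $\deg(F)$ is the length of the closed facial walk bounding $F$. In a cellular embedding every edge is traversed exactly twice in total when one walks around all the faces (either once by each of two distinct faces, or twice by a single face), and the same bookkeeping applies to loops and to parallel edges in the pseudograph; hence $\sum_{F}\deg(F)=2e(G)$.

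From the assumption $\deg(F)\ge g$ for every face, I then obtain $2e(G)=\sum_{F}\deg(F)\ge g\,f(G)$, so $f(G)\le \frac{2e(G)}{g}$. Substituting the value of $f(G)$ from Euler's formula gives $2-\gamma-|V(G)|+e(G)\le \frac{2e(G)}{g}$, which rearranges to $\frac{g-2}{g}\,e(G)\le |V(G)|-2+\gamma$. Since $g\ge 3$ the coefficient $\frac{g-2}{g}$ is strictly positive, and dividing through by it yields the claimed bound $e(G)\le \frac{g}{g-2}\bigl(|V(G)|-2+\gamma\bigr)$.

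The only step that needs a word of care --- and it is a minor point rather than a genuine obstacle --- is the identity $\sum_{F}\deg(F)=2e(G)$ in the pseudograph setting. This is precisely why the statement insists on a cellular embedding: when every face is an open disc, its boundary is a well-defined closed walk and the incidence count is clean, whereas for a non-cellular embedding the faces need not be discs and this bookkeeping breaks down. Granting this, the remainder is elementary algebra, so I would present the argument in just a few lines.
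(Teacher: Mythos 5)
Your argument is correct and is exactly the standard derivation the paper has in mind: the paper states this lemma as ``a direct consequence of Euler's formula'' (citing Ellingham--Zha) without writing out the details, and your combination of Euler's formula with the edge--face incidence count $\sum_F \deg(F)=2e(G)$ is precisely that derivation. No issues.
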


We now focus on the family $\mathbb{G}(n,\gamma)$,
where each graph is embedded (not necessarily cellularly) on a surface with Euler genus $\gamma$.
Based on Lemma \ref{lem2.3},
the following result can be readily derived; the first upper bound is also noted in \cite{WEST}.

\begin{lem}\label{lem2.4}
For $n\geq3$ and every graph $G\in \mathbb{G}(n,\gamma)$, we have
$e(G)\leq 3(n-2+\gamma).$
Furthermore, if $G$ is bipartite, then $e(G)\leq 2(n-2+\gamma).$
\end{lem}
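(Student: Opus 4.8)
The plan is to derive both bounds from Euler's formula by way of Lemma~\ref{lem2.3}, so the first task is to arrange that the embedding under consideration is cellular. I would start by reducing to the case that $G$ is connected. If $G$ is disconnected, repeatedly add an edge joining two of its components, choosing the two endpoints in opposite colour classes whenever $G$ is bipartite; each such step keeps the graph simple, preserves bipartiteness when present, and does not decrease $e(G)$. It also keeps the graph in $\mathbb{G}(n,\gamma)$, because joining two components of a graph by a single edge does not increase the Euler genus: if $C_1,C_2$ are the components being joined, one embeds each $C_i$ in a surface of Euler genus $\gamma(C_i)$, deletes an open disc from a face of that embedding incident to the chosen endpoint, and glues the two resulting boundary circles together, routing the new edge through the common face; this gives $\gamma(C_1\cup_e C_2)\le\gamma(C_1)+\gamma(C_2)$, whence $\gamma(G+e)\le\gamma(G)\le\gamma$ by additivity of the Euler genus over connected components. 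So it suffices to prove the two inequalities for connected $G\in\mathbb{G}(n,\gamma)$.

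Now assume $G$ is connected, so that $\gamma(G)\le\gamma$, and fix a minimal embedding $\widetilde G$ of $G$ on a surface of Euler genus $\gamma(G)$; as recalled in the introduction, this embedding is cellular. Since $G$ is simple and connected with $n\ge3$ vertices, every face of $\widetilde G$ has degree at least $3$ — a face of degree $1$ or $2$ would force a loop, a multi-edge, or a component isomorphic to $K_2$ — and if $G$ is moreover bipartite then every face has degree at least $4$, since a boundary walk of length $3$ would be an odd closed walk and hence contain an odd cycle. Applying Lemma~\ref{lem2.3} with $g=3$ gives $e(G)\le3(|V(G)|-2+\gamma(G))\le3(n-2+\gamma)$, and applying it with $g=4$ in the bipartite case gives $e(G)\le2(|V(G)|-2+\gamma(G))\le2(n-2+\gamma)$.

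The step I expect to require the most care is the reduction in the first paragraph, specifically the claim that $\mathbb{G}(n,\gamma)$ is closed under joining two components by an edge, equivalently that the Euler genus does not go up under such an operation; the cleanest justification runs through additivity of the genus over connected components, a classical fact in both the orientable and the nonorientable settings. By contrast, everything after the reduction is routine: on a cellular embedding with a known lower bound on the face degrees, the two inequalities drop straight out of Lemma~\ref{lem2.3}. One small point worth spelling out is the face-degree claim for graphs with very few edges (trees, paths), where the embedding has a single face of large even degree.
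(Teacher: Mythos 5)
Your proof is correct and follows the route the paper intends (the paper offers no explicit proof, saying only that the lemma follows easily from Lemma~\ref{lem2.3}): reduce to a connected graph with a cellular (minimal) embedding, verify the face-degree lower bounds $g=3$ (resp.\ $g=4$ for bipartite), and apply Lemma~\ref{lem2.3}. The only non-routine ingredient is your appeal to additivity of the Euler genus over components when joining components by an edge; this is indeed a classical theorem and is invoked correctly, so there is no gap.
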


Determining $\gamma(G)$ is a challenging task.
To date, the Euler genus has been found only for a few graphs.
The hard part of the Map Coloring Theorem establishes the value of $\gamma(K_r)$
(see \cite{Ring1}), and the Euler genus of $K_{s,t}$
was determined by Ringel \cite{Ring2,Ring3}.

\begin{lem}\emph{(\cite{Ring1,Ring2,Ring3})}\label{lem2.5}
Let $r\geq3$ and $t\geq s\geq2$. Then, we have\\
(i) $\gamma_1(K_r)=\lceil\frac1{12}(r-3)(r-4)\rceil$,
and $\gamma_2(K_r)=\lceil\frac1{6}(r-3)(r-4)\rceil$, except that
$\gamma_2(K_7)=3$;\\
(ii) $\gamma_1(K_{s,t})=\lceil\frac1{4}(s-2)(t-2)\rceil$
and $\gamma_2(K_{s,t})=\lceil\frac1{2}(s-2)(t-2)\rceil$.
\end{lem}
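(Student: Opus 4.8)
The plan is to read each equality as a lower bound together with a matching upper bound, and to attack the two halves by completely different methods: the lower bounds are an immediate consequence of Euler's formula, while the upper bounds require producing genus-optimal embeddings explicitly, which is the real content and the reason the result is quoted from \cite{Ring1,Ring2,Ring3} rather than reproved here.

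For the lower bounds I would invoke Lemma~\ref{lem2.3}. A minimum-genus embedding of a connected graph is cellular, so any embedding of $K_r$ (with $r\ge3$) realizing $\gamma_1$ or $\gamma_2$ is cellular with every face of degree at least $3$; Lemma~\ref{lem2.3} with $g=3$, applied to such an embedding on a surface of Euler genus $\gamma$, gives
\[
\binom{r}{2}\le 3(r-2+\gamma),
\]
whence $\gamma\ge\tfrac16(r-3)(r-4)$. Since $\gamma=\gamma_2$ on a nonorientable surface and $\gamma=2\gamma_1$ on an orientable one, integrality forces $\gamma_2(K_r)\ge\lceil\tfrac16(r-3)(r-4)\rceil$ and $\gamma_1(K_r)\ge\lceil\tfrac1{12}(r-3)(r-4)\rceil$. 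For $K_{s,t}$ with $t\ge s\ge2$ the graph is bipartite of girth $4$, so every face of a cellular embedding has degree at least $4$; Lemma~\ref{lem2.3} with $g=4$ gives $st\le 2(s+t-2+\gamma)$, i.e. $\gamma\ge\tfrac12(s-2)(t-2)$, and the same two reductions give $\gamma_2(K_{s,t})\ge\lceil\tfrac12(s-2)(t-2)\rceil$ and $\gamma_1(K_{s,t})\ge\lceil\tfrac14(s-2)(t-2)\rceil$.

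It remains to match these bounds by exhibiting, for each $r$ (respectively each pair $s\le t$), an embedding of $K_r$ (respectively $K_{s,t}$) on the surface of the stated genus, equivalently a triangular (respectively quadrilateral) embedding there. The route is the current-graph method of Ringel and Youngs: rather than write down the rotation system of the desired embedding directly, one obtains it as the rotation system induced by a suitably labelled current graph carrying currents in a finite cyclic group, and then checks that the induced face-tracing produces a single orbit of the correct length. For $K_{s,t}$ a uniform family of such current graphs yields a quadrilateral embedding. For $K_r$ the construction breaks into the twelve residue classes of $r$ modulo $12$, each handled by its own family of current graphs, plus separate treatment of a handful of small cases and of the orientable/nonorientable discrepancy; notably $K_7$ triangulates the torus, so $\gamma_1(K_7)=1$, yet it admits no triangular embedding of the Klein bottle $N_2$, which pushes $\gamma_2(K_7)$ up to $3$ rather than the value $2$ given by the formula --- this lone exception is ruled out by a short parity argument on the rotation of a hypothetical $N_2$-triangulation. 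The hard part is entirely this upper-bound half: the genus-optimal embeddings of $K_r$ form the Map Coloring Theorem of Ringel and Youngs, whose twelve-case proof is not reproduced here, whereas the lower bounds are a one-line corollary of Lemma~\ref{lem2.3}.
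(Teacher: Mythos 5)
The paper states this lemma purely as a citation of Ringel--Youngs and gives no proof, so there is nothing to compare line by line; your outline is a correct account of the standard argument. Your lower-bound half is sound and checks out numerically: Lemma~\ref{lem2.3} with $g=3$ gives $\binom{r}{2}\le 3(r-2+\gamma)$, hence $\gamma\ge\frac16(r-3)(r-4)$, and with $g=4$ for the bipartite case $st\le 2(s+t-2+\gamma)$, hence $\gamma\ge\frac12(s-2)(t-2)$, after which the translations $\gamma=\gamma_2$ and $\gamma=2\gamma_1$ plus integrality give the stated floors; and you correctly identify that the entire remaining content is the construction of triangular (resp.\ quadrilateral) embeddings via current graphs, together with Franklin's exclusion of a $K_7$-triangulation of $N_2$, which is exactly what the paper delegates to \cite{Ring1,Ring2,Ring3}.
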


For each $i\geq 1$, as shown in Figure \ref{fig.023}, both $H_{i}$ and $H_{i}'$ are planar.
Thus, by Lemma \ref{lem2.2}, $H_{i}$ and $H_{i}'$ can be embedded on spheres,
which we will denote as $S^{i}$ and ${S^i}'$, respectively.
With the above lemmas in place, we now proceed to the key structural theorem,
which is based on an iterative construction.
The initial step is defined as follows:

\begin{definition}\label{def3A}
Let $G$ be a simple graph cellularly embedded on a surface $R$, with a 3-face $xyz$.
We construct a new graph $G(x,y,z;i)$ and a new surface $R(x,y,z;i)$ as follows:

\vspace{1mm}
\noindent
(i) Cut out the face $xyz$ from $G$ (resp.\,$R$)
and the face $v_{i1}v_{i2}v_{i5}$ from $H_{i}$ (resp.\,$S^{i}$).
Then, identify the boundary of the face $xyz$ with that of $v_{i1}v_{i2}v_{i5}$,
where $x\!=\!v_{i1},y\!=\!v_{i2}$, and $z\!=\!v_{i5}$.
This results in a new graph $G'_{i}$ and a new surface $R'_{i}$.

\vspace{1mm}
\noindent
(ii) Cut out the faces $v_{i1}v_{i2}v_{i3}$ and $v_{i4}v_{i5}v_{i6}$ from $G'_{i}$ (resp.\,$R'_i$),
and the faces $v'_{i1}v'_{i2}v'_{i3}$ and $v'_{i4}v'_{i5}v'_{i6}$ from $H_{i'}$ (resp.\,${S^i}'$).
Furthermore, identify the boundary of $v_{i1}v_{i2}v_{i3}$ with
that of $v'_{i1}v'_{i2}v'_{i3}$,
and identify the boundary of $v_{i4}v_{i5}v_{i6}$ with
that of $v'_{i4}v'_{i5}v'_{i6}$,
where $v_{ij}\!=\!v'_{ij}$ for $j\in\{1,\ldots,6\}$.
This results in a graph $G(x,y,z;i)$ and a surface $R(x,y,z;i)$.
\end{definition}

\begin{figure}[]
\centering
\begin{tikzpicture}[x=1.00mm, y=1.00mm, inner xsep=0pt, inner ysep=0pt, outer xsep=0pt, outer ysep=0pt]
\definecolor{L}{rgb}{0,0,0}
\definecolor{F}{rgb}{0,0,0}
\node[circle,fill=red,draw=red,inner sep=0pt,minimum size=2mm] (v1) at (0,0) {};
\draw(0,-3) node[anchor=center]{\fontsize{12.38}{8.65}\selectfont $v_{i1}$};
\node[circle,fill=red,draw=red,inner sep=0pt,minimum size=2mm] (v2) at (40,0) {};
\draw(40,-3) node[anchor=center]{\fontsize{12.38}{8.65}\selectfont $v_{i2}$};
\node[circle,fill=red,draw=red,inner sep=0pt,minimum size=2mm] (v3) at (20,34.64) {};
\draw(20,38) node[anchor=center]{\fontsize{12.38}{8.65}\selectfont $v_{i3}$};
\node[circle,fill=blue,draw=blue,inner sep=0pt,minimum size=2mm] (v4) at (15,5) {};
\draw(12,7) node[anchor=center]{\fontsize{12.38}{8.65}\selectfont $v_{i4}$};
\node[circle,fill=blue,draw=blue,inner sep=0pt,minimum size=2mm] (v5) at (25,5) {};
\draw(28,7) node[anchor=center]{\fontsize{12.38}{8.65}\selectfont $v_{i5}$};
\node[circle,fill=blue,draw=blue,inner sep=0pt,minimum size=2mm] (v6) at (20,13.66) {};
\draw(20,9) node[anchor=center]{\fontsize{12.38}{8.65}\selectfont $v_{i6}$};
\node[circle,fill=green,draw=green,inner sep=0pt,minimum size=2mm] (v7) at (20,23) {};
\draw(17,23) node[anchor=center]{\fontsize{12.38}{8.65}\selectfont $v_{i7}$};
\definecolor{L}{rgb}{0,0,1}
\path[line width=0.4mm, draw=red] (v1) -- (v2);
\path[line width=0.4mm, draw=red] (v2) -- (v3);
\path[line width=0.4mm, draw=red] (v1) -- (v3);
\path[line width=0.4mm, draw=L] (v4) -- (v5);
\path[line width=0.4mm, draw=L] (v4) -- (v6);
\path[line width=0.4mm, draw=L] (v5) -- (v6);
\definecolor{L}{rgb}{0,0,0}
\path[line width=0.3mm, draw=green] (v1) -- (v4);
\path[line width=0.3mm, draw=green] (v2) -- (v5);
\path[line width=0.3mm, draw=green] (v1) -- (v5);
\path[line width=0.3mm, draw=green] (v1) -- (v7);
\path[line width=0.3mm, draw=green] (v2) -- (v7);
\path[line width=0.35mm, draw=green] (v3) -- (v7);
\path[line width=0.3mm, draw=green] (v4) -- (v7);
\path[line width=0.3mm, draw=green] (v5) -- (v7);
\path[line width=0.35mm, draw=green] (v6) -- (v7);
\draw(20,-10) node[anchor=base west]{\fontsize{14.23}{17.07}\selectfont $H_{i}$};
\node[circle,fill=red,draw=red,inner sep=0pt,minimum size=2mm] (v'2) at (60,0) {};
\draw(60,-4) node[anchor=center]{\fontsize{12.38}{8.65}\selectfont $v'_{i2}$};
\node[circle,fill=red,draw=red,inner sep=0pt,minimum size=2mm] (v'1) at (100,0) {};
\draw(100,-4) node[anchor=center]{\fontsize{12.38}{8.65}\selectfont $v'_{i1}$};
\node[circle,fill=red,draw=red,inner sep=0pt,minimum size=2mm] (v'3) at (80,34.64) {};
\draw(80,38) node[anchor=center]{\fontsize{12.38}{8.65}\selectfont $v'_{i3}$};
\node[circle,fill=blue,draw=blue,inner sep=0pt,minimum size=2mm] (v'5) at (75,5) {};
\draw(72,8) node[anchor=center]{\fontsize{12.38}{8.65}\selectfont $v'_{i5}$};
\node[circle,fill=blue,draw=blue,inner sep=0pt,minimum size=2mm] (v'4) at (85,5) {};
\draw(87.5,7) node[anchor=center]{\fontsize{12.38}{8.65}\selectfont $v'_{i4}$};
\node[circle,fill=blue,draw=blue,inner sep=0pt,minimum size=2mm] (v'6) at (80,13.66) {};
\draw(80,9.5) node[anchor=center]{\fontsize{12.38}{8.65}\selectfont $v'_{i6}$};
\definecolor{L}{rgb}{0,0,1}
\path[line width=0.4mm, draw=red] (v'1) -- (v'2);
\path[line width=0.4mm, draw=red] (v'2) -- (v'3);
\path[line width=0.4mm, draw=red] (v'1) -- (v'3);
\path[line width=0.4mm, draw=blue] (v'4) -- (v'5);
\path[line width=0.4mm, draw=blue] (v'4) -- (v'6);
\path[line width=0.4mm, draw=blue] (v'5) -- (v'6);
\definecolor{L}{rgb}{0,0,0}
\path[line width=0.3mm, draw=green] (v'3) -- (v'5);
\path[line width=0.3mm, draw=green] (v'3) -- (v'6);
\draw[line width=0.3mm, draw=green]
        (v'1) .. controls(93,9) and (85,15) .. (v'6);
\draw[line width=0.3mm, draw=green]
        (v'3) .. controls(63,1) and (70,2) ..  (v'4);
\draw[line width=0.3mm, draw=green]
        (v'2) .. controls(110,2) and (91,7) .. (v'6);
\draw[line width=0.3mm, draw=green]
        (v'2) .. controls(65,1) and (75,2) ..  (v'4);
\draw(80,-10) node[anchor=base west]{\fontsize{14.23}{17.07}\selectfont $H'_{i}$};
\end{tikzpicture}
\caption{The plane graphs $H_{i}$ and $H'_{i}$, where $i\geq 1$. }{\label{fig.023}}
\end{figure}
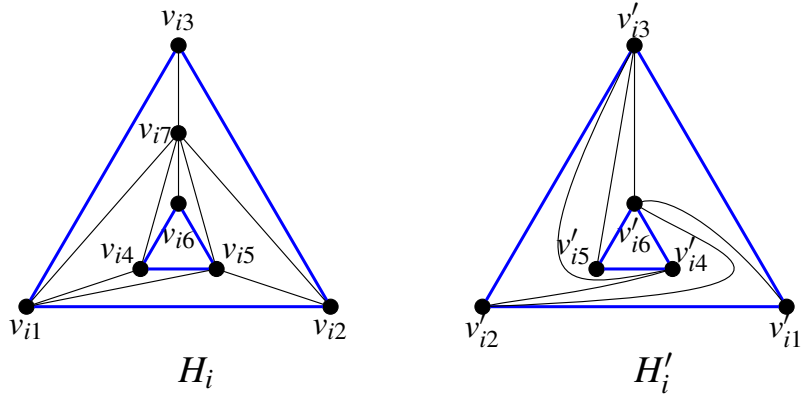

Since the face $v_{i1}v_{i2}v_{i5}$ can be placed as the outer face of $H_i$,
it is clear that $R'_i$ is homeomorphic to $R$,
Furthermore, we observe that $R(x,y,z;i)$ is obtained from $R'_i$ by adding a handle,
and the subgraph of $G(x,y,z;i)$ induced by $\{v_{ij}~|~j=1,\ldots,7\}$
is isomorphic to $K_7$.
Additionally, the following observation can be easily made.

\begin{obs}\label{fact-1}
Let $G$ be defined as in Definition \ref{def3A}. Then:\\
(i) $G(x,y,z;i)$ is a simple graph and cellularly embeds on the surface $R(x,y,z;i).$ \\
(ii) $V(G(x,y,z;i))=V(G)\cup\{v_{i3},v_{i4},v_{i6},v_{i7}\}$ and $e(G(x,y,z;i))=e(G)+18$.\\
(iii) $G(x,y,z;i)$ has a subgraph $K_2\nabla P^{i}$, where $V(K_2)=\{v_{i1},v_{i2}\}$ and $P^{i}=v_{i5}v_{i4}v_{i7}v_{i3}v_{i6}$.\\
(iv) If $R$ is homeomorphic to $S_k$, then $R(x,y,z;i)$ is homeomorphic to $S_{k+1}$;
if $R$ is homeomorphic to $N_k$, then $R(x,y,z;i)$ is homeomorphic to $N_{k+2}$.
\end{obs}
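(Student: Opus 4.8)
The plan is to read off all four assertions directly from Definition \ref{def3A} and the combinatorics of the two plane gadgets in Figure \ref{fig.023}, so the first step is to record what those gadgets are. Writing $[jk]$ for the pair $v_{ij}v_{ik}$, I would note from the figure that $H_{i}$ is a triangulation of the sphere on $v_{i1},\dots,v_{i7}$, hence $e(H_{i})=3\cdot7-6=15$; that $v_{i1}v_{i2}v_{i5}$ is a face and $v_{i1}v_{i2}v_{i3}$, $v_{i4}v_{i5}v_{i6}$ are two vertex-disjoint faces; and that the six pairs among $v_{i1},\dots,v_{i7}$ \emph{not} present in $H_{i}$ are exactly $[16],[24],[26],[34],[35],[36]$. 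Likewise $H_{i}'$ is a triangulation of the sphere on $v'_{i1},\dots,v'_{i6}$, hence $e(H_{i}')=3\cdot6-6=12$; $v'_{i1}v'_{i2}v'_{i3}$ and $v'_{i4}v'_{i5}v'_{i6}$ are faces; and the six edges of $H_{i}'$ lying on neither of these two faces are exactly $[16]',[24]',[26]',[34]',[35]',[36]'$, i.e. under the identification $v_{ij}=v'_{ij}$ they are precisely the six pairs missing from $H_{i}$.

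Given these facts, parts (ii) and (iii) are bookkeeping. For (ii): in step (i) the vertices $v_{i1},v_{i2},v_{i5}$ are identified with $x,y,z\in V(G)$ and the three boundary edges of $v_{i1}v_{i2}v_{i5}$ with $xy,yz,zx$ (genuine edges since $G$ is simple), so $G'_{i}$ gains the four vertices $v_{i3},v_{i4},v_{i6},v_{i7}$ and $15-3=12$ edges; in step (ii) all of $v'_{i1},\dots,v'_{i6}$ and the six boundary edges of the two glued faces are already present, so exactly $12-6=6$ edges are added, giving $V(G(x,y,z;i))=V(G)\cup\{v_{i3},v_{i4},v_{i6},v_{i7}\}$ and $e(G(x,y,z;i))=e(G)+18$. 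For (iii): by the two edge lists, the subgraph on $\{v_{i1},\dots,v_{i7}\}$ contains the $15$ edges of $H_{i}$ together with the six pairs $[16],[24],[26],[34],[35],[36]$ contributed by $H_{i}'$, hence all $\binom72$ pairs, so it is $K_{7}$; in particular $v_{i1}v_{i2}\in E$, each of $v_{i1},v_{i2}$ is adjacent to all of $v_{i3},\dots,v_{i7}$, and $v_{i5}v_{i4},v_{i4}v_{i7},v_{i7}v_{i3},v_{i3}v_{i6}$ are edges, so $K_{2}\nabla P^{i}\subseteq G(x,y,z;i)$ with $V(K_{2})=\{v_{i1},v_{i2}\}$ and $P^{i}=v_{i5}v_{i4}v_{i7}v_{i3}v_{i6}$.

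For (i), simplicity holds because every identification in the construction glues the boundary $3$-cycle of a triangular face to another one, and in each case both cycles consist of vertices and edges already present (the triangle $xyz$ of $G$ with a triangle of $H_{i}$, then two triangles of $H_{i}$ with two triangles of $H_{i}'$), so no loop or multiple edge is created; together with simplicity of $G,H_{i},H_{i}'$ this gives a simple $G(x,y,z;i)$. Cellularity holds because $H_{i}$ (resp. $H_{i}'$) is cellularly embedded on the sphere, so deleting the listed open faces leaves the graph cellularly embedded on a closed disc (resp. on an annulus, a sphere minus two disjoint open discs); each step glues such a piece to the current surface along boundary circles of faces that were open discs, and the faces of the result are precisely the remaining open-disc faces of the two pieces, so cellularity is preserved. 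Two passes of this give that $G(x,y,z;i)$ is cellularly embedded on $R(x,y,z;i)$.

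Part (iv) is the one needing genuine topological care. Step (i) deletes an open disc from $R$ and glues back the closed disc $S^{i}$ minus the open face $v_{i1}v_{i2}v_{i5}$, so $R'_{i}$ is homeomorphic to $R$; step (ii) deletes two disjoint open discs from $R'_{i}$ and glues in the annulus ${S^{i}}'$ minus its two open faces along the two boundary circles, which (as already asserted right before the statement, and which one verifies from the facial walks of $H_{i}$ and $H_{i}'$ under $v_{ij}=v'_{ij}$) is exactly adding a handle to $R'_{i}$. Hence the Euler genus increases by $2$: $R\cong S_{k}$ gives $R(x,y,z;i)\cong S_{k+1}$, and $R\cong N_{k}$ gives $R(x,y,z;i)\cong N_{k+2}$, using the classification-theorem fact that a sphere with one handle and $k\ge1$ cross-caps is homeomorphic to $N_{k+2}$. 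The hard part will be precisely this orientation bookkeeping in step (ii) — checking from the prescribed rotation systems of $H_{i}$ and $H_{i}'$ that the two boundary circles are matched so as to realize a handle rather than a cross-handle; everything else is counting.
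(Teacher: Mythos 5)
The paper offers no proof of this Observation at all: it is prefaced by ``the following observation can be easily made,'' and the three substantive facts it rests on ($R'_i\cong R$, that step (ii) adds a handle, and that the induced subgraph on $\{v_{i1},\dots,v_{i7}\}$ is $K_7$) are simply asserted in the boldfaced sentence preceding it. Your write-up is therefore strictly more detailed than the paper's treatment, and the details you supply check out: reading the figure, $H_i$ is indeed a $7$-vertex plane triangulation with $15$ edges whose missing pairs are exactly $\{16,24,26,34,35,36\}$, and the six edges of $H_i'$ off the two glued faces are exactly those pairs, so the union is $K_7$; the counts $12+6=18$ new edges and $4$ new vertices, the containment of $K_2\nabla P^i$, and the simplicity/cellularity arguments are all correct. (One small point worth making explicit in the simplicity check: besides the glued boundary cycles matching existing edges, one should note that each of the six edges contributed by $H_i'$ has at least one endpoint among the new vertices $v_{i3},v_{i4},v_{i6},v_{i7}$ or is a pair absent from $H_i$, so no parallel edge arises there either.)

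The one place your proposal stops short is the point you yourself flag at the end of (iv): when $R$ is orientable, gluing the annulus ${S^i}'\setminus(F_1'\cup F_2')$ along the two boundary circles yields $S_{k+1}$ only if the identification is orientation-compatible; an incompatible matching would give a cross-handle and hence $N_{2k+2}$. You defer this rotation-system check rather than carry it out, so as written the proof of (iv) is incomplete in the orientable case. To be fair, the paper defers it too (it asserts ``adding a handle'' with no verification), and the gap is harmless for the only use of the Observation, Theorem \ref{thm1.1}: the Euler characteristic drops by $2$ under either gluing, so the Euler genus increases by exactly $2$ in all cases and $G_{\gamma/2}\in\mathbb{G}(n,\gamma)$ regardless of whether the intermediate surfaces are $S_{k+i}$ or nonorientable. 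Still, if you want a complete proof of (iv) as literally stated, you must do the facial-walk computation you describe: orient $S^i$ and ${S^i}'$, read off the boundary orientations of $v_{i1}v_{i2}v_{i3}$, $v_{i4}v_{i5}v_{i6}$, $v'_{i1}v'_{i2}v'_{i3}$, $v'_{i4}v'_{i5}v'_{i6}$ from the two embeddings in Figure \ref{fig.023}, and verify that under $v_{ij}=v'_{ij}$ both identifications reverse the induced boundary orientations. The nonorientable case needs no such care, since by Dyck's theorem a handle and a cross-handle attached to a nonorientable surface give the same result $N_{k+2}$.
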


By Lemma \ref{lem2.4}, every graph $G\in \mathbb{G}(n,\gamma)$ satisfies
$e(G)\leq 3(n-2+\gamma).$
Building on Definition \ref{def3A} and Observation \ref{fact-1}, we now present the key structural theorem.

\begin{thm}\label{thm1.1}
Let $n\geq 2\gamma+4$.
Then, there always exists a graph $G^*\in EX(n,\gamma)$
such that $G^*$ is obtained from $K_2\nabla P_{n-2}$ by adding $3\gamma$ edges.
\end{thm}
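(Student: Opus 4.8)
The plan is to construct, for every $n\ge 2\gamma+4$, a concrete graph $G^*$ of order $n$ that (a) embeds cellularly on a surface of Euler genus $\gamma$, (b) has exactly $3(n-2+\gamma)$ edges, and (c) contains $K_2\nabla P_{n-2}$ as a spanning subgraph. Given (a) and (b), Lemma \ref{lem2.4} shows $G^*$ attains the maximum size, so $G^*\in EX(n,\gamma)$; combining (b) and (c), $G^*$ is $K_2\nabla P_{n-2}$ together with exactly $3(n-2+\gamma)-3(n-2)=3\gamma$ further edges, which is the assertion. So everything reduces to the construction.

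I build $G^*$ by iterating the operation of Definition \ref{def3A}, maintaining the invariant that the current graph $G_t$ is a simple graph cellularly embedded on a surface of Euler genus $\gamma_0+2t$, contains a spanning $K_2\nabla P$ with the \emph{same} pair $V(K_2)=\{a,b\}$ throughout, and has the edge $ab$ lying on a triangular face $abw$ whose third vertex $w$ is an endpoint of the spanning path $P$. The base $G_0$ is chosen by the parity of $\gamma$. If $\gamma=2\ell$, set $G_0=K_2\nabla P_{n-2\gamma-2}$, a triangulation of the sphere with $3(n-2\gamma-2)$ edges; it satisfies the invariant since $ab$ lies on the faces $abp_1$ and $abp_{n-2\gamma-2}$. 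If $\gamma=2\ell+1$, take $G_0$ to be a triangulation $T_m$ of the projective plane on $m:=n-2\gamma+2$ vertices ($m\ge 6$, which is precisely the hypothesis $n\ge 2\gamma+4$) having two universal vertices $a,b$, a spanning path on the remaining vertices, and the same endpoint/face condition. Such $T_m$ exist: begin with $T_6=K_6=K_2\nabla K_4$ embedded on the projective plane (Figure \ref{fig.01}$(b)$), which plainly satisfies the invariant, and repeatedly split the distinguished face $abw$ by a new vertex adjacent to $a$, $b$, $w$; this preserves being a projective-plane triangulation, keeps $a,b$ universal, prepends the new vertex to the path, and produces a new triangular face $abw'$ with $w'$ the new path endpoint.

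For the inductive step, apply Definition \ref{def3A} at $xyz:=abw$ with $x=a$, $y=b$, $z=w$. By Observation \ref{fact-1}, $G_{t+1}:=G(a,b,w;i)$ is simple, cellularly embedded with Euler genus raised by $2$, contains $K_7$ on $\{a,b,w,v_{i3},v_{i4},v_{i6},v_{i7}\}$ (so $a,b$ stay universal), and has the subgraph $K_2\nabla P^i$ with $V(K_2)=\{a,b\}$ and $P^i=v_{i5}v_{i4}v_{i7}v_{i3}v_{i6}$ where $v_{i5}=w$. Since $w$ is an endpoint of $P=wq_2\cdots q_{m_t}$, the walk $v_{i6}v_{i3}v_{i7}v_{i4}\,w\,q_2\cdots q_{m_t}$ is a spanning path $P'$ and $\{a,b\}\nabla P'$ is spanning in $G_{t+1}$. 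Tracing the cut-and-paste of Definition \ref{def3A} through the planar gadget $H_i'$ of Figure \ref{fig.023} then shows that the edge $ab$ now lies on the triangular face $abv_{i6}$, and $v_{i6}$ is an endpoint of $P'$, so the invariant is restored. Running the step $\ell$ times from $G_0$ yields $G^*:=G_\ell$, which has $n$ vertices ($4\ell$ were added), Euler genus $\gamma_0+2\ell=\gamma$ in both parities, and $e(G^*)=e(G_0)+18\ell=3(n-2+\gamma)$ by a direct computation; the invariant provides the spanning $K_2\nabla P_{n-2}$. Thus (a), (b), (c) hold and the theorem follows.

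The step I expect to be the main obstacle is showing the invariant is self-propagating: one must unwind the surgery of Definition \ref{def3A} on the explicit planar gadgets $H_i$, $H_i'$ of Figure \ref{fig.023} to track exactly which edges and faces survive the two cut-and-paste stages, and in particular to check that after the operation the edge $ab$ ends up incident to the face $abv_{i6}$ with $v_{i6}$ at the correct end of the new spanning path $P'$. The supporting facts — existence of the projective-plane base triangulations $T_m$ with two universal vertices, which the face-splitting argument handles, and the arithmetic that order, size, and Euler genus land exactly on $n$, $3(n-2+\gamma)$, $\gamma$ — are routine.
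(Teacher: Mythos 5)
Your proposal is correct and follows essentially the same route as the paper: reduce to exhibiting one cellularly embedded graph of size $3(n-2+\gamma)$ containing a spanning $K_2\nabla P_{n-2}$ (edge-maximality then follows from Lemma \ref{lem2.4}), split on the parity of $\gamma$, and iterate the surgery of Definition \ref{def3A} with the recursion $G_{i+1}=G_i(v_{i1},v_{i2},v_{i6};i+1)$, which is exactly your invariant that the distinguished face $abw$ regenerates as $abv_{i6}$ with $v_{i6}$ the new path endpoint. Your odd-case base (face-splitting $K_6$ on the projective plane) is the same graph the paper obtains by embedding $K_2\nabla P^0$ into a $3$-face of $K_6$, so the difference is purely presentational.
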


\begin{proof}
When $\gamma=0$,
the surface is the plane,
and clearly,  $K_2 \nabla P_{n-2}$ itself is a planar graph with $3n-6$ edges.
Next, we consider the case $\gamma\geq1$.

Let $G_0$ be a simple graph that is cellularly embedded on a surface $R$ and has a 3-face $w_1w_2w_3$.
Set $G_1=G_0(w_1,w_2,w_3;1)$,
and recursively define $G_{i+1}=G_{i}(v_{i1},v_{i2},v_{i6};i+1)$ for $i\geq 1$.
According to Definition \ref{def3A}, we observe that $w_3=v_{15}$ in $G_1$,
and $v_{i6}=v_{(i+1)5}$ in each $G_{i+1}$.
Therefore, by recursively applying Observation \ref{fact-1},
we can directly obtain the following statements:
\vspace{1mm}

(i) $G_i$ is a simple graph;

(ii) $|V(G_i)|=|V(G_{0})|+4i$ and $e(G_i)=e(G_{0})+18i$;

(iii) $G_i$ has a subgraph $K_2\nabla P^{i}$, where $V(K_2)=\{v_{11},v_{12}\}$ and
$P^{i}$ is a path of length $4i$;

(iv) if $R$ is homeomorphic to $S_k$, then $G_i$ is cellularly embedded on $S_{k+i}$;
and if $R$ is homeomorphic to $N_k$, then $G_i$ is cellularly embedded on $N_{k+2i}$.

\vspace{1mm}
Now we consider two cases.
The first case is when $\gamma$ is even.
Define $G_0=K_2 \nabla P^0$, where $t\geq 1$ and $P^{0}=w_{t+2}w_{t+1}\dots w_3$
(see Figure \ref{fig.01} ($c$)).
Clearly, $G_{0}$ is a planar graph of order $t+2$ and size $3t$.
Thus, $G_{0}\in \mathbb{G}(t+2,0)$.
Set $t=n-2\gamma-2$. Since $n\geq 2\gamma+4$, it follows that $t\geq 2$.
Moreover, $|V(G_0)|=n-2\gamma$ and $e(G_0)=3(n-2\gamma-2)$.
In view of (i), $G_{\gamma/2}$ is simple.
Additionally, in view of (ii), we observe that
$|V(G_{\gamma/2})|=n$
and $e(G_{\gamma/2})=e(G_0)+9\gamma=3(n-2+\gamma).$
Furthermore, based on (iii) and the structure of $G_0$,
we find that $G_{\gamma/2}$ contains $K_2\nabla P_{n-2}$ as a spanning subgraph,
where $V(K_2)=\{v_{11},v_{12}\}=\{w_1,w_2\}$ and $E(P_{n-2})=E(P^0)\cup E(P^{\gamma/2})$.
Finally, based on (iv),
$G_{\gamma/2}$ can be cellularly embedded on $S_{\gamma/2}$.
Thus, $G_{\gamma/2}\in\mathbb{G}(n,\gamma)$.
Therefore, $G_{\gamma/2}$ is a desired graph.

Next, we consider the case when $\gamma$ is odd.
Let $G_0$ be the graph obtained by embedding the entire graph $K_2 \nabla P^0$
into the 3-face $v_1v_2v_3$ of $K_6$,
such that $w_{1}=v_{1}$, $w_{2}=v_{2}$ and $w_{t+2}=v_{3}$
(see $K_2 \nabla P^0$ in Figure \ref{fig.01} ($c$),
and the embedding of $K_6$ on the projective plane in Figure \ref{fig.01} ($b$)).
Clearly,
$G_0$ can be cellularly embedded on $N_{1}$. More precisely,
we have $G_0\in \mathbb{G}(t+5,1)$ and $e(G_0)=3(t+4)$.
Set $t=n-2\gamma-3$.
Since $n\geq 2\gamma+4$, it follows that $t\geq 1$.
Furthermore, $G_0\in \mathbb{G}(n-2\gamma+2,1)$ and $e(G_0)=3(n-2\gamma+1)$.
According to statements (i)-(iv),
we similarly obtain that $G_{(\gamma-1)/2}\in\mathbb{G}(n,\gamma)$,
$e(G_{(\gamma-1)/2})=3(n-2+\gamma),$
and that $G_{(\gamma-1)/2}$ contains $K_2\nabla P_{n-2}$ as a spanning subgraph,
where $V(K_2)=\{v_{11},v_{12}\}$ and $E(P_{n-2})=\{v_6v_5,v_5v_4,v_4v_3\}\cup E(P^0)\cup E(P^{(\gamma-1)/2)})$,
with $v_3=w_{t+2}$ and $w_3=v_{15}$.
Therefore, $G_{(\gamma-1)/2}$ is a desired graph.

This completes the proof of Theorem~\ref{thm1.1}.
\end{proof}

\section{Proof of Theorem \ref{thm1.2}}

We will first introduce several notations.
Given two disjoint subsets $S,T\subseteq V(G)$ and a vertex $v\in V(G)$,
define $N_S(v)$ as the set of neighbors of $v$ in $S$, and let $d_S(v)$
be the degree of $v$ in $S$.
Let $G[S]$ denote the subgraph of $G$ induced by $S$,
and $G[S,T]$ be the bipartite subgraph of $G[S\cup T]$, obtained
by removing all the edges within $S$ and within $T$.
Define $e(S)$ and $e(S,T)$ as the numbers of edges in
$G[S]$ and $G[S,T]$, respectively.

Let $G$ be a graph in $SPEX(n,\gamma)$ with spectral radius $\rho(G)=\rho$.
Then, $G$ is connected. Otherwise, we could add edges between two connected components
such that the resulting graph still embeds on a surface with Euler genus $\gamma$.
However, this would increase the spectral radius,
contradicting the maximality of $\rho(G)$.
Since $G$ is connected, by the well-known Perron-Frobenius theorem, there exists a positive eigenvector
$\mathbf{x}=(x_1,\ldots,x_n)^\mathrm{T}$ corresponding to $\rho(G)$.
We may assume that $u^*\in V(G)$ with $x_{u^{*}}=\max_{1\leq i\leq n}x_i=1$.

For any $S\subseteq V(G)$, if $|S|\in \{1,2\}$ then $e(S)\leq 1$.
If $|S|\geq 3$, then by Lemma \ref{lem2.4}, we have $e(S)\leq 3(|S|-2+\gamma)$.
In both cases, we obtain that
\begin{align}\label{eq2}
e(S)\leq 3(|S|+\gamma).
\end{align}

Since $G\in \mathbb{G}(n,\gamma)$, it is clear that $G[S,T]\in \mathbb{G}(n,\gamma)$
for any two disjoint non-empty vertex subsets $S$ and $T$.
If $|S|+|T|\leq 2$, then $e(S,T)\leq 1$.
If $|S|+|T|\geq 3$,
it follows from Lemma \ref{lem2.4} that $e(S,T)\leq 2(|S|+|T|-2+\gamma).$
In both cases, we get that
\begin{align}\label{eq3}
e(S,T)\leq 2(|S|+|T|+\gamma).
\end{align}

Define a parameter as follows:
\begin{eqnarray}\label{eq1}
\alpha=\frac{1}{300+180\gamma+24\gamma^2}.
\end{eqnarray}
According to the condition of Theorem \ref{thm1.2}, we have $n\geq\frac{50}{\alpha^2}$.
Let $H=K_2 \nabla (n-2)K_1$.
Observe that $H$ is a planar graph.
Hence, $H\in \mathbb{G}(n,0)$, and thus $H\in\mathbb{G}(n,\gamma)$ for any non-negative integer $\gamma$.
Since $G\in SPEX(n,\gamma)$,
one can easily calculate that
\begin{eqnarray}\label{eq4}
\rho\!\geq\!\rho(H)\!=\!\frac12\big(1\!+\!\sqrt{8n-15}\big)\!>\!\sqrt{2n\!+\!8}\!>\!\frac{10}{\alpha}.
\end{eqnarray}
Next, we partition $V(G)$ into two subsets $L$ and $\overline{L}$,
where $L=\{v\in V(G)~|~x_v\geq\alpha\}$.
We will now present a series of claims regarding $L$ and $G$.

\begin{claim}\label{cl2.1}
$\gamma<10^{-3}\alpha n$ and $|L|<0.55\alpha n$.
\end{claim}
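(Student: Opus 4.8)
The plan is to prove the two inequalities in turn, using only the definition \eqref{eq1} of $\alpha$, the hypothesis $n\geq 50/\alpha^2$, the crude edge bounds \eqref{eq2} and \eqref{eq3}, and the estimate $\rho>10/\alpha$ from \eqref{eq4}.

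First I would dispose of $\gamma<10^{-3}\alpha n$ by a direct computation. From $n\geq 50/\alpha^2$ we get $\alpha n\geq 50/\alpha$, hence $10^{-3}\alpha n\geq \frac{1}{20\alpha}=\frac{1}{20}\bigl(300+180\gamma+24\gamma^2\bigr)=15+9\gamma+1.2\gamma^2>\gamma$ for every integer $\gamma\geq 0$, which is the claim. Note that only $n\geq 50/\alpha^2$ is used here; no further largeness of $n$ is needed.

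For $|L|<0.55\alpha n$ I would run a double-counting argument on the Perron eigenvector $\mathbf{x}$. Summing the eigenequation over the vertices of $L$ and regrouping by the other endpoint,
\begin{align*}
\rho\sum_{v\in L}x_v&=\sum_{v\in L}\sum_{u\sim v}x_u=\sum_{u\in V(G)}x_u\,d_L(u)\\
&=\sum_{u\in L}x_u\,d_L(u)+\sum_{u\in\overline L}x_u\,d_L(u).
\end{align*}
Using $x_u\leq 1$ on $L$ gives $\sum_{u\in L}x_u\,d_L(u)\leq \sum_{u\in L}d_L(u)=2e(L)$, and using $x_u<\alpha$ on $\overline L$ gives $\sum_{u\in\overline L}x_u\,d_L(u)<\alpha\,e(L,\overline L)$. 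Now \eqref{eq2} yields $e(L)\leq 3(|L|+\gamma)$ and \eqref{eq3} yields $e(L,\overline L)\leq 2(n+\gamma)$, so $\rho\sum_{v\in L}x_v\leq 6(|L|+\gamma)+2\alpha(n+\gamma)$. On the other hand $\sum_{v\in L}x_v\geq\alpha|L|$, so with $\rho>10/\alpha$ this gives $10|L|<6(|L|+\gamma)+2\alpha(n+\gamma)$, i.e. $4|L|<6\gamma+2\alpha n+2\alpha\gamma$. Feeding in the first inequality $\gamma<10^{-3}\alpha n$ (and $\alpha<1$) bounds the right-hand side by $(0.006+2+0.002)\alpha n=2.008\alpha n$, whence $|L|<0.502\alpha n<0.55\alpha n$.

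There is no genuine obstacle here; the argument is a routine application of \eqref{eq2}--\eqref{eq4}. The only mild care needed is that the edge bounds \eqref{eq2} and \eqref{eq3} are applied in all ranges of $|L|$ (reading empty sums as $0$, so that the estimate $4|L|<2.008\alpha n$ is unconditional --- this incidentally forces $\overline L\neq\emptyset$), and that one keeps the constants honest so that the slack built into $\alpha$ and into $n\geq 50/\alpha^2$ genuinely suffices, which it does with room to spare since $0.502<0.55$. If a sharper estimate were ever required, the $6\gamma$ term could be absorbed more carefully, but $10^{-3}\alpha n$ is already far more than enough for the present purpose.
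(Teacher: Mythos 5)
Your proposal is correct and follows essentially the same route as the paper: the first inequality by the same direct computation from $n\geq 50/\alpha^2$ and the definition of $\alpha$, and the second by summing the eigenvalue equation over $L$ and applying \eqref{eq2}, \eqref{eq3}, and $\rho>10/\alpha$ (the paper uses the pointwise bound $\rho x_u\geq 10$ rather than your lower bound $\sum_{v\in L}x_v\geq\alpha|L|$, but these are the same estimate). Your constants are slightly tighter ($0.502$ versus the paper's $0.55$), but the argument is identical in substance.
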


\begin{proof}
Since $n\geq\frac{50}{\alpha^2}$, based on \eqref{eq1}, it follows that
$\alpha n\geq \frac{50}{\alpha}>10^3\gamma,$
which implies that $\gamma<10^{-3}\alpha n$.
Next,
we will prove that $|L|<0.55\alpha n$.

For any $u\in L$, we know that $\rho x_u\geq \rho\alpha$.
Based on \eqref{eq4}, we further have $\rho x_u\geq \rho\alpha\geq10.$
On the other hand, by the assumption that $\max_{v\in V(G)}x_v=1$,
we obtain that
$\rho x_u\!=\!\sum_{v\in N_G(u)}x_v\leq d_L(u)\!+\!d_{\overline{L}}(u)\alpha.$
Summing this inequality over all vertices $u\in L$ gives:
\begin{align}\label{eq5}
10|L|\leq\sum_{u\in L}\rho x_u\leq\sum_{u\in L}d_L(u)+\sum_{u\in L}d_{\overline{L}}(u)\alpha
\leq 2e(L)+\alpha\cdot e(L,\overline{L}).
\end{align}
From \eqref{eq2} and \eqref{eq3}, we know that $2e(L)\leq6(|L|+\gamma)$ and
$\alpha\cdot e(L,\overline{L})\leq2\alpha(n+\gamma)$.
Based on \eqref{eq1}, we further have
$6\gamma+2\alpha\gamma<10\gamma<0.2\alpha n.$
Combining \eqref{eq5}, we obtain:
\begin{align*}
10|L|\leq 6(|L|+\gamma)+2\alpha(n+\gamma)<6|L|+2.2\alpha n,
\end{align*}
which implies that $|L|<0.55\alpha n$, as claimed.
\end{proof}

\begin{claim}\label{cl2.2}
For each $u\in L$, we have $d_G(u)>(x_u-5.5\alpha)n$.
\end{claim}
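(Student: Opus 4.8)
The goal is to show that every vertex $u \in L$ has large degree in $G$, namely $d_G(u) > (x_u - 5.5\alpha)n$. The natural starting point is the eigenvalue equation $\rho x_u = \sum_{v \in N_G(u)} x_v$, which I would rewrite as a sum over $N_L(u)$ and $N_{\overline{L}}(u)$. Since every coordinate of the Perron vector is at most $1$, we have $\sum_{v \in N_G(u)} x_v \le d_L(u) \cdot 1 + d_{\overline{L}}(u) \cdot \alpha \le d_L(u) + d_G(u)\alpha$; but this bound goes the wrong way for a lower bound on $d_G(u)$. Instead, I would split off the contribution of $N_L(u)$ and bound it crudely using Claim \ref{cl2.1}: $\sum_{v \in N_L(u)} x_v \le d_L(u) \le |L| < 0.55\alpha n$. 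Therefore
$$
\rho x_u = \sum_{v \in N_L(u)} x_v + \sum_{v \in N_{\overline{L}}(u)} x_v < 0.55\alpha n + d_{\overline{L}}(u) \le 0.55\alpha n + d_G(u).
$$
This already gives $d_G(u) > \rho x_u - 0.55\alpha n$.

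To finish, I need to replace $\rho$ by something of order at least $n$. The bound $\rho > \sqrt{2n+8}$ from \eqref{eq4} is too weak here since $\sqrt{2n+8}$ is of order $\sqrt{n}$, not $n$; so this cannot be the whole story. I suspect the intended argument is slightly different: one should bound $\sum_{v \in N_{\overline{L}}(u)} x_v$ from above by $d_{\overline{L}}(u)\alpha$ rather than $d_{\overline{L}}(u)$, giving $\rho x_u \le d_L(u) + \alpha d_{\overline{L}}(u) < d_L(u) + \alpha d_G(u) \le d_L(u) + \alpha n$, hence $d_L(u) > \rho x_u - \alpha n > \sqrt{2n} \, x_u - \alpha n$... this still does not directly produce the claimed form. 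The cleanest route, and the one I would pursue, is: $d_G(u) \ge d_{\overline{L}}(u) \ge \frac{1}{\alpha}\big(\rho x_u - d_L(u)\big) \ge \frac{1}{\alpha}\big(\rho x_u - 0.55\alpha n\big)$. Since $\rho \ge \frac{10}{\alpha}$ by \eqref{eq4} and $x_u \ge \alpha$ for $u \in L$... no, that gives $\rho x_u \ge 10$, a constant. So in fact $d_G(u) \ge \frac{1}{\alpha}(\rho x_u) - 0.55 n$; to reach $(x_u - 5.5\alpha)n$ I would need $\frac{\rho x_u}{\alpha} \ge x_u n$, i.e. $\rho \ge \alpha n$, which \emph{is} true since $\rho \ge \frac{10}{\alpha}$ and $\alpha^2 n \ge 50$ give $\rho \ge \frac{10}{\alpha} = \frac{10 \alpha n}{\alpha^2 n} \le \frac{\alpha n}{5}$... the direction is wrong again.

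Let me record the plan honestly: the main obstacle is pinning down exactly which lower bound on $\rho$ (or which combination of the two estimates $\sum_{v\in N_L(u)} x_v \le d_L(u)$ versus $\le |L|$, and $x_v \le 1$ versus $x_v \le \alpha$ on $\overline{L}$) makes the arithmetic close with the constant $5.5\alpha$. The robust skeleton is: (1) write $\rho x_u = \sum_{N_L(u)} x_v + \sum_{N_{\overline L}(u)} x_v$; (2) bound the first sum by $|L| < 0.55\alpha n$ via Claim \ref{cl2.1}; (3) bound the second sum by $\alpha d_{\overline L}(u) \le \alpha d_G(u)$; (4) conclude $d_G(u) \ge \frac{1}{\alpha}(\rho x_u - 0.55\alpha n) = \frac{\rho x_u}{\alpha} - 0.55 n$; (5) invoke $\rho \ge \frac{10}{\alpha} > \alpha n$ — wait, one needs the reverse, so instead use the \emph{lower bound of order $n$} that must come from a better estimate, presumably $\rho \ge \rho(K_2 \nabla P_{n-2})$ combined with the fact that for $u = u^*$ with $x_{u^*}=1$ one gets $d_G(u^*)$ close to $n$, propagated to general $u$. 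I expect the actual proof uses $\rho x_u \ge \alpha x_u n$ type reasoning where the $n$ comes from $d_G(u) \le n$ being tight, i.e. a self-referential bound $\rho x_u \le d_L(u) + \alpha(n - d_L(u)) < d_L(u)(1-\alpha) + \alpha n \le d_L(u) + \alpha n$, rearranged with $d_L(u) \le d_G(u)$ and $\rho x_u \ge$ (the order-$n$ bound) to isolate $d_G(u) > (x_u - 5.5\alpha)n$. I would fill in step (5) by checking the constant $5.5 = 0.55/0.1$ against \eqref{eq4}: since $\rho > \sqrt{2n+8} > \frac{10}{\alpha}$ forces $\alpha > \frac{10}{\sqrt{2n}}$, equivalently $\alpha n > 5\sqrt{2n} \gg$ any constant, the term $0.55n$ is absorbed, and matching $\rho x_u / \alpha \ge x_u n - (5.5\alpha - 0.55)n$ reduces to $\rho \ge \alpha n - (\text{lower order})$, which I would verify holds in the regime $n \ge 50/\alpha^2$ precisely because then $\alpha n \le \frac{\alpha \cdot \alpha^2 n^2}{50 \alpha} $... the bookkeeping here is the one genuinely delicate point, and I would handle it by writing $d_G(u) \ge d_{\overline L}(u) \ge \alpha^{-1}\sum_{v \in N_{\overline L}(u)} x_v = \alpha^{-1}(\rho x_u - \sum_{v\in N_L(u)}x_v) \ge \alpha^{-1}(\rho x_u - 0.55\alpha n)$ and then using $\rho > \frac{10}{\alpha}$, $x_u \ge \alpha$ only when needed, to land the stated inequality after absorbing lower-order terms.
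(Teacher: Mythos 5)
There is a genuine gap, and you have essentially diagnosed it yourself: the first-order eigenvalue equation $\rho x_u=\sum_{v\in N_G(u)}x_v$ cannot yield a lower bound of order $n$ on $d_G(u)$, because $\rho=\Theta(\sqrt{n})$, so $\rho x_u=O(\sqrt{n})$ while the target $(x_u-5.5\alpha)n$ is of order $n$ when $x_u$ is bounded away from $5.5\alpha$. None of the rearrangements you try (bounding $\sum_{N_L(u)}x_v$ by $|L|$, bounding $\sum_{N_{\overline L}(u)}x_v$ by $\alpha d_G(u)$ and dividing by $\alpha$, etc.) can repair this, since every one of them still has $\rho x_u$ on the large side of the inequality; your repeated observation that ``the direction is wrong'' is the symptom of this structural obstruction, not a bookkeeping issue.

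The missing idea is to pass to the \emph{second-order} equation
\begin{align*}
\rho^2x_u=\sum_{v\in V(G)}d_{N_G(u)}(v)\,x_v,
\end{align*}
which counts weighted $2$-walks from $u$; combined with $\rho^2>2n$ from \eqref{eq4}, the left side becomes $2nx_u$, which is of the right order. The paper then splits the right side over $L$ and $\overline{L}$. For $v\in\overline{L}$ one uses $x_v<\alpha$ and $\sum_v d_G(v)=2e(G)\leq 6(n+\gamma)$ to get a bound $<6.2\alpha n$. For $v\in L$ one uses the linear edge bounds \eqref{eq2} and \eqref{eq3} coming from Lemma \ref{lem2.4}: $\sum_{v\in L}d_{N_L(u)}(v)\leq 2e(L)\leq 6(|L|+\gamma)$ and $\sum_{v\in L}d_{N_{\overline L}(u)}(v)\leq e(N_{\overline L}(u),L)\leq 2(d_G(u)+|L|+\gamma)$, together with $|L|+\gamma<0.6\alpha n$ from Claim \ref{cl2.1}. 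It is the factor $2(d_G(u)+\cdots)$ in the bipartite edge bound that makes $d_G(u)$ appear with coefficient $2$, matching the $2n x_u$ on the left and yielding $2nx_u<2d_G(u)+11\alpha n$, i.e.\ $d_G(u)>(x_u-5.5\alpha)n$. Your proposal never invokes the squared equation nor the sparsity bounds on $e(L)$ and $e(N_{\overline L}(u),L)$, so the argument as sketched cannot be completed.
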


\begin{proof}
Recall that $N_L(u)=N_G(u)\cap L$ and $N_{\overline{L}}(u)=N_G(u)\setminus L$.
Based on inequalities \eqref{eq2} and \eqref{eq3},
we obtain
$\sum_{v\in L}d_{N_L(u)}(v)\leq2e(L)\leq 6(|L|+\gamma)$
and $\sum_{v\in L}d_{N_{\overline{L}}(u)}(v)\leq
e(N_{\overline{L}}(u),L)$
$\leq 2(d_G(u)+|L|+\gamma),$
where $|L|+\gamma<0.6\alpha n$ follows from Claim \ref{cl2.1}.
Combining these two inequalities, we can deduce that
\begin{eqnarray}\label{eq6}
\sum_{v\in L}d_{N_G(u)}(v)x_v\leq\!\sum_{v\in L}d_{N_L(u)}(v)\!+\!\sum_{v\in L}d_{N_{\overline{L}}(u)}(v)
<2d_G(u)+4.8\alpha n.
\end{eqnarray}

On the other hand, in view of \eqref{eq2}, we have $e(G)\leq3(n+\gamma)$.
Note that $x_v<\alpha$ for any $v\in\overline{L}$, and by Claim \ref{cl2.1},
we see that $6\alpha \gamma\leq \gamma<10^{-3}\alpha n$.
Thus, we have
\begin{eqnarray}\label{eq7}
\sum_{v\in \overline{L}}\!d_{N_G(u)}(v)x_v
\leq\!\!\sum_{v\in V(G)}\!\!\!\!d_{G}(v)\alpha
\leq 2\alpha\cdot e(G)
\leq 6\alpha(n+\gamma)<6.2\alpha n.
\end{eqnarray}
As we know, $\rho^2x_u=\sum_{v\in V(G)}d_{N_G(u)}(v)x_v,$
and by \eqref{eq4}, we have $\rho^2>2n$.
Thus,
\begin{align}\label{eq8}
2nx_u<\rho^2x_u=\!\sum_{v\in L}d_{N_G(u)}(v)x_v
\!+\!\sum_{v\in \overline{L}}d_{N_G(u)}(v)x_v.
\end{align}
Combining \eqref{eq8} with inequalities \eqref{eq6} and \eqref{eq7}, we obtain
$2nx_u\!<\!2d_G(u)\!+\!11\alpha n$.
Therefore, we have $d_G(u)>(x_u\!-\!5.5\alpha)n$, as claimed.
\end{proof}

\begin{claim}\label{cl2.3}
Let $u^{**}\in V(G)$ such that $x_{u^{**}}=\max_{u\in V(G)\setminus \{u^*\}}x_u$.
Then $x_{u^{**}}>1\!-\!11\alpha$.
\end{claim}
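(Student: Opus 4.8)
The plan is to use the eigen-equation at $u^{**}$ together with the degree lower bound from Claim~\ref{cl2.2} and the sparsity bounds \eqref{eq2}, \eqref{eq3}. First I would record that $u^*, u^{**}\in L$: indeed $x_{u^*}=1\geq\alpha$, and for $u^{**}$ one checks that $x_{u^{**}}\geq\alpha$ since otherwise every vertex other than $u^*$ would lie in $\overline L$, forcing $\rho = \rho x_{u^*}=\sum_{v\sim u^*}x_v<\alpha n$, which contradicts \eqref{eq4}. So $\{u^*,u^{**}\}\subseteq L$ and in particular $|L|\geq 2$.

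Next I would estimate $\rho x_{u^{**}}=\sum_{v\in N_G(u^{**})}x_v$ by splitting the neighborhood into $L$ and $\overline L$. The contribution from $\overline L$ is at most $d_{\overline L}(u^{**})\cdot\alpha\leq \alpha n$, which is small compared to $\rho\approx\sqrt{2n}$; actually I want something slightly sharper, so I would instead bound the $\overline L$-part crudely by $\alpha n$ but then absorb it. For the $L$-part, $\sum_{v\in N_L(u^{**})}x_v\leq |N_L(u^{**})|\leq |L|<0.55\alpha n$ by Claim~\ref{cl2.1}. Hence $\rho x_{u^{**}}< 0.55\alpha n+\alpha n <1.6\alpha n$, giving only $x_{u^{**}}<1.6\alpha n/\rho$, which is $O(\sqrt n\,\alpha)$ and far from the target — so this direct first-moment bound on a single vertex is too weak, and the real argument must go through $\rho^2$.

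The correct route is the second-moment (i.e.\ $\rho^2$) identity, mirroring the proof of Claim~\ref{cl2.2}. Apply $\rho^2 x_{u^{**}}=\sum_{v\in V(G)}d_{N_G(u^{**})}(v)\,x_v$ and split over $L$ and $\overline L$. The $\overline L$-part is at most $\alpha\sum_{v}d_G(v)=2\alpha e(G)\leq 6\alpha(n+\gamma)<6.2\alpha n$ exactly as in \eqref{eq7}. For the $L$-part, write $\sum_{v\in L}d_{N_G(u^{**})}(v)x_v\leq \sum_{v\in L}d_{N_L(u^{**})}(v)+\sum_{v\in L}d_{N_{\overline L}(u^{**})}(v)$; the first sum is $\leq 2e(L)\leq 6(|L|+\gamma)$ and the second is $\leq e(N_{\overline L}(u^{**}),L)\leq 2(d_G(u^{**})+|L|+\gamma)$, again as in \eqref{eq6}, so the $L$-part is $<2d_G(u^{**})+4.8\alpha n$. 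Combining with $\rho^2>2n$ yields $2n\,x_{u^{**}}<2d_G(u^{**})+11\alpha n$, i.e. $d_G(u^{**})>(x_{u^{**}}-5.5\alpha)n$ — that is, Claim~\ref{cl2.2} applies equally to $u^{**}$ (which we now know lies in $L$).

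Finally I would play $u^*$ and $u^{**}$ against each other. From the eigen-equation at $u^*$, $\rho=\rho x_{u^*}=\sum_{v\in N_G(u^*)}x_v\leq d_L(u^*)+d_{\overline L}(u^*)\alpha+\big(\text{correction}\big)$; more usefully, $\rho x_{u^*}\leq x_{u^{**}}\,d_{G}(u^*)\cdot(\text{something})$ — the clean way is: every neighbour $v\neq u^{**}$ of $u^*$ has $x_v\leq x_{u^{**}}$, and the single possible neighbour $u^{**}$ contributes $x_{u^{**}}$ too, so $\rho=\rho x_{u^*}=\sum_{v\in N_G(u^*)}x_v\leq d_G(u^*)\,x_{u^{**}}$ whenever $d_G(u^*)\geq 1$ — wait, this needs $u^{**}$ to be the max over $V\setminus\{u^*\}$, which it is. Hence $x_{u^{**}}\geq \rho/d_G(u^*)$. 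On the other hand the total edge bound $e(G)\leq 3(n+\gamma)$ forces $d_G(u^*)\leq$ roughly $n$; more precisely, using Claim~\ref{cl2.2} for all of $L$ and the sparsity of $G[L]$ one shows the large-degree vertices behave like the two dominating vertices of $K_2\nabla(n-2)K_1$, so $d_G(u^*)<n$, and then $x_{u^{**}}\geq\rho/d_G(u^*)>\rho/n>\sqrt{2n}/n$, still too weak on its own. The gap is closed by combining $x_{u^{**}}\geq\rho/d_G(u^*)$ with $d_G(u^*)\leq n-1$ \emph{and} with the refined upper estimate $\rho\leq\frac12(1+\sqrt{8n-15})+o(1)$ type information already forced on $SPEX(n,\gamma)$ graphs, OR — more cleanly and in the spirit of this section — by noting $\rho=\rho x_{u^*}=\sum_{v\in N(u^*)}x_v\leq x_{u^{**}}(d_G(u^*)-1)+1$ if $u^{**}\sim u^*$ and using $\rho^2\geq d_G(u^*)$ together with Claim~\ref{cl2.2} applied to $u^*$ (giving $d_G(u^*)>(1-5.5\alpha)n$) to get $\rho x_{u^*}\geq\rho(1-5.5\alpha)$-ish lower bounds — hence $x_{u^{**}}(d_G(u^*)-1)\geq \rho-1\geq \rho-1$ and $d_G(u^*)-1<n$, so $x_{u^{**}}>\frac{\rho-1}{n}$, and plugging $\rho>\sqrt{2n+8}$ gives only $x_{u^{**}}>\sqrt{2/n}$. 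Since none of the first-moment estimates alone reach $1-11\alpha$, the main obstacle — and the step I expect to require the most care — is extracting a strong lower bound on $x_{u^{**}}$: the right tool is the $\rho^2$-equation at $u^{**}$ combined with the observation that the two dominating vertices must contribute almost all of $\rho^2 x_{u^{**}}$, so writing $\rho^2 x_{u^{**}}\geq d_{N(u^{**})}(u^*)x_{u^*}+d_{N(u^{**})}(u^{**})x_{u^{**}}=(\text{common neighbours of }u^*,u^{**})+d_G(u^{**})x_{u^{**}}$ and using $d_G(u^{**})>(x_{u^{**}}-5.5\alpha)n$ gives $\rho^2x_{u^{**}}>(x_{u^{**}}-5.5\alpha)n\,x_{u^{**}}+c\cdot n$ for the large common-neighbourhood count $c\geq 1-O(\alpha)$; since $\rho^2<2n+O(1)$ by an upper bound on $\rho$ (which one gets from $e(G)\leq 3(n+\gamma)$ and standard $\rho^2\leq 2e(G)(1-1/n)+\ldots$ estimates, or just $\rho\leq 2+\sqrt{2n+8\gamma-6}$ from Ellingham--Zha in the introduction), we obtain $2n\,x_{u^{**}}+O(1)>x_{u^{**}}^2 n-5.5\alpha n\,x_{u^{**}}+(1-O(\alpha))n$, i.e.\ $x_{u^{**}}^2-(2+5.5\alpha)x_{u^{**}}+(1-O(\alpha))<O(1/n)<\alpha$, and solving this quadratic in $x_{u^{**}}\in(0,1]$ forces $x_{u^{**}}>1-11\alpha$, as claimed. $\hfill\square$
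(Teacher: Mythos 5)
Your final argument --- the only one of the several sketches that you actually push to a conclusion --- has two genuine gaps.

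First, it is circular. You write $\rho^2 x_{u^{**}}\geq |N(u^*)\cap N(u^{**})|\,x_{u^*}+d_G(u^{**})x_{u^{**}}$ and then assert that the common-neighbourhood count is $c\,n$ with $c\geq 1-O(\alpha)$. But the only available lower bound is $|N(u^*)\cap N(u^{**})|\geq d_G(u^*)+d_G(u^{**})-n>(x_{u^{**}}-11\alpha)n$ via Claim \ref{cl2.2}; getting $c\geq 1-O(\alpha)$ requires $d_G(u^{**})\geq(1-O(\alpha))n$, i.e.\ $x_{u^{**}}\geq 1-O(\alpha)$, which is exactly what you are trying to prove. If you substitute the honest bound $c>x_{u^{**}}-11\alpha$, the resulting inequality $x_{u^{**}}^2-(1+5.5\alpha)x_{u^{**}}-11\alpha<O(1/\sqrt{n})$ holds for every $x_{u^{**}}\in(0,1]$ and yields nothing. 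Second, even granting $c\geq 1-O(\alpha)$, the quadratic $x^2-(2+5.5\alpha)x+(1-O(\alpha))<\alpha$ rearranges (with $x=1-t$) to $t^2+5.5\alpha t<O(\alpha)$, which only forces $t=O(\sqrt{\alpha})$. Since $\alpha\leq 1/300$, one has $\sqrt{\alpha}\geq 17\alpha>11\alpha$, so this is strictly weaker than the claimed $x_{u^{**}}>1-11\alpha$, and the later claims (e.g.\ the comparison $x_u<(41+8\gamma)\alpha<1-11\alpha$ in Claim \ref{cl2.5}) need the linear-in-$\alpha$ bound. A side issue: your justification that $u^{**}\in L$ is also wrong, since $\alpha n>\sqrt{2n+8}$ under the standing assumption $n\geq 50/\alpha^2$, so $\rho<\alpha n$ does not contradict \eqref{eq4}.

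The missing idea is to run the second-moment identity at $u^*$ rather than at $u^{**}$, and to exploit that every vertex of $L\setminus\{u^*\}$ has eigenvector entry at most $x_{u^{**}}$. From \eqref{eq6} one has $\sum_{v\in L}d_{N_G(u^*)}(v)\leq 2d_G(u^*)+4.8\alpha n$; peeling off the diagonal term $d_{N_G(u^*)}(u^*)=d_G(u^*)<n$ (which carries weight $x_{u^*}=1$) leaves $\sum_{v\in L\setminus\{u^*\}}d_{N_G(u^*)}(v)\leq(1+4.8\alpha)n$, each such term weighted by at most $x_{u^{**}}$. Together with the $\overline{L}$-bound \eqref{eq7} and $\rho^2>2n$, this gives the \emph{linear} inequality $2n<n+(1+4.8\alpha)n\,x_{u^{**}}+6.2\alpha n$, whence $x_{u^{**}}\geq\frac{1-6.2\alpha}{1+4.8\alpha}>1-11\alpha$. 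No upper bound on $\rho$ and no common-neighbourhood count is needed.
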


\begin{proof}
Recall that $x_{u^*}=1>\alpha$. Thus, $u^*\in L$.
In view of \eqref{eq6},
we obtain $\sum_{v\in L}d_{N_G(u^*)}(v)\leq 2d_G(u^*)+4.8\alpha n$.
Notice that $d_{N_G(u^*)}(u^*)=d_G(u^*)<n$. It follows that
\begin{align*}
\sum\limits_{v\in L\setminus\{u^*\}}\!\!\!\!d_{N_G(u^*)}(v)
=\sum_{v\in L}d_{N_G(u^*)}(v)\!-\!d_{N_G(u^*)}(u^*)\leq (1\!+\!4.8\alpha)n,
\end{align*}
which further implies that
\begin{align}\label{eq8'}
\sum\limits_{v\in L}\!\!d_{N_G(u^*)}\!(v)x_v
\leq d_{N_G(u^*)}\!(u^*)\!+\!\!\!\!\!\sum\limits_{v\in L\setminus\{u^*\}}\!\!\!\!\!d_{N_G(u^*)}\!(v)x_{u^{**}}
\leq n\!+\!(1\!+\!4.8\alpha)nx_{u^{**}}.
\end{align}
From \eqref{eq8}, we know that $2nx_{u^*}<\sum_{v\in L}d_{N_G(u^*)}(v)x_v
+\sum_{v\in \overline{L}}d_{N_G(u^*)}(v)x_v.$
By setting $u=u^*$ in \eqref{eq7} and summing it with \eqref{eq8'}, we obtain:
\begin{align*}
2n=2nx_{u^*}<(1\!+\!6.2\alpha)n\!+\!(1\!+\!4.8\alpha)nx_{u^{**}},
\end{align*}
which yields that
$x_{u^{**}}\geq\frac{1-6.2\alpha}{1+4.8\alpha}>1-11\alpha$.
Therefore, the claim holds.
\end{proof}

By Claim \ref{cl2.3}, we have $x_{u^{**}}\!>\!1\!-\!11\alpha\!>\!\alpha$,
which implies that $u^{**}\!\in L$.
Denote $L^*\!=\!\{u^*\!,u^{**}\}$.
Then, $L^*\!\subseteq L$. Furthermore, by Claims \ref{cl2.2} and \ref{cl2.3}, we deduce that
$d_G(u^*)\!>\!(1\!-\!5.5\alpha)n$ and $d_G(u^{**})\!>\!(1\!-\!16.5\alpha)n$.
We now partition $V(G)$ into $L^*\cup R\cup R_1$,
where $R\!=\!N_G(u^*)\cap N_G(u^{**})$.
Then, $|R|\geq d_G(u^*)\!+\!d_G(u^{**})-n$.
It follows that
\begin{eqnarray}\label{eq9}
|R|\!>\!(1\!-\!22\alpha)n~~~\text{and}~~~|R_1|\!<\!22\alpha n.
\end{eqnarray}

By Lemma \ref{lem2.5}, we see that
$\gamma_1(K_{3,2\gamma+3})\!=\!\lceil\frac{2\gamma+1}{4}\rceil\!\geq\!\frac{\gamma+1}{2}$
and $\gamma_2(K_{3,2\gamma+3})\!=\!\lceil\frac{2\gamma+1}{2}\rceil\!=\!\gamma+1$.
It follows that $$\gamma(K_{3,2\gamma+3})\!=\!\min\big\{2\gamma_1(K_{3,2\gamma+3}),\gamma_2(K_{3,2\gamma+3})\big\}\!=\!\gamma\!+\!1.$$
This implies that $G$ is $K_{3,2\gamma+3}$-free,
since $G$ embeds on a surface with Euler genus $\gamma$.

\begin{claim}\label{cl2.4}
For each $u\in R\cup R_1$, we have $x_u\!<\!(41\!+\!8\gamma)\alpha.$
\end{claim}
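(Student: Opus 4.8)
The plan is to bound $x_u$ for every $u\in R\cup R_1$ by exploiting that $u$ has at most a bounded number of neighbors among the high-weight vertices and that all other neighbors carry weight less than $\alpha$. The crucial structural input is that $G$ is $K_{3,2\gamma+3}$-free, which has just been established. First I would observe that any vertex $u\in R\cup R_1$, together with $u^*$ and $u^{**}$, can have at most $2\gamma+2$ common neighbors with $\{u^*,u^{**}\}$ in the set $R$; otherwise $\{u,u^*,u^{**}\}$ and those common neighbors would form a $K_{3,2\gamma+3}$. Hence $|N_R(u)|\le 2\gamma+2$. Combined with $|R_1|<22\alpha n$ from \eqref{eq9}, the neighbors of $u$ that lie in $R\cup R_1$ number at most $(2\gamma+2)+22\alpha n$, and each contributes weight at most $1$ (in fact only the at most two vertices of $L^*\subseteq$... but $L^*$ is disjoint from $R\cup R_1$, so weights in $R\cup R_1$ are themselves below $\alpha$ — I need to be careful here and split according to $L^*$ versus $R\cup R_1$).

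The cleaner route is the eigenvalue equation $\rho x_u=\sum_{v\in N_G(u)}x_v$. I would split $N_G(u)=\big(N_G(u)\cap L^*\big)\cup\big(N_G(u)\cap(R\cup R_1)\big)$. The first part contributes at most $x_{u^*}+x_{u^{**}}\le 2$. For the second part, every vertex in $R\cup R_1$ has weight $<\alpha$ (since $L^*$ are the only two vertices we have pinned down as large, and more precisely one can argue every vertex outside $L^*$ that is adjacent to $u$ and lies in $R\cup R_1$ has small weight, because if some $w\in R\cup R_1$ had $x_w\ge$ something comparable to $x_{u^{**}}$ it would contradict the maximality defining $u^{**}$ only if $x_w>x_{u^{**}}$; so I actually get $x_w\le x_{u^{**}}$, not $x_w<\alpha$). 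To get the genuinely small bound I should instead use the refined count: $|N_G(u)\cap R|\le 2\gamma+2$ via $K_{3,2\gamma+3}$-freeness applied to $\{u,u^*,u^{**}\}$, and $|N_G(u)\cap R_1|\le |R_1|<22\alpha n$; each such neighbor has weight at most $1$, but more usefully at most $x_{u^{**}}\le1$. This gives
\begin{align*}
\rho x_u\le 2+(2\gamma+2)\cdot 1+22\alpha n\cdot\alpha = 2+2\gamma+2+22\alpha^2 n.
\end{align*}
Since $\rho>\sqrt{2n+8}>\sqrt{2n}$ by \eqref{eq4}, dividing through yields $x_u<\dfrac{2\gamma+4+22\alpha^2 n}{\sqrt{2n}}$, and then invoking $n\ge 50/\alpha^2$ (so $\alpha^2 n\ge 50$ and $\sqrt{2n}\ge 10/\alpha$) one converts this into a bound of the form $(c_1+c_2\gamma)\alpha$; a careful choice of the constants, using that the dominant term is $22\alpha^2 n/\sqrt{2n}=11\alpha\sqrt{2n}\cdot\alpha/\!\sqrt{\ }$... — here one must track that $\sqrt{2n}$ is large so the $22\alpha^2 n$ term divided by $\sqrt{2n}$ is actually $O(\alpha\sqrt{n})$, which is too big, signalling that the naive $|R_1|<22\alpha n$ estimate for neighbors of $u$ weighted by $\alpha$ must be replaced.

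The main obstacle, and the point requiring the most care, is precisely this: the term $\alpha\cdot|N_{R_1}(u)|$ is not automatically small, because $|R_1|$ can be as large as $22\alpha n$ and weights there are only bounded by $\alpha$, giving a contribution up to $22\alpha^2 n$, which after dividing by $\rho\approx\sqrt{2n}$ is of order $\alpha\sqrt{n}$ — not $O(\alpha)$. To fix this I would bound the second moment instead: use $\rho^2 x_u=\sum_{v}d_{N_G(u)}(v)x_v$ together with $\rho^2>2n$, exactly mirroring the argument of Claim \ref{cl2.2} but now for $u\in R\cup R_1$. The key extra ingredient is that $u$ has at most $2\gamma+2$ neighbors in $R$, so the "large" contributions to $\sum d_{N_G(u)}(v)x_v$ come only from $L^*$ (two vertices, each of $G$-degree $<n$) plus a bounded-by-$2\gamma+2$ set of $R$-neighbors (each of degree $<n$) plus $R_1$-vertices (total degree controlled by $e(G)\le 3(n+\gamma)$ and edge counts \eqref{eq2}–\eqref{eq3}), while everything in $\overline{L}$ contributes with weight $<\alpha$. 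Assembling: $2n x_u<\rho^2 x_u\le \big(2+(2\gamma+2)\big)n+\alpha\cdot 2e(G)+(\text{small})$, i.e. $2nx_u<(2\gamma+4)n+6\alpha(n+\gamma)+\text{(small)}$, so $x_u<\frac{2\gamma+4}{2}+\ldots$, which is still $O(\gamma)$, not $O(\gamma)\alpha$. Evidently the correct argument must extract a factor $\alpha$ from the fact that $u\notin L$ on at least one side, or must feed back Claim \ref{cl2.2}'s degree bound to show $N_R(u)$-vertices themselves have controlled weight; I would therefore iterate: first show $\sum_{v\in R\cup R_1}x_v$ is $O(\alpha n)$ by a global counting argument (summing $\rho x_u$ over $u\in R\cup R_1$ and using edge bounds, as in Claim \ref{cl2.1}), then use that every neighbor of $u$ outside $L^*$ lies in $R\cup R_1\cup\overline L$ and that the total weight available there, restricted to $N_G(u)$, is small because $|N_R(u)|\le 2\gamma+2$ while the $R_1\cup\overline L$ part has weight sum bounded by $\alpha\cdot|V(G)|$ times an $O(1)$ average-degree factor. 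The delicate bookkeeping that makes the final constant come out as $(41+8\gamma)\alpha$ is exactly this interplay, and it is where I expect the proof to spend its effort.
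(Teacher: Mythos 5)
You correctly isolate the two ingredients that start the proof: the bound $d_R(u)\le 2+2\gamma$ obtained by applying $K_{3,2\gamma+3}$-freeness to $\{u,u^*,u^{**}\}$ and $N_R(u)$ (this is exactly the paper's inequality \eqref{eq10}), and the fact that the real difficulty is the contribution of the neighbours of $u$ inside $R_1$. But the proposal then stalls precisely at that difficulty and never resolves it. All three routes you sketch fail for the reasons you yourself note, and the final ``iterate'' plan, as described, still does not close the gap: bounding the weight of $N_G(u)\cap(R_1\cup\overline{L})$ by $O(\alpha n)$ and dividing by $\rho\approx\sqrt{2n}$ only once leaves a term of order $\alpha\sqrt{n}$, which is unbounded. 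So this is a genuine missing idea, not deferred bookkeeping.

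The idea you are missing is to pass the offending sum through the eigenvalue equation \emph{twice}, restricted to $R_1$ alone, so that it ends up divided by $\rho^2>2n$ rather than by $\rho$. Concretely: let $u_0$ maximize $x_u$ over $R\cup R_1$; it suffices to bound $x_{u_0}$. Summing $\rho x_u=\sum_{v\in N(u)}x_v$ over $u\in R_1$ and using $d_{L^*}(u)\le1$, $d_R(u)\le2+2\gamma$, $2e(R_1)\le6(|R_1|+\gamma)$ and $|R_1|<22\alpha n$ gives $\sum_{u\in R_1}\rho x_u\le(9+2\gamma)\cdot22.1\,\alpha n$. Since $N_{R_1}(u_0)\subseteq R_1$, the eigenvalue equation for $u_0$ reads $\rho x_{u_0}\le(4+2\gamma)+\rho^{-1}\sum_{u\in R_1}\rho x_u$, and one further division by $\rho$ turns the $R_1$ contribution into $(9+2\gamma)\cdot22.1\,\alpha n/\rho^2=O\big((1+\gamma)\alpha\big)$, because $\rho^2>2n$. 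This first pass yields $x_{u_0}<(100+23\gamma)\alpha<\tfrac13$; feeding that bound back into the sum over $R_1$ (in place of the crude $x_{u_0}\le1$) is the bootstrap that produces the exact constant $(41+8\gamma)\alpha$. Your second-moment attempt mirroring Claim \ref{cl2.2} cannot succeed here: as you computed, it gives $x_u=O(1+\gamma)$ with no factor of $\alpha$, since the term $2d_G(u)/(2n)$ in that argument is of order $1$.
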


\begin{proof}
Let $u$ be an arbitrary vertex in $R\cup R_1$. We first show that
\begin{eqnarray}\label{eq10}
d_{R}(u)\leq 2+2\gamma.
\end{eqnarray}
To prove this, we assume the contrary: $d_{R}(u)\geq 3+2\gamma$.
Under this assumption, the subgraph of $G$ induced by $N_R(u)\cup\{u\}\cup L^*$ contains a copy of $K_{3,3+2\gamma}$,
a contradiction.

Let $u_0\in R\cup R_1$ such that $x_{u_0}=\max_{u\in R\cup R_1}x_u$.
It suffices to show that $x_{u_0}\!<\!(41\!+\!8\gamma)\alpha.$
Note that $d_G(u)\!=\!d_{L^*\cup R\cup R_1}(u)$ and $d_{L^*}(u)\leq1$ for $u\in R_1$.
Based on \eqref{eq10}, we have
\begin{align}\label{eq11}
\sum_{u\in R_1}\!\!\rho x_u\!\leq\!\!\sum_{u\in R_1}\!\!\!\Big(x_{u^*}\!+\!\big(2\!+\!2\gamma\!+\!d_{R_1}(u)\big)x_{u_0}\!\Big)
\!=\!|R_1|\!+\!\Big(\big(2\!+2\!\gamma\big)|R_1|\!+\!2e(R_1)\!\Big)x_{u_0},
\end{align}
where $2e(R_1)\!\leq\!6(|R_1|\!+\!\gamma)$ by inequality \eqref{eq2}.
Recall that $|R_1|\!<\!22\alpha n$ by inequality \eqref{eq9} and $\gamma<10^{-3}\alpha n$ by Claim \ref{cl2.1}.
Since $x_{u_0}\leq1$, inequality \eqref{eq11} can be simplified to
$$\sum_{u\in R_1}\rho x_u\!\leq\!(9\!+\!2\gamma)|R_1|\!+\!6\gamma\!<\!\!(9\!+\!2\gamma)22.1\alpha n.$$

Observe that $N_{R_1}(u_0)\subseteq R_1$ and $L^*=\{u^*,u^{**}\}$.
Combining the above inequality with \eqref{eq10}, we deduce that
\begin{align}\label{eq12}
\rho x_{u_0}
=\!\!\!\!\!\sum_{v\in N_{L^*\cup R}(u_0)}\!\!\!\!\!x_v
+\!\!\!\!\!\sum_{v\in N_{R_1}(u_0)}\!\!\!\!\!x_v
\leq \big(4+2\gamma\big)\!+\!\frac{1}{\rho}\big(9\!+\!2\gamma\big)22.1\alpha n.
\end{align}
Dividing both sides of \eqref{eq12} by $\rho$, and noting that $\rho>\sqrt{2n}>\frac{10}{\alpha}$ based on \eqref{eq4},
we obtain:
\begin{align*}
x_{u_0}\!<\!\big(4\!+\!2\gamma\big)\frac\alpha{10}\!+\!\big(9\!+\!2\gamma\big)\frac{22.1\alpha n}{2n}
\!<\!\big(100\!+\!23\gamma\big)\alpha.
\end{align*}
In view of \eqref{eq1} and the inequality derived above, we observe that $x_{u_0}<\frac13.$
We now refine inequality \eqref{eq11} as follows:
\begin{align*}
\sum_{u\in R_1}\!\!\rho x_u\!\leq\!|R_1|\!+\!\frac13\big((8\!+\!2\gamma)|R_1|\!+\!6\gamma\big)
\!<\!\frac13(11\!+\!2\gamma)22.1\alpha n.
\end{align*}
Using the above inequality, we can further refine inequality \eqref{eq12} as follows:
\begin{align*}
\rho x_{u_0}
\!=\!\!\!\sum_{v\in N_{L^*\cup R}(u_0)}\!\!\!x_v
+\!\!\!\sum_{v\in N_{R_1}(u_0)}\!\!\!x_v
\!\leq\!(4+2\gamma)\!+\!\frac{1}{3\rho}(11\!+\!2\gamma)22.1\alpha n.
\end{align*}
Recall that $\rho>\sqrt{2n}>\frac{10}{\alpha}$.\
This simplifies to $x_{u_0}\!<\!(41\!+\!8\gamma)\alpha,$ as desired.
\end{proof}

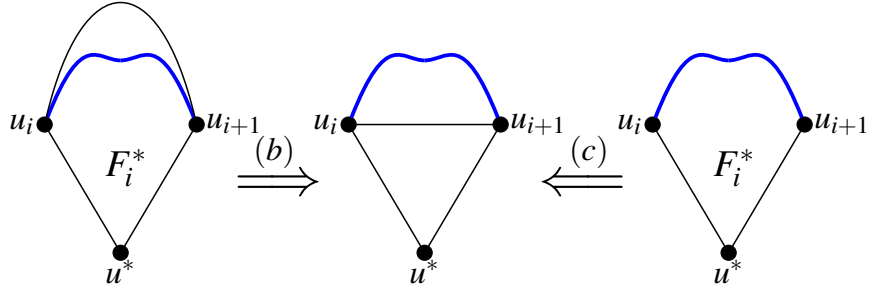
\begin{figure}
\centering
\begin{tikzpicture}[x=1mm, y=0.85mm, inner xsep=0pt, inner ysep=0pt, outer xsep=0pt, outer ysep=0pt]
\definecolor{L}{rgb}{0,0,0}
\definecolor{F}{rgb}{0,0,0}
\node[circle,fill=green,draw=green,inner sep=0pt,minimum size=2mm] (v1) at (0,0) {};
\draw(-3,0) node[anchor=center]{\fontsize{12.38}{8.65}\selectfont $u_{i}$};
\node[circle,fill=green,draw=green,inner sep=0pt,minimum size=2mm] (v2) at (20,0) {};
\draw(25,0) node[anchor=center]{\fontsize{12.38}{8.65}\selectfont $u_{i+1}$};
\node[circle,fill=green,draw=green,inner sep=0pt,minimum size=2mm] (v3) at (10,-20) {};
\draw(10,-23) node[anchor=center]{\fontsize{12.38}{8.65}\selectfont $u^*$};
\path[line width=0.4mm, draw=green] (v1) -- (v3);
\path[line width=0.4mm, draw=green] (v2) -- (v3);
\definecolor{L}{rgb}{0,0,1}
\draw[line width=0.5mm, L]
        (v1) .. controls(5,15) and (7,10) .. (10,10);
\draw[line width=0.5mm, L]
        (10,10) .. controls(13,10) and (15,15) .. (v2);
\definecolor{L}{rgb}{0,0,0}
\draw[line width=0.4mm, green]
        (v1) .. controls(5,25) and (15,25) .. (v2);
\draw(8,-8) node[anchor=base west]{\fontsize{14.23}{17.07}\selectfont $F^*_{i}$};
\draw(27.5,-5.5) node[anchor=base west]{\fontsize{12.23}{17.07}\selectfont $(b)$};
\draw(25,-11) node[anchor=base west]{\fontsize{20.23}{17.07}\selectfont $\Longrightarrow$};
\node[circle,fill=green,draw=green,inner sep=0pt,minimum size=2mm] (u1) at (40,0) {};
\draw(37,0) node[anchor=center]{\fontsize{12.38}{8.65}\selectfont $u_{i}$};
\node[circle,fill=green,draw=green,inner sep=0pt,minimum size=2mm] (u2) at (60,0) {};
\draw(65,0) node[anchor=center]{\fontsize{12.38}{8.65}\selectfont $u_{i+1}$};
\node[circle,fill=green,draw=green,inner sep=0pt,minimum size=2mm] (u3) at (50,-20) {};
\draw(50,-23) node[anchor=center]{\fontsize{12.38}{8.65}\selectfont $u^*$};
\path[line width=0.4mm, draw=green] (u1) -- (u2);
\path[line width=0.4mm, draw=green] (u1) -- (u3);
\path[line width=0.4mm, draw=green] (u2) -- (u3);

\definecolor{L}{rgb}{0,0,1}
\draw[line width=0.5mm, L]
        (u1) .. controls(45,15) and (47,10) .. (50,10);
\draw[line width=0.5mm, L]
        (50,10) .. controls(53,10) and (55,15) .. (u2);

\definecolor{L}{rgb}{0,0,0}
\draw(69,-5.5) node[anchor=base west]{\fontsize{12.23}{17.07}\selectfont$(c)$};
\draw(65,-11) node[anchor=base west]{\fontsize{20.23}{17.07}\selectfont $\Longleftarrow$};
\node[circle,fill=green,draw=green,inner sep=0pt,minimum size=2mm] (w1) at (80,0) {};
\draw(77,0) node[anchor=center]{\fontsize{12.38}{8.65}\selectfont $u_{i}$};
\node[circle,fill=green,draw=green,inner sep=0pt,minimum size=2mm] (w2) at (100,0) {};
\draw(105,0) node[anchor=center]{\fontsize{12.38}{8.65}\selectfont $u_{i+1}$};
\node[circle,fill=green,draw=green,inner sep=0pt,minimum size=2mm] (w3) at (90,-20) {};
\draw(90,-23) node[anchor=center]{\fontsize{12.38}{8.65}\selectfont $u^*$};
\path[line width=0.4mm, draw=green] (w1) -- (w3);
\path[line width=0.4mm, draw=green] (w2) -- (w3);

\definecolor{L}{rgb}{0,0,1}
\draw[line width=0.5mm, L]
        (w1) .. controls(85,15) and (87,10) .. (90,10);
\draw[line width=0.5mm, L]
        (90,10) .. controls(93,10) and (95,15) .. (w2);

\definecolor{L}{rgb}{0,0,0}
\draw(88,-8) node[anchor=base west]{\fontsize{14.23}{17.07}\selectfont $F^*_{i}$};
\end{tikzpicture}
\caption{The operations ($b$) and ($c$). }{\label{fig.035}}
\end{figure}

Next, we will proceed to prove a {\bf key} structural claim.
In order to ensure the clarity and readability of the proof,
we have to transform the original graph $G$ into a series of
embedded graphs: $\widetilde{G}$, $\widetilde{G'}$, and $\widetilde{G''}$.
Each of these transformations will gradually introduce more structural information, which is essential for the subsequent proof.

Assume that the graph $G$ has Euler genus $\gamma(G)=\gamma_0.$
Since $G\in SPEX(n,\gamma)$,
it is clear that $\gamma_0\leq \gamma$.
Let $\widetilde{G}$ be a minimal embedding of $G$ on a surface $\Sigma_0$ with Euler genus $\gamma_0$.
Note that $G$ is connected. Thus, $\widetilde{G}$ is a cellular embedding.

Let $N_{\widetilde{G}}(u^*)=\{u_1,\dots,u_{d^*}\}$, where $u_1,\dots,u_{d^*}$
surround $u^*$ in clockwise order.
For each $i\in \{1,\dots,d^*\}$, let $F^*_i$ denote the face that
extends clockwise from the edge $u^*u_i$ to the edge $u^*u_{i+1}$ in $\widetilde{G}$, where $u_{d^*+1}=u_1$.
Then, we can construct a new embedded graph $\widetilde{G'}$ from $\widetilde{G}$ as follows (see Figure \ref{fig.035}):
\vspace{1.5mm}

($a$) if $u_iu_{i+1}\in E(F^*_i)$, we do nothing;

($b$) if $u_iu_{i+1}\in E(\widetilde{G})\setminus E(F^*_i)$,
we move the edge $u_iu_{i+1}$ to cross the face $F^*_i$;

($c$) if $u_iu_{i+1}\notin E(\widetilde{G})$, we add the edge $u_iu_{i+1}$ and make it cross the face $F^*_i$.

\begin{claim}\label{cl2.5}
The following assertions regarding $\widetilde{G'}$ are true.\\
(i) $\widetilde{G'}$ is cellularly embedded on the surface $\Sigma_0$.\\
(ii) $\widetilde{G'}\cong \widetilde{G}$, that is, $u_iu_{i+1}\in E(\widetilde{G})$ for each $i\in \{1,\dots,d^*\}$.\\
(iii) $u^*u^{**}\in E(\widetilde{G})$.
\end{claim}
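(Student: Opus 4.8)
The plan is to settle the three parts in the stated order; parts (i) and (ii) are short, whereas part (iii) is the genuinely topological step. For (i), I would observe that the moves (a)--(c) are all supported in a small neighbourhood of $u^*$. In case (c) one cuts the disc $F^*_i$ along a chord joining $u_i$ to $u_{i+1}$ drawn close to $u^*$, splitting $F^*_i$ into the triangular disc bounded by $u^*u_iu_{i+1}$ and a smaller disc. In case (b) one first inserts such a chord (momentarily obtaining a cellular embedding of a multigraph) and then deletes the original copy of $u_iu_{i+1}$; since $u^*u_iu_{i+1}$ is a triangle of $G$, that original edge is not a bridge, hence it borders two distinct faces, and deleting it merges two discs into one disc. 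Since the corners at $u^*$ occupy pairwise disjoint sectors of a small neighbourhood of $u^*$ and the edge removed in each case (b) is never one of the newly created chords, performing all the surgeries keeps every face a disc and does not alter the underlying surface; hence $\widetilde{G'}$ is a (simple) cellular embedding on $\Sigma_0$, of Euler genus $\gamma_0\le\gamma$.

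For (ii): by (i) the embedding $\widetilde{G'}$ realizes the abstract graph $G':=G\cup\{u_iu_{i+1}:1\le i\le d^*\}$ on a surface of Euler genus $\gamma_0\le\gamma$, so $G'\in\mathbb{G}(n,\gamma)$, and $G'$ is connected because $G$ is. If some move (c) had actually occurred, then $G'\supsetneq G$, whence $\rho(G')>\rho(G)$ by the Perron-Frobenius theorem, contradicting $G\in SPEX(n,\gamma)$. Therefore no move (c) occurs, i.e.\ $u_iu_{i+1}\in E(\widetilde{G})$ for every $i$, so $\widetilde{G'}\cong\widetilde{G}$; as a by-product, $N_G(u^*)=\{u_1,\dots,u_{d^*}\}$ spans the cycle $C=u_1u_2\cdots u_{d^*}u_1$ in $G$.

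For (iii), I would argue by contradiction, assuming $u^*u^{**}\notin E(\widetilde{G})$, so that $u^{**}$ is a non-neighbour of $u^*$. On the other hand, the set $R=N_G(u^*)\cap N_G(u^{**})$ satisfies $R\subseteq N_G(u^*)=V(C)$ and $|R|>(1-22\alpha)n$ by \eqref{eq9}, so $u^{**}$ is joined to more than $(1-22\alpha)n$ of the vertices of the rim cycle $C$. I would then exploit the near-wheel $\{u^*\}\cup C$ in one of two ways. First, one may attempt to rearrange the embedding in a genus-neutral fashion so that $u^*$ and $u^{**}$ lie on a common face of an embedding of $G$ of Euler genus $\gamma_0$; inserting the edge $u^*u^{**}$ then produces a graph in $\mathbb{G}(n,\gamma)$ whose spectral radius exceeds $\rho(G)$ --- the Rayleigh-quotient increment being $2x_{u^*}x_{u^{**}}=2x_{u^{**}}>0$ by Claim~\ref{cl2.3} --- again contradicting $G\in SPEX(n,\gamma)$. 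If such a rearrangement is obstructed, the alternative is to prove that the forced configuration --- a long arc of $C$ made up of vertices of $R$, each of them also adjacent to the ``outside'' vertex $u^{**}$, together with the spokes at $u^*$ --- cannot be accommodated on a surface of Euler genus $\gamma$: after deleting $u^*$ one compares the resulting edge count with the bound of Lemma~\ref{lem2.4}, invoking the $K_{3,2\gamma+3}$-freeness of $G$ established above from Lemma~\ref{lem2.5}.

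I expect step (iii) to be the main obstacle, and the difficulty is purely topological: the rim cycle $C$ tends to separate $u^*$ from the rest of $\Sigma_0$, so one must pin down the global position of $u^{**}$ relative to the near-wheel around $u^*$ before the edge $u^*u^{**}$ can be inserted, or before the genus contradiction can be extracted. The spectral estimates on their own are powerless here, since $d_G(u^*)$ and $d_G(u^{**})$ each lie within $O(\alpha n)$ of $n$, so that all the relevant vertex-set sizes agree up to an error of size $O(\alpha n)$; this is exactly why the argument is organized around the successive refinements $\widetilde{G}$, $\widetilde{G'}$ and $\widetilde{G''}$, each injecting more embedding information before the decisive step is taken.
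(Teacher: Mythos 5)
Your treatment of parts (i) and (ii) is essentially the paper's: cellularity follows because the surgeries happen inside the faces around $u^*$ (the paper argues even more quickly, noting that a non-cellular $\widetilde{G'}$ would force $\gamma(\widetilde{G'})<\gamma_0$ while $\widetilde{G}\subseteq\widetilde{G'}$ forces $\gamma(\widetilde{G'})\geq\gamma_0$), and then any genuinely added edge $u_iu_{i+1}$ would raise the spectral radius, contradicting $G\in SPEX(n,\gamma)$. Those two parts are fine.

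Part (iii), however, is a genuine gap: you name two candidate strategies and execute neither, and your first strategy is wrong as stated. You cannot in general ``rearrange the embedding in a genus-neutral fashion so that $u^*$ and $u^{**}$ lie on a common face'' and then insert $u^*u^{**}$ for free; the rim cycle $u_1\cdots u_{d^*}$ around $u^*$ is exactly the obstruction, and the paper's resolution is to \emph{pay for} the new edge by deleting a rim edge. Concretely, the paper triangulates $\widetilde{G'}$ into $\widetilde{G''}$ and runs a face count: with $\mathcal{F}^*$ the $d^*$ faces at $u^*$, $\mathcal{F}^{**}$ the $\geq|R|$ faces at $u^{**}$, and $\mathcal{F}^{***}$ the $\geq d^*/2$ faces meeting a rim edge but not $u^*$, the Euler-formula bound $|\mathcal{F}|\leq 2(n-2+\gamma_0)$ forces $\mathcal{F}^{**}\cap\mathcal{F}^{***}\neq\varnothing$, i.e.\ some 3-face is incident to both $u^{**}$ and a rim edge $u_{i_0}u_{i_0+1}$. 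Only then can one form $\widetilde{G'''}=\widetilde{G'}-u_{i_0}u_{i_0+1}+u^*u^{**}$ on the same surface $\Sigma_0$, and the Rayleigh increment is $2(x_{u^*}x_{u^{**}}-x_{u_{i_0}}x_{u_{i_0+1}})$, not $2x_{u^{**}}$; its positivity is not automatic but requires the quantitative bounds $x_{u^{**}}>1-11\alpha$ (Claim \ref{cl2.3}) and $x_u<(41+8\gamma)\alpha$ for $u\in R\cup R_1$ (Claim \ref{cl2.4}). Both the face-counting step that locates $F''$ and the delete-an-edge trade-off with its eigenvector estimates are absent from your proposal, so part (iii) is not proved.
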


\begin{proof}
($i$) From the definition of $\widetilde{G'}$,
we observe that $\widetilde{G}\subseteq \widetilde{G'}$, and $\widetilde{G'}$ is also embedded on the surface $\Sigma_0$.
Suppose that $\widetilde{G'}$ is not cellularly embedded on $\Sigma_0$.
Then, $\gamma(\widetilde{G'})<\gamma_0$.
On the other hand, since $\widetilde{G}\subseteq \widetilde{G'}$,
it follows that $\gamma(\widetilde{G'})\geq\gamma(\widetilde{G})=\gamma_0$,
which leads to a contradiction.
Therefore, $\widetilde{G'}$ must be cellularly embedded on the surface $\Sigma_0$.

($ii$) Based on the statement ($i$), we have $\widetilde{G'}\in\mathbb{G}(n,\gamma_0)$,
and thus $\widetilde{G'}\in\mathbb{G}(n,\gamma)$.
Since $\widetilde{G}\in SPEX(n,\gamma)$, it follows that $\rho(\widetilde{G'})\leq\rho(\widetilde{G})$.
If there exists $i_0\in \{1,\dots,d^*\}$ such that $u_{i_0}u_{i_0+1}\notin E(\widetilde{G})$,
then $\widetilde{G}$ is a proper subgraph of $\widetilde{G'}$, implying $\rho(\widetilde{G'})>\rho(\widetilde{G})$,
which leads to a contradiction.
Therefore, $u_iu_{i+1}\in E(\widetilde{G})$ for $i\in \{1,\dots,d^*\}$,
i.e., $\widetilde{G'}\cong \widetilde{G}$.
Furthermore, by the definition of $\widetilde{G'}$,
every triangle $u^*u_iu_{i+1}$ is a 3-face in $\widetilde{G'}$ for $i\in \{1,\dots,d^*\}$.

($iii$) Suppose, to the contrary that, $u^*u^{**}\notin E(\widetilde{G'})$.
Notice that in the graph $\widetilde{G'}$,
there are exactly $d^*$ faces $F^*_{11},F^*_{21}\dots,F^*_{d^*1}$ incident to $u^*$,
where $F^*_{i1}$ denotes the 3-face $u^*u_iu_{i+1}$.
We now iteratively add edges (possibly multiple edges) to $\widetilde{G'}$ until
all faces become 3-faces, resulting in a new embedded graph, denoted by $\widetilde{G''}$.
Clearly, $N_{\widetilde{G''}}(u^{*})=\{u_1,\dots,u_{d^{*}}\}$,
and $F^*_{i1}$ is also a 3-face in $\widetilde{G''}$ for each $i\in \{1,\ldots,d^*\}$.

In the embedded graph $\widetilde{G''}$,
let $\mathcal{F}$ denote the set of all faces,
$\mathcal{F}^*$ denote the set of faces incident to $u^{*}$
(clearly, $\mathcal{F}^*=\{F^*_{i1}~|~1\leq i \leq d^*\}$),
and $\mathcal{F}^{**}$ denote the set of faces incident to $u^{**}$.
Additionally, let $\mathcal{F}'$ be the set of faces incident to at least one edge in
$\{u_iu_{i+1}~|~1\leq i \leq d^*\}$.
Clearly, $\mathcal{F}^*\subseteq\mathcal{F}'$ and $\mathcal{F}^{*}\cap \mathcal{F}^{**}=\varnothing$.
Moreover, since every face of $\widetilde{G''}$ is a 3-face, it is easy to observe that
$$|\mathcal{F}^{**}|=d_{\widetilde{G''}}(u^{**})
\geq d_{\widetilde{G}}(u^{**})\geq |R|.$$
Set $\mathcal{F}^{***}=\mathcal{F}'\setminus \mathcal{F}^{*}$.
Since each face $F$ in $\mathcal{F}^{***}$ is a 3-face,
it follows that $F$ is incident to at most two edges in $\{u_iu_{i+1}~|~1\leq i \leq d^*\}$,
which implies that $|\mathcal{F}^{***}|\geq \frac{1}{2}d^*$.

Now, we demonstrate that $\mathcal{F}^{**}\cap \mathcal{F}^{***}\neq \varnothing$.
Assume, for the sake of contradiction, that $\mathcal{F}^{**}\cap \mathcal{F}^{***}=\varnothing$.
Then, we have
$$|\mathcal{F}|
\geq|\mathcal{F}^*|+ |\mathcal{F}^{**}|+ |\mathcal{F}^{***}|
\geq d^*+|R|+\frac{1}{2}d^*.$$
Note that $d^*=d_{\widetilde{G}}(u^{*})\geq |R|$, where $|R|>(1-22\alpha)n$ by inequality \eqref{eq9}.
Thus,
$$|\mathcal{F}|\geq \frac{5}{2}|R|>\frac{5}{2}\big(1-22\alpha\big)n>2(n-2+\gamma_0).$$
On the other hand,
since each face of $\widetilde{G''}$ is a 3-face,
by Lemma \ref{lem2.3}, we know that $e(\widetilde{G''})\leq 3(n-2+\gamma_0)$.
By the Handshake Theorem, we also have $3|\mathcal{F}|=2e(\widetilde{G''})$.
Combining this, we get that $|\mathcal{F}|\leq 2(n-2+\gamma_0)$, which leads to a contradiction
since $|\mathcal{F}|>2(n-2+\gamma_0)$.
Therefore, we conclude that
$\mathcal{F}^{**}\cap \mathcal{F}^{***}\neq \varnothing$.

Now, choose an $F''\in \mathcal{F}^{**}\cap \mathcal{F}^{***}$.
Then, there exists some $i_0\in \{1,\dots,d^*\}$ such that
the 3-face $F''$ is incident to both $u^{**}$ and the edge $u_{i_0}u_{i_0+1}$ in $\widetilde{G''}$.
By the construction of $\widetilde{G''}$,
we can see that $F''$ is a subregion of some face $F'$ of $\widetilde{G'}$
(possibly $F''=F'$).

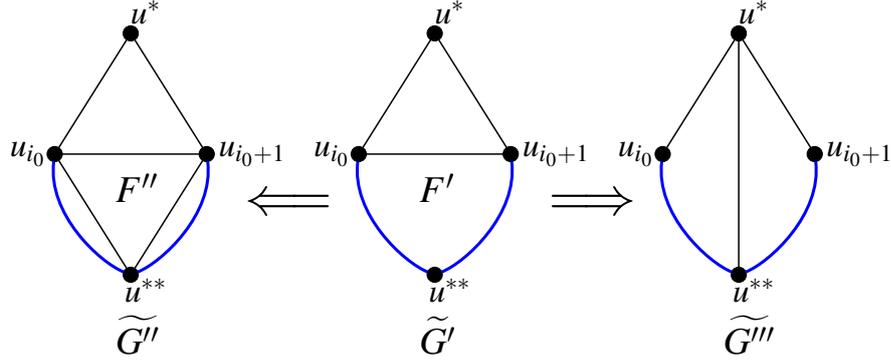
\begin{figure}
\centering
\begin{tikzpicture}[x=1mm, y=0.8mm, inner xsep=0pt, inner ysep=0pt, outer xsep=0pt, outer ysep=0pt]
\definecolor{L}{rgb}{0,0,0}
\definecolor{F}{rgb}{0,0,0}

\node[circle,fill=green,draw=green,inner sep=0pt,minimum size=2mm] (v1) at (0,0) {};
\draw(-3.5,0) node[anchor=center]{\fontsize{12.38}{8.65}\selectfont $u_{i_0}$};
\node[circle,fill=green,draw=green,inner sep=0pt,minimum size=2mm] (v2) at (20,0) {};
\draw(26,0) node[anchor=center]{\fontsize{12.38}{8.65}\selectfont $u_{i_0+1}$};
\node[circle,fill=green,draw=green,inner sep=0pt,minimum size=2mm] (v3) at (10,-20) {};
\draw(12,-22.5) node[anchor=center]{\fontsize{12.38}{8.65}\selectfont $u^{**}$};
\node[circle,fill=green,draw=green,inner sep=0pt,minimum size=2mm] (v4) at (10,20) {};
\draw(12,23.5) node[anchor=center]{\fontsize{12.38}{8.65}\selectfont $u^{*}$};

\path[line width=0.4mm, draw=green] (v1) -- (v2);
\path[line width=0.4mm, draw=green] (v1) -- (v3);
\path[line width=0.4mm, draw=green] (v2) -- (v3);
\path[line width=0.4mm, draw=green] (v1) -- (v4);
\path[line width=0.4mm, draw=green] (v2) -- (v4);

\definecolor{L}{rgb}{0,0,1}
\draw[line width=0.4mm, L]
        (v1) .. controls(-1,-10) and (6,-18) .. (v3);
\draw[line width=0.4mm, L]
        (v2) .. controls(21,-10) and (14,-18) .. (v3);

\definecolor{L}{rgb}{0,0,0}
\draw(8,-8) node[anchor=base west]{\fontsize{14.23}{17.07}\selectfont $F''$};
\draw(25,-10) node[anchor=base west]{\fontsize{20.23}{17.07}\selectfont $\Longleftarrow$};
\draw(8,-33) node[anchor=base west]{\fontsize{14.23}{17.07}\selectfont $\widetilde{G''}$};

\node[circle,fill=green,draw=green,inner sep=0pt,minimum size=2mm] (v1) at (40,0) {};
\draw(36.5,0) node[anchor=center]{\fontsize{12.38}{8.65}\selectfont $u_{i_0}$};
\node[circle,fill=green,draw=green,inner sep=0pt,minimum size=2mm] (v2) at (60,0) {};
\draw(66,0) node[anchor=center]{\fontsize{12.38}{8.65}\selectfont $u_{i_0+1}$};
\node[circle,fill=green,draw=green,inner sep=0pt,minimum size=2mm] (v3) at (50,-20) {};
\draw(52,-22.5) node[anchor=center]{\fontsize{12.38}{8.65}\selectfont $u^{**}$};
\node[circle,fill=green,draw=green,inner sep=0pt,minimum size=2mm] (v4) at (50,20) {};
\draw(52,23.5) node[anchor=center]{\fontsize{12.38}{8.65}\selectfont $u^{*}$};
\path[line width=0.4mm, draw=green] (v1) -- (v2);
\path[line width=0.4mm, draw=green] (v1) -- (v4);
\path[line width=0.4mm, draw=green] (v2) -- (v4);

\definecolor{L}{rgb}{0,0,1}
\draw[line width=0.4mm, L]
        (v1) .. controls(39,-10) and (46,-18) .. (v3);
\draw[line width=0.4mm, L]
        (v2) .. controls(61,-10) and (54,-18) .. (v3);

\definecolor{L}{rgb}{0,0,0}
\draw(48,-8) node[anchor=base west]{\fontsize{14.23}{17.07}\selectfont $F'$};
\draw(65,-10) node[anchor=base west]{\fontsize{20.23}{17.07}\selectfont $\Longrightarrow$};
\draw(48,-33) node[anchor=base west]{\fontsize{14.23}{17.07}\selectfont $\widetilde{G'}$};
\node[circle,fill=green,draw=green,inner sep=0pt,minimum size=2mm] (v1) at (80,0) {};
\draw(76.5,0) node[anchor=center]{\fontsize{12.38}{8.65}\selectfont $u_{i_0}$};
\node[circle,fill=green,draw=green,inner sep=0pt,minimum size=2mm] (v2) at (100,0) {};
\draw(106,0) node[anchor=center]{\fontsize{12.38}{8.65}\selectfont $u_{i_0+1}$};
\node[circle,fill=green,draw=green,inner sep=0pt,minimum size=2mm] (v3) at (90,-20) {};
\draw(92,-22.5) node[anchor=center]{\fontsize{12.38}{8.65}\selectfont $u^{**}$};
\node[circle,fill=green,draw=green,inner sep=0pt,minimum size=2mm] (v4) at (90,20) {};
\draw(92,23.5) node[anchor=center]{\fontsize{12.38}{8.65}\selectfont $u^{*}$};
\path[line width=0.4mm, draw=green] (v1) -- (v4);
\path[line width=0.4mm, draw=green] (v2) -- (v4);
\path[line width=0.4mm, draw=green] (v3) -- (v4);

\definecolor{L}{rgb}{0,0,1}
\draw[line width=0.4mm, L]
        (v1) .. controls(79,-10) and (86,-18) .. (v3);
\draw[line width=0.4mm, L]
        (v2) .. controls(101,-10) and (94,-18) .. (v3);

\definecolor{L}{rgb}{0,0,0}
\draw(88,-33) node[anchor=base west]{\fontsize{14.23}{17.07}\selectfont $\widetilde{G'''}$};
\end{tikzpicture}%
\caption{An illustration of the local structures of $\widetilde{G''}$, $\widetilde{G'}$ and $\widetilde{G'''}$. }{\label{fig.036}}
\end{figure}

On the surface $\Sigma_0$,
let $\widetilde{G'''}$ be the embedded graph obtained from $\widetilde{G'}$ by deleting the edge $u_{i_0}u_{i_0+1}$ and
adding the edge $u^*u^{**}$ (see Figure \ref{fig.036}).
By Claim \ref{cl2.4} and equality \eqref{eq1}, we have $x_u<(41+8\gamma)\alpha<1-11\alpha$ for each $u\in R\cup R_1$.
However, $x_{u^*}=1$, and by Claim \ref{cl2.3}, $x_{u^{**}}>1-11\alpha$.
Consequently,
$$\rho(\widetilde{G'''})\!-\!\rho(\widetilde{G'})
\geq \frac{\mathbf{x}^{\mathrm{T}}(A(\widetilde{G'''})\!-\!A(\widetilde{G'}))\mathbf{x}}{\mathbf{x}^{\mathrm{T}}\mathbf{x}}
=\frac{2}{\mathbf{x}^{\mathrm{T}}\mathbf{x}}\big(x_{u^*}x_{u^{**}}\!-\!x_{u_{i_0}}x_{u_{i_0+1}}\big)
\!>\!0,$$
which contradicts the fact that $\widetilde{G'}\in SPEX(n,\gamma)$.
Therefore, $u^*u^{**}\in E(\widetilde{G'})$.
\end{proof}

\begin{claim}\label{cl2.6}
$R_1$ is empty.
\end{claim}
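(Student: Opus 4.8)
The plan is a single-vertex switching argument. Suppose, for contradiction, that $R_1\neq\varnothing$. The first thing I would do is single out a vertex $w\in R_1$ whose degree inside $R_1$ is small. Since $G[R_1]\in\mathbb{G}(|R_1|,\gamma)$, Lemma~\ref{lem2.4} yields $e(R_1)\le 3(|R_1|-2+\gamma)$ when $|R_1|\ge 3$ and $e(R_1)\le 1$ when $|R_1|\le 2$; combining this with the handshake identity $\sum_{v\in R_1}d_{R_1}(v)=2e(R_1)$ and the trivial bound $d_{R_1}(v)\le|R_1|-1$, and separating the cases $|R_1|\le\gamma+6$ and $|R_1|\ge\gamma+7$, I obtain some $w\in R_1$ with $d_{R_1}(w)\le\gamma+5$.

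Next I would estimate the weight $\rho x_w=\sum_{v\in N_G(w)}x_v$. Because $w\notin R$, the vertex $w$ is adjacent to at most one of $u^*,u^{**}$, contributing at most $x_{u^*}=1$; by \eqref{eq10} we have $d_R(w)\le 2+2\gamma$; and $d_{R_1}(w)\le\gamma+5$ by the choice of $w$. Since every vertex of $R\cup R_1$ has weight less than $(41+8\gamma)\alpha$ by Claim~\ref{cl2.4}, this gives
$$\rho x_w<1+(3\gamma+7)(41+8\gamma)\alpha=2-(\gamma+13)\alpha,$$
the last equality being the identity $(3\gamma+7)(41+8\gamma)+(\gamma+13)=1/\alpha$ from \eqref{eq1}.

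Finally comes the switching. By Claim~\ref{cl2.5}(iii) the edge $u^*u^{**}$ lies in $\widetilde G$, hence it survives in $G-w$, so some face of the embedded graph $G-w$ on $\Sigma_0$ is incident to $u^*u^{**}$; inserting a new vertex into that face joined exactly to $u^*$ and $u^{**}$ produces a simple graph $G'$ of order $n$ that is still embedded on $\Sigma_0$, whence $G'\in\mathbb{G}(n,\gamma)$. Taking $\mathbf{x}$ as a test vector and using $\mathbf{x}^{\mathrm{T}}A(G)\mathbf{x}=\rho\,\mathbf{x}^{\mathrm{T}}\mathbf{x}$,
$$\rho(G')\ge\rho+\frac{2x_w}{\mathbf{x}^{\mathrm{T}}\mathbf{x}}\bigl((x_{u^*}+x_{u^{**}})-\rho x_w\bigr)>\rho,$$
since Claim~\ref{cl2.3} gives $x_{u^*}+x_{u^{**}}>2-11\alpha>2-(\gamma+13)\alpha>\rho x_w$. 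This contradicts $G\in SPEX(n,\gamma)$, so $R_1=\varnothing$. I expect the genuinely delicate step to be the first one: one must verify that a nonempty $R_1$ always contains a vertex with $d_{R_1}\le\gamma+5$, exactly the threshold that makes the weight estimate in the second step fall strictly below $x_{u^*}+x_{u^{**}}\approx 2$; the remaining steps are a routine face-subdivision construction and a Rayleigh-quotient comparison.
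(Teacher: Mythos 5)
Your proposal is correct and follows essentially the same route as the paper's proof: both select a vertex of $R_1$ with $d_{R_1}\le \gamma+5$ via Lemma~\ref{lem2.4} and an average-degree argument, bound $\rho x_w=\sum_{v\in N_G(w)}x_v\le 1+(3\gamma+7)(41+8\gamma)\alpha$ using \eqref{eq10} and Claim~\ref{cl2.4}, reattach the vertex to $u^*$ and $u^{**}$ through the face incident to $u^*u^{**}$ while remaining on $\Sigma_0$, and conclude by a Rayleigh-quotient comparison with Claim~\ref{cl2.3}. The step you flag as delicate is handled in the paper exactly as you propose (trivially for $|R_1|\le 6$, and by the average degree $2e(R_1)/|R_1|<6+\gamma$ otherwise), and your exact identity $(3\gamma+7)(41+8\gamma)+(\gamma+13)=1/\alpha$ matches the paper's computation $(298+179\gamma+24\gamma^2)\alpha<1$.
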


\begin{proof}
Suppose, for the sake of contradiction, that $|R_1|\geq1$.
In view of \eqref{eq2}, we know that $e(\widetilde{G}[R_1])\leq 3(|R_1|+\gamma)$.
We now state that  $\delta(\widetilde{G}[R_1])\leq5+\gamma$.
If $|R_1|\leq6$, the statement holds trivially. Assume that $|R_1|\geq7$.
Then, $\delta(\widetilde{G}[R_1])\leq2e(\widetilde{G}[R_1])/|R_1|<6+\gamma$,
which implies that $\delta(\widetilde{G}[R_1])\leq5+\gamma$.

Now, we can choose a vertex $u_1\in R_1$ with $d_{R_1}(u_1)\leq5+\gamma$.
Combining (\ref{eq10}), we have $d_{R\cup R_1}(u_1)\leq7+3\gamma$.
Noting that $d_{\{u^*,u^{**}\}}(u_1)\leq1$,
and by Claim \ref{cl2.4}, we obtain that
\begin{eqnarray}\label{eq14}
\sum_{w\in N_{\widetilde{G}}(u_1)}\!\!\!\!\!x_w\leq x_{u^*}
+\!\!\!\!\!\sum_{w\in N_{R\cup R_1}(u_1)}\!\!\!\!\!x_w
\leq 1+\big(7+3\gamma\big)\big(41+8\gamma\big)\alpha.
\end{eqnarray}

Let $\widetilde{G''''}$ be the graph obtained from $\widetilde{G}$ by deleting all edges adjacent to $u_1$,
adding the edges $u_1u^*$ and $u_1u^{**}$,
and making $u_1 u^*,u_1u^{**}$ cross the face incident to the edge $u^*u^{**}$ in $\widetilde{G}$.
Obviously, $\widetilde{G''''}$ can still be embedded on the surface $\Sigma_0$ with Euler genus $\gamma_0\leq \gamma$.
Hence, we also have $\widetilde{G''''}\in\mathbb{G}(n,\gamma)$.
In view of \eqref{eq1}, we observe that $$11\alpha+(7+3\gamma)(41+8\gamma)\alpha=(298+179\gamma+24\gamma^2)\alpha<1.$$
Combining the inequality above with Claim \ref{cl2.3}, we deduce that
$$x_{u^*}+x_{u^{**}}>2-11\alpha>1+(7+3\gamma)(41+8\gamma)\alpha.$$
It follows from \eqref{eq14} that $x_{u^*}+x_{u^{**}}>\sum_{w\in N_{\widetilde{G}}(u_1)}x_w$, and thus
$$\rho(\widetilde{G''''})\!-\!\rho(\widetilde{G})\geq
\frac{2x_{u_1}}{\mathbf{x}^{\mathrm{T}}\mathbf{x}}\Big((x_{u^*}\!+\!x_{u^{**}})
\!-\!\!\!\!\sum_{w\in N_{\widetilde{G}}(u_1)}x_w\Big)\!>\!0,
$$
which contradicts the assumption that $\widetilde{G}\in SPEX(n,\gamma)$.
Hence, $R_1$ is an empty set.
\end{proof}

In the following, we will complete the proof of Theorem \ref{thm1.2}.
To enhance clarity and readability, the proof is divided into two separate theorems.

\begin{thm}\label{thm2.1}
Let $n\geq\frac{50}{\alpha^2}$.
Then $SPEX(n,\gamma)\subseteq EX(n,\gamma)$, and every graph $G$ in $SPEX(n,\gamma)$
is obtained from $K_2\nabla P_{n-2}$ by adding $3\gamma$ edges.
\end{thm}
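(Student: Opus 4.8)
The plan is to convert Claims~\ref{cl2.1}--\ref{cl2.6} into a complete description of $G$ and then promote that description to the assertion about $EX(n,\gamma)$. From Claim~\ref{cl2.6} we have $R_1=\varnothing$, so $V(G)=\{u^*,u^{**}\}\cup R$ with $|R|=n-2$, and by the definition of $R$ both $u^*$ and $u^{**}$ are adjacent to every vertex of $R$; Claim~\ref{cl2.5}(iii) gives $u^*u^{**}\in E(G)$. Hence $G=K_2\nabla G[R]$ with $V(K_2)=\{u^*,u^{**}\}$. To extract a spanning path of $G[R]$, I would use Claim~\ref{cl2.5}(ii): writing $N_{\widetilde{G}}(u^*)=\{u_1,\dots,u_{d^*}\}$ in rotational order, every triangle $u^*u_iu_{i+1}$ is a $3$-face of $\widetilde{G}$. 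Since $u^*$ is universal in $G$ we have $d^*=n-1$ and $\{u_1,\dots,u_{n-1}\}=R\cup\{u^{**}\}$, so the edges $u_1u_2,\dots,u_{n-2}u_{n-1},u_{n-1}u_1$ form a spanning cycle of $G[R\cup\{u^{**}\}]$; deleting $u^{**}$ from this cycle leaves a spanning path $P$ of $G[R]$. Thus $G\supseteq K_2\nabla P\cong K_2\nabla P_{n-2}$, and in particular $e(G)\ge 3n-6$.

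It remains to prove $e(G)=3(n-2+\gamma)$; this is the crux. Once it holds, $G$ attains the maximum size in $\mathbb{G}(n,\gamma)$, which equals $3(n-2+\gamma)$ by Lemma~\ref{lem2.4} and Theorem~\ref{thm1.1}, so $SPEX(n,\gamma)\subseteq EX(n,\gamma)$; and since $e(K_2\nabla P_{n-2})=3n-6$ and $G\supseteq K_2\nabla P_{n-2}$, the graph $G[R]$ is exactly $P$ together with $3\gamma$ further edges, i.e.\ $G$ is obtained from $K_2\nabla P_{n-2}$ by adding $3\gamma$ edges. For $\gamma=0$ this is automatic, since then $3n-6\le e(G)\le 3(n-2)$ by Lemma~\ref{lem2.4}, so assume $\gamma\ge1$. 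By Lemma~\ref{lem2.4}, $e(G)\le 3(n-2+\gamma_0)\le 3(n-2+\gamma)$ where $\gamma_0=\gamma(G)$, so it suffices to rule out $e(G)\le 3(n-2+\gamma)-1$. The key observation is that $G$ is \emph{edge-maximal} in $\mathbb{G}(n,\gamma)$: if $ab$ were a non-edge with $G+ab\in\mathbb{G}(n,\gamma)$, then, as $G$ is connected, $\rho(G+ab)>\rho(G)$, contradicting $G\in SPEX(n,\gamma)$. So I must show that an edge-maximal graph in $\mathbb{G}(n,\gamma)$ carrying the above apex structure necessarily satisfies $e(G)=3(n-2+\gamma)$.

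I would argue this from the minimal cellular embedding $\widetilde{G}$ on a surface $\Sigma_0$ of Euler genus $\gamma_0$. If $\gamma_0\le\gamma-2$, any non-edge of $G$ (one exists since $e(G)$ is linear in $n$) can be added while raising the Euler genus by at most $2$, contradicting edge-maximality; hence $\gamma_0\in\{\gamma-1,\gamma\}$ and, by Lemma~\ref{lem2.4}, $e(G)\le 3(n-2+\gamma_0)+2$. If $e(G)<3(n-2+\gamma_0)$, then by Euler's formula and Lemma~\ref{lem2.3} some face $F$ has degree $\ge4$; since all $n-1$ faces at $u^*$ are $3$-faces (Claim~\ref{cl2.5}(ii)), $F$ avoids $u^*$, so $V(F)\subseteq R\cup\{u^{**}\}\subseteq N_G(u^*)$. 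If the boundary walk of $F$ contains two distinct, non-consecutive, non-adjacent vertices, then adding the corresponding chord across $F$ keeps the embedding cellular on $\Sigma_0$, again contradicting edge-maximality. The only remaining possibility is that the boundary of $F$ induces a clique, in which case $V(F)$ together with $u^*$ (and with $u^{**}$ when $u^{**}\notin V(F)$) spans a clique $K_r$ in $G$ with $r\ge\deg(F)+1\ge5$, so by Lemma~\ref{lem2.5} every such ``saturated'' face has degree bounded in terms of $\gamma$.

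The main obstacle is exactly the bookkeeping that closes out the cases $\gamma_0\in\{\gamma-1,\gamma\}$: one must show that the edge deficit $3(n-2+\gamma)-e(G)$ — which is spread over at most $3\gamma$ faces of bounded degree, all remaining faces being triangles — cannot in fact be positive. This requires controlling how the bounded clique-faces can coexist with the two universal vertices $u^*,u^{**}$ and with the $K_{3,2\gamma+3}$-freeness of $G$ (so that these clique-faces cannot be ``chained'' along the spanning path), and, in the subcase $\gamma_0=\gamma-1$, showing the spare handle of genus budget can always be spent to add an edge unless $\widetilde{G}$ already triangulates $\Sigma_0$, which then conflicts with $e(G)\le 3(n-2+\gamma_0)+2$ once the clique-faces are accounted for. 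Carrying out this analysis forces $\gamma_0=\gamma$ and $e(G)=3(n-2+\gamma)$, and the theorem then follows immediately from the first two paragraphs.
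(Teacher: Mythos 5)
Your first paragraph reproduces the paper's own argument: Claims \ref{cl2.5} and \ref{cl2.6} give $V(G)=\{u^*,u^{**}\}\cup R$ with both $u^*,u^{**}$ dominating, $u^*u^{**}\in E(G)$, and the ring of $3$-faces around $u^*$ supplies a spanning path of $G[R]$, so $G\supseteq K_2\nabla P_{n-2}$. The problem is the crux, $e(G)=3(n-2+\gamma)$, where your route diverges from the paper's and is not actually carried out. You reduce to showing that an edge-maximal graph in $\mathbb{G}(n,\gamma)$ with this apex structure must be a triangulation of a surface of Euler genus exactly $\gamma$, and you concede that ``the main obstacle is exactly the bookkeeping'' needed to dispose of faces of degree at least $4$ whose boundaries induce cliques. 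That bookkeeping is not a formality: edge-maximality for embeddability in a fixed surface does not in general force the maximum edge count (saturated non-triangulations of positive-genus surfaces exist), and nothing in your sketch excludes, for instance, a single degree-$4$ face whose four boundary vertices together with $u^*,u^{**}$ span a $K_6$ --- which by Lemma \ref{lem2.5} is embeddable already for $\gamma=1$, and which would leave $e(G)=3(n-2+\gamma)-1$. So the final sentence ``carrying out this analysis forces $\gamma_0=\gamma$ and $e(G)=3(n-2+\gamma)$'' is asserted, not proved, and it is precisely the hard part of the theorem.

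The paper closes this step by an entirely different, spectral mechanism, and the ingredient your plan never deploys is Theorem \ref{thm1.1} used as a \emph{comparison graph}: there exists $G^*\in\mathbb{G}(n,\gamma)$ obtained from $K_2\nabla P_{n-2}$ by adding exactly $3\gamma$ edges (built by the iterated handle construction of Definition \ref{def3A}). Setting $\phi=e(G)-e(K_2\nabla P_{n-2})$ and supposing $\phi\le 3\gamma-1$, the paper combines $d_R(u_i)\le\min\{2+2\gamma,\phi+2\}$ with Claim \ref{cl2.4} to pin every eigenvector entry on $R$ into the window from $2/\rho$ to $2/\rho+3(\phi+2)/\rho^2$, so that all products $x_{u_i}x_{u_j}$ equal $4/\rho^2$ up to an error less than $2/(\rho^2(\phi+1))$. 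Deleting the $\phi$ surplus edges of $G$ and inserting the $3\gamma$ surplus edges of $G^*$ then increases the Rayleigh quotient by more than $\tfrac{2}{\mathbf{x}^{\mathrm{T}}\mathbf{x}}\bigl(12\gamma-4\phi-2\bigr)/\rho^2>0$, contradicting $G\in SPEX(n,\gamma)$. This edge-swap comparison avoids any topological saturation analysis; without it (or some equivalent explicit extremal construction to compare against), your argument does not close.
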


\begin{proof}
Let $G$ be a graph in $SPEX(n,\gamma)$.
Recall that $V(G)$ has been partitioned into $L^*\cup R\cup R_1$,
where $L^*=\{u^*,u^{**}\}$ and $R=N_G(u^*)\cap N_G(u^{**})$.
By Claim \ref{cl2.6}, $R'$ is an empty set,
and by Claim \ref{cl2.5}, we further know that $u^*u^{**}\in E(G)$ and $G$ contains $K_2 \nabla P_{n-2}$ as a spanning subgraph.
In the following, we will show that $G\in EX(n,\gamma)$.
Based on Lemma \ref{lem2.4}, to establish that $G\in EX(n,\gamma)$, it suffices to verify that
$e(G)\!=\!3(n\!-\!2\!+\!\gamma)$.

Suppose, for the sake of contradiction, that $e(G)\!<\!3(n\!-\!2\!+\!\gamma)$.
By symmetry, we can observe that $x_{u^{**}}\!=\!x_{u^*}\!=\!1$.
Assume that $R=\{u_1,\dots,u_{n-2}\}$.
For each $u_i\in R$, we have
\begin{eqnarray}\label{eq15}
\rho x_{u_i}= x_{u^*}+x_{u^{**}}+\!\!\sum_{u\in N_R(u_i)}x_{u}.
\end{eqnarray}
In view of \eqref{eq10}, we know that $d_R(u_i)\leq2+2\gamma$
for each $u_i\in R$.
Furthermore,
by Claim \ref{cl2.4}, we obtain
$\sum_{u\in N_R(u_i)}x_{u}\!\leq\!(2\!+\!2\gamma)(41\!+\!8\gamma)\alpha.$
Combining this with \eqref{eq1} gives $\sum_{u\in N_R(u_i)}x_{u}\!<\!1$.
It follows from \eqref{eq15} that
$\rho x_{u_i}<3$ for each $u_i\in R$.

Let $\phi=e(G)-e(K_2\nabla P_{n-2})$.
Then, we also have $d_R(u_i)\leq\phi+2$ for each $u_i\in R$.
Using \eqref{eq15} again, and noting that $x_{u}<\frac{3}{\rho}$ for each $u\in N_R(u_i)$,
we obtain
\begin{eqnarray}\label{eq16}
\frac{2}{\rho}\leq x_{u_i}\!<\!\frac{2}{\rho}\!+\!\frac{3}{\rho^2}\big(\phi\!+\!2\big).
\end{eqnarray}
Since $e(G)\!<\!3(n\!-\!2\!+\!\gamma)$ and $e(K_2\nabla P_{n-2})=3n\!-\!6$,
we have $\phi<3\gamma$.
Furthermore, in light of \eqref{eq1} and \eqref{eq4}, it is evident that
\begin{eqnarray}\label{eq17}
\rho\!>\!\frac{10}{\alpha}\!=\!3000\!+\!1800\gamma\!+\!240\gamma^2\!>\!12(\phi\!+\!1)(\phi\!+\!2).
\end{eqnarray}
Based on \eqref{eq16} and \eqref{eq17},
it follows that for any two vertices $u_i,u_j\in R$,
\begin{eqnarray}\label{eq18}
\frac{4}{\rho^2}\!\leq\!x_{u_i}x_{u_{j}}\!<\!\frac{4}{\rho^2}
\!+\!\frac{12(\phi\!+\!2)}{\rho^3}\!+\!\frac{9(\phi\!+\!2)^2}{\rho^4}
\!<\!\frac{4}{\rho^2}\!+\!\frac{2}{\rho^2(\phi\!+\!1)}.
\end{eqnarray}

By Theorem \ref{thm1.1},
there exists a graph $G^*\in\mathbb{G}(n,\gamma)$,
which is obtained from $K_2\nabla P_{n-2}$ by adding $3\gamma$ edges.
We can modify $G$ by first deleting $\phi$ edges within $R$ to obtain $K_2 \nabla P_{n-2}$,
and then adding $3\gamma$ edges within $R$ to obtain $G^*$.
Based on \eqref{eq18}, we have
$$\rho(G^*)\!-\!\rho(G)
\!\geq\!\frac{\mathbf{x}^{\mathrm{T}}\big(A(G^*)\!-\!A(G)\big)\mathbf{x}}{\mathbf{x}^{\mathrm{T}}\mathbf{x}}
\!>\!\frac{2}{\mathbf{x}^{\mathrm{T}}\mathbf{x}}\Big(\frac{12\gamma}{\rho^2}\!-\!\frac{4\phi\!+\!2}{\rho^2}\Big).$$
Since $\phi<3\gamma$, we have $\phi\leq 3\gamma-1$, and thus $\rho(G^*)>\rho(G)$,
which contradicts the assumption that $G\in SPEX(n,\gamma)$.
Therefore, $e(G)=3(n-2+\gamma)$, as desired.
\end{proof}

Now, let $\mathbb{H}(n,\gamma)$ denote the family of graphs where each member is obtained from
$K_2\nabla P_{n-2}$ by adding $3\gamma$ edges.
By Theorem \ref{thm2.1}, we have
$$SPEX(n,\gamma)\subseteq \mathbb{G}(n,\gamma)\cap \mathbb{H}(n,\gamma).$$

\begin{thm}\label{thm2.2}
Let $n\geq\frac{50}{\alpha^2}$ and $\gamma$ be a positive integer.
Denote $\rho_0=\frac32\!+\!\sqrt{2n\!-\!\frac{15}4}.$
Then, for every graph $G$ in $\mathbb{G}(n,\gamma)\cap\mathbb{H}(n,\gamma)$, we have
\begin{eqnarray*}
\rho_0\!+\!\frac{3\gamma\!-\!1}n<\rho(G)<\rho_0\!+\!\frac{3\gamma\!-\!0.95}n.
\end{eqnarray*}
\end{thm}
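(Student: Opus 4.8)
The plan is to estimate $\rho(G)$ for $G \in \mathbb{G}(n,\gamma) \cap \mathbb{H}(n,\gamma)$ by a quotient-type argument, using the fact that $G$ is obtained from $K_2 \nabla P_{n-2}$ by adding exactly $3\gamma$ edges and has $e(G) = 3(n-2+\gamma)$. Write $V(K_2) = \{u^*, u^{**}\}$ and let $R$ be the remaining $n-2$ vertices, which carry the path $P_{n-2}$ plus $3\gamma$ extra edges. First I would pin down the entries of the Perron vector $\mathbf{x}$: by symmetry (the automorphism swapping $u^*$ and $u^{**}$) one may take $x_{u^*} = x_{u^{**}}$, and the analysis in Claims~\ref{cl2.4}--\ref{cl2.6} together with \eqref{eq16} already shows $x_u = \frac{2}{\rho}(1 + O(\gamma/\rho))$ for every $u \in R$, while $x_{u^*} = x_{u^{**}} = \frac{1}{\rho}\sum_{u \in R} x_u$ up to the edge $u^*u^{**}$. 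Normalizing $x_{u^*} = 1$, this gives tight two-sided control: $\frac{2}{\rho} \le x_u < \frac{2}{\rho} + \frac{3(\phi+2)}{\rho^2}$ with $\phi = 3\gamma$ here, so $\sum_{u \in R} x_u = \rho - 1$ exactly (from $\rho x_{u^*} = x_{u^{**}} + \sum_{u\in R} x_u$).

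The key identity to exploit is $\rho^2 = \frac{\mathbf{x}^{\mathrm{T}} A^2 \mathbf{x}}{\mathbf{x}^{\mathrm{T}}\mathbf{x}}$, or more simply summing $\rho x_v = \sum_{w \sim v} x_w$ over all $v$. Summing over $v \in R$ gives $\rho \sum_{u \in R} x_u = 2(n-2)\cdot x_{u^*} + 2\sum_{uv \in E(G[R])} (\text{contributions})$; more precisely $\rho\sum_{u\in R}x_u = 2(n-2) + \sum_{u \in R} d_R(u) x_{\text{(nbr)}}$, and since every $x_u$ lies in the narrow window above and $\sum_{u\in R} d_R(u) = 2(n-3+3\gamma)$, this yields $\rho(\rho-1) = 2(n-2) + \frac{2}{\rho}\cdot 2(n-3+3\gamma) + (\text{error})$. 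Combined with $\rho x_{u^*} = x_{u^{**}} + (\rho - 1)$, i.e. $x_{u^{**}} = 1$, and the equation $\rho \cdot 1 = x_{u^{**}} + \sum_{u\in R} x_u = 1 + (\rho-1)$, which is a tautology, I instead use the second-neighborhood count at $u^*$: $\rho^2 = \rho^2 x_{u^*} = \sum_{w} d_{N(u^*)}(w) x_w = 2 + \sum_{u \in R}(d_R(u) + 2) x_u$ roughly, since each $u \in R$ is adjacent to both $u^*, u^{**}$ and to its $R$-neighbors of which $u^*$ sees $d_R(u)$ many paths of length $2$. Plugging $x_u \approx \frac{2}{\rho}$ and $\sum_{u\in R} d_R(u) = 2(n-3) + 6\gamma$ gives $\rho^2 \approx 2 + \frac{2}{\rho}\big(2(n-2) + 2(n-3) + 6\gamma\big) = 2 + \frac{2}{\rho}(4n - 10 + 6\gamma)$, hence $\rho^3 - 2\rho - (8n - 20 + 12\gamma) \approx 0$; but I expect the correct normalization is via the cubic $\rho^3 - 3\rho^2 - (2n-O(1))\rho + O(n) = 0$ matching $\rho_0 = \frac{3}{2} + \sqrt{2n - \frac{15}{4}}$, the largest root of $t^2 - 3t - (2n - 6) = 0$ shifted appropriately; I would rederive this characteristic equation carefully for $K_2 \nabla P_{n-2}$ first (where $\gamma$-term is absent) and then track the perturbation.

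Concretely, the route is: (1) establish that $\rho$ satisfies $\rho^2 - 3\rho - (2n-6) = \frac{c\gamma}{n} + O(1/n^2)$ for an explicit constant (the extra $3\gamma$ edges each contribute a term of order $\frac{x_u x_v}{\text{stuff}} \approx \frac{4}{\rho^2}$ to $\mathbf{x}^{\mathrm{T}} A \mathbf{x}$, feeding into $\rho$ at order $\gamma/n$); (2) solve the perturbed quadratic: if $f(t) = t^2 - 3t - (2n-6)$ then $f(\rho_0) = 0$ and $f'(\rho_0) = 2\rho_0 - 3 = 2\sqrt{2n-15/4} \approx 2\sqrt{2n}$, so $\rho - \rho_0 \approx \frac{f(\rho_0) - f(\rho)}{f'(\rho_0)} = \frac{-(\text{perturbation})}{2\sqrt{2n}}$, which with a perturbation of size $\Theta(\gamma\sqrt{2n}/n) = \Theta(\gamma/\sqrt{n})$ produces $\rho - \rho_0 = \Theta(\gamma/n)$; (3) sharpen the constants to land strictly inside $\big(\frac{3\gamma-1}{n}, \frac{3\gamma - 0.95}{n}\big)$. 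The main obstacle will be step (3): obtaining the error bound tight enough. The window has width only $0.05/n$, so I must control every $O(1/n)$-level term — in particular the exact value of $\sum_{u\in R} x_u$ (which depends on the precise placement of the $3\gamma$ added edges and on path-endpoint effects in $P_{n-2}$), the deviation of each $x_u$ from $\frac{2}{\rho}$ summed against $d_R(u)$, and the contribution of the few forks. I would handle this by showing all these structural choices perturb $\rho$ only at order $O(\gamma^2/n^2)$, well below $0.05/n$ once $n \ge 50/\alpha^2$ with $\alpha$ as in \eqref{eq1}, so that the dominant $\frac{3\gamma}{n}$ term (coming from $3\gamma$ edges each shifting $\rho$ by $\approx \frac{1}{n}$) lies safely in the stated interval; the asymmetry of the bounds ($3\gamma - 1$ vs $3\gamma - 0.95$) reflects exactly this: the truth is $\rho_0 + \frac{3\gamma}{n} - \frac{(\text{positive }O(1))}{n} + o(1/n)$, and I would verify the $O(1)$ correction sits in $(0.95, 1)$.
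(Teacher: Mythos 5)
Your outline has the right heuristics (each of the $3\gamma$ added edges shifts $\rho$ by roughly $1/n$, and there is an $O(1)/n$ correction coming from the cycle-versus-path discrepancy), but it is missing the one device that makes the $0.05/n$-precision verification actually go through, and that step is exactly the one you defer with ``I would verify.'' The paper's proof does not perturb an approximate characteristic equation at all. It takes $G_0=K_2\nabla C_{n-2}$ as the reference graph: the partition $\{u^*,u^{**}\}\cup R$ is equitable for $G_0$, so $\rho(G_0)=\rho_0$ \emph{exactly} and its Perron vector is explicit ($y_{u^*}=y_{u^{**}}=1$, $y_u=\frac{2}{\rho_0-2}$ on $R$). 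The lower bound is then a one-line Rayleigh quotient: $G$ is $G_0$ minus one edge plus $3\gamma$ edges inside $R$, so $\rho-\rho_0\geq \frac{8(3\gamma-1)}{(\rho_0-2)^2\mathbf{y}^{\mathrm T}\mathbf{y}}>\frac{3\gamma-1}{n}$. The upper bound reverses the roles, testing $A(G_0)-A(G)$ against the Perron vector $\mathbf{x}$ of $G$, and here the only input needed is the two-sided bound $\frac{2}{\rho}\leq x_u\leq\frac{2}{\rho-2\gamma-2}$ for $u\in R$, which follows from $d_R(u)\leq 2\gamma+2$ (i.e.\ $K_{3,2\gamma+3}$-freeness) --- not from the full machinery of Claims~\ref{cl2.4}--\ref{cl2.6}, which concern the extremal graph $G\in SPEX(n,\gamma)$ rather than an arbitrary member of $\mathbb{G}(n,\gamma)\cap\mathbb{H}(n,\gamma)$ as required here.

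Concretely, two points in your plan would fail as written. First, you propose to ``rederive the characteristic equation for $K_2\nabla P_{n-2}$'': the path does not give an equitable partition, $\rho(K_2\nabla P_{n-2})$ is not $\rho_0$ and has no closed form, so this anchor is the wrong one; the cycle is what matches $\rho_0$ exactly. Second, your assertion that all structural choices (placement of the $3\gamma$ edges, endpoint effects) perturb $\rho$ only at order $O(\gamma^2/n^2)$ is unsupported and is precisely the hard content: without the uniform eigenvector window $\frac{2}{\rho}\leq x_u\leq\frac{2}{\rho-2\gamma-2}$ (valid for \emph{every} graph in the class, via the degree bound on $R$), the term $\sum_{u\in R}d_R(u)(x_u-\tfrac{2}{\rho})$ in your second-neighborhood identity is not controlled to the needed accuracy, and the $0.05/n$ window is not certified. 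With the Rayleigh-quotient comparison against $K_2\nabla C_{n-2}$ both bounds drop out without ever solving or perturbing a characteristic equation.
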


\begin{proof}
Let $G_0=K_2\nabla C_{n-2}.$
Based on the high symmetry of the structure of $G_0$,
we can observe that $\rho(G_0)=\rho_0$ through a straightforward calculation.
By the Perron-Frobenius theorem,
there exists a positive eigenvector $\mathbf{y}=(y_1,\ldots,y_n)^\mathrm{T}$ corresponding to
the spectral radius of $G_0.$
Let $u^*$ and $u^{**}$ be the two dominating vertices in $G_0$.
By symmetry, we may assume that $y_{u^{*}}\!=\!y_{u^{**}}\!=\!1$ and $y_{u}\!=\!y_0$
for each $u\in V(G_0)\setminus\{u^*,u^{**}\}$.
Then, we obtain $\rho_0y_0=2y_{u^{*}}+2y_0$, which yields that $y_0=\frac{2}{\rho_0-2}$,
and thus, $$\mathbf{y}^{\mathrm{T}}\mathbf{y}\!=\!2y^2_{u^*}\!+\!(n\!-\!2)y_0^2
\!=\!2\!+\!\frac{4(n\!-\!2)}{(\rho_0\!-\!2)^2}.$$
Observe that $(\rho_0\!-\!2)^2<2n.$
Then, $(\rho_0\!-\!2)^2\mathbf{y}^{\mathrm{T}}\mathbf{y}\!=\!2(\rho_0\!-\!2)^2\!+\!4(n\!-\!2)\!<\!8n$.

Now, choose an arbitrary graph $G\in\mathbb{G}(n,\gamma)\cap\mathbb{H}(n,\gamma),$
and let $\rho=\rho(G)$. Based on (\ref{eq4}),
we have $\rho\!\geq\!\rho\big(K_2\nabla(n\!-\!2)K_1\big)\!>\!\sqrt{2n\!+\!8}$.
Moreover, we can modify $G_0$ by first deleting an edge to obtain $K_2\nabla P_{n-2}$,
and then adding $3\gamma$ edges within $V(G_0)\setminus\{u^*,u^{**}\}$ to obtain the resulting graph $G$.
Thus,
\begin{eqnarray*}
\rho\!-\!\rho_0
\!\geq\!\frac{\mathbf{y}^{\mathrm{T}}\big(A(G)\!-\!A(G_0)\big)\mathbf{y}}{\mathbf{y}^{\mathrm{T}}\mathbf{y}}
\!=\!\frac{2(3\gamma\!-\!1)y_0^2}{\mathbf{y}^{\mathrm{T}}\mathbf{y}}
\!=\!\frac{8(3\gamma\!-\!1)}{(\rho_0\!-\!2)^2\mathbf{y}^{\mathrm{T}}\mathbf{y}}
\!>\!\frac{3\gamma\!-\!1}{n}.
\end{eqnarray*}
Therefore, we conclude that $\rho>\rho_0\!+\!\frac{1}n(3\gamma\!-\!1).$

Now, let $R=V(G)\setminus\{u^*,u^{**}\},$
and let $\mathbf{x}=(x_1,\ldots,x_n)^\mathrm{T}$
be a positive eigenvector corresponding to $\rho(G).$
Since $u^*$ and $u^{**}$ are both dominating vertices in $G$,
we have $x_{u^*}=x_{u^{**}}$, and we may assume that $x_{u^*}=1$.
Recall that every graph in $\mathbb{G}(n,\gamma)$ is $K_{3,2\gamma+3}$-free.
Thus, for any $u\in R$, we have $d_R(u)\leq2\gamma+2$. Since $\rho x_u=2+\sum_{v\in N_R(u)}x_v$,
it is clear that
\begin{eqnarray}\label{eq19}
\frac2\rho\leq\min_{u\in R}x_u\leq\max_{u\in R}x_u\!\leq\!\frac2{\rho\!\!-\!\!2\gamma\!\!-\!\!2},
\end{eqnarray}
which implies that $\frac4{\rho^2}\leq x_ux_v\leq\frac4{(\rho\!-\!2\gamma\!-\!2)^2}$ for any two vertices $u,v\in R$.
Hence,
\begin{eqnarray}\label{eq20}
\rho_0-\rho
\geq \frac{\mathbf{x}^{\mathrm{T}}\big(A(G_0)\!-\!A(G)\big)\mathbf{x}}{\mathbf{x}^{\mathrm{T}}\mathbf{x}}
\geq\frac{8}{\mathbf{x}^{\mathrm{T}}\mathbf{x}}
\Big(\frac{1}{\rho^2}\!-\!\frac{3\gamma}{(\rho\!\!-\!\!2\gamma\!\!-\!\!2)^2}\Big),
\end{eqnarray}
where
$\mathbf{x}^{\mathrm{T}}\mathbf{x}\!=\!2\!+\!\sum_{u\in R}x_{u}^2\geq2\!+\!\frac4{\rho^2}(n\!-\!2)$.
Recall that $\rho\!>\!\sqrt{2n\!+\!8}$.
It follows that $\rho^2\mathbf{x}^{\mathrm{T}}\mathbf{x}\!\geq\!2\rho^2\!+\!4(n\!-\!2)\!>\!8n$.
Moreover, since $\rho>\sqrt{2n}\geq\frac{10}{\alpha}=\!3000\!+\!1800\gamma\!+\!240\gamma^2$,
a straightforward calculation shows that
$3\gamma/(\rho\!\!-\!\!2\gamma\!\!-\!\!2)^2\!\leq\!(3\gamma\!+\!0.05)/\rho^2.$
Combining (\ref{eq20}) gives: $$\rho_0-\rho
\geq\frac{8}{\rho^2\mathbf{x}^{\mathrm{T}}\mathbf{x}}\big(0.95\!-\!3\gamma\big)>\frac{1}{n}\big(0.95\!-\!3\gamma\big).$$
It follows that $\rho<\rho_0\!+\!\frac{1}n(3\gamma\!-\!0.95),$
completing the proof.
\end{proof}

\section{Proof of Theorem \ref{thm1.3}}

Recall that $\mathbb{G}(n,\gamma)$ denotes the family of graphs of order $n$ that can be embedded on a surface with Euler genus $\gamma$,
and $\mathbb{H}(n,\gamma)$ represents the family of graphs where each member is obtained from
$K_2\nabla P_{n-2}$ by adding $3\gamma$ edges.
In this section, we will explore more precise structural properties of the graphs in $\mathbb{G}(n,\gamma)\cap \mathbb{H}(n,\gamma),$
under the assumptions that $\gamma$ is a positive integer and $n$ is sufficiently large.

Choose an arbitrary graph $G\in \mathbb{G}(n,\gamma)\cap \mathbb{H}(n,\gamma)$.
Clearly, we have $e(G)=3(n-2+\gamma)$.
Let $\widetilde{G}$ denote an embedding of $G$ on a surface with Euler genus $\gamma$.
By Lemma \ref{lem2.4}, $G$ is edge-maximal in $\mathbb{G}(n,\gamma)$. Thus,
the embedding $\widetilde{G}$ is minimal and, consequently, cellular.
This further implies that
$\widetilde{G}$ is a triangulation.
Hence, we have the following statement.

\begin{lem}\label{lem3.1}
For any graph $G\in\mathbb{G}(n,\gamma)\cap \mathbb{H}(n,\gamma)$ and for any vertex $u\in V(\widetilde{G})$,
there are exactly $d_{\widetilde{G}}(u)$ 3-faces incident to $u$,
which together form a wheel of order $d_{\widetilde{G}}(u)+1$.
\end{lem}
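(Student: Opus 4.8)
The plan is to exploit the fact, already recorded in the paragraph preceding the lemma, that $\widetilde{G}$ is a triangulation of its surface, i.e.\ every face has degree exactly $3$. For completeness I would first re-derive this: since $\widetilde{G}$ is a cellular embedding, Euler's formula gives $f(\widetilde{G})=e(G)-n+2-\gamma=2n-4+2\gamma$ using $e(G)=3(n-2+\gamma)$; combining this with the face-handshake identity $\sum_{F}\deg(F)=2e(G)=6n-12+6\gamma=3f(\widetilde{G})$ and the bound $\deg(F)\ge 3$ valid for every face, we are forced to have $\deg(F)=3$ for all faces $F$.

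Next, fix $u\in V(\widetilde{G})$ and set $d=d_{\widetilde{G}}(u)$. The rotation system of the embedding equips the edges incident to $u$ with a cyclic order; write the corresponding neighbors of $u$ as $u_1,u_2,\dots,u_d$ (indices mod $d$). Since $G$ is simple, these are $d$ pairwise distinct vertices. Between the consecutive edges $uu_i$ and $uu_{i+1}$ in the rotation there sits exactly one face-corner, so there are exactly $d$ corners at $u$; let $F_i$ denote the face occupying the corner between $uu_i$ and $uu_{i+1}$. The boundary walk of $F_i$ traverses the segment $u_i,u,u_{i+1}$, and since $\deg(F_i)=3$ this walk must close up as $u_i,u,u_{i+1},u_i$; in particular $u_iu_{i+1}\in E(G)$ and $F_i$ is the triangular face $uu_iu_{i+1}$.

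It then remains to check that $F_1,\dots,F_d$ are the only faces incident to $u$ and that they are pairwise distinct, i.e.\ that no $3$-face meets $u$ in two different corners. A face meeting $u$ at two corners would have a boundary walk passing through $u$ at two positions; for a closed walk of length $3$ this forces either a repeated edge or a loop at $u$, contradicting simplicity. Also, any face incident to $u$ must occupy at least one corner at $u$, hence equals some $F_i$. Therefore the faces incident to $u$ are precisely $F_1,\dots,F_d$, and the subgraph of $G$ on $\{u,u_1,\dots,u_d\}$ contains the spokes $uu_i$ together with the rim cycle $u_1u_2\cdots u_du_1$, i.e.\ a wheel of order $d+1$ whose triangular faces are exactly the $F_i$; this proves the lemma.

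I do not anticipate a serious obstacle: the statement is the classical fact that the link of each vertex in a simple surface triangulation is a cycle, and essentially all of the content is the Euler-characteristic count showing $\widetilde{G}$ is a triangulation, which the excerpt has already performed. The only points needing a little care are the two simplicity-based observations — that the rotation lists $d$ \emph{distinct} neighbors, and that a $3$-face cannot revisit $u$ — both of which are elementary.
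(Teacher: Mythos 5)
Your proposal is correct and follows the same route as the paper, which simply observes that edge-maximality forces the embedding to be a minimal, cellular triangulation and then states the lemma without further argument; you have merely supplied the standard details (the Euler-formula count showing every face is a $3$-face, and the link-of-a-vertex argument showing the faces around $u$ form a wheel).
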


As we will see later (Lemma \ref{lem3.3}),
the maximum spectral radius of graphs in $\mathbb{H}(n,\gamma)$
is achieved just when the $3\gamma$ added edges are incident to the same vertex within the interior of the path $P_{n-2}$.
The extremal graph in this case contains a copy of
$K_{3,3\gamma+2}$. However,
by Lemma \ref{lem2.5}, $K_{3,3\gamma+2}$ can not be embedded on
a surface with Euler genus $\gamma$.
Based on this reasoning,
the proof of Theorem \ref{thm1.3} will require a careful combination of spectral techniques and structural analysis.
To facilitate this, we introduce several definitions and additional lemmas.

First, we note that
every graph $G$ in $\mathbb{H}(n,\gamma)$ is connected.
By the Perron--Frobenius theorem,
there exists a positive unit eigenvector corresponding to $\rho(G)$,
which is referred to as the \emph{Perron vector} of $G$.

Recall that $d_H(u)$ denotes the degree of a vertex $u$ in a graph $H$.
We now introduce the concept of the \emph{$\ell$--degree} $w^{(\ell)}_H(u)$ of a vertex $u$,
which is defined as the number of $\ell$--walks starting from $u$.
For every vertex $u\in V(H)$, we define $w^{(0)}_H\!(u)=1$, and clearly,
we have $w^{(1)}_H\!(u)=d_H(u)$.
Furthermore, it is easy to see that
\begin{eqnarray}\label{eq21---110}
w^{(\ell+1)}_H\!(u)=\!\!\!\sum_{v\in N_{H}(u)}\!\!\!w^{(\ell)}_H\!(v).
\end{eqnarray}

As early as 2002, Nikiforov \cite{NIKI} employed this parameter to investigate the relationship between the spectral radius and clique number of graphs.
Let $w^{(\ell)}\!(H)$ denote the total number of $\ell$--walks in $H$.
Clearly, $w^{(1)}(H)=2e(H)$. Furthermore, we have
\begin{eqnarray}\label{eq21}
w^{(2)}(H)=\!\!\!\sum_{v\in V(H)}\!\!\!d^2_H(v)~~~\mbox{and}
~~~w^{(3)}(H)=\!\!\!\sum_{uv\in E(H)}\!\!\!2d_H\!(u)d_H\!(v).
\end{eqnarray}
In view of (\ref{eq21---110}), we can deduce that
\begin{eqnarray}\label{eq21---111}
w^{(4)}(H)=\!\!\!\sum_{v\in V(H)}\!\!\!\big(w^{(2)}_H\!(v)\big)^2,~~~\mbox{where}~~w^{(2)}_H\!(v)=\!\!\!\sum_{u\in N_H\!(v)}\!\!\!d_H\!(u).
\end{eqnarray}


For an integer $r\geq2$, let $K_{n_1,\ldots,n_r}$ be the complete $r$-partite graph,
with partition classes $V_1,\ldots,V_r$ satisfying $|V_i|=n_i$ and $\sum_{i=1}^rn_i=n.$
The following lemma is due to Zhang \cite{Zhang}, and it provides a useful tool for spectral extremal problems.

\begin{lem}(\cite{Zhang})\label{lem3.2}
For each $1\leq i\leq r$, let $H_i$ be a graph with $V(H_i)\subseteq V_i$.
Let $G$ be the graph obtained from $K_{n_1,n_2,\ldots,n_r}$ by embedding the edges of $H_i$ into the partition class $V_i$ for $i\in\{1,\ldots,r\}$.
If $x>\rho(H_i)$, then
$\sum_{\ell=1}^\infty\frac{w^{(\ell)}(H_i)}{x^{\ell}}$ is convergent.
Moreover,
\begin{eqnarray*}
\sum\limits_{i=1}^{r}\frac1{1+\frac{n_i}{\rho}+\sum\limits_{\ell=1}^\infty\frac{w^{(\ell)}(H_i)}{\rho^{\ell+1}}}=r-1,\,\, \text{where}\,\,\rho=\rho(G).
\end{eqnarray*}
\end{lem}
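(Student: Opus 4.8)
The plan is to proceed by a direct eigenvalue-equation computation, combining the ``weighted quotient'' trick with a walk-generating-function identity. First I would set $\rho=\rho(G)$ and let $\mathbf{z}$ be the Perron vector of $G$. The crucial structural observation is that the graph $G$ behaves, from the point of view of any fixed vertex, almost like the complete multipartite graph $K_{n_1,\dots,n_r}$: every vertex in class $V_i$ is adjacent to \emph{all} vertices outside $V_i$, and only the edges \emph{inside} $V_i$ (those of $H_i$) deviate from this. By the Perron--Frobenius theorem and the obvious symmetry, the entries of $\mathbf{z}$ are determined up to the behaviour within each $H_i$; in fact I would argue that for each $i$ the restriction $\mathbf{z}|_{V_i}$, after normalization, is proportional to the vector whose $u$-th entry is $\sum_{\ell\ge 0} w^{(\ell)}_{H_i}(u)/\rho^{\ell}$ — this is precisely the resolvent $(I-\tfrac1\rho A(H_i))^{-1}\mathbf 1$ applied to the all-ones vector, which converges because $x=\rho>\rho(H_i)$ (here I would invoke $\rho(G)\ge\rho(K_{n_1,\dots,n_r})>\rho(H_i)$, or simply take this inequality as a hypothesis as the lemma states). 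The convergence claim itself follows since $\sum_\ell w^{(\ell)}(H_i)/x^\ell=\mathbf 1^{\mathrm T}(I-\tfrac1x A(H_i))^{-1}\mathbf 1$ is a geometric-type series dominated by $\sum_\ell (\rho(H_i)/x)^\ell |V_i|<\infty$.

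Next I would write down the eigenvalue equation at the ``aggregate'' level. Let $s_i=\sum_{u\in V_i} z_u$ be the total Perron weight on $V_i$ and let $S=\sum_{j} s_j$. For a vertex $u\in V_i$ the equation $\rho z_u=\sum_{v\sim u} z_v$ splits as $\rho z_u=(S-s_i)+\sum_{v\in N_{H_i}(u)} z_v$, i.e. $(\rho I-A(H_i))\,\mathbf z|_{V_i}=(S-s_i)\mathbf 1$. Solving, $\mathbf z|_{V_i}=(S-s_i)(\rho I-A(H_i))^{-1}\mathbf 1=\tfrac{S-s_i}{\rho}\sum_{\ell\ge0}\rho^{-\ell}A(H_i)^\ell\mathbf 1$, and summing the entries gives
\begin{equation*}
s_i=\frac{S-s_i}{\rho}\Big(n_i+\sum_{\ell\ge 1}\frac{w^{(\ell)}(H_i)}{\rho^{\ell}}\Big).
\end{equation*}
Writing $c_i:=n_i+\sum_{\ell\ge1} w^{(\ell)}(H_i)/\rho^{\ell}$, this rearranges to $s_i(\rho+c_i)=Sc_i$, hence $\tfrac{s_i}{S}=\tfrac{c_i}{\rho+c_i}=\big(1+\tfrac{\rho}{c_i}\big)^{-1}$.

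Finally I would sum over $i$. Since $\sum_i s_i=S$, dividing by $S$ yields $\sum_{i=1}^r \big(1+\tfrac{\rho}{c_i}\big)^{-1}=1$, which is \emph{not} quite the stated identity; so I would instead use the complementary form. Observe $1-\tfrac{s_i}{S}=\tfrac{\rho}{\rho+c_i}$, so $\sum_i\big(1-\tfrac{s_i}{S}\big)=\sum_i\tfrac{\rho}{\rho+c_i}=r-1$. Dividing numerator and denominator of each term by $\rho$ turns $\tfrac{\rho}{\rho+c_i}$ into $\big(1+\tfrac{c_i}{\rho}\big)^{-1}=\big(1+\tfrac{n_i}{\rho}+\sum_{\ell\ge1}\tfrac{w^{(\ell)}(H_i)}{\rho^{\ell+1}}\big)^{-1}$, which is exactly the summand in the claimed identity; hence $\sum_{i=1}^r\big(1+\tfrac{n_i}{\rho}+\sum_{\ell\ge1} w^{(\ell)}(H_i)/\rho^{\ell+1}\big)^{-1}=r-1$, as desired.

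The main obstacle is the first step: rigorously justifying that the Perron vector restricted to $V_i$ is the resolvent vector $(\rho I-A(H_i))^{-1}\mathbf 1$ scaled by $(S-s_i)$, and in particular that $\rho I-A(H_i)$ is invertible with a convergent Neumann series. This rests on the strict inequality $\rho(G)>\rho(H_i)$; one must check that $H_i$ being an induced subgraph-like object of a strictly larger graph (plus the fact that $\rho(G)\ge\rho(K_{n_1,\dots,n_r})$, which dominates $\rho(H_i)$ by interlacing/monotonicity) forces this. Once invertibility and convergence are in hand, the remaining algebra is the routine bookkeeping sketched above, and care is only needed to match the exact shape of the summand (the extra factor $\rho^{-1}$ distinguishing $w^{(\ell)}/\rho^\ell$ from $w^{(\ell)}/\rho^{\ell+1}$).
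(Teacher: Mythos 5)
Your derivation is correct: the eigen-equation $(\rho I-A(H_i))\,\mathbf z|_{V_i}=(S-s_i)\mathbf 1$, the Neumann-series expansion justified by $\rho(G)>\rho(H_i)$ (which holds since $H_i$ is a proper subgraph of the connected graph $G$), and the complementary summation $\sum_i(1-s_i/S)=r-1$ together give exactly the stated identity, with the $n_i$ correctly replacing $w^{(0)}$ to account for vertices of $V_i$ outside $V(H_i)$. The paper does not prove this lemma but imports it from the cited reference of Zhang, and your argument is essentially the same resolvent/walk-generating-function computation used there, so there is nothing to reconcile.
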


Let $G_1$ and $G_2$ be two distinct graphs in $\mathbb{G}(n,\gamma)\cap\mathbb{H}(n,\gamma)$ such that $V(G_1)=V(G_2)$.
For $j\in\{1,2\}$, let $H_j$ be the subgraph of $G_j$ induced by $V(G_j)\setminus\{u^*,u^{**}\}$,
where $u^*,u^{**}$ are the two dominating vertices of $G_j$.
By Lemma \ref{lem2.5}, $G_j$ is $K_{3,2\gamma+3}$-free, which implies that $\Delta(H_j)\leq 2\gamma+2$ for $j\in\{1,2\}$.

\begin{lem}\label{lem3.3}
If there exists a constant $k$ with respect to $n$ such that $w^{(k)}(H_1)>w^{(k)}(H_2)$,
and $w^{{(\ell)}}(H_1)=w^{{(\ell)}}(H_2)$ for all $\ell\leq k-1$, then we have
$\rho(G_1)>\rho(G_2)$.
\end{lem}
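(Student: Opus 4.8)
The plan is to apply Lemma~\ref{lem3.2} with $r=3$: both $G_1$ and $G_2$ arise from the complete tripartite graph $K_{1,1,n-2}$ (with $V_1=\{u^*\}$, $V_2=\{u^{**}\}$, $V_3=R$) by embedding edges inside the parts; the only nontrivial part is $V_3$, where the embedded graph is $H_1$ (resp.\ $H_2$). Since $\Delta(H_j)\le 2\gamma+2$ is bounded and $R$ is an independent set in each $G_j$, the three summands of Lemma~\ref{lem3.2} are controlled, and the identity it provides becomes, after clearing the two trivial parts (each contributing $\frac{1}{1+1/\rho}$), a single equation relating $\rho(G_j)$ to the walk-generating function $W_j(x):=\sum_{\ell\ge 1} w^{(\ell)}(H_j)/x^{\ell+1}$ evaluated at $x=\rho(G_j)$.

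First I would record that $\rho(G_j)>\rho(K_2\nabla(n-2)K_1)>\sqrt{2n}$ by~\eqref{eq4}, whereas $\rho(H_j)\le \Delta(H_j)\le 2\gamma+2$, so $x=\rho(G_j)$ lies well inside the radius of convergence and $W_j$ is a strictly increasing, smooth function of $x$ on $(\rho(H_j),\infty)$. Next I would write the Lemma~\ref{lem3.2} identity for each $j$ in the form $\Phi(\rho(G_j), W_j(\rho(G_j)))=0$ for an explicit rational function $\Phi$ that is strictly decreasing in its first argument and strictly increasing in its second (this monotonicity is the routine computation I would not grind through, but it follows because enlarging $W_j$ corresponds to adding edges inside $V_3$, which can only increase the spectral radius). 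Then the comparison reduces to comparing $W_1$ and $W_2$ as functions: I would show that the hypothesis — agreement of $w^{(\ell)}(H_1)$ and $w^{(\ell)}(H_2)$ for $\ell\le k-1$ together with strict inequality at $\ell=k$ — forces $W_1(x)>W_2(x)$ for all $x$ larger than some threshold, in particular for $x$ of order $\sqrt{2n}$. Concretely, $W_1(x)-W_2(x)=\sum_{\ell\ge k}\big(w^{(\ell)}(H_1)-w^{(\ell)}(H_2)\big)x^{-\ell-1}$; the $\ell=k$ term is a positive constant (independent of $n$) times $x^{-k-1}$, while the tail is bounded in absolute value by $\sum_{\ell>k}2e(H_j)\Delta(H_j)^{\ell-1}x^{-\ell-1}\le C\,\Delta^{k}x^{-k-2}(1-\Delta/x)^{-1}$, which is $o(x^{-k-1})$ once $x\gg\Delta=O(\gamma)$, hence negligible for $n$ large. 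Therefore $W_1(x)>W_2(x)$ on the relevant range.

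Finally I would combine the pieces: suppose for contradiction $\rho(G_1)\le\rho(G_2)$. Evaluating the identity at $j=2$ and using $\Phi$'s monotonicity in the second slot gives $\Phi(\rho(G_2),W_1(\rho(G_2)))>\Phi(\rho(G_2),W_2(\rho(G_2)))=0$; on the other hand, decreasing the first argument from $\rho(G_2)$ down to $\rho(G_1)$ only increases $\Phi$, so $\Phi(\rho(G_1),W_1(\rho(G_1)))$ — wait, one must be careful that $W_1$ is itself evaluated at a moving point — so instead I would use the standard monotone-operator argument: define $g_j(x):=\Phi(x,W_j(x))$, which is strictly decreasing in $x$ (both the explicit $x$-dependence and, via $W_j$ increasing entering a slot where $\Phi$ increases — here a sign check is needed, and in fact the net effect is still a decrease because the dominant term is the direct one for $x\sim\sqrt{2n}$), with unique root $\rho(G_j)$. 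Since $W_1>W_2$ pointwise and $\Phi$ is increasing in the second slot, $g_1(x)>g_2(x)$ for all $x$ in the range; evaluating at $x=\rho(G_2)$ yields $g_1(\rho(G_2))>g_2(\rho(G_2))=0$, and since $g_1$ is decreasing with root $\rho(G_1)$, this forces $\rho(G_1)>\rho(G_2)$, as desired.

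The main obstacle I anticipate is the bookkeeping around the exact form of $\Phi$ and verifying that $g_j(x)=\Phi(x,W_j(x))$ is genuinely strictly decreasing on the whole interval between the two spectral radii (rather than just locally), since $W_j$ is increasing and enters a slot where $\Phi$ increases; this requires a quantitative estimate showing the direct $x$-dependence dominates, which in turn relies on $\rho(G_j)=\Theta(\sqrt n)$ being enormous compared with $\Delta(H_j)=O(\gamma)$ and with the walk counts $w^{(\ell)}(H_j)=O(\Delta^{\ell-1}n)$. Once that domination is quantified — essentially the same $x\gg\Delta$ estimate used to control the tail of $W_1-W_2$ — the argument closes cleanly.
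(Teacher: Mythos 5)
Your overall strategy coincides with the paper's: invoke Lemma~\ref{lem3.2} for $K_{1,1,n-2}$ with $H_j$ embedded in the large part, reduce the comparison of $\rho(G_1)$ and $\rho(G_2)$ to comparing the walk series $\sum_{\ell\ge1}w^{(\ell)}(H_j)/\rho^{\ell+1}$ for $\rho\ge\sqrt{2n}$, and finish by monotonicity of the resulting one-variable function. (The sign bookkeeping in your last paragraph is muddled — in the paper's normalization $f_j(\rho)$ is increasing in $\rho$ and decreasing in the walk series, so there are no competing effects to worry about — but that part is repairable.)

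The genuine gap is in your tail estimate. You bound
$\bigl|\sum_{\ell>k}(w^{(\ell)}(H_1)-w^{(\ell)}(H_2))x^{-\ell-1}\bigr|$
by $\sum_{\ell>k}2e(H_j)\Delta^{\ell-1}x^{-\ell-1}$ and declare this $o(x^{-k-1})$ once $x\gg\Delta$. But $e(H_j)=n-3+3\gamma=\Theta(n)$ while $x=\rho=\Theta(\sqrt n)$, so the leading term of your tail bound is $\Theta(n)\,\Delta^{k}x^{-k-2}=\Theta(\sqrt n)\,\Delta^{k}\,x^{-k-1}$, which \emph{dominates} the positive head term $\ge x^{-k-1}$ rather than being negligible against it. The crude inequality $w^{(\ell)}(H_j)=O(n\Delta^{\ell-1})$ — which you yourself record — is exactly what makes the argument fail: one would need $x\gg n$, not $x\gg\Delta$. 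The missing idea is to exploit that $H_1$ and $H_2$ share the spanning path $P_{n-2}$, so that
$w^{(\ell)}(H_1)-w^{(\ell)}(H_2)\ge w^{(\ell)}(P_{n-2})-w^{(\ell)}(H_2)\ge-\sum_{i=1}^{3\gamma}\bigl|W^{(\ell)}_{H_2}(e_i)\bigr|\ge-6\gamma(4\gamma+4)^{\ell-1}$,
where the $e_i$ are the $3\gamma$ added edges and $W^{(\ell)}_{H_2}(e_i)$ is the set of $\ell$-walks passing through $e_i$; the point is that only walks meeting an added edge can contribute to the difference, and their number is bounded independently of $n$. With this $n$-free bound the tail is indeed $>-x^{-k-1}$ for $x\ge\sqrt{2n}$, and the rest of your argument goes through.
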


\begin{proof}
For $j\in\{1,2\}$,
as noted above, we have $\Delta(H_j)\leq2\gamma+2$.
Moreover, $H_j$ is obtained from $P_{n-2}$ by adding $3\gamma$ edges, denoted as $e_1,\ldots,e_{3\gamma}.$
We first claim that
\begin{eqnarray}\label{eq22}
0\leq w^{(\ell)}(H_j)-w^{(\ell)}(P_{n-2})\leq 6\gamma(4\gamma+4)^{\ell-1}.
\end{eqnarray}

Since $P_{n-2}$ is a spanning subgraph of $H_j$, the left-hand side of (\ref{eq22}) holds trivially.
We now establish the right-hand side of (\ref{eq22}).
For every edge $e_i$, where $i\in\{1,\ldots,3\gamma\}$,
let $W_{H_j}^{{(\ell)}}\!(e_i)$ denote the set of $\ell$--walks in $H_j$ that contain the edge $e_i$
(it is not necessary for these walks to start from $e_i$).
It is clear that
\begin{eqnarray}\label{eq23}
w^{(\ell)}(H_j)\leq w^{(\ell)}(P_{n-2})+\sum_{i=1}^{3\gamma}\big|W_{H_j}^{{(\ell)}}\!(e_i)\big|.
\end{eqnarray}
Furthermore,
every walk in $W_{H_j}^{(\ell)}\!(e_i)$ can be shortened to a walk in $W_{H_j}^{(\ell-1)}\!(e_i)$ by deleting its ending edge if $e_i$ is the starting edge of the walk,
or by deleting its starting edge otherwise. Since $\Delta(H_j) \le 2\gamma + 2$,
every walk in $W_{H_j}^{(\ell-1)}\!(e_i)$ corresponds to at most $4\gamma + 4$ walks in $W_{H_j}^{(\ell)}\!(e_i)$ via the above operation.
Hence,
$|W_{H_j}^{(\ell)}\!(e_i)| \le (4\gamma + 4) \, |W_{H_j}^{(\ell\!-\!1)}\!(e_i)|,$
and
$$\big|W_{H_j}^{(\ell)}\!(e_i)\big|\le(4\gamma+4)^{\ell-1}\big|W_{H_j}^{(1)}\!(e_i)\big|=2(4\gamma+4)^{\ell-1}.$$
Combining this with (\ref{eq23}), we derive the right-hand side of inequality (\ref{eq22}).

Since $\Delta(H_j)\leq 2\gamma+2$ for $j\in\{1,2\}$, we have $\rho(H_j)\leq\Delta(H_j)<\sqrt{2n}$.
By Lemma \ref{lem3.2}, we know that $\sum_{\ell=1}^\infty\frac{w^{(\ell)}(H_j)}{\rho^{\ell}}$ is convergent for $\rho\geq\sqrt{2n}$.
Next, we prove that
\begin{eqnarray}\label{eq24}
\sum_{\ell=1}^\infty\frac{w^{(\ell)}(H_1)-w^{(\ell)}(H_2)}{\rho^{\ell+1}}>0
\end{eqnarray}
for any $\rho\geq\sqrt{2n}$.
By the assumption of the lemma, we know that
\begin{eqnarray}\label{eq25}
\sum_{\ell=1}^k\frac{w^{(\ell)}(H_1)-w^{(\ell)}(H_2)}{\rho^{\ell+1}}\geq\frac1{\rho^{k+1}}.
\end{eqnarray}
In view of (\ref{eq22}), we derive that
$$w^{(\ell)}(H_1)\!-\!w^{(\ell)}(H_2)\geq w^{(\ell)}(P_{n-2})\!-\!w^{(\ell)}(H_2)\geq -6\gamma(4\gamma\!+\!4)^{\ell-1}.$$
Since $\rho\geq\sqrt{2n}$ and $6\gamma(4\gamma\!+\!4)^k$ is constant with respect to $n$, it follows that
\begin{eqnarray*}
\sum_{\ell=k\!+\!1}^\infty\!\!\!\!\frac{w^{(\ell)}\!(H_1)\!-\!w^{(\ell)}\!(H_2)}{\rho^{\ell\!+\!1}}
\geq\!\!\!\sum_{\ell=k\!+\!1}^\infty\!\!\!\!\frac{-6\gamma(4\gamma\!+\!4)^{\ell\!-\!1}}{\rho^{\ell\!+\!1}}
=\frac{-6\gamma(4\gamma\!+\!4)^k}{\rho^{k\!+\!2}}\cdot\frac1{1\!-\!\frac{4\gamma\!+4}{\rho}}
>\frac{-1}{\rho^{k\!+\!1}}.
\end{eqnarray*}
Combining this with inequality (\ref{eq25}), we conclude inequality (\ref{eq24}) holds.

Note that for $j\in\{1,2\}$, the graph $G_j$ can be constructed from a complete 3-partite graph $K_{1,1,n-2}$ by embedding
a copy of $H_j$ into its third partition.
Define $$f_j(\rho)=\frac2{1+\frac{1}{\rho}}+\frac{1}{1+\frac{n-2}{\rho}+\sum\limits_{\ell=1}^\infty\frac{w^{(\ell)}\!(H_j)}{\rho^{\ell+1}}}-2,$$
where $\rho\geq\sqrt{2n}$ and $j\in\{1,2\}$.
By Lemma \ref{lem3.2}, we have
$f_1\big(\rho(G_1)\big)=f_2\big(\rho(G_2)\big)=0.$
Based on inequality (\ref{eq24}), it follows that $f_2(\rho)>f_1(\rho)$ for any $\rho\geq\sqrt{2n}$.
Since Theorem \ref{thm2.2} indicates that $\rho(G_1)\geq\sqrt{2n}$,
we obtain $$f_2\big(\rho(G_1)\big)>f_1\big(\rho(G_1)\big)=f_2\big(\rho(G_2)\big).$$
Noting that $f_2(\rho)$ is increasing in $\rho$,
we then conclude that $\rho(G_1)>\rho(G_2)$.
\end{proof}

Recall that $H_0(a,b)$ denotes a graph obtained from a based graph $H_0$
by attaching two pendant paths of lengths $a$ and $b$ to two distinct vertices of $H_0$.
In particular, if $|a-b|\leq1$ and $H_0\cong K_r$, we denote $H_0(a,b)=K_r^{a+b+r}.$

\begin{lem}\label{lem3.4}
Let $a\geq b+2$, $a\!+\!b\!+\!r=n\!-\!2$, and $|V(H_0)|=r\geq 3$,
where $r$ is constant with respect to $n$.
If $\delta(H_0)\geq2$ and $H_0(a,b)$ admits a spanning path, then
$$\rho\Big(K_2\nabla H_0(a\!-\!1,b\!+\!1)\Big)>\rho\Big(K_2\nabla H_0(a,b)\Big).$$
\end{lem}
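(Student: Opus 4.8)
The plan is to compare the two graphs via the walk-counting machinery established in Lemma~\ref{lem3.3}. Write $H=H_0(a,b)$ and $H'=H_0(a-1,b+1)$; both are obtained from the spanning path $P_{n-2}$ by adding the same $3\gamma$ edges (the ``chords'' inside $H_0$), so they lie in $\mathbb{H}(n,\gamma)$, and since $H_0$ embeds as required, so do $K_2\nabla H$ and $K_2\nabla H'$ in $\mathbb{G}(n,\gamma)$. By Lemma~\ref{lem3.3}, it suffices to exhibit a constant $k$ (independent of $n$) with $w^{(\ell)}(H')=w^{(\ell)}(H)$ for all $\ell\le k-1$ and $w^{(k)}(H')>w^{(k)}(H)$. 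Note that $H$ and $H'$ differ only in the two pendant path lengths attached to the two chosen vertices of $H_0$; as abstract graphs they are \emph{not} isomorphic precisely because $a\ge b+2$ (so $a-1\neq b+1$ is still possible, but the multiset of pendant-path lengths changes from $\{a,b\}$ to $\{a-1,b+1\}$, which differ since $a>b+1$).

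First I would reduce the walk count to the ``interaction'' between the two pendant paths and the base graph. For a fixed length $\ell$ that is much smaller than $b$, an $\ell$-walk in $H$ (or $H'$) either stays entirely within distance $\ell$ of $H_0$ — in which case it only sees a pendant path as an infinite ray and cannot distinguish $a$ from $a-1$ or $b$ from $b+1$ — or it stays entirely inside one pendant path, in which case it is a walk on a path segment $P_m$ with $m\in\{a,b\}$ (resp.\ $m\in\{a-1,b+1\}$) together with one attachment vertex. So for $\ell<b$ (say), $w^{(\ell)}(H)-w^{(\ell)}(H')$ equals the difference of walk counts between the disjoint unions of the two pendant paths (with their attachment endpoints) in the two configurations, plus zero from the ``near $H_0$'' walks. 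The key elementary fact is: for a path $P_m$ with a distinguished endpoint, the number of $\ell$-walks that \emph{start at the endpoint} and the total number of $\ell$-walks are both independent of $m$ once $m>\ell$; only when a walk is forced to bounce off the \emph{far} end of the pendant path does the length $m$ matter. Hence the first $\ell$ at which $w^{(\ell)}$ can differ is $\ell=b+1$ (the shortest length at which a walk can reach the far end of the shorter pendant path and thereby ``feel'' its length). Take $k=b+1$ — but $b$ grows with $n$, so this is not yet a constant-$k$ argument.

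To repair this, I would instead observe that Lemma~\ref{lem3.3} as stated needs $k$ constant, so the right move is to compare walk \emph{generating functions} directly, exactly as in the proof of Lemma~\ref{lem3.3}, rather than at a single finite $k$. Concretely: both pendant paths contribute a closed-form generating function $\sum_\ell w^{(\ell)}_{\text{ray}}(\rho^{-1})$; using the transfer-matrix (or reflection) formula for walks on a path of length $m$ attached at one end, one gets that the pendant-path contribution to $\sum_\ell w^{(\ell)}(H)\rho^{-(\ell+1)}$ is a \emph{decreasing} function of the number of pendant edges of given total, i.e.\ balancing the two pendant lengths \emph{increases} the generating function, while un-balancing (moving from $(a-1,b+1)$ toward $(a,b)$ with $a\ge b+2$) \emph{decreases} it. Equivalently, define $g(m)=\sum_{\ell\ge 1} t^\ell\,(\text{number of }\ell\text{-walks confined to the pendant ray of length }m)$; one shows $g(a-1)+g(b+1)>g(a)+g(b)$ for $a\ge b+2$ and $t=\rho^{-1}$ small, by a convexity/interlacing argument on $m\mapsto g(m)$ (the second difference $g(m+1)-2g(m)+g(m-1)$ has a fixed sign for $t$ small, coming from the exponentially small ``far-end correction'' $\sim t^{2m}$). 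Feeding this into the $f_j(\rho)$ functions from the proof of Lemma~\ref{lem3.3} — with $H_1=H'$, $H_2=H$ — gives $f_{H}(\rho)>f_{H'}(\rho)$ for all $\rho\ge\sqrt{2n}$, hence $\rho(K_2\nabla H')>\rho(K_2\nabla H)$ by monotonicity of $f$, as desired.

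\medskip

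The main obstacle is making the ``balancing increases walk counts'' statement both precise and quantitative enough to survive the $\rho\ge\sqrt{2n}$ regime. The clean way is: for a ray $P_m=p_0p_1\cdots p_{m}$ with attachment at $p_0$, the generating function for closed or open walks confined to the ray, evaluated at $t=\rho^{-1}$, equals (rational function of $t$) $\,+\,C(t)\,t^{2m}\cdot(\text{rational})$ where the second term is the reflection correction from the far end $p_m$; since $\rho>\sqrt{2n}$ we have $t^{2m}\le (2n)^{-m}$, which for $m\ge b\ge 1$ is astronomically smaller than the $\rho^{-(k+1)}$ ``gap'' one would get from any genuine difference — so one must be careful to locate the \emph{first} length at which the two graphs differ and check its coefficient has the right sign before the exponential tails can interfere. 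I expect the cleanest route is to avoid the generating function entirely and argue: the smallest $\ell$ with $w^{(\ell)}(H)\neq w^{(\ell)}(H')$ is $\ell_0 := b+2$ (for a suitable base graph; one checks the coefficient is strictly positive for $H'$), then invoke a version of Lemma~\ref{lem3.3} that permits $k=\ell_0$ to depend on $n$ — which is legitimate because the proof of Lemma~\ref{lem3.3} only used $k$ constant to bound the \emph{tail} $\sum_{\ell>k}$, and here the tail bound $6\gamma(4\gamma+4)^{\ell-1}$ is dwarfed by $\rho^{-(k+1)}$ as long as $(4\gamma+4)/\sqrt{2n}<1$, i.e.\ for $n$ large, uniformly in $k$. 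Thus the ``constant $k$'' hypothesis can be relaxed to ``$k=o(\log n)$'' at no cost, and $\ell_0=b+2\le n$ certainly qualifies after re-checking the tail estimate. Verifying the sign of the $\ell_0$-th coefficient — essentially that the shorter pendant path, once lengthened by one, admits strictly more length-$\ell_0$ walks than the longer one shortened by one — is a finite, explicit path-walk computation and is where the condition $a\ge b+2$ (ensuring the two configurations are genuinely different and the inequality is strict) enters.
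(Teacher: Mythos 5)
Your proposal correctly locates the heart of the matter: reduce to comparing $\sum_\ell w^{(\ell)}/\rho^{\ell+1}$ for the two graphs, observe that the walk counts agree for $\ell\le b+1$ (both sides only see bare path segments), and that at $\ell=b+2$ the graph $H_0(a-1,b+1)$ gains strictly (this is where $\delta(H_0)\ge 2$ and $a\ge b+2$ enter, giving a surplus of $2(d_{H_0}(u_{a+r})-1)\ge 2$ at order $\rho^{-(b+3)}$). This matches the paper's inequality (28). The fatal gap is in how you dispose of the terms with $\ell\ge b+3$. You propose to reuse the crude tail bound $w^{(\ell)}(H_1)-w^{(\ell)}(H_2)\ge -6\gamma(4\gamma+4)^{\ell-1}$ from the proof of Lemma \ref{lem3.3} with $k=\ell_0=b+2$. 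Summing that bound from $\ell_0+1$ gives a negative contribution of order $\gamma(4\gamma+4)^{\ell_0}/\rho^{\ell_0+2}$, which is smaller than the gain $2/\rho^{\ell_0+1}$ only when $(4\gamma+4)^{\ell_0}\lesssim \rho\approx\sqrt{2n}$, i.e.\ only for $\ell_0=O(\log n)$ --- exactly the relaxation ``$k=o(\log n)$'' you yourself state. But $b$ is typically $\Theta(n)$ (e.g.\ $b\approx(n-2-r)/2$ in the intended application), so $\ell_0=b+2$ does \emph{not} qualify, and your closing assertion that ``$\ell_0=b+2\le n$ certainly qualifies'' contradicts your own criterion. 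The generating-function/convexity sketch in your middle paragraph has the same unresolved difficulty and is not carried out.

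What is actually needed, and what the paper supplies, is a much sharper bound on the \emph{deficit} walks: writing $W_0^{(\ell)}(u_1u_2)$ for the $\ell$-walks of $H_2=H_0(a,b)$ through $u_1u_2$ that are not walks of the spanning path, one shows (i) $W_0^{(\ell)}(u_1u_2)=\varnothing$ for all $\ell\le a$, so the negative contribution only begins at $\ell=a+1\ge b+3$, one full order in $\rho$ below the gain at $b+2$; and (ii) via a classification of these walks into five types, the recursion $|W_0^{(\ell)}|\le(2r^2+6r^4+2(\ell-1))|W_0^{(\ell-2)}|\le(\tfrac56\rho)^2|W_0^{(\ell-2)}|$ for $\ell\le\lfloor 2n/3\rfloor$, giving $|W_0^{(\ell)}|\le 2(r-2)(\tfrac56\rho)^{\ell-a-1}$ and hence a total negative contribution of at most $12(r-2)/\rho^{a+2}<1/\rho^{b+3}$. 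The growth rate per step here is $O(\sqrt{\ell}+r)$, not $4\gamma+4$ raised to the full length, and the delayed onset at $\ell=a+1$ is what makes the comparison with the $\rho^{-(b+3)}$ gain work. Neither ingredient appears in your argument, so as written the proof does not close.
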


\begin{proof}
Denote $H_1=H_0(a\!-\!1,b\!+\!1)$ and $H_2=H_0(a,b)$.
Since $\rho(K_2\nabla(n\!-\!2)K_1)=\frac12(1\!+\!\sqrt{8n-15})$, and $K_2\nabla(n\!-\!2)K_1$ is a spanning subgraph of $K_2\nabla H_i$ for $i\in\{1,2\}$,
it follows that $\rho(K_2\nabla H_i)\geq\sqrt{2n}$ for sufficiently large $n$.

By an argument analogous to that in the proof of Lemma \ref{lem3.3}, we can see that if
\begin{eqnarray}\label{eq26}
\sum_{\ell=1}^\infty\frac{w^{(\ell)}\!(H_1)-w^{(\ell)}\!(H_2)}{\rho^{\ell+1}}>0
\end{eqnarray}
for any $\rho\geq\sqrt{2n}$, then $\rho\big(K_2\nabla H_1\big)>\rho\big(K_2\nabla H_2\big).$
Therefore, it suffices to demonstrate that inequality (\ref{eq26}) holds for $\rho\geq\sqrt{2n}$.

Let $P=u_1u_2\ldots u_{n-2}$ be a spanning path of $H_2$.
Then, $H_0$ is the subgraph of $H_2$ induced by $\{u_{a+1},\ldots,u_{a+r}\}$, where $u_{a+r}=u_{n-b-2}$.
Moreover, $H_1$ can be constructed from $H_2$ by deleting the edge $u_1u_2$ and adding the edge $u_{n-2}u_1$.
Let $W^{(\ell)}_{H_1}(e)$ denote the set of $\ell$--walks in $H_1$ that contains the edge $e$.
Since $H_1-\{u_{n-2}u_1\}\cong H_2-\{u_1u_2\}$, for any positive integer $\ell$, we have
\begin{eqnarray}\label{eq27}
w^{(\ell)}\!(H_1)-w^{(\ell)}\!(H_2)=\big|W^{(\ell)}_{H_1}\!(u_{n\!-\!2}u_1)\big|-\big|W^{(\ell)}_{H_2}\!(u_1u_2)\big|.
\end{eqnarray}

We first prove that for $\rho\geq\sqrt{2n}$, the following inequality holds:
\begin{eqnarray}\label{eq28}
\sum_{\ell=1}^{b+2}\frac{|W^{(\ell)}_{H_1}\!(u_{n\!-\!2}u_1)|-|W^{(\ell)}_{H_2}\!(u_1u_2)|}{\rho^{\ell+1}}\geq\frac{2}{\rho^{b+3}}.
\end{eqnarray}

Observe that for $\ell\leq a$, every $\ell$--walk in $W^{(\ell)}_{H_2}\!(u_1u_2)$ can only use edges from the induced path $P'_{\ell+1}=u_1u_2\ldots u_{\ell+1}$ in $H_2$.
Similarly, for $\ell\leq b+1$, every $\ell$--walk in $W^{(\ell)}_{H_1}\!(u_{n-2}u_1)$ can only use edges from the induced path $P''_{\ell+1}=u_1u_{n-2}\ldots u_{n-\ell-1}$ in $H_1$.
Recall that $a\geq b\!+\!2$.
Hence, for $\ell\leq b+1$, we have $P'_{\ell+1}\cong P''_{\ell+1}$, and thus $|W^{(\ell)}_{H_1}\!(u_{n-2}u_1)|=|W^{(\ell)}_{H_2}\!(u_1u_2)|.$

Furthermore, every $(b\!+\!2)$-walk in $W^{(\ell)}_{H_2}\!(u_1u_2)$ can only utilize edges from the induced path $P'_{b+3}=u_1u_2\ldots u_{b+3}$ in $H_2$.
By contrast, every $(b\!+\!2)$-walk in $W^{(\ell)}_{H_1}\!(u_{n-2}u_1)$ can use edges from the subgraph
obtained from the induced path $P''_{b+2}=u_1u_{n-2}\ldots u_{n-b-2}$ in $H_1$ by attaching $d_{H_0}(u_{a\!+\!r})$ edges to the vertex $u_{a+r}=u_{n-b-2}$.
Consequently, $$\big|W^{(b\!+\!2)}_{H_1}\!(u_{n\!-\!2}u_1)\big|-\big|W^{(b\!+\!2)}_{H_2}\!(u_1u_2)\big|=2\big(d_{H_0}(u_{a\!+\!r})\!-\!1\big)\geq2.$$
Therefore, inequality (\ref{eq28}) follows.

Next, we demonstrate that for $\rho\geq\sqrt{2n}$, the following inequality holds:
\begin{eqnarray}\label{eq29}
\sum_{\ell=\lfloor\frac{2n}3\rfloor+1}^{\infty}\!\!\!\!\frac{|W^{(\ell)}_{H_1}\!(u_{n\!-\!2}u_1)|-|W^{(\ell)}_{H_2}\!(u_1u_2)|}{\rho^{\ell+1}}
>-\frac{1}{\rho^{b+3}}.
\end{eqnarray}
It suffices to show that $\sum_{\ell=\lfloor2n/3\rfloor+1}^{\infty}\!\!\frac{|W^{(\ell)}_{H_2}\!(u_1u_2)|}{\rho^{\ell+1}}<\frac{1}{\rho^{b+3}}.$
Note that $|V(H_2)|=n-2$, $\Delta(H_2)\leq r$, and $\rho\geq\sqrt{2n}$.
Hence,
$|W^{(\ell)}_{H_2}\!(u_1u_2)|\leq w^{(\ell)}(H_2)\leq (n\!-\!2)r^{\ell}\leq \rho^2r^{\ell}$.
It follows that
$$\sum_{\ell=\lfloor\frac{2n}3\rfloor+1}^{\infty}\!\!\frac{|W^{(\ell)}_{H_2}\!(u_1u_2)|}{\rho^{\ell+1}}
\leq \sum_{\ell=\lfloor\frac{2n}3\rfloor+1}^{\infty}\frac{r^{\ell}}{\rho^{\ell-1}}
=\frac{r^{\lfloor\frac{2n}3\rfloor+1}}{\rho^{\lfloor\frac{2n}3\rfloor}}\cdot\frac{1}{1-\frac{r}{\rho}}<\frac{1}{\rho^{\frac{n}{2}}}.$$
Furthermore, since $a\geq b+2$ and $a\!+\!b=n\!-\!r\!-\!2\leq n\!-\!5$, we deduce that $b<\frac n2-3$.
Therefore, we have $\frac{1}{\rho^{n/2}}<\frac{1}{\rho^{b+3}}$, and thus inequality (\ref{eq29}) is established.

Finally, we prove that for $\rho\geq\sqrt{2n}$, the following inequality holds:
\begin{eqnarray}\label{eq30}
\sum_{\ell=b+3}^{\lfloor\frac{2n}3\rfloor}\!\!\!\!\frac{|W^{(\ell)}_{H_1}\!(u_{n\!-\!2}u_1)|-|W^{(\ell)}_{H_2}\!(u_1u_2)|}{\rho^{\ell+1}}
>-\frac{1}{\rho^{b+3}}.
\end{eqnarray}
Recall that $H_2$ has a spanning path $P=u_1u_2\ldots u_{n-2}$, and $H_1$ also admits a spanning path $u_2u_3\ldots u_{n-2}u_1$, which is isomorphic to $P$.
Hence, $|W^{(\ell)}_{H_1}\!(u_{n\!-\!2}u_1)|\geq|W^{(\ell)}_{P}\!(u_1u_2)|$.
Now, define $W^{(\ell)}_0\!(u_1u_2)=W^{(\ell)}_{H_2}\!(u_1u_2)\setminus W^{(\ell)}_{P}\!(u_1u_2).$
For every $\ell\geq b+3$, we deduce that
\begin{eqnarray}\label{eq31}
\big|W^{(\ell)}_{H_1}\!(u_{n\!-\!2}u_1)\big|\!-\!\big|W^{(\ell)}_{H_2}\!(u_1u_2)\big|\!\geq \!
\big|W^{(\ell)}_{P}\!(u_1u_2)\big|\!-\!\big|W^{(\ell)}_{H_2}\!(u_1u_2)\big|\!=\!-\big|W^{(\ell)}_0\!(u_1u_2)\big|.
\end{eqnarray}

We now define three subsets of $W^{(\ell)}_0\!(u_1u_2)$ as follows:
\vspace{1mm}

{\bf Type I}: All $\ell$--walks $e_1\ldots e_\ell$ where the sub-walk $e_3\ldots e_\ell\in W^{(\ell\!-\!2)}_0\!(u_1u_2)$;

{\bf Type II}: All $\ell$--walks $e_1\ldots e_\ell$ where the sub-walk $e_1\ldots e_{\ell\!-\!2}\in W^{(\ell\!-\!2)}_0\!(u_1u_2)$;

{\bf Type III}: All $\ell$--walks in $W^{(\ell)}_0\!(u_1u_2)$ that belong to neither Type I nor Type II.

\vspace{1mm}

It should be noted that the first two types of $\ell$--walks are not mutually exclusive.
Recall that $\Delta(H_2)\leq r$. Hence,
there are at most $r^2|W^{(\ell\!-\!2)}_0\!(u_1u_2)|$ walks of Type I and Type II, respectively.
In the following, we will evaluate the number of walks of Type III.

Let $E_0=E(H_2)\setminus E(P)$.
Clearly, every walk in $W^{(\ell)}_0\!(u_1u_2)$ passes through the edge $u_1u_2$ and at least one edge from $E_0$.
Choose an arbitrary $\ell$--walk $e_1\ldots e_\ell$ of Type III.
We will verify the following two statements:
\vspace{1mm}

(i) either $u_1u_2\in \{e_1,e_2\}$ or $u_1u_2\in \{e_{\ell\!-\!1},e_\ell\}$;

(ii) $E_0\cap\{e_3,\ldots,e_{\ell\!-\!2}\}=\varnothing$.

\vspace{1mm}
\noindent
Indeed, if $u_1u_2\notin\{e_1,e_2\}$, then $E_0\cap \{e_1,e_2\}\neq \varnothing$;
otherwise, $E_0\cap\{e_3,\ldots,e_\ell\}\neq\varnothing$ and
the sub-walk $e_3\ldots e_\ell\in W^{(\ell\!-\!2)}_0\!(u_1u_2)$.
This implies that the entire $\ell$--walk would be classified as Type I,
leading to a contradiction.
Furthermore, if $u_1u_2\notin\{e_1,e_2\}$, then $u_1u_2\in \{e_{\ell\!-\!1},e_\ell\}$;
otherwise, $u_1u_2\in\{e_3,\ldots,e_{\ell-2}\}$, and
the sub-walk $e_1\ldots e_{\ell-2}\in W^{(\ell\!-\!2)}_0\!(u_1u_2)$ since $E_0\cap \{e_1,e_2\}\neq \varnothing$.
This implies that the entire $\ell$--walk would be classified as Type II,
leading to another contradiction.
Hence, statement (i) holds.
Assume without loss of generality that $u_1u_2\in \{e_1,e_2\}$.
Then, $e_3,\ldots,e_{\ell\!-\!2}\notin E_0$;
otherwise, the sub-walk $e_1\ldots e_{\ell\!-\!2}$ would belong to
$W^{(\ell\!-\!2)}_0\!(u_1u_2)$, and the entire $\ell$--walk would be classified as Type II, leading to a contradiction.
Hence, statement (ii) holds.

We now partition the $\ell$--walks of Type III into two disjoint subclasses:
\vspace{1mm}

{\bf Type IV}: All $\ell$--walks $e_1\ldots e_\ell$ where no $e_ie_{i\!+\!1}$ ($3\leq i\leq\ell\!-\!3$) forms a closed 2-walk;

{\bf Type V}: All $\ell$--walks of Type III that do not belong to Type IV.

\vspace{1mm}

Choose an arbitrary $\ell$--walk $e_1\ldots e_\ell$ of Type IV.
Based on statement (ii), the sub-walk $e_3\ldots e_{\ell\!-\!2}$ is, in fact, a sub-path of $P-E_0$.
Moreover, by statement (i), if $u_1u_2\in \{e_1,e_2\}$,
then the path $e_3\ldots e_{\ell\!-\!2}$ can only start at a vertex from $\{u_1,u_2,u_3\}$.
Since $\Delta(H_2)\leq r$,
there are at most $3r^4$ such $\ell$--walks.
Similarly, if $u_1u_2\in \{e_{\ell-1},e_\ell\}$,
then the path $e_3\ldots e_{\ell\!-\!2}$ can only end at a vertex from $\{u_1,u_2,u_3\}$,
and thus there are also at most $3r^4$ such $\ell$--walks.
Therefore, the total number of $\ell$--walks of Type IV is at most $6r^4$.

Consider an arbitrary $\ell$--walk of Type V, denoted as $W=e_1\ldots e_\ell$.
Then, there exists some $i\in\{3,\ldots,\ell\!-\!3\}$ such that the edges $e_i$ and $e_{i\!+\!1}$ form a closed 2-walk,
and both $e_i$ and $e_{i+1}$ belong to $E(P)\setminus E_0$, as stated in (ii).
Thus, $e_1\ldots e_{i-1}e_{i+2}\ldots e_{\ell}\in W^{(\ell\!-\!2)}_0\!(u_1u_2).$
Consequently, every $\ell$--walk $W$ of Type V can always be reduced to an $(\ell\!-\!2)$-walk $W'\in W^{(\ell\!-\!2)}_0\!(u_1u_2)$
by deleting a closed 2-walk $e_ie_{i\!+\!1}$, where $e_i,e_{i+1}\in E(P)\setminus E_0$.
On the other hand, every $(\ell\!-\!2)$-walk $W'=e'_1\ldots e_{\ell\!-\!2}'$ in $W^{(\ell\!-\!2)}_0\!(u_1u_2)$
may be used to construct an $\ell$--walk of Type V by inserting a closed 2-walk $u_iu_{i-1}u_i$ or $u_iu_{i+1}u_i$ at a passing vertex $u_i$
(see Figure \ref{fig.023B}).
Since there are at most $\ell\!-\!1$ positions to insert a closed 2-walk,
and once the position is fixed, there exist at most two choices for the closed 2-walk from $E(P)\setminus E_0$,
it follows that there are at most $2(\ell\!-\!1)|W^{(\ell\!-\!2)}_0\!(u_1u_2)|$ walks of Type V.

Recall that every walk in $W^{(\ell)}_0\!(u_1u_2)$ must belong to one of Type I, II, or III,
and every walk of Type III must belong to either Type IV or Type V.
It follows that
\begin{eqnarray}\label{eq32}
\big|W^{(\ell)}_0\!(u_1u_2)\big|\leq 2r^2\big|W^{(\ell\!-\!2)}_0\!(u_1u_2)\big|\!+\!6r^4\!+\!2(\ell\!-\!1)\big|W^{(\ell\!-\!2)}_0\!(u_1u_2)\big|.
\end{eqnarray}

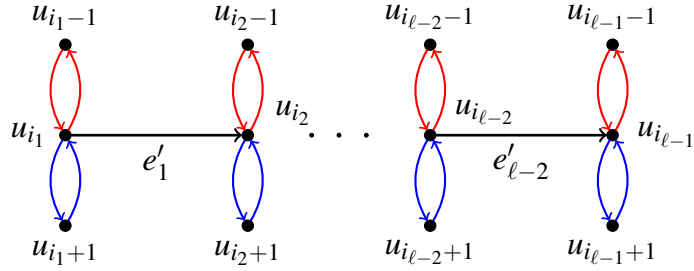
\begin{figure}
\centering
\begin{tikzpicture}
[scale=1.2, x=1.00mm, y=1.00mm, inner xsep=0pt, inner ysep=0pt, outer xsep=0pt, outer ysep=0pt]
\definecolor{F}{rgb}{0,0,0}
\node[circle,fill=green,draw=green,inner sep=0pt,minimum size=1.5mm] (A) at (0,0) {};
\draw(-4,0) node[anchor=center]{\fontsize{12.38}{8.65}\selectfont $u_{i_1}$};
\node[circle,fill=red,draw=red,inner sep=0pt,minimum size=1.5mm] (A1) at (0,10) {};
\draw(0,13) node[anchor=center]{\fontsize{12.38}{8.65}\selectfont $u_{i_1-1}$};
\node[circle,fill=blue,draw=blue,inner sep=0pt,minimum size=1.5mm] (A2) at (0,-10) {};
\draw(0,-13) node[anchor=center]{\fontsize{12.38}{8.65}\selectfont $u_{i_1+1}$};

\draw[->,color=red,line width=1pt] (A) to [out=60, in=300, looseness=1] (A1);
\draw[->,color=red,line width=1pt] (A1) to [out=240, in=120, looseness=1] (A);
\draw[->,color=blue,line width=1pt]  (A2) to [out=60, in=300, looseness=1] (A);
\draw[->,color=blue,line width=1pt] (A) to [out=240, in=120, looseness=1] (A2);

\node[circle,fill=green,draw=green,inner sep=0pt,minimum size=1.5mm] (B) at (20,0) {};
\draw(25,2.5) node[anchor=center]{\fontsize{12.38}{8.65}\selectfont $u_{i_2}$};
\node[circle,fill=red,draw=red,inner sep=0pt,minimum size=1.5mm] (B1) at (20,10) {};
\draw(20,13) node[anchor=center]{\fontsize{12.38}{8.65}\selectfont $u_{i_2-1}$};
\node[circle,fill=blue,draw=blue,inner sep=0pt,minimum size=1.5mm] (B2) at (20,-10) {};
\draw(20,-13) node[anchor=center]{\fontsize{12.38}{8.65}\selectfont $u_{i_2+1}$};
\draw[->,color=red,line width=1pt] (B) to [out=60, in=300, looseness=1] (B1);
\draw[->,color=red,line width=1pt] (B1) to [out=240, in=120, looseness=1] (B);
\draw[->,color=blue,line width=1pt]  (B2) to [out=60, in=300, looseness=1] (B);
\draw[->,color=blue,line width=1pt] (B) to [out=240, in=120, looseness=1] (B2);
\node[circle,fill=green,draw=green,inner sep=0pt,minimum size=1.5mm] (C) at (40,0) {};
\draw(46,2.5) node[anchor=center]{\fontsize{12.38}{8.65}\selectfont $u_{i_{\ell-2}}$};
\node[circle,fill=red,draw=red,inner sep=0pt,minimum size=1.5mm] (C1) at (40,10) {};
\draw(40,13) node[anchor=center]{\fontsize{12.38}{8.65}\selectfont $u_{i_{\ell-2}-1}$};
\node[circle,fill=blue,draw=blue,inner sep=0pt,minimum size=1.5mm] (C2) at (40,-10) {};
\draw(40,-13) node[anchor=center]{\fontsize{12.38}{8.65}\selectfont $u_{i_{\ell-2}+1}$};
\draw[->,color=red,line width=1pt] (C) to [out=60, in=300, looseness=1] (C1);
\draw[->,color=red,line width=1pt] (C1) to [out=240, in=120, looseness=1] (C);
\draw[->,color=blue,line width=1pt]  (C2) to [out=60, in=300, looseness=1] (C);
\draw[->,color=blue,line width=1pt] (C) to [out=240, in=120, looseness=1] (C2);
\node[circle,fill=green,draw=green,inner sep=0pt,minimum size=1.5mm] (D) at (60,0) {};
\draw(66,0) node[anchor=center]{\fontsize{12.38}{8.65}\selectfont $u_{i_{\ell-1}}$};
\node[circle,fill=red,draw=red,inner sep=0pt,minimum size=1.5mm] (D1) at (60,10) {};
\draw(60,13) node[anchor=center]{\fontsize{12.38}{8.65}\selectfont $u_{i_{\ell-1}-1}$};
\node[circle,fill=blue,draw=blue,inner sep=0pt,minimum size=1.5mm] (D2) at (60,-10) {};
\draw(60,-13) node[anchor=center]{\fontsize{12.38}{8.65}\selectfont $u_{i_{\ell-1}+1}$};
\draw[->,color=red,line width=1pt] (D) to [out=60, in=300, looseness=1] (D1);
\draw[->,color=red,line width=1pt] (D1) to [out=240, in=120, looseness=1] (D);
\draw[->,color=blue,line width=1pt]  (D2) to [out=60, in=300, looseness=1] (D);
\draw[->,color=blue,line width=1pt] (D) to [out=240, in=120, looseness=1] (D2);
\draw[->,color=green,line width=1.2pt]  (A) -- (B);
\draw[->,color=green,line width=1.2pt]  (C) -- (D);
\node[circle,fill=F,draw,inner sep=0pt,minimum size=0.5mm] () at (27,0) {};
\node[circle,fill=F,draw,inner sep=0pt,minimum size=0.5mm] () at (30,0) {};
\node[circle,fill=F,draw,inner sep=0pt,minimum size=0.5mm] () at (33,0) {};
\draw(10,-3) node[anchor=center]{\fontsize{12.38}{8.65}\selectfont $e_{1}'$};
\draw(50,-3) node[anchor=center]{\fontsize{12.38}{8.65}\selectfont $e_{\ell-2}'$};
\end{tikzpicture}
\caption{Extending an $(\ell\!-\!2)$--walk to an $\ell$--walk by inserting a closed 2--walk.}{\label{fig.023B}}
\end{figure}

Recall that every walk in $W^{(\ell)}_0\!(u_1u_2)$ must pass through $u_1u_2$ and at least one edge from $E_0$.
Moreover, all edges in $E_0$ are incident to vertices in $\{u_{a+1},\ldots,u_{a+r}\}$.
Hence,
$|W^{(\ell)}_0\!(u_1u_2)|=0$ for all $\ell\leq a$.
Since $u_{a+1}$ is incident to at most $r-2$ edges in $E_0$, we have $|W^{(a+1)}_0\!(u_1u_2)|\!=\!2(r\!-\!2)$.
Furthermore, $|W^{(a+2)}_0\!(u_1u_2)|$ is also a positive constant.
Thus, $$\big|W^{(a+2)}_0\!(u_1u_2)\big|\leq 2(r\!-\!2)\frac56\rho.$$

To prove inequality \eqref{eq30}, we may assume $\ell\leq\lfloor\frac{2n}3\rfloor$.
Since $\rho\geq\sqrt{2n}$,
it follows that $2r^2\!+\!6r^4\!+\!2(\ell\!-\!1)\leq \frac{25}{18}n\leq (\frac{5}{6}\rho)^2$.
Combining this with \eqref{eq32}, we obtain
\begin{align*}
\big|W^{(\ell)}_0\!(u_1u_2)\big|\leq\big(2r^2\!+\!6r^4\!+\!2(\ell\!-\!1)\big)\big|W^{(\ell\!-\!2)}_0\!(u_1u_2)\big|\leq (\frac{5}{6}\rho)^2 \big|W^{(\ell\!-\!2)}_0\!(u_1u_2)\big|
\end{align*}
for all $\ell\geq a+3$.
By induction on $\ell$, where $b\!+\!3\leq a\!+\!1\leq\ell\leq\lfloor\frac{2n}3\rfloor$, we can derive that
\begin{eqnarray}\label{eq33}
\big|W^{(\ell)}_0\!(u_1u_2)\big|\leq 2(r\!-\!2)(\frac{5}{6}\rho)^{\ell-a-1}.
\end{eqnarray}

By inequality \eqref{eq31}, we have
$|W^{(\ell)}_{H_1}\!(u_{n\!-\!2}u_1)|\!-\!|W^{(\ell)}_{H_2}\!(u_1u_2)|\!\geq\!-|W^{(\ell)}_0\!(u_1u_2)|.$
Combining this with \eqref{eq33}, and noting that $|W^{(\ell)}_0\!(u_1u_2)|=0$ for $\ell\leq a$, we deduce that
\begin{eqnarray*}
\sum_{\ell=b\!+\!3}^{\lfloor\frac{2n}3\rfloor}\!\!\!\!\frac{|W^{(\ell)}_{H_1}\!(u_{n\!-\!2}u_1)|\!-\!|W^{(\ell)}_{H_2}\!(u_1u_2)|}{\rho^{\ell+1}}
\geq\sum_{\ell=a\!+\!1}^{\infty}\!\!\!\!\frac{\!-2(r\!-\!2)(\frac{5}{6}\rho)^{\ell\!-\!a\!-\!1}}{\rho^{\ell+1}}
=\frac{\!-12(r\!-\!2)}{\rho^{a+2}}.
\end{eqnarray*}
Since $a\geq b+2$ and $r\geq3$, it follows that $\frac{12(r\!-\!2)}{\rho^{a+2}}<\frac{1}{\rho^{b+3}}$.
Therefore, inequality \eqref{eq30} holds.

Now, by summing the three inequalities \eqref{eq28}, \eqref{eq29}, and \eqref{eq30}, we derive that
\begin{eqnarray*}
\sum_{\ell=1}^{\infty}\!\frac{|W^{(\ell)}_{H_1}\!(u_{n\!-\!2}u_1)|\!-\!|W^{(\ell)}_{H_2}\!(u_1u_2)|}{\rho^{\ell+1}}
>0.
\end{eqnarray*}
Combining this with equality \eqref{eq27},
we can further deduce inequality \eqref{eq26}.
Therefore, the lemma is established, and this concludes the proof.
\end{proof}

The following lemma gives a general characterization of graphs in $\mathbb{G}(n,\gamma)$.
The non-orientable case was established by Archdeacon and Huneke \cite{ARC1}.
Its general version, which applies to both orientable and non-orientable surfaces,
is a consequence of the famous Robertson-Seymour theorem (Wagner's conjecture):
Every minor-closed class of graphs can be characterized by a finite family of excluded minors.
A precise formulation of the related results and their applications can be found in a survey by Lov\'{a}sz \cite{LL}.

\begin{lem}[\cite{ARC1,ROB}]\label{lem3.5}
For every closed compact surface, there exists a finite list of graphs such
that a graph $G$ is embeddable on this surface if and only if it does not contain any
of these graphs as a minor.
\end{lem}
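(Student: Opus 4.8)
The plan is to derive Lemma~\ref{lem3.5} from two facts: that embeddability on a fixed surface is preserved under taking minors, and that the finite graphs are well-quasi-ordered under the minor relation. Write $\mathcal{E}(\Sigma)$ for the class of finite graphs embeddable on a closed compact surface $\Sigma$.

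First I would verify that $\mathcal{E}(\Sigma)$ is minor-closed. It suffices to treat the three elementary operations that generate the minor relation. Deleting an edge, or deleting an isolated vertex, from an embedded graph obviously leaves an embedding on the same surface. For edge contraction of $e=uv$, one contracts $e$ along the arc that represents it inside $\Sigma$, merging $u$ and $v$ into a single point and rerouting the remaining half-edges incident to $u$ and $v$ alongside the original arc; the result is again an embedding on $\Sigma$ (any loops or parallel edges thereby created are discarded, as is standard for the minor operation on simple graphs). Hence every minor of a graph in $\mathcal{E}(\Sigma)$ again lies in $\mathcal{E}(\Sigma)$.

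Next I would invoke the Graph Minors Theorem of Robertson and Seymour: the finite graphs contain no infinite antichain under the minor order. For any minor-closed class $\mathcal{C}$, let $\mathrm{Obs}(\mathcal{C})$ denote the set of graphs not in $\mathcal{C}$ that are minor-minimal with this property. Any two distinct members of $\mathrm{Obs}(\mathcal{C})$ are incomparable in the minor order (if one were a minor of the other, the larger could not be minimal), so $\mathrm{Obs}(\mathcal{C})$ is an antichain, hence finite. Taking $\mathcal{C}=\mathcal{E}(\Sigma)$ produces a finite list $\mathrm{Obs}(\mathcal{E}(\Sigma))$, and by the definition of the obstruction set of a minor-closed class, a graph $G$ embeds on $\Sigma$ if and only if it contains no member of $\mathrm{Obs}(\mathcal{E}(\Sigma))$ as a minor. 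This is precisely the assertion of Lemma~\ref{lem3.5}.

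The main obstacle is the Graph Minors Theorem itself, which is by far the deepest input and occupies the whole Graph Minors series of Robertson and Seymour. For surfaces, though, one can argue more directly: Archdeacon settled the projective plane and, with Huneke, all nonorientable surfaces by a topological reduction argument that bounds the order (and hence the number) of the minimal obstructions without invoking the general well-quasi-ordering machinery, while the work of Robertson and Seymour also covers the orientable surfaces. Either route yields the finiteness claimed in the lemma.
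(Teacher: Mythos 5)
The paper gives no proof of Lemma~\ref{lem3.5}; it is quoted verbatim as an external result of Archdeacon--Huneke \cite{ARC1} (nonorientable surfaces) and Robertson--Seymour \cite{ROB} (general surfaces), so there is no internal argument to compare yours against. Your outline is nevertheless a correct account of how the result is established: embeddability on a fixed surface is minor-closed (your treatment of the three elementary operations is fine, and contracting an edge along its embedded arc is the standard argument), and for a minor-closed class the obstruction set is an antichain, hence finite once one knows there are no infinite antichains. The one caveat worth flagging is chronological and logical rather than mathematical: the full well-quasi-ordering theorem (Wagner's conjecture) postdates both cited papers, so the lemma as actually referenced rests on the surface-specific finiteness arguments — exactly the alternative route you describe in your closing paragraph — not on the general Graph Minors Theorem. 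Since you offer both derivations and correctly identify which one the cited references supply, the proposal is sound; it simply proves more machinery than the paper invokes.
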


Choose an extremal graph $G^*\in SPEX(n,\gamma)$.
By Theorem \ref{thm1.2}, we have $G^*\in\mathbb{G}(n,\gamma)\cap \mathbb{H}(n,\gamma)$.
Let $\widetilde{G^*}$ be an embedding of $G^*$ on a surface $\Sigma^*$ with Euler genus $\gamma$.
As discussed at the beginning of this section, $\widetilde{G^*}$ is edge-maximal, cellular,
and thus a triangulation.

By Lemma \ref{lem3.1}, for every $u\in V(\widetilde{G^*})$,
there are $d_{\widetilde{G^*}}(u)$ 3-faces incident to $u$,
which together form a wheel of order $d_{\widetilde{G^*}}(u)+1$.
In particular, since $d_{\widetilde{G^*}}(u^*)=n-1$,
$\widetilde{G^*}$ contains a spanning wheel with center $u^*$ and an outer cycle given by $u^{**}u_1u_2\ldots u_{n-2}u^{**}$.

Let $H^*$ be the subgraph induced by $V(G^*)\setminus \{u^*,u^{**}\}$,
where $u^*$ and $u^{**}$ are the two dominating vertices in $G^*$.
Then, $H^*$ contains a spanning path $u_1u_2\ldots u_{n-2}$.
Based on the above observation, we now present some structural claims for $\widetilde{G^*}$,
some of which are used to prove Theorem \ref{thm1.3}, and others to prove Theorems \ref{thm1.4} and \ref{thm1.5}.

\begin{claim}\label{clm3.0A}
The number of 3-faces incident to neither $u^*$ nor $u^{**}$ is $2\gamma$.
\end{claim}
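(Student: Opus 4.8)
The plan is to count faces via Euler's formula and then apply inclusion--exclusion over the two dominating vertices. Since $G^*\in\mathbb{G}(n,\gamma)\cap\mathbb{H}(n,\gamma)$, we have $e(\widetilde{G^*})=e(G^*)=3(n-2+\gamma)$, and, as recalled above, $\widetilde{G^*}$ is a triangulation cellularly embedded on a surface of Euler genus $\gamma$. First I would apply Euler's formula $|V(\widetilde{G^*})|-e(\widetilde{G^*})+f(\widetilde{G^*})=2-\gamma$ to get
$$f(\widetilde{G^*})=2-\gamma-n+3(n-2+\gamma)=2n-4+2\gamma,$$
the total number of (triangular) faces; as a consistency check, $3f(\widetilde{G^*})=2e(\widetilde{G^*})$.

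Next I would count the faces incident to $u^*$ and to $u^{**}$ separately. Both are dominating, so $d_{\widetilde{G^*}}(u^*)=d_{\widetilde{G^*}}(u^{**})=n-1$, and by Lemma~\ref{lem3.1} the 3-faces incident to $u^*$ (resp.\ $u^{**}$) form a wheel with $n-1$ spokes; in particular exactly $n-1$ faces are incident to $u^*$ and exactly $n-1$ to $u^{**}$. For the overlap, recall from Claim~\ref{cl2.5}(iii) that $u^*u^{**}\in E(\widetilde{G^*})$. Since $\widetilde{G^*}$ is a triangulation of a simple graph, every face is a genuine triangle, so any face incident to both $u^*$ and $u^{**}$ has both of them among its three vertices, hence is a triangle $u^*u^{**}w$ and therefore contains the edge $u^*u^{**}$; as exactly two faces are incident to a fixed edge, there are exactly two faces incident to both $u^*$ and $u^{**}$.

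Finally, by inclusion--exclusion the number of faces incident to $u^*$ or $u^{**}$ is $(n-1)+(n-1)-2=2n-4$, so the number of 3-faces incident to neither is $f(\widetilde{G^*})-(2n-4)=(2n-4+2\gamma)-(2n-4)=2\gamma$, as claimed. The only point that needs care is the overlap count in the second step---that in a triangulation the only faces meeting both dominating vertices are the two sharing the edge $u^*u^{**}$---but this is immediate once $u^*u^{**}$ is known to be an edge; everything else is a direct computation.
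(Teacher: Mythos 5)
Your proof is correct and follows essentially the same route as the paper: count the total number of triangular faces ($2(n-2+\gamma)$, which you get from Euler's formula and the paper gets from $3f=2e$), note that each of $u^*,u^{**}$ is incident to exactly $n-1$ faces with exactly two faces in common (the two faces on the edge $u^*u^{**}$), and finish by inclusion--exclusion. The only cosmetic difference is that the paper identifies the two common faces explicitly as $u^*u^{**}u_1$ and $u^*u^{**}u_{n-2}$, whereas you derive the count of two from the fact that every common face must contain the edge $u^*u^{**}$; both arguments are valid.
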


\begin{proof}
Since $\widetilde{G^*}$ is a triangulation of $\Sigma^*$,
every edge is incident to exactly two 3-faces.
Hence, the total number of 3-faces is $\frac23e(\widetilde{G}),$
where $e(\widetilde{G})=3(n-2+\gamma)$ because $G\in\mathbb{H}(n,\gamma)$.
By Lemma \ref{lem3.1}, both $u^*$
and $u^{**}$ are incident to $n\!-\!1$ faces,
where exactly two faces, $u^*u^{**}u_1$ and $u^*u^{**}u_{n-2}$, are their common faces.
Consequently, the number of 3-faces incident to neither $u^*$ nor $u^{**}$ is
$$2(n-2+\gamma)-2(n-1)+2=2\gamma.$$
Hence, the claim holds.
\end{proof}

\begin{claim}\label{clm3.1}
$H^*$ does not contain $K_{1,2\gamma+3}$ as a minor.
\end{claim}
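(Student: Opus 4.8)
The plan is a short argument by contradiction: a hypothetical $K_{1,2\gamma+3}$-minor in $H^*$ would produce a $K_{3,2\gamma+3}$-minor in $G^*$, contradicting the fact---recorded earlier via Lemma~\ref{lem2.5}---that $\gamma(K_{3,2\gamma+3})=\gamma+1$. First I would invoke the standard description of a $K_{1,t}$-minor: if $H^*$ had $K_{1,2\gamma+3}$ as a minor, then there would be a nonempty connected subgraph $T\subseteq H^*$ serving as the branch set of the centre, together with $2\gamma+3$ pairwise distinct vertices $v_1,\dots,v_{2\gamma+3}\in V(H^*)\setminus V(T)$, each having a neighbour in $V(T)$ (the leaf branch sets may be contracted to single vertices).

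Next I would lift this configuration to $G^*$. Since $u^*$ and $u^{**}$ are the two dominating vertices of $G^*$ and $V(H^*)=V(G^*)\setminus\{u^*,u^{**}\}$, the sets $V(T)$, $\{u^*\}$, $\{u^{**}\}$, $\{v_1\},\dots,\{v_{2\gamma+3}\}$ are pairwise disjoint and each induces a connected subgraph of $G^*$. The cross-adjacencies required for a $K_{3,2\gamma+3}$-model all hold: $V(T)$ meets each $\{v_j\}$ in an edge by the choice of $v_j$, and $u^*,u^{**}$ are adjacent to every $v_j$ because they dominate $G^*$. No edges within the three-element side of $K_{3,2\gamma+3}$ are required, so it is irrelevant whether $V(T)$ sees $u^*$ or $u^{**}$. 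Hence $K_{3,2\gamma+3}$ is a minor of $G^*$.

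Finally, since $G^*\in\mathbb{G}(n,\gamma)$ embeds on a surface of Euler genus $\gamma$, and Euler genus is monotone under vertex deletion, edge deletion and edge contraction, every minor of $G^*$ also embeds on that surface; in particular $\gamma(K_{3,2\gamma+3})\le\gamma$, contradicting $\gamma(K_{3,2\gamma+3})=\gamma+1$. This forces $H^*$ to have no $K_{1,2\gamma+3}$-minor. The argument has essentially no genuine obstacle; the only point requiring care is the minor-model bookkeeping, namely observing that $T$ automatically avoids $u^*$ and $u^{**}$ (as $T\subseteq H^*$), so that $\{u^*\}$ and $\{u^{**}\}$ are free to serve as two of the three branch sets on the high-degree side. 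One could alternatively extract the contradiction from the excluded-minor characterisation in Lemma~\ref{lem3.5}, but the explicit value of $\gamma(K_{3,2\gamma+3})$ makes the direct route cleaner.
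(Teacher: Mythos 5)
Your proposal is correct and follows essentially the same route as the paper: lift a hypothetical $K_{1,2\gamma+3}$-minor of $H^*$ to a $K_{3,2\gamma+3}$-minor of $G^*$ using the two dominating vertices, and contradict $\gamma(K_{3,2\gamma+3})=\gamma+1$ from Lemma~\ref{lem2.5}. The only cosmetic difference is that you appeal directly to minor-monotonicity of the Euler genus where the paper cites Lemma~\ref{lem3.5}; your version is, if anything, more explicit about the branch-set bookkeeping, which the paper leaves implicit.
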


\begin{proof}
By Lemma \ref{lem2.5}, $\gamma(K_{3,2\gamma+3})\geq\gamma+1$,
which implies that $K_{3,2\gamma+3}$ cannot be embedded on any surface with Euler genus $\gamma$.
Furthermore, by Lemma \ref{lem3.5},
we observe that every graph in $\mathbb{G}(n,\gamma)$ has no $K_{3,2\gamma+3}$ minor.
Hence, $H^*$ has no $K_{1,2\gamma+3}$ minor.
\end{proof}

Recall that every $k$-vertex with $k\geq3$ is referred to as a \emph{fork}.
A vertex $u_k$ in $H^*$ is called an \emph{internal vertex} if there exist two forks $u_i$ and $u_j$ such that $i<k<j$.
A 2-vertex $u_k$ is said to be \emph{contractible} if $u_k$ is an internal vertex such that $u_{k-1}u_{k+1}\notin E(H^*)$.

\begin{claim}\label{clm3.2}
$d_{H^*}\!(u_1)=d_{H^*}\!(u_{n-2})=1$.
\end{claim}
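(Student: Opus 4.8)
The plan is to argue by contradiction from the extremality of $\rho(G^*)$, treating $u_1$ (the case of $u_{n-2}$ is symmetric). Suppose $d_{H^*}(u_1)\ge2$. Since $H^*$ is obtained from the spanning path $u_1u_2\cdots u_{n-2}$ by adding $3\gamma$ edges, $u_1$ is then incident to an added edge $u_1u_j$ with $3\le j\le n-2$. Because $\widetilde{G^*}$ is a triangulation, Lemma~\ref{lem3.1} fixes the rotation at $u_1$: its $d_{\widetilde{G^*}}(u_1)=2+d_{H^*}(u_1)$ incident $3$-faces form a wheel whose rim is a cycle $C=u^{**}u^*u_2w_1w_2\cdots w_t u^{**}$, where $w_1,\dots,w_t$ (with $t=d_{H^*}(u_1)-1\ge1$) are the $H^*$-neighbours of $u_1$ distinct from $u_2$; in particular $u_2w_1\in E(\widetilde{G^*})$ and each $w_iw_{i+1}\in E(\widetilde{G^*})$. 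Apart from $u_1u^*u^{**}$ and $u_1u^*u_2$, the faces at $u_1$ thus form a triangulated fan through $u_2,w_1,\dots,w_t$.

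Next I would relocate $u_1$ to the opposite pendant tip $u_{n-2}$ by a surface-preserving surgery on $\widetilde{G^*}$: delete $u_1$ and its incident edges, which merges the wheel into a single open disk bounded by $C$; re-triangulate this disk using chords among $\{u_2,w_1,\dots,w_t\}$ only (chords incident to the dominating vertices $u^*,u^{**}$ are unavailable, being parallel to existing edges); and reinsert $u_1$ into the $3$-face $u^*u^{**}u_{n-2}$, joining it to the three corners. The underlying surface is unchanged, so the resulting simple graph $\widehat G$ still lies in $\mathbb{G}(n,\gamma)$, and its $H$-part $\widehat H$ is again a spanning path with $3\gamma$ added edges, in which $u_1$ now has degree $1$. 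To compare spectral radii I would use the walk-count machinery: let $\mathbf x$ be the Perron vector of $G^*$ with $x_{u^*}=x_{u^{**}}=1$, note that $\rho(G^*),\rho(\widehat G)\ge\sqrt{2n}$ since both graphs contain the spanning subgraph $K_2\nabla(n-2)K_1$ (so Theorem~\ref{thm2.2} and Lemma~\ref{lem3.3} both apply), observe that $\widehat G$ and $G^*$ have the same size and hence $w^{(1)}(\widehat H)=w^{(1)}(H^*)$, and check that at the first level $\ell$ where the walk sequences of $\widehat H$ and $H^*$ differ one has $w^{(\ell)}(\widehat H)>w^{(\ell)}(H^*)$ --- intuitively, moving the loose endpoint $u_1$ next to $u_{n-2}$, which sits at the far end of the other pendant path, creates strictly more short walks running through the added edges. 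Lemma~\ref{lem3.3} then yields $\rho(\widehat G)=\rho(K_2\nabla\widehat H)>\rho(K_2\nabla H^*)=\rho(G^*)$, contradicting $G^*\in SPEX(n,\gamma)$.

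The hard part is the embeddability bookkeeping inside the surgery: one must arrange the deletion, re-triangulation, and reinsertion so that no parallel edges appear and so that $\widehat G$ is still a simple graph of size $3(n-2+\gamma)$ embeddable on a surface of Euler genus $\gamma$. This is exactly where the triangulation/wheel structure of Lemma~\ref{lem3.1}, together with the face count of Claim~\ref{clm3.0A} and, if a direct re-triangulation is obstructed, the forbidden-minor characterization of Lemma~\ref{lem3.5}, are needed; an alternative when the disk cannot be closed off cleanly is to delete only the single added edge $u_1u_j$ and reinsert a suitable edge via a global re-embedding. A smaller but genuine point is verifying that the walk comparison is strict at the first differing level rather than a tie; this reduces to a finite, purely local count of the short walks destroyed and created near $u_1$, in the spirit of (but much simpler than) the Type I--V analysis in the proof of Lemma~\ref{lem3.4}.
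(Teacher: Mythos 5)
Your overall strategy (perturb $\widetilde{G^*}$ by a surface-preserving surgery, stay inside $\mathbb{G}(n,\gamma)\cap\mathbb{H}(n,\gamma)$, and get a contradiction via the walk-count comparison of Lemma~\ref{lem3.3}) is the right one, but the specific surgery you propose does not go through, and it differs from the paper's. The paper does not touch $u_1$ at all: it locates a long run of consecutive $2$-vertices $u_{i+1},\dots,u_{j-1}$ (which exists because there are at most $6\gamma$ forks), performs the edge switch $G^*-\{u_iu_{i+1},u_{j-1}u_j\}+\{u_iu_j,u_{i+1}u_{n-2}\}$, re-embeds by collapsing the two fans of $3$-faces at $u^*$ and $u^{**}$ over the run into single triangles and packing the displaced faces into the face $u^*u_{n-2}u^{**}$, and then reads off $w^{(2)}(H)-w^{(2)}(H^*)=2d_{H^*}(u_{n-2})-2d_{H^*}(u_{j-1})+2$, which is positive as soon as $d_{H^*}(u_{n-2})\ge2$ since $d_{H^*}(u_{j-1})=2$.

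The concrete gap in your version is the re-triangulation step. After deleting $u_1$, the merged face is a disk bounded by the rim $C=u^{**}u^*u_2w_1\cdots w_t$, on which $u^*$ and $u^{**}$ are consecutive. Any triangulation of this $(t+3)$-gon must contain a triangle on the boundary edge $u^*u^{**}$, and for $t\ge1$ its third vertex cannot be boundary-adjacent to both $u^*$ and $u^{**}$, so at least one diagonal incident to $u^*$ or $u^{**}$ is forced --- exactly the diagonals you correctly rule out as parallel to existing edges (the two vertices are dominating). Hence at most $t-1$ of the required $t$ chords can be added, $e(\widehat G)=e(G^*)-1<3(n-2+\gamma)$, $\widehat H\notin\mathbb{H}(n,\gamma)$, and $w^{(1)}(\widehat H)<w^{(1)}(H^*)$, so Lemma~\ref{lem3.3} cannot be invoked to conclude $\rho(\widehat G)>\rho(G^*)$; if anything the first-level comparison points the wrong way. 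Your fallback ("delete only $u_1u_j$ and reinsert a suitable edge via a global re-embedding") is not an argument. Separately, even where the surgery is assumed to succeed, your strict inequality "at the first differing level" is only asserted intuitively; since the degree changes produced by your surgery depend on the choice of chords among $\{u_2,w_1,\dots,w_t\}$, the sign of $w^{(2)}(\widehat H)-w^{(2)}(H^*)$ is not automatic and would need an explicit computation of the kind the paper supplies for its own switch.
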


\begin{figure}
\centering
\begin{tikzpicture}[scale=0.95, x=1.00mm, y=1.00mm, inner xsep=0pt, inner ysep=0pt, outer xsep=0pt, outer ysep=0pt]
\definecolor{F}{rgb}{0,0,0}
\node[circle,fill=green,draw=green,inner sep=0pt,minimum size=1.5mm] (u*) at (30,20) {};
\draw(31,25) node[anchor=center]{\fontsize{12.38}{8.65}\selectfont $u^{*}$};
\node[circle,fill=green,draw=green,inner sep=0pt,minimum size=1.5mm] (ui-1) at (0,0) {};
\draw(0,-5) node[anchor=center]{\fontsize{12.38}{8.65}\selectfont $u_{1}$};
\node[circle,fill=green,draw=green,inner sep=0pt,minimum size=0.5mm] () at (7,0) {};
\node[circle,fill=green,draw=green,inner sep=0pt,minimum size=0.5mm] () at (5,0) {};
\node[circle,fill=green,draw=green,inner sep=0pt,minimum size=0.5mm] () at (3,0) {};
\node[circle,fill=green,draw=green,inner sep=0pt,minimum size=1.5mm] (ui) at (10,0) {};
\draw(10,-5) node[anchor=center]{\fontsize{12.38}{8.65}\selectfont $u_{i}$};
\node[circle,fill=green,draw=green,inner sep=0pt,minimum size=1.5mm] (ui+1) at (20,0) {};
\draw(20,-5) node[anchor=center]{\fontsize{12.38}{8.65}\selectfont $u_{i+1}$};
\node[circle,fill=green,draw=green,inner sep=0pt,minimum size=0.5mm] () at (23,0) {};
\node[circle,fill=green,draw=green,inner sep=0pt,minimum size=0.5mm] () at (25,0) {};
\node[circle,fill=green,draw=green,inner sep=0pt,minimum size=0.5mm] () at (27,0) {};
\node[circle,fill=green,draw=green,inner sep=0pt,minimum size=1.5mm] (uj-1) at (30,0) {};
\draw(30,-5) node[anchor=center]{\fontsize{12.38}{8.65}\selectfont $u_{j-1}$};
\node[circle,fill=green,draw=green,inner sep=0pt,minimum size=1.5mm] (uj) at (40,0) {};
\draw(40,-5) node[anchor=center]{\fontsize{12.38}{8.65}\selectfont $u_{j}$};
\node[circle,fill=green,draw=green,inner sep=0pt,minimum size=0.5mm] () at (43,0) {};
\node[circle,fill=green,draw=green,inner sep=0pt,minimum size=0.5mm] () at (45,0) {};
\node[circle,fill=green,draw=green,inner sep=0pt,minimum size=0.5mm] () at (47,0) {};
\node[circle,fill=green,draw=green,inner sep=0pt,minimum size=1.5mm] (un-2) at (50,0) {};
\draw(50,-5) node[anchor=center]{\fontsize{12.38}{8.65}\selectfont $u_{n-2}$};
\node[circle,fill=green,draw=green,inner sep=0pt,minimum size=1.5mm] (u**) at (60,0) {};
\draw(62,-4) node[anchor=center]{\fontsize{12.38}{8.65}\selectfont $u^{**}$};
\path[line width=0.4mm, draw=green] (un-2) -- (u**);
\path[line width=0.4mm, draw=green] (u*) -- (ui-1);
\path[line width=0.4mm, draw=green] (u*) -- (ui);
\path[line width=0.4mm, draw=green] (u*) -- (ui+1);
\path[line width=0.4mm, draw=green] (u*) -- (uj-1);
\path[line width=0.4mm, draw=green] (u*) -- (uj);
\path[line width=0.4mm, draw=green] (u*) -- (un-2);
\path[line width=0.4mm, draw=green] (u*) -- (u**);
\path[line width=0.5mm, draw=blue] (ui) -- (ui+1);
\path[line width=0.5mm, draw=blue] (uj-1) -- (uj);

\draw(70,12) node[anchor=base west]{\fontsize{20.23}{17.07}\selectfont $\Longrightarrow$};
\node[circle,fill=green,draw=green,inner sep=0pt,minimum size=1.5mm] (u*) at (135,20) {};
\draw(136,25) node[anchor=center]{\fontsize{12.38}{8.65}\selectfont $u^{*}$};
\node[circle,fill=green,draw=green,inner sep=0pt,minimum size=1.5mm] (ui-1) at (90,0) {};
\draw(90,-5) node[anchor=center]{\fontsize{12.38}{8.65}\selectfont $u_{1}$};
\node[circle,fill=green,draw=green,inner sep=0pt,minimum size=0.5mm] () at (93,0) {};
\node[circle,fill=green,draw=green,inner sep=0pt,minimum size=0.5mm] () at (95,0) {};
\node[circle,fill=green,draw=green,inner sep=0pt,minimum size=0.5mm] () at (97,0) {};
\node[circle,fill=green,draw=green,inner sep=0pt,minimum size=1.5mm] (ui) at (100,0) {};
\draw(100,-5) node[anchor=center]{\fontsize{12.38}{8.65}\selectfont $u_{i}$};
\node[circle,fill=green,draw=green,inner sep=0pt,minimum size=1.5mm] (uj) at (110,0) {};
\draw(110,-5) node[anchor=center]{\fontsize{12.38}{8.65}\selectfont $u_{j}$};
\node[circle,fill=green,draw=green,inner sep=0pt,minimum size=0.5mm] () at (113,0) {};
\node[circle,fill=green,draw=green,inner sep=0pt,minimum size=0.5mm] () at (115,0) {};
\node[circle,fill=green,draw=green,inner sep=0pt,minimum size=0.5mm] () at (117,0) {};
\node[circle,fill=green,draw=green,inner sep=0pt,minimum size=1.5mm] (un-2) at (120,0) {};
\draw(120,-5) node[anchor=center]{\fontsize{12.38}{8.65}\selectfont $u_{n-2}$};
\node[circle,fill=green,draw=green,inner sep=0pt,minimum size=1.5mm] (ui+1) at (130,5) {};
\draw(131,2.5) node[anchor=center]{\fontsize{8.38}{8.65}\selectfont $u_{i+1}$};
\node[circle,fill=green,draw=green,inner sep=0pt,minimum size=0.5mm] () at (133,5.1) {};
\node[circle,fill=green,draw=green,inner sep=0pt,minimum size=0.5mm] () at (135,5.1) {};
\node[circle,fill=green,draw=green,inner sep=0pt,minimum size=0.5mm] () at (137,5.1) {};
\node[circle,fill=green,draw=green,inner sep=0pt,minimum size=1.5mm] (uj-1) at (140,5) {};
\draw(136,7.5) node[anchor=center]{\fontsize{8.38}{8.65}\selectfont $u_{j-1}$};
\node[circle,fill=green,draw=green,inner sep=0pt,minimum size=1.5mm] (u**) at (150,0) {};
\draw(150,-4) node[anchor=center]{\fontsize{12.38}{8.65}\selectfont $u^{**}$};
\definecolor{L}{rgb}{0,0,0}
\path[line width=0.4mm, draw=green] (ui) -- (uj);
\path[line width=0.4mm, draw=green] (un-2) -- (ui+1);
\path[line width=0.4mm, draw=green] (un-2) -- (u**);
\path[line width=0.4mm, draw=green] (u*) -- (ui-1);
\path[line width=0.4mm, draw=green] (u*) -- (ui);
\path[line width=0.4mm, draw=green] (u*) -- (uj);
\path[line width=0.4mm, draw=green] (u*) -- (uj);
\path[line width=0.4mm, draw=green] (u*) -- (un-2);
\path[line width=0.4mm, draw=green] (u*) -- (un-2);
\path[line width=0.4mm, draw=green] (u*) -- (ui+1);
\path[line width=0.4mm, draw=green] (u*) -- (uj-1);
\path[line width=0.4mm, draw=green] (u*) -- (u**);
\path[line width=0.4mm, draw=green] (u**) -- (ui+1);
\path[line width=0.4mm, draw=green] (u**) -- (uj-1);
\path[line width=0.5mm, draw=red] (ui) -- (uj);
\path[line width=0.5mm, draw=red] (ui+1) -- (un-2);
\end{tikzpicture}
\caption{The local edge-switching from $H^*$ to $H$.}{\label{fig.0233}}
\end{figure}

\begin{proof}
Note that $H^*$ is obtained from a spanning path $u_1u_2\ldots u_{n-2}$ by adding $3\gamma$ edges.
Then, there are at most $6\gamma$ forks in $H^*$.
For sufficiently large $n$,
there exist consecutive 2-vertices $u_{i},u_{i+1},\ldots,u_{j-1}$ in $H^*$,
with $i\geq 2$ and $i+2\leq j\leq n-3$.

By symmetry, it suffices to show $d_{H^*}\!(u_{n-2})=1$.
We now define a graph $G$ as follows: $G=G^*\!-\!\{u_iu_{i+1},u_{j-1}u_j\}\!+\!\{u_iu_j,u_{i+1}u_{n-2}\}$,
as illustrated in Figure \ref{fig.0233}, where the case $u_{i+1}=u_{j-1}$ is also allowed.
Let $H$ be the subgraph of $G$ induced by $V(G)\setminus\{u^*,u^{**}\}$.
Then, $H$ admits a spanning path $u_1\ldots u_iu_j\ldots u_{n-2}u_{i+1}\ldots u_{j-1}$.
Hence, $G\in \mathbb{H}(n,\gamma)$.

Since $\widetilde{G^*}$ is a triangulation of $\Sigma^*$, every edge in $\widetilde{G^*}$ is incident to two 3-faces.
Recall that the local embedding round at $u^*$ forms a wheel with an outer cycle $u^{**}u_1u_2\ldots u_{n\!-\!2}u^{**}$.
Since $u_{i\!+\!1},u_{i\!+\!2},\ldots,u_{j\!-\!1}$ are 2-vertices in $H^*$, for every $k\in\{i,i\!+\!1,\ldots,j\!-\!1\}$,
$u_{k}$ and $u_{k\!+\!1}$ share exactly two common neighbors: $u^*$ and $u^{**}$.
Thus, $\{u^{**}\!,u_i,u_{i\!+\!1}\}$, $\ldots,\{u^{**}\!,u_{j\!-\!1},u_j\}$
each forms a 3-face.
We now embed $G$ on the surface $\Sigma^*$ as follows (see Figure \ref{fig.0233}):

\vspace{1mm}
(a) Replace the $j-i$ faces $\{u^{*}\!,u_i,u_{i\!+\!1}\}, \ldots, \{u^{*}\!,u_{j\!-\!1},u_j\}$ with a face $\{u^*\!,u_i,u_j\}$.

(b) Replace the $j-i$ faces $\{u^{**}\!,u_i,u_{i\!+\!1}\}, \ldots, \{u^{**}\!,u_{j\!-\!1},u_j\}$ with a face $\{u^{**}\!,u_i,u_j\}$.

(c) Replace the face $\{u^*\!,u_{n\!-\!2},u^{**}\}$ with $2j\!-\!2i\!-\!1$ faces inside the triangle induced by $\{u^*\!,u_{n\!-\!2},u^{**}\}$,
while keeping all other faces unchanged.

\vspace{1mm}
Thus, we obtain a cellular embedding of $G$ on $\Sigma^*$,
and hence, $G\in\mathbb{G}(n,\gamma)\cap\mathbb{H}(n,\gamma)$.
Note that $d_{H}\!(u_{n\!-\!2})\!=\!d_{H^*}\!(u_{n\!-\!2})\!+\!1$, $d_{H}\!(u_{j\!-\!1})\!=\!d_{H^*}\!(u_{j\!-\!1})\!-\!1$,
and $d_{H}\!(u_k)\!=\!d_{H^*}\!(u_k)$ for all other $u_k\in V(H)$.
Then, $w^{(1)}\!(H)=w^{(1)}\!(H^*)$, and in view of (\ref{eq21}), we deduce that
\begin{eqnarray}\label{eq34}
w^{(2)}\!(H)\!-\!w^{(2)}\!(H^*)\!=\!\!\sum\limits_{u\in V(H)}\!\!\!\!\big(d^2_H\!(u)\!-\!d^2_{H^*}\!(u)\big)
\!=\!2d_{H^*}\!(u_{n\!-\!2})\!-\!2d_{H^*}\!(u_{j\!-\!1})\!+\!2.
\end{eqnarray}
Recall that $d_{H^*}\!(u_{j\!-\!1})=2$. If $d_{H^*}\!(u_{n\!-\!2})\geq2$, then
$w^{(2)}\!(H)>w^{(2)}\!(H^*)$.
By Lemma \ref{lem3.3}, this would derive that $\rho(G)>\rho(G^*)$, a contradiction.
Thus, we have $d_{H^*}\!(u_{n-2})=1$.
\end{proof}

\begin{claim}\label{clm3.3}
There are no consecutive internal 2-vertices in $H^*$.
\end{claim}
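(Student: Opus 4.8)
\medskip
\noindent\textbf{Proof proposal.} The plan is a proof by contradiction in the spirit of Claim \ref{clm3.2}: from a pair of consecutive internal $2$-vertices I would build a graph $G\in\mathbb{G}(n,\gamma)\cap\mathbb{H}(n,\gamma)$ with $\rho(G)>\rho(G^*)$ by a local edge-switch on the triangulation $\widetilde{G^*}$, and detect the strict increase of $\rho$ through a comparison of walk numbers via Lemma \ref{lem3.3}.

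First I would fix the combinatorial picture. Suppose $u_k,u_{k+1}$ are consecutive internal $2$-vertices, and let $u_{s+1},\dots,u_{t-1}$ be the maximal block of $2$-vertices of $H^*$ containing $u_k$ and $u_{k+1}$; then $t\ge s+3$. Since $u_k$ is internal and $u_1,u_{n-2}$ are leaves by Claim \ref{clm3.2}, the flanking vertices $u_s$ and $u_t$ are neither $u_1$ nor $u_{n-2}$, so both are forks; moreover $u_{t-2}$ lies inside the block, hence is a $2$-vertex, so $u_{t-2}u_t\notin E(H^*)$. Because $H^*$ has at most $6\gamma$ forks, for $n$ large $u_1,u_2$ lie far to the left of this block.

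Next, the transformation: set
\[
G=G^*-\{u_{t-2}u_{t-1},\,u_{t-1}u_t,\,u_1u_2\}+\{u_{t-2}u_t,\,u_1u_{t-1},\,u_{t-1}u_2\},
\]
which excises the $2$-vertex $u_{t-1}$ from the block (legitimately, since $u_{t-2}u_t\notin E(H^*)$) and reinserts it between the leaf $u_1$ and $u_2$. Writing $H$ for the graph induced by $V(G)\setminus\{u^*,u^{**}\}$, the sequence $u_1u_{t-1}u_2u_3\cdots u_{t-2}u_tu_{t+1}\cdots u_{n-2}$ is a spanning path of $H$, so $G\in\mathbb{H}(n,\gamma)$. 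To see $G\in\mathbb{G}(n,\gamma)$ I would re-embed $G$ on $\Sigma^*$ exactly as in Claim \ref{clm3.2}: by Lemma \ref{lem3.1} each of $u_{s+1},\dots,u_{t-1}$ has degree $4$ in $\widetilde{G^*}$ and is the hub of a $4$-wheel through $u^*,u^{**}$ and its two path-neighbours, while the leaf $u_1$ has degree $3$; hence the $3$-faces $\{u^*,u_{t-2},u_{t-1}\},\{u^{**},u_{t-2},u_{t-1}\},\{u^*,u_{t-1},u_t\},\{u^{**},u_{t-1},u_t\}$ and $\{u^*,u_1,u_2\},\{u^{**},u_1,u_2\}$ all occur, and a purely local face-surgery (replace the faces at the old site of $u_{t-1}$ by $\{u^*,u_{t-2},u_t\},\{u^{**},u_{t-2},u_t\}$, and subdivide the region about $u_1$ to reinsert $u_{t-1}$, keeping it joined to $u^*,u^{**}$) produces a cellular embedding of $G$ on $\Sigma^*$; all other faces are untouched.

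Finally, the walk comparison. Since the switch merely relocates a $2$-vertex, $H$ and $H^*$ have the same degree sequence, so $w^{(1)}(H)=w^{(1)}(H^*)$ and $w^{(2)}(H)=w^{(2)}(H^*)$; and a short computation of $\sum_{uv\in E}d(u)d(v)$ using \eqref{eq21} gives $w^{(3)}(H)-w^{(3)}(H^*)=2\bigl(d_{H^*}(u_2)-2\bigr)\ge 0$. The heart of the argument is to show that in fact $w^{(\ell)}(H)\ge w^{(\ell)}(H^*)$ for every $\ell$, with strict inequality at the least $\ell$ at which the two differ; this I would do by tracking the walks through the $O(1)$ edges affected by the switch, much as in the proof of Lemma \ref{lem3.4}. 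The intuition: in $H^*$ the vertex $u_{t-1}$ sits between the fork $u_t$ and a block of $2$-vertices ending at the fork $u_s$, whereas in $H$ it sits between the leaf $u_1$ and a block of $2$-vertices ending at the first fork of the left pendant path; the counts tie while both blocks persist, and the first disagreement occurs once the shorter block is exhausted, where the count strictly increases because a vertex of degree $\ge 3$ replaces the leaf $u_1$. Lemma \ref{lem3.3} (with $G_1=G$, $G_2=G^*$) then gives $\rho(G)>\rho(G^*)$, contradicting $G^*\in SPEX(n,\gamma)$, and no two consecutive internal $2$-vertices can exist.

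\medskip
\noindent The step I expect to be the main obstacle is this last walk-number inequality: establishing $w^{(\ell)}(H)\ge w^{(\ell)}(H^*)$ for all $\ell$ and pinning down the sign at the first index of disagreement, since the loss at the relocated vertex (which trades a fork-neighbour for the leaf $u_1$) must be shown to be outweighed by the gains at $u_2$ and $u_{t-2}$, whose neighbourhoods improve. The embedding bookkeeping, although it relies on the wheel structure of Lemma \ref{lem3.1}, is routine once Claim \ref{clm3.2} is in hand.
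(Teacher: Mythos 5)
Your overall strategy (a local edge-switch followed by a walk-count comparison via Lemma \ref{lem3.3}) is the paper's strategy, but the particular switch you chose leaves a genuine gap at exactly the point you flag. Because your operation merely relocates the single $2$-vertex $u_{t-1}$, it preserves the degree sequence, so $w^{(1)}$ and $w^{(2)}$ tie automatically and $w^{(3)}(H)-w^{(3)}(H^*)=2(d_{H^*}(u_2)-2)$, which is $0$ whenever $u_2$ is a $2$-vertex — the typical case. The promised resolution at higher $\ell$ is not carried out, and it cannot be rescued by Lemma \ref{lem3.3} as stated: that lemma requires the first index $k$ of disagreement to be a \emph{constant} with respect to $n$ (its tail estimate needs $6\gamma(4\gamma+4)^k<\rho\approx\sqrt{2n}$), whereas in your picture the counts first differ only "once the shorter block is exhausted," and both the internal block $u_{s+1}\ldots u_{t-1}$ and the left pendant path can have length $\Theta(n)$. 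So you would need a full Lemma-\ref{lem3.4}-style analysis with tail estimates over all $\ell$ up to $\Theta(n)$, plus a proof that the relocation actually increases the relevant walk sums — neither of which is supplied, and the sign of the trade-off (you swap $u_{t-1}$'s fork-neighbour $u_t$ for the leaf $u_1$) is not obvious.

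The paper sidesteps all of this by choosing a different switch: it deletes $u_iu_{i+1}$ and $u_{j-1}u_j$ and adds $u_iu_j$ and $u_{i+1}u_{n-2}$, i.e.\ it excises the \emph{entire} block of $2$-vertices, makes the two forks $u_i$ and $u_j$ adjacent, and appends the block to the pendant vertex $u_{n-2}$. This changes exactly two degrees ($u_{n-2}\colon 1\to 2$ and $u_{j-1}\colon 2\to 1$), so by \eqref{eq21} and Claim \ref{clm3.2} one still has $w^{(1)}(H)=w^{(1)}(H^*)$ and $w^{(2)}(H)=w^{(2)}(H^*)$, while the new fork–fork edge forces
$w^{(3)}(H)-w^{(3)}(H^*)=2\big(d_{H^*}(u_i)-2\big)\big(d_{H^*}(u_j)-2\big)+2\big(d_{H^*}(u_{n-3})-2\big)\geq 2$,
so Lemma \ref{lem3.3} applies at the fixed index $k=3$. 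If you want to repair your version, redesign the switch so that the two flanking forks become adjacent (or otherwise guarantee a strict increase of $w^{(\ell)}$ at some bounded $\ell$); your embedding surgery and the verification that $G\in\mathbb{G}(n,\gamma)\cap\mathbb{H}(n,\gamma)$ are fine and would carry over.
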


\begin{proof}
Suppose, for the sake of contradiction, that $H^*$ contains a sequence of consecutive internal 2-vertices
$u_{i},u_{i+1},\ldots,u_{j-1}$, with $j\geq i+2$ and $u_{i-1}$ and $u_j$ being two forks.
Then, by Claim \ref{clm3.2}, it follows that $i\geq 3$ and $j\leq n-3$.

Define $G$ and its subgraph $H$ as described in the proof of Claim \ref{clm3.2}
(see Figure \ref{fig.0233}).
As shown above, we know that
$w^{(1)}\!(H)=w^{(1)}\!(H^*)$ and $G\in \mathbb{G}(n,\gamma)\cap\mathbb{H}(n,\gamma)$.
Furthermore, by Claim \ref{clm3.2} and equality (\ref{eq34}),
we conclude that $w^{(2)}\!(H)=w^{(2)}\!(H^*)$.

Note that $E(H)\setminus\{u_iu_j,u_{n-2}u_{i+1}\}\!=\!E(H^*)\setminus\{u_iu_{i+1},u_{j-1}u_j\}.$
Moreover, $d_{H}\!(u_{n\!-\!2})\!=\!d_{H^*}\!(u_{n\!-\!2})\!+\!1=2$, $d_{H}\!(u_{j\!-\!1})\!=\!d_{H^*}\!(u_{j\!-\!1})\!-\!1=1$,
and $d_{H}\!(u_k)\!=\!d_{H^*}\!(u_k)$ for all other $u_k\in V(H)$.
From (\ref{eq21}), we know that $w^{(3)}(H)=\sum_{uv\in E(H)}2d_H\!(u)d_H\!(v).$ If $j\geq i+3$, then
\begin{eqnarray*}
&& w^{(3)}\!(H)\!-\!w^{(3)}\!(H^*)\nonumber\\
&=& 2d_{H}\!(u_i)d_{H}\!(u_j)\!+\!2d_{H}\!(u_{j\!-\!1})d_{H}\!(u_{j\!-\!2})\!+\!2d_{H}\!(u_{n\!-\!2})\big(d_{H}\!(u_{i\!+\!1})\!+\!d_{H}\!(u_{n\!-\!3})\big)\nonumber\\
&& \!-\!2d_{H^*}\!(u_i)d_{H^*}\!(u_{i\!+\!1})\!-\!2d_{H^*}\!(u_{j\!-\!1})\big(d_{H^*}\!(u_{j\!-\!2})\!+\!d_{H^*}\!(u_j)\big)\!-\!2d_{H^*}\!(u_{n\!-\!2})d_{H^*}\!(u_{n\!-\!3}).
\end{eqnarray*}
If $j=i+2$, then $u_{i+1}=u_{j-1}$. In this case, $u_iu_{i+1}=u_{j-1}u_{j-2}\in E(H^*)\setminus E(H)$, and 
\begin{eqnarray*}
&& w^{(3)}\!(H)\!-\!w^{(3)}\!(H^*)\nonumber\\
&=& 2d_{H}\!(u_i)d_{H}\!(u_j)\!+\!2d_{H}\!(u_{n\!-\!2})\big(d_{H}\!(u_{j\!-\!1})\!+\!d_{H}\!(u_{n\!-\!3})\big)\nonumber\\
&& \!-\!2d_{H^*}\!(u_i)d_{H^*}\!(u_{i\!+\!1})\!-\!2d_{H^*}\!(u_{j\!-\!1})d_{H^*}\!(u_j)\!-\!2d_{H^*}\!(u_{n\!-\!2})d_{H^*}\!(u_{n\!-\!3}).
\end{eqnarray*}
Recall that $d_{H^*}\!(u_{i})\!=\!\cdots\!=\!d_{H^*}\!(u_{j\!-\!1})=2$. 
In both cases, we derive that
\begin{eqnarray}\label{eq35}
w^{(3)}\!(H)\!-\!w^{(3)}\!(H^*)\!=\!
2\big(d_{H^*}\!(u_i)\!-\!2\big)\big(d_{H^*}\!(u_j)\!-\!2\big)\!+\!2(d_{H^*}\!(u_{n\!-\!3})\!-\!2\big).
\end{eqnarray}
If $d_{H^*}\!(u_{n\!-\!3})\geq3$, then $w^{(3)}\!(H)\!>\!w^{(3)}\!(H^*)$, and by Lemma \ref{lem3.3}, it follows that $\rho(G)>\rho(G^*)$,
a contradiction.
Hence, $d_{H^*}\!(u_{n\!-\!3})=2$ and $w^{(3)}\!(H)\!=\!w^{(3)}\!(H^*)$, which implies that the fork $u_j$ must have $j\leq n-4$.

In light of (\ref{eq21---111}), we know that 
$w^{(4)}(H)=\sum_{v\in V(H)}\big(w^{(2)}_H\!(v)\big)^2,$ where $w^{(2)}_H\!(v)$ is the sum of the degrees of the neighbors of $v$ in $H$.
If $j\geq i+3$, there are exactly five vertices $u_k$ such that $w^{(2)}_H\!(u_k)\neq w^{(2)}_{H^*}\!(u_k)$, 
namely those with indices $k\in\{i,j\!-\!2,j\!-\!1,n\!-\!3,n\!-\!2\}$. 
Recall that both  $u_{i-1}$ and $u_j$ are forks. A direct calculation shows that
\begin{eqnarray}\label{eq35-------1}
w^{(4)}\!(H)\!-\!w^{(4)}\!(H^*)\!=\!2\big(d_{H^*}\!(u_{i-1})\!-\!2\big)\big(d_{H^*}\!(u_j)\!-\!2\big)\!+\!2\big(d_{H^*}(u_{n-4})\!-\!2\big)>0.
\end{eqnarray}
If $j=i+2$, then $u_{i}=u_{j-2}$, and there are four vertices $u_k$ such that $w^{(2)}_H\!(u_k)\neq w^{(2)}_{H^*}\!(u_k)$, 
namely those with indices $k\in\{i,j\!-\!1,n\!-\!3,n\!-\!2\}$. 
A similar calculation yields the same inequality as in (\ref{eq35-------1}).
Therefore, we conclude that $w^{(4)}\!(H)\!>\!w^{(4)}\!(H^*).$
By Lemma \ref{lem3.3}, it follows that $\rho(G)>\rho(G^*)$, a contradiction.
This establishes the claim.
\end{proof}

\begin{claim}\label{clm3.4}
If $u_{i+1}$ is an internal 2-vertex,
then $u_i$ and $u_{i+2}$ are adjacent forks.
\end{claim}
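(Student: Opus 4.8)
The plan is to split the statement into two parts and settle them in turn: first that $u_i$ and $u_{i+2}$ are forks, and then that $u_iu_{i+2}\in E(H^*)$. The first part is purely combinatorial. Since $u_{i+1}$ is an internal $2$-vertex there are forks $u_a,u_b$ with $a\le i<i+1<i+2\le b$. If $u_i$ were not a fork, I would first rule out $i=1$ (otherwise $a=1$ would make $u_1$ a fork, contradicting Claim~\ref{clm3.2}), so $i\ge2$ and $d_{H^*}(u_i)=2$; since $a=i$ is excluded we get $a<i<b$, so $u_i$ is an internal $2$-vertex adjacent to the internal $2$-vertex $u_{i+1}$, contradicting Claim~\ref{clm3.3}. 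Hence $u_i$ is a fork, and symmetrically (using Claim~\ref{clm3.2} to exclude $i+2=n-2$) so is $u_{i+2}$; in particular $2\le i$ and $i+2\le n-3$.

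For the second part I would argue by contradiction: suppose $u_iu_{i+2}\notin E(H^*)$, so $u_{i+1}$ is a contractible $2$-vertex with $N_{\widetilde{G^*}}(u_{i+1})=\{u^*,u^{**},u_i,u_{i+2}\}$. Since $\widetilde{G^*}$ is a triangulation, Lemma~\ref{lem3.1} together with the spanning wheel at $u^*$ identifies the four $3$-faces incident to $u_{i+1}$ as $u^*u_iu_{i+1}$, $u^*u_{i+1}u_{i+2}$, $u^{**}u_iu_{i+1}$, $u^{**}u_{i+1}u_{i+2}$ (the second face on the edge $u_iu_{i+1}$ must contain $u^{**}$ rather than $u_{i+2}$, as $u_iu_{i+2}\notin E$). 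For $n$ large, $H^*$ — a Hamilton path plus $3\gamma$ edges — has at most $6\gamma$ forks, so it has a maximal run of at least two consecutive $2$-vertices $u_{p+1},u_{p+2}$; this run is automatically disjoint from $\{u_i,u_{i+1},u_{i+2}\}$, because $u_i,u_{i+2}$ are forks and $u_{i+1}$ by itself forms a length-one run, and the same analysis shows the two $3$-faces on the edge $u_{p+1}u_{p+2}$ are $u^*u_{p+1}u_{p+2}$ and $u^{**}u_{p+1}u_{p+2}$.

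I then build $G$ from $\widetilde{G^*}$ by (a) deleting $u_{i+1}$ together with its four edges, which merges its four faces into the quadrilateral $u^*u_iu^{**}u_{i+2}$, and adding the edge $u_iu_{i+2}$ across this quadrilateral; and (b) deleting the edge $u_{p+1}u_{p+2}$, which merges its two faces into the quadrilateral $u^*u_{p+1}u^{**}u_{p+2}$, and inserting a new vertex $w$ inside it joined to $u^*,u^{**},u_{p+1},u_{p+2}$. Each step is a local surgery preserving cellularity and the Euler characteristic, so $G$ embeds on a surface of Euler genus $\gamma$; moreover $G$ is simple, $|V(G)|=n$, and $G=K_2\nabla H$ where $H:=G-\{u^*,u^{**}\}$ has the Hamilton path obtained from that of $H^*$ by bypassing $u_{i+1}$ via $u_iu_{i+2}$ and subdividing $u_{p+1}u_{p+2}$ at $w$, with $e(H)=e(H^*)=(n-3)+3\gamma$. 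Hence $G\in\mathbb{G}(n,\gamma)\cap\mathbb{H}(n,\gamma)$.

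Finally I would compare walk counts via Lemma~\ref{lem3.3}. The degree sequences of $H$ and $H^*$ coincide: $w$ and $u_{i+1}$ both have degree $2$ and every other vertex keeps its degree — in particular $d_H(u_i)=d_{H^*}(u_i)$ and $d_H(u_{i+2})=d_{H^*}(u_{i+2})$, since each of $u_i,u_{i+2}$ trades its path-edge to $u_{i+1}$ for the new edge $u_iu_{i+2}$. Thus $w^{(1)}(H)=w^{(1)}(H^*)$ and, by \eqref{eq21}, $w^{(2)}(H)=w^{(2)}(H^*)$; and since only the edges $u_iu_{i+1},u_{i+1}u_{i+2},u_{p+1}u_{p+2}$ are removed and $u_iu_{i+2},wu_{p+1},wu_{p+2}$ added (the last three joining degree-$2$ vertices), the second formula in \eqref{eq21} yields
\[
w^{(3)}(H)-w^{(3)}(H^*)=2\big(d_{H^*}(u_i)-2\big)\big(d_{H^*}(u_{i+2})-2\big)\ge2,
\]
because $u_i,u_{i+2}$ are forks. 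By Lemma~\ref{lem3.3} this forces $\rho(G)>\rho(G^*)$, contradicting $G^*\in SPEX(n,\gamma)$, so $u_iu_{i+2}\in E(H^*)$ and the claim follows. I expect the main obstacle to be the surface surgery in steps (a)–(b): correctly pinning down the four faces around $u_{i+1}$ and the two faces along $u_{p+1}u_{p+2}$ and checking that the re-routings preserve the Euler genus; the walk-count bookkeeping is routine, though the invariance of $d_{H^*}(u_i)$ and $d_{H^*}(u_{i+2})$ in $H$ must be verified with some care.
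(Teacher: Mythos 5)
Your proof is correct, and at the top level it follows the paper's strategy: assume $u_iu_{i+2}\notin E(H^*)$, build a competitor in $\mathbb{G}(n,\gamma)\cap\mathbb{H}(n,\gamma)$, and contradict extremality via Lemma~\ref{lem3.3}. The surgery you use, however, is genuinely different. The paper keeps the vertex set fixed and performs the edge switch $G=G^*-\{u_iu_{i+1},u_{i+1}u_{i+2}\}+\{u_iu_{i+2},u_{i+1}u_{n-2}\}$, i.e.\ it relocates $u_{i+1}$ to become a pendant vertex at the end of the spanning path (the case $j=i+2$ of Figure~\ref{fig.0233}); since this changes the degrees of $u_{i+1}$ and $u_{n-2}$, the equalities $w^{(1)}(H)=w^{(1)}(H^*)$ and $w^{(2)}(H)=w^{(2)}(H^*)$ are read off from \eqref{eq34} using $d_{H^*}(u_{n-2})=1$, and $w^{(3)}(H)>w^{(3)}(H^*)$ from \eqref{eq35}, with the embedding argument recycled verbatim from Claim~\ref{clm3.2}. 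You instead delete $u_{i+1}$ outright and compensate by subdividing a path edge between two consecutive $2$-vertices elsewhere; this preserves the degree sequence vertex-by-vertex, so $w^{(1)}$ and $w^{(2)}$ are unchanged for free and the $w^{(3)}$ difference collapses to the clean expression $2\big(d_{H^*}(u_i)-2\big)\big(d_{H^*}(u_{i+2})-2\big)\geq 2$. The price is a topologically heavier operation (vertex deletion plus reinsertion rather than re-routing edges through existing faces) and the extra existence argument for the pair $u_{p+1},u_{p+2}$; you handle both correctly — the identification of the four faces around $u_{i+1}$ and the two faces on $u_{p+1}u_{p+2}$ is exactly the face analysis the paper performs in Claim~\ref{clm3.2}, and your Euler-characteristic bookkeeping for the two local modifications checks out. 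Your explicit derivation that $u_i$ and $u_{i+2}$ are forks from Claims~\ref{clm3.2} and~\ref{clm3.3} is also slightly more careful than the paper's one-line citation of Claim~\ref{clm3.3}.
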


\begin{figure}
\centering
\begin{tikzpicture}
[scale=1, x=1.00mm, y=1.00mm, inner xsep=0pt, inner ysep=0pt, outer xsep=0pt, outer ysep=0pt]
\definecolor{F}{rgb}{0,0,0}
\node[circle,fill=green,draw=green,inner sep=0pt,minimum size=1.5mm] (u*) at (30,20) {};
\draw(31,25) node[anchor=center]{\fontsize{12.38}{8.65}\selectfont $u^{*}$};
\draw(00,-5) node[anchor=center]{\fontsize{12.38}{8.65}\selectfont $u_{1}$};
\node[circle,fill=green,draw=green,inner sep=0pt,minimum size=1.5mm] (u1) at (0,0) {};
\node[circle,fill=green,draw=green,inner sep=0pt,minimum size=0.5mm] () at (3,0) {};
\node[circle,fill=green,draw=green,inner sep=0pt,minimum size=0.5mm] () at (5,0) {};
\node[circle,fill=green,draw=green,inner sep=0pt,minimum size=0.5mm] () at (7,0) {};
\node[circle,fill=green,draw=green,inner sep=0pt,minimum size=1.5mm] (ui) at (10,0) {};
\draw(10,-5) node[anchor=center]{\fontsize{12.38}{8.65}\selectfont $u_{i}$};
\node[circle,fill=green,draw=green,inner sep=0pt,minimum size=1.5mm] (ui+1) at (20,0) {};
\draw(19,-5) node[anchor=center]{\fontsize{12.38}{8.65}\selectfont $u_{i+1}$};
\node[circle,fill=green,draw=green,inner sep=0pt,minimum size=1.5mm] (ui+2) at (30,0) {};
\draw(30,-5) node[anchor=center]{\fontsize{12.38}{8.65}\selectfont $u_{i+2}$};
\node[circle,fill=green,draw=green,inner sep=0pt,minimum size=0.5mm] () at (33,0) {};
\node[circle,fill=green,draw=green,inner sep=0pt,minimum size=0.5mm] () at (35,0) {};
\node[circle,fill=green,draw=green,inner sep=0pt,minimum size=0.5mm] () at (37,0) {};
\node[circle,fill=green,draw=green,inner sep=0pt,minimum size=1.5mm] (un-2) at (40,0) {};
\draw(42,-5) node[anchor=center]{\fontsize{12.38}{8.65}\selectfont $u_{n-2}$};
\node[circle,fill=green,draw=green,inner sep=0pt,minimum size=1.5mm] (u**) at (50,0) {};
\draw(53,-4) node[anchor=center]{\fontsize{12.38}{8.65}\selectfont $u^{**}$};
\definecolor{L}{rgb}{0,0,0}
\path[line width=0.4mm, draw=green] (ui) -- (ui+1);
\path[line width=0.4mm, draw=green] (un-2) -- (u**);
\path[line width=0.4mm, draw=green] (u*) -- (u1);
\path[line width=0.4mm, draw=green] (u*) -- (ui);
\path[line width=0.4mm, draw=green] (u*) -- (ui+1);
\path[line width=0.4mm, draw=green] (u*) -- (ui+2);
\path[line width=0.4mm, draw=green] (u*) -- (un-2);
\path[line width=0.4mm, draw=green] (u*) -- (u**);
\definecolor{L}{rgb}{0,0,1}
\path[line width=0.6mm, draw=L] (ui) -- (ui+1);
\path[line width=0.6mm, draw=L] (ui+1) -- (ui+2);

\draw(65,10) node[anchor=base west]{\fontsize{20.23}{17.07}\selectfont $\Longrightarrow$};
\node[circle,fill=green,draw=green,inner sep=0pt,minimum size=1.5mm] (u*) at (115,20) {};
\draw(116,25) node[anchor=center]{\fontsize{12.38}{8.65}\selectfont $u^{*}$};
\node[circle,fill=green,draw=green,inner sep=0pt,minimum size=0.5mm] () at (93,0) {};
\node[circle,fill=green,draw=green,inner sep=0pt,minimum size=0.5mm] () at (95,0) {};
\node[circle,fill=green,draw=green,inner sep=0pt,minimum size=0.5mm] () at (97,0) {};
\node[circle,fill=green,draw=green,inner sep=0pt,minimum size=1.5mm] (u1) at (90,0) {};
\draw(90,-5) node[anchor=center]{\fontsize{12.38}{8.65}\selectfont $u_{1}$};
\node[circle,fill=green,draw=green,inner sep=0pt,minimum size=1.5mm] (ui) at (100,0) {};
\draw(100,-5) node[anchor=center]{\fontsize{12.38}{8.65}\selectfont $u_{i}$};
\node[circle,fill=green,draw=green,inner sep=0pt,minimum size=1.5mm] (ui+2) at (110,0) {};
\draw(110,-5) node[anchor=center]{\fontsize{12.38}{8.65}\selectfont $u_{i+2}$};
\node[circle,fill=green,draw=green,inner sep=0pt,minimum size=0.5mm] () at (113,0) {};
\node[circle,fill=green,draw=green,inner sep=0pt,minimum size=0.5mm] () at (115,0) {};
\node[circle,fill=green,draw=green,inner sep=0pt,minimum size=0.5mm] () at (117,0) {};
\node[circle,fill=green,draw=green,inner sep=0pt,minimum size=1.5mm] (un-2) at (120,0) {};
\draw(121,-5) node[anchor=center]{\fontsize{12.38}{8.65}\selectfont $u_{n-2}$};
\node[circle,fill=green,draw=green,inner sep=0pt,minimum size=1.5mm] (ui+1) at (127,5) {};
\draw(128,2) node[anchor=center]{\fontsize{10.38}{8.65}\selectfont $u_{i+1}$};
\node[circle,fill=green,draw=green,inner sep=0pt,minimum size=1.5mm] (u**) at (140,0) {};
\draw(140,-4) node[anchor=center]{\fontsize{12.38}{8.65}\selectfont $u^{**}$};
\definecolor{L}{rgb}{0,0,0}
\path[line width=0.4mm, draw=green] (un-2) -- (ui+1);
\path[line width=0.4mm, draw=green] (un-2) -- (u**);
\path[line width=0.4mm, draw=green] (u*) -- (u1);
\path[line width=0.4mm, draw=green] (u*) -- (ui);
\path[line width=0.4mm, draw=green] (u*) -- (ui+1);
\path[line width=0.4mm, draw=green] (u*) -- (ui+2);
\path[line width=0.4mm, draw=green] (u*) -- (un-2);
\path[line width=0.4mm, draw=green] (u*) -- (u**);
\path[line width=0.4mm, draw=green] (u**) -- (ui+1);
\path[line width=0.5mm, draw=red] (ui) -- (ui+2);
\path[line width=0.5mm, draw=red] (ui+1) -- (un-2);
\end{tikzpicture}
\caption{The local edge-switching from $H^*$ to $H$. }{\label{fig.037}}
\end{figure}

\begin{proof}
Let $u_{i+1}$ be an internal 2-vertex in $H^*$.
Then, by Claim \ref{clm3.3},
both $u_i$ and $u_{i+2}$ are forks.
Furthermore, by Claim \ref{clm3.2}, we conclude that $i\geq 2$ and $i+2\leq n-3$,

Suppose to the contrary that $u_iu_{i+2}\notin E(H^*)$.
Define $G$ as follows (see Figure \ref{fig.037}):
$$G=G^*\!-\!\{u_iu_{i+1},u_{i+1}u_{i+2}\}\!+\!\{u_iu_{i+2},u_{i+1}u_{n-2}\}.$$
Let $H$ be the subgraph induced by $V(G)\setminus\{u^*,u^{**}\}$.
Then, $w^{(1)}\!(H)=w^{(1)}\!(H^*)$.
Observe that Figure \ref{fig.037} is special case of Figure \ref{fig.0233} where $j\!=\!i\!+\!2$.
Using a similar argument as in the proof of Claim \ref{clm3.2},
we know that $w^{(1)}\!(H)=w^{(1)}\!(H^*)$ and $G\in \mathbb{G}(n,\gamma)\cap\mathbb{H}(n,\gamma)$.
Furthermore, by setting $j\!=\!i\!+\!2$ in (\ref{eq34}) and (\ref{eq35}) respectively,
we obtain $w^{(2)}\!(H)=w^{(2)}\!(H^*)$ and $w^{(3)}\!(H)>w^{(3)}\!(H^*)$.
By Lemma \ref{lem3.3}, $\rho(G)>\rho(G^*)$, which leads to a contradiction.
Therefore, the claim holds.
\end{proof}

By Claims \ref{clm3.3} and \ref{clm3.4},
$H^*$ contains no contractible 2-vertices, which establishes the first statement of Theorem \ref{thm1.3}.
We next prove the remaining two statements of Theorem \ref{thm1.3} via the following two claims.

\begin{claim}\label{clm3.5}
There exists an $H_0$ with $|H_0|\leq 9\gamma$ such that $H^*\cong H_0(a,b)$, where $|a\!-\!b|\leq1$.
\end{claim}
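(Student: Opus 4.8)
The plan is to take $H_0$ to be the subgraph of $H^*$ induced between the first and last forks along the spanning path $u_1u_2\ldots u_{n-2}$, so that the two ends of this path become the two pendant paths of $H_0(a,b)$; then to bound $|V(H_0)|$ by a degree‑counting argument combined with Claims~\ref{clm3.2}--\ref{clm3.4}; and finally to invoke Lemma~\ref{lem3.4} together with a local re‑embedding of $\widetilde{G^*}$ to force $|a-b|\le 1$.

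First I would record the fork structure of $H^*$. Since $H^*$ is the spanning path $u_1\ldots u_{n-2}$ plus $3\gamma$ edges, the degree sum gives $\sum_{v\ \text{a fork}}\big(d_{H^*}(v)-2\big)=6\gamma$, so $H^*$ has at most $6\gamma$ forks. By Claim~\ref{clm3.2}, $u_1$ and $u_{n-2}$ have degree $1$, hence no added edge is incident to either of them, and any vertex of $\{u_2,\ldots,u_{n-3}\}$ incident to an added edge has degree $\ge 3$ and is therefore a fork. Consequently $H^*$ has at least two forks (with only one fork there would be no room for any added edge), say the first one is $u_k$ and the last one is $u_m$ with $k<m$; and all $3\gamma$ added edges lie inside $H_0:=H^*[\{u_k,\ldots,u_m\}]$. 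Setting $a=k-1$ and $b=n-2-m$, the segments $u_1\ldots u_k$ and $u_m\ldots u_{n-2}$ carry no added edges, so they are genuine pendant paths attached at the distinct vertices $u_k\ne u_m$, $a,b\ge 1$, $a+b+|V(H_0)|=n-2$, and hence $H^*\cong H_0(a,b)$.

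Next I would bound $r:=|V(H_0)|=m-k+1$. Every vertex strictly between $u_k$ and $u_m$ is either a fork or an internal $2$‑vertex, and there are at most $6\gamma$ forks in total. For the internal $2$‑vertices, Claim~\ref{clm3.4} says each internal $2$‑vertex $u_{i+1}$ forces the chord $u_iu_{i+2}\in E(H^*)$, one of the $3\gamma$ added edges, and the assignment $u_{i+1}\mapsto u_iu_{i+2}$ is injective; so there are at most $3\gamma$ internal $2$‑vertices. Therefore $r\le 6\gamma+3\gamma=9\gamma$, which is constant with respect to $n$. The same analysis gives $\delta(H_0)\ge 2$: each $u_i$ with $k<i<m$ retains both its path‑neighbours in $H_0$ (they lie in $[k,m]$), while $u_k$ and $u_m$ lose only the single pendant neighbour $u_{k-1}$, resp.\ $u_{m+1}$, from their degree $\ge 3$; and $r\ge 3$, since $m=k+1$ would leave no room for an added edge avoiding new forks and $u_1,u_{n-2}$.

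Finally, suppose for contradiction that $|a-b|\ge 2$, say $a\ge b+2$. Then all hypotheses of Lemma~\ref{lem3.4} hold ($r\ge 3$ is constant, $\delta(H_0)\ge 2$, and $H_0(a,b)=H^*$ has the spanning path $u_1\ldots u_{n-2}$), giving $\rho\big(K_2\nabla H_0(a-1,b+1)\big)>\rho\big(K_2\nabla H^*\big)=\rho(G^*)$. To contradict $G^*\in SPEX(n,\gamma)$ it only remains to check $K_2\nabla H_0(a-1,b+1)\in\mathbb{G}(n,\gamma)$: in the triangulation $\widetilde{G^*}$ the vertex $u_1$ has degree $3$, so by Lemma~\ref{lem3.1} its three incident faces form a wheel with rim triangle $u^*u^{**}u_2$; deleting $u_1$ merges them into that triangle, and re‑inserting a vertex into the face $u^*u^{**}u_{n-2}$ joined to its three corners yields a cellular embedding of $K_2\nabla H_0(a-1,b+1)$ on $\Sigma^*$. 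Hence $|a-b|\le 1$, proving the claim. I expect the main obstacle to be the bookkeeping in the counting step — in particular verifying that the internal $2$‑vertices inject into the $3\gamma$ added edges via Claim~\ref{clm3.4}, that no added edge meets the two pendant paths, and that $3\le r\le 9\gamma$ — rather than the (routine) re‑embedding argument at the end.
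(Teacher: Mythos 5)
Your proposal is correct and follows essentially the same route as the paper: count at most $6\gamma$ forks from the degree sum, use Claim \ref{clm3.4} (non-contractibility) to inject the internal $2$-vertices into the $3\gamma$ added chords so that $|V(H_0)|\leq 9\gamma$, and then rule out $a\geq b+2$ via Lemma \ref{lem3.4} together with a re-embedding showing $K_2\nabla H_0(a-1,b+1)\in\mathbb{G}(n,\gamma)$. Your explicit delete-$u_1$-and-reinsert construction and your verification of $\delta(H_0)\geq 2$, $r\geq 3$ are just more detailed versions of steps the paper handles by reference to Claim \ref{clm3.2}.
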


\begin{proof}
By Claim \ref{clm3.2}, we have $d_{H^*}(u_1)=d_{H^*}(u_{n-2})=1$.
Recall that there are exactly $3\gamma$ edges outside the spanning path $u_1u_2\ldots u_{n-2}$.
Therefore, there are at most $6\gamma$ forks.

Since $H^*$ contains at most $6\gamma$ forks and no contractible 2-vertices, there are at most $3\gamma$ internal 2-vertices.
In other words, there exists a based graph $H_0$ with $|H_0|\leq 9\gamma$ such that $H^*\cong H_0(a,b)$,
for positive integers $a$ and $b$ satisfying $a+b+|H_0|=n-2$.

We now verify that $|a-b|\leq1$ by contradiction. Assume, without loss of generality, that $a\geq b+2$.
Since $G^*\cong K_2\nabla H_0(a,b)$,
it follows that $K_2\nabla H_0(a,b)\in \mathbb{G}(n,\gamma)\cap\mathbb{H}(n,\gamma)$.
Furthermore, by a similar argument as in the proof of Claim \ref{clm3.2},
we can demonstrate that $K_2\nabla H_0(a\!-\!1,b\!+\!1)\in \mathbb{G}(n,\gamma)\cap\mathbb{H}(n,\gamma)$.

On the other hand, by Lemma \ref{lem3.4}, we have
$\rho\big(K_2\nabla H_0(a\!-\!1,b\!+\!1)\big)>\rho\big(K_2\nabla H_0(a,b)\big),$
which contradicts the assumption that $G^*\in SPEX(n,\gamma)$.
Therefore, $|a-b|\leq1$.
\end{proof}

By Lemma \ref{lem3.1}, for every $u\in V(\widetilde{G^*})$, there are exactly $d_{\widetilde{G^*}}(u)$ faces incident to $u$,
which together form a wheel of order $d_{\widetilde{G^*}}(u)+1$.
Denote this wheel by $W(u)$.
Furthermore, we define $P(u^*)=W(u^*)\!-\!\{u^*,u^{**}\}$ and $P(u^{**})=W(u^{**})\!-\!\{u^*,u^*\}$.

Since $d_{G^*}(u^*)=d_{G^*}(u^{**})=n\!-\!1$ and $d_{G^*}(u_1)=d_{G^*}(u_{n-2})=3$,
it follows that both $P(u^*)$ and $P(u^{**})$ are paths of order $n-2$, with $u_1$ and $u_{n-2}$ being their endpoints.
As assumed earlier, we know that $P(u^*)=u_1u_2\ldots u_{n-2}.$

\begin{claim}\label{clm3.6}
$H^*$ contains no separate forks, i.e., if $u_i$ is a fork, then either $u_{i-1}$ or $u_{i+1}$ (or both) must also be a fork.
\end{claim}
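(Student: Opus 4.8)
Assume, for contradiction, that $u_i$ is a separate fork of $H^*$. As in Claims \ref{clm3.2}--\ref{clm3.4}, the plan is to produce a graph $G\in\mathbb{G}(n,\gamma)\cap\mathbb{H}(n,\gamma)$ with $\rho(G)>\rho(G^*)$, contradicting $G^*\in SPEX(n,\gamma)$; the spectral comparison will be carried out through Lemma \ref{lem3.3} by tracking the walk numbers $w^{(\ell)}$.

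First I would localize $u_i$ and read off its neighbourhood. By Claim \ref{clm3.5}, $H^*\cong H_0(a,b)$ with $|H_0|\le 9\gamma$ and $|a-b|\le1$, so for large $n$ both pendant paths are long and $u_i$ is a vertex of $H_0$ lying far from $u_1$ and from $u_{n-2}$ on the spanning path; since the only $1$-vertices are $u_1,u_{n-2}$ (Claim \ref{clm3.2}) while $u_{i\pm1}$ are non-forks, both $u_{i-1}$ and $u_{i+1}$ are $2$-vertices. For each side, Claims \ref{clm3.3} and \ref{clm3.4} give the dichotomy: the $2$-vertex is either internal (whence the vertex two steps from $u_i$ on that side is a fork joined to $u_i$ by a chord, so $u_i$ sits in a triangle through the degree-$2$ vertex), or it starts a pendant path attached to $u_i$. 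Not both sides can be pendant paths, for then $H_0=\{u_i\}$ and the $3\gamma\ge1$ chords would create a second fork. Hence, reversing the spanning path if necessary, I may assume $u_{i-1}$ is internal, $u_{i-2}$ is a fork, and $u_{i-2}u_{i-1}u_i$ is a triangle with $d_{H^*}(u_{i-1})=2$.

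Next comes the switching. Since $H^*$ has $3\gamma\ge1$ chords and $u_i$ is separate, $u_i$ is not contained in any block of consecutive forks, so there is at least one other block, and I would pick a fork $u_k$ at an end of such a block whose outer path-neighbour $u_{k'}$ (with $k'=k\pm1$) is a $2$-vertex. The move is the degree-preserving $2$-switch that deletes the two path edges $u_iu_{i+1}$ and $u_ku_{k'}$ and adds $u_iu_k$ together with one further edge that splices the reversed segment back in; this leaves the $3\gamma$ chords untouched (so $H\in\mathbb{H}(n,\gamma)$ once one writes down the new spanning path) and, being a $2$-switch, preserves the degree sequence, whence $w^{(1)}(H)=w^{(1)}(H^*)$ and $w^{(2)}(H)=w^{(2)}(H^*)$. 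A computation with \eqref{eq21} then gives
$$w^{(3)}(H)-w^{(3)}(H^*)=2\big(d_{H^*}(u_i)-d_{H^*}(u_{k'})\big)\big(d_{H^*}(u_k)-d_{H^*}(u_{i+1})\big)=2\big(d_{H^*}(u_i)-2\big)\big(d_{H^*}(u_k)-2\big)>0,$$
because $u_i,u_k$ are forks while $u_{i+1},u_{k'}$ are $2$-vertices. To see $G:=K_2\nabla H\in\mathbb{G}(n,\gamma)$, I would start from the cellular embedding $\widetilde{G^*}$ of $G^*$, keep every vertex and every chord where it is, and redraw only the two fans of $3$-faces around $u^*$ and $u^{**}$ along the new outer cycle --- exactly the local re-embedding device used in Claims \ref{clm3.2}--\ref{clm3.4}. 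Lemma \ref{lem3.3} then yields $\rho(G)>\rho(G^*)$, the desired contradiction, and so $H^*$ has no separate forks.

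I expect the main obstacle to be the exceptional case in which this $2$-switch is blocked because $u_i$ is already joined by chords to every other fork (equivalently, the forks form one small tight cluster, all incident to $u_i$ --- for instance a double triangle on $H_0$). There one cannot create the edge $u_iu_k$, and the resolution must be an embeddability argument rather than a walk count: one has to show that such a configuration cannot be extended to a cellular embedding on a surface of Euler genus $\gamma$ at all --- by exhibiting inside $K_2\nabla H^*$ a sub-configuration of Euler genus exceeding $\gamma$, or by contradicting the exact count of $2\gamma$ non-apex $3$-faces of Claim \ref{clm3.0A} --- so that this case never actually occurs for a graph in $SPEX(n,\gamma)$. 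Making this elimination precise, and verifying simplicity and embeddability of the generic $2$-switch, is where the real work lies.
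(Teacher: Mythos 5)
Your proposal does not close the argument, and it also misses the much shorter route the paper takes. The paper's proof is a purely local face-counting argument in the triangulation $\widetilde{G^*}$: if $u_{i-1}$ and $u_{i+1}$ are both $2$-vertices of $H^*$, then they are also $2$-vertices of the path $P(u^{**})$, so $u_{i-1}u_i,u_iu_{i+1}\in E(P(u^{**}))$ and the four $3$-faces $u^*u_{i-1}u_i$, $u^*u_iu_{i+1}$, $u^{**}u_{i-1}u_i$, $u^{**}u_iu_{i+1}$ already close up into a wheel of order $5$ centered at $u_i$; but Lemma \ref{lem3.1} forces $W(u_i)$ to have order $d_{G^*}(u_i)+1\geq 6$ since $u_i$ is a fork. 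No edge switch, no walk count, no spectral comparison is needed.

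Your $2$-switch route has two genuine gaps. First, the re-embedding step: the device of Claims \ref{clm3.2}--\ref{clm3.4} works because the segment being moved or reversed consists entirely of consecutive $2$-vertices, so every face it touches is incident to $u^*$ or $u^{**}$ and the two fans can be collapsed and re-expanded freely. In your switch the reversed segment $u_{i+1}\ldots u_k$ is arbitrary and in general contains forks and their chords, so ``keep every chord where it is and redraw only the two fans'' is not a construction --- you have not shown that the switched graph embeds on a surface of Euler genus $\gamma$, and without that you cannot invoke Lemma \ref{lem3.3} against $G^*\in SPEX(n,\gamma)$. Second, you explicitly leave open the exceptional case in which $u_i$ is already joined by chords to every other fork, where the edge $u_iu_k$ cannot be added, and you concede that eliminating it ``is where the real work lies.'' An argument whose hardest case is acknowledged but not carried out is not a proof. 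Both difficulties evaporate in the paper's approach, which never modifies the graph at all.
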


\begin{proof}
Since $u_i$ is a fork in $H^*$, we have $d_{G^*}(u_i)=d_{H^*}(u_i)+2\geq 5$,
and by Claim \ref{clm3.5}, we know that $3\leq i\leq n-4$.

Assume, for contradiction, that $d_{H^*}(u_{i-1})=d_{H^*}(u_{i+1})=2$.
Given that $u_1$ and $u_{n-2}$ are the two endpoints of $P(u^{**})$,
we have $d_{P(u^{**})}(u_{i-1})=d_{P(u^{**})}(u_{i+1})=2$.
This implies that the edges $u_{i-1}u_i$ and $u_{i}u_{i+1}$ belong to both $E(P(u^{*}))$ and $E(P(u^{**}))$.
Consequently, the following four 3-faces incident to $u_i$ exist in the embedded graph $\widetilde{G^*}$:
\begin{eqnarray}\label{eq35D}
u^*u_{i-1}u_{i},~u^{*}u_{i}u_{i+1},~u^{**}u_{i-1}u_{i},~u^{**}u_{i}u_{i+1}.
\end{eqnarray}
 
By Lemma \ref{lem3.1}, these four faces,
together with $d_{G^*}(u_i)-4$ additional 3-faces incident to $u_i$,
form a wheel $W(u_i)$.
However, this leads to a contradiction: the four faces given in \eqref{eq35D}
already form a $5$-order wheel centered at $u_i$,
leaving no space for any other incident faces. 
Therefore, the claim holds.
\end{proof}

Based on Claims \ref{clm3.3} through \ref{clm3.6}, we conclude Theorem \ref{thm1.3},
which, along with the seven claims stated above, provides structural characterizations for graphs in $SPEX(n,\gamma)$.
These characterizations will be used in the next section to further identify the spectral extremal graphs on the projective plane and the torus.

\section{Proofs of Theorems \ref{thm1.4} and \ref{thm1.5}}

In this section, we present the proofs of Theorems \ref{thm1.4} and \ref{thm1.5}.
Before proceeding, we first introduce two lemmas under the assumption that $\gamma\geq1$ and $n$ is sufficiently large.

\begin{lem}\label{lem4.1}
For $\gamma\in\{1,2\}$, we have $K_2\nabla K_{\gamma+3}^{n-2}\in \mathbb{G}(n,\gamma)$.
\end{lem}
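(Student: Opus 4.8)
The plan is to build an explicit cellular embedding of $K_2\nabla K_{\gamma+3}^{n-2}$ on the projective plane (when $\gamma=1$) or on the torus (when $\gamma=2$) by gluing together standard pieces. The starting point is the pair of identities $K_2\nabla K_4=K_6$ and $K_2\nabla K_5=K_7$. Writing $u^*,u^{**}$ for the two dominating vertices and $v_1,\dots,v_{\gamma+3}$ for the clique vertices of $K_{\gamma+3}^{n-2}$, the \emph{core} of $K_2\nabla K_{\gamma+3}^{n-2}$ — the subgraph induced by $\{u^*,u^{**},v_1,\dots,v_{\gamma+3}\}$ — is the complete graph $K_{\gamma+5}$, that is, $K_6$ when $\gamma=1$ and $K_7$ when $\gamma=2$. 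Let the two pendant paths of $K_{\gamma+3}^{n-2}$ be $v_1p_1p_2\dots p_s$ and $v_2q_1q_2\dots q_t$, where $s+t=n-\gamma-5$. Then $K_2\nabla K_{\gamma+3}^{n-2}$ is the union of three overlapping subgraphs: the core $K_{\gamma+5}$; the fan $F_1:=K_2\nabla(v_1p_1\dots p_s)\cong K_2\nabla P_{s+1}$; and the fan $F_2:=K_2\nabla(v_2q_1\dots q_t)\cong K_2\nabla P_{t+1}$. Here $F_1\cap F_2$ is just the edge $u^*u^{**}$, the core meets $F_1$ exactly in the triangle $u^*u^{**}v_1$, and the core meets $F_2$ exactly in the triangle $u^*u^{**}v_2$; a routine vertex/edge count confirms this union is precisely $K_2\nabla K_{\gamma+3}^{n-2}$, a graph of order $n$.

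Next I would fix a triangulation of the core on the right surface. By Lemma \ref{lem2.5} one has $\gamma(K_6)=1$ and $\gamma(K_7)=2$, so $K_6$ admits a minimal (hence cellular) embedding on the projective plane and $K_7$ admits a minimal (hence cellular) embedding on the torus. Euler's formula gives $f(K_6)=10$ on the projective plane and $f(K_7)=14$ on the torus, and in both cases $3f=2e$, so each of these embeddings is a triangulation. Fix such a triangulation of the core. The edge $u^*u^{**}$ then lies on exactly two faces, say $u^*u^{**}x$ and $u^*u^{**}y$ with $x\ne y$. Since the automorphism group of a complete graph is the full symmetric group, I may relabel the clique vertices (keeping $u^*,u^{**}$ fixed) so that $x=v_1$ and $y=v_2$; this produces a triangulation of the core in which $u^*u^{**}v_1$ and $u^*u^{**}v_2$ are exactly the two faces incident to the edge $u^*u^{**}$.

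Finally I would glue in the two fans using the face-substitution primitive of Definition \ref{def3A}(i). Each fan $K_2\nabla P_m$ is a maximal planar graph ($m+2$ vertices and $3m=3(m+2)-6$ edges), hence triangulates the sphere, and — choosing the attachment vertex to be an endpoint of the path — this spherical triangulation has $u^*u^{**}v_1$ (respectively $u^*u^{**}v_2$) as a face. I cut the open face $u^*u^{**}v_1$ out of the core triangulation and the matching outer face out of $F_1$, and identify the boundary triangles; I then repeat the operation for $v_2$ and $F_2$. Because $u^*u^{**}v_1$ and $u^*u^{**}v_2$ are distinct faces, the two substitutions are independent, each merely fills a disc with a triangulated disc, and neither changes the underlying surface. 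The outcome is a cellular embedding (in fact a triangulation) of $K_2\nabla K_{\gamma+3}^{n-2}$ on the projective plane for $\gamma=1$ and on the torus for $\gamma=2$, so $K_2\nabla K_{\gamma+3}^{n-2}\in\mathbb{G}(n,\gamma)$.

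The only point requiring care — the main, and fairly mild, obstacle — is the relabeling step: one must first verify that the minimal embeddings of $K_6$ and $K_7$ are triangulations, so that $u^*u^{**}$ is incident to exactly two faces and both are triangles, and then invoke the full vertex- and edge-transitivity of complete graphs to name these two faces as $u^*u^{**}v_1$ and $u^*u^{**}v_2$. Everything else is the routine operation of inserting a planar triangulation into a triangular face, which is precisely the construction already prepared in Definition \ref{def3A}.
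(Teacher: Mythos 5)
Your proposal is correct and follows essentially the same route as the paper: realize the core $K_{\gamma+5}$ ($=K_6$ or $K_7$) cellularly on the projective plane or torus and insert the two planar fans $K_2\nabla P_{s+1}$, $K_2\nabla P_{t+1}$ into the two triangular faces sharing the edge $u^*u^{**}$. The only presentational difference is that the paper points at the explicit embeddings of $K_6$ and $K_7$ from Figure \ref{fig.01}, whereas you derive the needed facts (the minimal embedding is a triangulation, so $u^*u^{**}$ lies on exactly two triangular faces with distinct third vertices) from Lemma \ref{lem2.5} and Euler's formula and then relabel by an automorphism of the complete graph.
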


\begin{proof}
We first prove that $K_2\nabla K_{5}^{n-2}\in \mathbb{G}(n,2)$.
Let $H_1=K_2\nabla P_{a+1}$ and $H_2=K_2\nabla P_{b+1}$ be two plane graphs,
where $a=\lceil\frac{n-7}{2}\rceil$ and $b=\lfloor\frac{n-7}{2}\rfloor$.
In Figure \ref{fig.01} ($a$),
the embedding of $K_7$ on the torus is cellular.
Let $G_1$ be the graph obtained from this cellular embedding of $K_7$ by embedding
$H_1$ into the 3-face $u^*u^{**}v_2$
and $H_2$ into the 3-face $u^*u^{**}v_3$,
such that $v_2=w_{a+1}$
and $v_3=u_{b+1}$ (see Figure \ref{fig.08A}).
Consequently, $G_1\in \mathbb{G}(n,2)$.
Furthermore, we know that $G_1\cong K_2\nabla K_{5}^{n-2}$.

We now prove that $K_2\nabla K_{4}^{n-2}\in \mathbb{G}(n,1)$.
Let $H_1=K_2\nabla P_{a+1}$ and $H_2=K_2\nabla P_{b+1}$ be two plane graphs,
where $a=\lceil\frac{n-6}{2}\rceil$ and $b=\lfloor\frac{n-6}{2}\rfloor$.
In Figure \ref{fig.01} ($b$),
the embedding of $K_6$ on the projective plane is cellular.
Let $G_2$ be the graph obtained from this cellular embedding of $K_6$ by embedding
$H_1$ into the 3-face $u^*u^{**}v_1$
and $H_2$ into the 3-face $u^*u^{**}v_2$ (see Figure \ref{fig.08A}).
Similarly, we have $G_2\in \mathbb{G}(n,1)$,
and $G_2\cong K_2\nabla K_{4}^{n-2}$.
\end{proof}

\begin{figure}
\centering
\begin{tikzpicture}[scale=0.9, x=1.00mm, y=1.00mm, inner xsep=0pt, inner ysep=0pt, outer xsep=0pt, outer ysep=0pt]
\definecolor{L}{rgb}{0,0,0}
\definecolor{F}{rgb}{0,0,0}

\path[line width=0.4mm, draw=green] (-20.00,90.00) -- (-10.00,75.00);
\path[line width=0.4mm, draw=green] (-10.00,90.00) -- (-10.00,75.00);
\path[line width=0.4mm, draw=green] (-20.00,80.00) -- (-10.00,75.00);
\path[line width=0.4mm, draw=red] (-20.00,90.00) -- (-20.00,60.00);
\path[line width=0.4mm, draw=red] (-20.00,90.00) -- (10.00,90.00);
\path[line width=0.4mm, draw=red] (10.00,60.00) -- (10.00,90.00);
\path[line width=0.4mm, draw=red] (10.00,60.00) -- (-20.00,60.00);
\path[line width=0.4mm, draw=green] (-20.00,70.00) -- (-10.00,75.00);
\path[line width=0.4mm, draw=green] (0.00,75.00) -- (-10.00,75.00);
\path[line width=0.4mm, draw=green] (-10.00,90.00) -- (0.00,75.00);
\path[line width=0.4mm, draw=green] (-10.00,90.00) -- (10.00,80.00);
\path[line width=0.4mm, draw=green] (0.00,75.00) -- (10.00,80.00);
\path[line width=0.4mm, draw=green] (0.00,90.00) -- (10.00,80.00);
\path[line width=0.4mm, draw=green] (-20.00,70.00) -- (-10.00,60.00);
\path[line width=0.4mm, draw=green] (-20.00,70.00) -- (0.00,60.00);
\path[line width=0.4mm, draw=green] (0.00,60.00) -- (-10.00,75.00);
\path[line width=0.4mm, draw=green] (0.00,75.00) -- (0.00,60.00);
\path[line width=0.4mm, draw=green] (0.00,75.00) -- (10.00,60.00);
\path[line width=0.4mm, draw=green] (0.00,75.00) -- (10.00,70.00);

\path[line width=0.30mm, draw=red, fill=red] (-20.00,90.00) circle (0.80mm);
\path[line width=0.30mm, draw=red, fill=red] (-20.00,80.00) circle (0.80mm);
\path[line width=0.30mm, draw=red, fill=red] (-20.00,70.00) circle (0.80mm);
\path[line width=0.30mm, draw=red, fill=red] (-20.00,60.00) circle (0.80mm);
\path[line width=0.30mm, draw=red, fill=red] (-10.00,90.00) circle (0.80mm);
\path[line width=0.30mm, draw=red, fill=red] (0.00,90.00) circle (0.80mm);
\path[line width=0.30mm, draw=red, fill=red] (10.00,90.00) circle (0.80mm);
\path[line width=0.30mm, draw=red, fill=red] (10.00,80.00) circle (0.80mm);
\path[line width=0.30mm, draw=red, fill=red] (10.00,70.00) circle (0.80mm);
\path[line width=0.30mm, draw=red, fill=red] (10.00,60.00) circle (0.80mm);
\path[line width=0.30mm, draw=red, fill=red] (-10.00,60.00) circle (0.80mm);
\path[line width=0.30mm, draw=red, fill=red] (0.00,60.00) circle (0.80mm);

\path[line width=0.30mm, draw=blue, fill=blue] (-10.00,75.00) circle (0.80mm);
\path[line width=0.30mm, draw=blue, fill=blue] (-10.00,75.00) circle (0.80mm);

\draw(-24.5,92.3) node[anchor=base west]{\fontsize{10.23}{17.07}\selectfont $v_1$};
\draw(-24.5,55.98) node[anchor=base west]{\fontsize{10.23}{17.07}\selectfont $v_1$};
\draw(-12.2,92.3) node[anchor=base west]{\fontsize{10.23}{17.07}\selectfont $v_2$};
\draw(-12.2,55.98) node[anchor=base west]{\fontsize{10.23}{17.07}\selectfont $v_2$};
\draw(-2.2,92.3) node[anchor=base west]{\fontsize{10.23}{17.07}\selectfont $v_3$};
\draw(-2.2,55.98) node[anchor=base west]{\fontsize{10.23}{17.07}\selectfont $v_3$};
\draw(10.47,55.98) node[anchor=base west]{\fontsize{10.23}{17.07}\selectfont $v_1$};
\draw(10.47,92.3) node[anchor=base west]{\fontsize{10.23}{17.07}\selectfont $v_1$};
\draw(-25.5,79.5) node[anchor=base west]{\fontsize{10.23}{17.07}\selectfont $v_4$};
\draw(-25.5,69) node[anchor=base west]{\fontsize{10.23}{17.07}\selectfont $v_5$};
\draw(11,79.5) node[anchor=base west]{\fontsize{10.23}{17.07}\selectfont $v_4$};
\draw(11,69) node[anchor=base west]{\fontsize{10.23}{17.07}\selectfont $v_5$};
\draw(-13.5,70.98) node[anchor=base west]{\fontsize{10.23}{17.07}\selectfont $u^*$};
\draw(0,77) node[anchor=base west]{\fontsize{10.23}{17.07}\selectfont $u^{**}$};

\node[circle,fill=blue,draw=blue,inner sep=0pt,minimum size=1.5mm] (v6) at (-10,75) {};
\node[circle,fill=blue,draw=blue,inner sep=0pt,minimum size=1.5mm] (v7) at (0,75) {};
\node[circle,fill=red,draw=red,inner sep=0pt,minimum size=1.5mm] (v2) at (-10,90) {};
\node[circle,fill=red,draw=red,inner sep=0pt,minimum size=1.5mm] (v3) at (0,60) {};

\draw[line width=1.4pt,color=blue]  (v6) -- (v7);
\draw[line width=1.4pt,color=blue]  (v6) -- (v3);
\draw[line width=1.4pt,color=blue]  (v6) -- (v2);
\draw[line width=1.4pt,color=blue]  (v7) -- (v3);
\draw[line width=1.4pt,color=blue]  (v7) -- (v2);

\path[line width=1.2pt, draw=red] (43.95,75.00) circle (15.00mm);
\path[line width=1.2pt, draw=L,color=blue] (38.75,72.00) -- (43.95,60.00);
\path[line width=1.2pt, draw=L,color=blue] (43.95,60.00) -- (49.10,72.00);
\path[line width=1.2pt, draw=L,color=blue] (38.75,72.00) -- (49.10,72.00);
\path[line width=1.2pt, draw=L,color=blue] (38.75,72.00) -- (43.95,81.00);
\path[line width=1.2pt, draw=L,color=blue] (49.20,72.00) -- (43.95,81.00);

\path[line width=1.4pt, draw=green] (43.95,90.00) -- (43.95,81.00);
\path[line width=1.2pt, draw=green] (32.53,84.57) -- (43.95,81.00);
\path[line width=1.2pt, draw=green] (55.72,84.57)-- (43.95,81.00);
\path[line width=1.2pt, draw=green] (38.75,72.00) -- (32.53,84.57);
\path[line width=1.2pt, draw=green] (38.75,72.00) -- (32.53,65.25);
\path[line width=1.2pt, draw=green] (55.72,65.25) -- (49.10,72.00);
\path[line width=1.2pt, draw=green] (55.72,84.57) -- (49.10,72.00);

\path[line width=0.30mm, draw=blue, fill=blue] (38.75,72.00) circle (0.80mm);
\path[line width=0.30mm, draw=blue, fill=blue] (49.20,72.00) circle (0.80mm);
\path[line width=0.30mm, draw=blue, fill=blue] (43.95,81.00) circle (0.80mm);
\path[line width=0.30mm, draw=red, fill=red] (43.95,60.00) circle (0.80mm);
\path[line width=0.30mm, draw=red, fill=red] (43.95,90.00) circle (0.80mm);
\path[line width=0.30mm, draw=red, fill=red] (32.53,84.57) circle (0.80mm);
\path[line width=0.30mm, draw=red, fill=red] (55.72,84.57) circle (0.80mm);
\path[line width=0.30mm, draw=red, fill=red] (32.53,65.25) circle (0.80mm);
\path[line width=0.30mm, draw=red, fill=red] (55.72,65.25) circle (0.80mm);

\draw(45,83.00) node[anchor=base west]{\fontsize{10.23}{17.07}\selectfont $v_1$};
\draw(34,71.00) node[anchor=base west]{\fontsize{10.23}{17.07}\selectfont $u^*$};
\draw(51,71.00) node[anchor=base west]{\fontsize{10.23}{17.07}\selectfont $u^{**}$};
\draw(42,92.00) node[anchor=base west]{\fontsize{10.23}{17.07}\selectfont $v_2$};
\draw(42,56.00) node[anchor=base west]{\fontsize{10.23}{17.07}\selectfont $v_2$};
\draw(27.53,85.57) node[anchor=base west]{\fontsize{10.23}{17.07}\selectfont $v_3$};
\draw(27.53,63.25) node[anchor=base west]{\fontsize{10.23}{17.07}\selectfont $v_4$};
\draw(56.42,85.57) node[anchor=base west]{\fontsize{10.23}{17.07}\selectfont $v_4$};
\draw(56.72,63.25) node[anchor=base west]{\fontsize{10.23}{17.07}\selectfont $v_3$};

\node[circle,fill=green,draw=green,inner sep=0pt,minimum size=1.5mm] (w1) at (80,60) {};
\draw(80,57) node[anchor=center]{\fontsize{10.38}{8.65}\selectfont $u^*$};
\node[circle,fill=green,draw=green,inner sep=0pt,minimum size=1.5mm] (w2) at (90,60) {};
\draw(90,57) node[anchor=center]{\fontsize{10.38}{8.65}\selectfont $u^{**}$};
\node[circle,fill=green,draw=green,inner sep=0pt,minimum size=1.5mm] (w3) at (85,70) {};
\draw(85,65) node[anchor=center]{\fontsize{10.38}{8.65}\selectfont $w_1$};
\node[circle,fill=green,draw=green,inner sep=0pt,minimum size=1.5mm] (w4) at (85,80) {};
\node[circle,fill=green,draw=green,inner sep=0pt,minimum size=0.5mm] () at (85,75) {};
\node[circle,fill=green,draw=green,inner sep=0pt,minimum size=0.5mm] () at (85,73) {};
\node[circle,fill=green,draw=green,inner sep=0pt,minimum size=0.5mm] () at (85,77) {};
\node[circle,fill=green,draw=green,inner sep=0pt,minimum size=1.5mm] (wa) at (85,90) {};
\draw(85,92.3) node[anchor=center]{\fontsize{10.38}{8.65}\selectfont $w_{a\!+\!1}$};

\draw[line width=1.2pt,color=blue]  (w1) -- (w2);
\draw[line width=1.2pt,color=blue]  (w2) -- (wa);
\draw[line width=1.2pt,color=blue]  (w1) -- (wa);

\draw[line width=1.2pt,color=green]  (w1) -- (w3);
\draw[line width=1.2pt,color=green]  (w2) -- (w3);
\draw[line width=1.2pt,color=green]  (wa) -- (w4);
\draw[line width=1.2pt,color=green]  (w1) -- (w4);
\draw[line width=1.2pt,color=green]  (w2) -- (w4);

\node[circle,fill=green,draw=green,inner sep=0pt,minimum size=1.5mm] (w1a) at (110,90) {};
\draw(110,92.5) node[anchor=center]{\fontsize{10.38}{8.65}\selectfont $u^*$};
\node[circle,fill=green,draw=green,inner sep=0pt,minimum size=1.5mm] (w2a) at (120,90) {};
\draw(120,92.5) node[anchor=center]{\fontsize{10.38}{8.65}\selectfont $u^{**}$};
\node[circle,fill=green,draw=green,inner sep=0pt,minimum size=1.5mm] (w3a) at (115,80) {};
\draw(115,85) node[anchor=center]{\fontsize{10.38}{8.65}\selectfont $u_1$};
\node[circle,fill=green,draw=green,inner sep=0pt,minimum size=1.5mm] (w4a) at (115,70) {};
\node[circle,fill=green,draw=green,inner sep=0pt,minimum size=0.5mm] () at (115,75) {};
\node[circle,fill=green,draw=green,inner sep=0pt,minimum size=0.5mm] () at (115,73) {};
\node[circle,fill=green,draw=green,inner sep=0pt,minimum size=0.5mm] () at (115,77) {};
\node[circle,fill=green,draw=green,inner sep=0pt,minimum size=1.5mm] (waa) at (115,60) {};
\draw(115,57) node[anchor=center]{\fontsize{10.38}{8.65}\selectfont $u_{b\!+\!1}$};

\draw[line width=1.2pt,color=blue]  (w1a) -- (w2a);
\draw[line width=1.2pt,color=blue]  (w2a) -- (waa);
\draw[line width=1.2pt,color=blue]  (w1a) -- (waa);
\draw[line width=1.2pt,color=green]  (w1a) -- (w3a);
\draw[line width=1.2pt,color=green]  (w2a) -- (w3a);
\draw[line width=1.2pt,color=green]  (waa) -- (w4a);
\draw[line width=1.2pt,color=green]  (w1a) -- (w4a);
\draw[line width=1.2pt,color=green]  (w2a) -- (w4a);
\draw(-8,77) node[anchor=base west]{\fontsize{10.23}{17.07}\selectfont $H_1$};
\draw(-5.2,69) node[anchor=base west]{\fontsize{10.23}{17.07}\selectfont $H_2$};
\draw(41.5,74) node[anchor=base west]{\fontsize{10.23}{17.07}\selectfont $H_1$};
\draw(41.5,67) node[anchor=base west]{\fontsize{10.23}{17.07}\selectfont $H_2$};
\draw(-8,48) node[anchor=base west]{\fontsize{10.23}{17.07}\selectfont $G_1$};
\draw(41,48) node[anchor=base west]{\fontsize{10.23}{17.07}\selectfont $G_2$};
\draw(83,48) node[anchor=base west]{\fontsize{10.23}{17.07}\selectfont $H_1$};
\draw(112,48) node[anchor=base west]{\fontsize{10.23}{17.07}\selectfont $H_2$};
\end{tikzpicture}
\caption{Embeddings of $K_2\nabla K_{\gamma+3}^{n-2}$ on the torus and the projective plane.}{\label{fig.08A}}
\end{figure}

\begin{lem}\label{lem4.2}
Let $H$ be a graph of order $n\!-\!2$, with $\pi(H)$ being its degree sequence.
Then, we have the following statements:

\vspace{1mm}
\noindent
(i) If $\pi(H)\!=\!(4,4,3,3,2,\ldots,2,1,1)$,
then $w^{(3)}\!(H)\leq 8n+106$.\\
(ii) If $\pi(H)\!=\!(5,5,4,4,4,2,\ldots,2,1,1)$,
then $w^{(3)}\!(H)\leq 8n+346$.\\
(iii) If $\pi(H)\!=\!(5,5,5,3,3,3,2,\ldots,2,1,1)$,
then $w^{(3)}\!(H)\leq 8n+340$.\\
(iv) If $\pi(H)\!=\!(6,4,4,4,3,3,2,\ldots,2,1,1)$,
then $w^{(3)}\!(H)\leq 8n+332$.

\vspace{1mm}
\noindent
Moreover, in both cases (i) and (ii), if equality holds, then all forks must form a clique.
\end{lem}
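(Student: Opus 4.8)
The plan is to evaluate $w^{(3)}(H)$ through the identity $w^{(3)}(H)=\sum_{uv\in E(H)}2d_H(u)d_H(v)$ from \eqref{eq21}, combined with the elementary rewriting
$$d_H(u)d_H(v)=\bigl(d_H(u)-2\bigr)\bigl(d_H(v)-2\bigr)+2\bigl(d_H(u)-2\bigr)+2\bigl(d_H(v)-2\bigr)+4 .$$
Summing over all edges and using $\sum_{uv\in E}\bigl[(d_H(u)-2)+(d_H(v)-2)\bigr]=\sum_{v}d_H(v)\bigl(d_H(v)-2\bigr)=w^{(2)}(H)-4e(H)$, one obtains
$$w^{(3)}(H)=4w^{(2)}(H)-8e(H)+2R,\qquad R:=\sum_{uv\in E(H)}\bigl(d_H(u)-2\bigr)\bigl(d_H(v)-2\bigr).$$
For a prescribed degree sequence the quantities $w^{(2)}(H)=\sum_v d_H(v)^2$ and $e(H)=\tfrac12\sum_v d_H(v)$ are fixed constants: in case (i) they equal $4n+20$ and $n$, giving $w^{(3)}(H)=8n+80+2R$; in cases (ii)--(iv) they equal $4n+64$ and $n+3$, giving $w^{(3)}(H)=8n+232+2R$. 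Thus the whole lemma reduces to an upper bound on $R$.

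First I would note that an edge contributes to $R$ only if neither endpoint has degree $2$, so only edges among the set $S$ of forks, edges joining a fork to a degree-$1$ vertex, and (at most) the single edge joining the two degree-$1$ vertices matter. A fork-leaf edge contributes $-(d_H(\mathrm{fork})-2)\le-1$, hence can only lower $R$; and since $H$ is connected in every application (it carries a spanning path), the two degree-$1$ vertices are never adjacent, so no leaf-leaf edge occurs. Consequently $R\le R_S$, where $R_S$ is the maximum of $\sum_{uv\in E'}\bigl(d_H(u)-2\bigr)\bigl(d_H(v)-2\bigr)$ taken over all simple graphs $E'$ on the vertex set $S$ whose degrees are bounded by the $d_H(u)$. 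Writing $c(u):=d_H(u)-2$, this is a weighted edge-packing problem on a fixed small set.

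The core step is to compute $R_S$ in each case. In cases (i) and (ii) every fork has degree at least $|S|-1$ (namely $\ge3$ when $|S|=4$, and $\ge4$ when $|S|=5$), so the forks may form a clique; since products $c(u)c(v)$ are maximized by making every pair adjacent, the clique is optimal and $R_S=\tfrac12\bigl[(\sum_{u\in S}c(u))^2-\sum_{u\in S}c(u)^2\bigr]$, which is $13$ for fork degrees $(4,4,3,3)$ and $57$ for $(5,5,4,4,4)$; moreover any non-complete configuration omits at least one edge of positive weight, so $R_S$ is attained only by the clique. In cases (iii) and (iv) no clique is possible (a $K_6$ would force every fork to have degree $\ge5$, which the degree-$3$ forks---and in (iv) also the degree-$4$ forks---cannot provide), so I would bound $R_S$ by a greedy/exchange argument: the degree budget allows at most $12$ edges among $S$, and packing the heaviest available edges gives a triangle on the three degree-$5$ forks together with a complete bipartite $K_{3,3}$ to the three degree-$3$ forks, yielding $R_S=27+27=54$ in case (iii), and making the degree-$6$ fork adjacent to all five others, a triangle on the three degree-$4$ forks, and the remaining capacity filled by next-best fork-fork edges, yielding $R_S=50$ in case (iv). Substituting $R\le R_S$ into the formulas above gives $w^{(3)}(H)\le 8n+106$, $8n+346$, $8n+340$, $8n+332$ in the four cases. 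For the last assertion, equality in case (i) or (ii) forces $R=R_S$, which (since each fork-leaf edge would cost at least $1$ and the leaf-leaf edge is excluded) forces the fork subgraph to attain $R_S$, i.e.\ to be complete---so all forks form a clique.

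The main obstacle will be the weighted edge-packing optimizations in cases (iii) and (iv): one must verify, respecting both simplicity and the individual degree caps, that no rerouting of edges beats the stated configurations, which calls for a clean exchange/charging argument or a finite check over subgraphs of the fixed six-vertex fork set. A secondary but genuinely necessary point is the exclusion of the leaf-leaf edge; this is exactly where connectivity (equivalently, the spanning-path structure inherited from $H^*$) is used, and it cannot be dropped---without it, a disjoint union of a suitable graph with a single edge would already exceed the bounds claimed in cases (i) and (ii).
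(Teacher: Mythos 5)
Your proposal is correct in substance but follows a genuinely different route from the paper. You reduce everything to the identity $w^{(3)}(H)=4w^{(2)}(H)-8e(H)+2R$ with $R=\sum_{uv\in E}(d_H(u)-2)(d_H(v)-2)$, so that for a fixed degree sequence the problem becomes a weighted edge-packing question on the bounded set of forks; your constants ($8n+80+2R$ in case (i), $8n+232+2R$ in cases (ii)--(iv)) and the target values $R\le 13,57,54,50$ all check out. The paper instead fixes a maximizer $H$ of $w^{(3)}$ with the given degree sequence and derives structural constraints on it by three edge-switching claims (Claims \ref{claim5.1}--\ref{claim5.3}), then identifies the fork subgraph case by case and computes directly. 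Your decomposition is cleaner and makes the ``equality forces a clique'' statement in (i) and (ii) immediate (every omitted fork--fork pair has strictly positive weight, every fork--leaf edge has strictly negative weight), whereas the paper has to extract this from its switching claims. The price is that in cases (iii) and (iv) you must actually solve the degree-capped packing problem, which the paper's switching argument handles implicitly. Case (iii) is fine by the greedy exchange you describe, but case (iv) needs one more observation than ``pack the heaviest edges'': the naive per-vertex bound $R\le\frac12\sum_u c(u)\max S(u)$ gives $51$, not $50$, and you must rule out $R=51$ by noting that attaining it forces each $4$-vertex to have exactly one $3$-neighbour while each $3$-vertex has two $4$-neighbours, a count ($3$ versus $4$ fork--fork edges of type $4$--$3$) that is inconsistent; since $R$ is an integer this yields $R\le 50$. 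This is exactly the finite check you flag as the main obstacle, and it does need to be written out.

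One point in your proposal deserves emphasis rather than apology: the exclusion of the edge between the two $1$-vertices is not merely a convenience but is genuinely required, and it is \emph{not} supplied by the lemma's hypotheses. A graph with degree sequence $(4,4,3,3,2,\ldots,2,1,1)$ in which the forks form a $K_4$, the $2$-vertices form a path joining the two $4$-vertices, and the two $1$-vertices form a separate $K_2$ component has $R=14$ and $w^{(3)}=8n+108$, exceeding the stated bound; the paper's own proof never addresses this configuration (its Claims \ref{claim5.1}--\ref{claim5.3} do not forbid the leaf--leaf edge), so the lemma as stated is off by an additive constant for disconnected $H$. In the only place the lemma is used, $H=H^*$ carries a spanning path, so the two $1$-vertices are its endpoints and are non-adjacent for large $n$; your proof should simply add connectivity (or non-adjacency of the two $1$-vertices) as a hypothesis, after which your argument is sound.
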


\begin{proof}
Let $H$ be a graph for which $w^{(3)}\!(H)$ achieves its maximum among all graphs with the same degree sequence,
and let $V_k$ denote the set of $k$-vertices in $H$.
Observe that $\Delta(H)\leq|V(H)\setminus V_2|\leq 8$ for each of the four cases.
Since $n$ is sufficiently large,
there exists $w'\in V_2$
such that the distance $d_H(w',w)\geq 4$ for any $w\in V(H)\setminus V_2$.
Choose $w''\in N_H(w')$.
Clearly, $w''$ also belongs to $V_2$.
We now proceed to prove three claims.

\begin{claim}\label{claim5.1}
Let $w_1$ and $w_2$ be two non-adjacent forks in $H$.
If $w_1$ has a neighbor of degree two,
then $d_H\!(w)\geq d_H\!(w_1)$ for any neighbor $w $ of $w_2$.
\end{claim}

\begin{proof}
Assume that $w_1$ has a neighbor $w_3$ of degree two, and let
$w_4$ be any neighbor of $w_2$ (possibly, $w_3=w_4$).
Based on the choices of $w'$ and $w''$, the distance between $\{w_3,w_4\}$ and $\{w',w''\}$ is at least two.
Let $H'$ be the graph obtained from $H$ by deleting the edges in $E_1=\{w_1w_{3},w_{2}w_{4},w'w''\}$
and adding the edges in $E_2=\{w_1w_2,w_3w',w_4w''\}$.
Clearly, $d_{H'}\!(w)\!=\!d_H\!(w)$ for each $w\in V(H)$, and thus $\pi(H')\!=\!\pi(H)$.
It follows from (\ref{eq21}) that
\begin{align*}
w^{(3)}\!(H')\!-\!w^{(3)}\!(H)
&=\,\,\sum_{uv\in E_2}\!\!\!2d_H\!(u)d_H\!(v)-\sum_{uv\in E_1}\!\!\!2d_H\!(u)d_H\!(v)\\
&=\,\,2\big(d_H\!(w_1)\!-\!d_H\!(w_4)\big)\big(d_H\!(w_2)\!-\!2\big).
\end{align*}
Since $d_H\!(w_2)>2$ and $w^{(3)}\!(H)\geq w^{(3)}\!(H')$,
we have $d_H\!(w_4)\geq d_H\!(w_1)$, as required.
\end{proof}

\begin{claim}\label{claim5.2}
Let $w_1,w_2,w_3,w_4\in\! V(H)$, with
$w_1w_3,w_2w_4\in E(H)$ and $w_1w_2,w_3w_4\notin E(H)$.
If $d_H\!(w_1)>d_H\!(w_4)$, then $d_H\!(w_2)\leq d_H\!(w_3)$.
\end{claim}

\begin{proof}
Let $H''$ be the graph obtained from $H$ by deleting the edges in $E_3=\{w_1w_{3},w_{2}w_{4}\}$
and adding the edges in $E_4=\{w_1w_{2},w_{3}w_4\}$.
Clearly, $d_{H''}\!(w)=d_H\!(w)$ for all $w\in V(H)$. Thus, we have $\pi(H'')=\pi(H)$,
and it follows from (\ref{eq21}) that
\begin{align*}
w^{(3)}\!(H'')\!-\!w^{(3)}\!(H)
&=\,\,\sum_{uv\in E_4}\!\!\!2d_H\!(u)d_H\!(v)-\sum_{uv\in E_3}\!\!\!2d_H\!(u)d_H\!(v)\\
&=\,\,2\big(d_H\!(w_1)\!-\!d_H\!(w_4)\big)\big(d_H\!(w_2)\!-\!d_H\!(w_3)\big).
\end{align*}
Since $d_H\!(w_1)>d_H\!(w_4)$ and $w^{(3)}\!(H)\geq w^{(3)}\!(H'')$,
we conclude that $d_H\!(w_2)\leq d_H\!(w_3)$.
\end{proof}

\begin{claim}\label{claim5.3}
Every fork in $H$ is not adjacent to any $1$-vertex.
\end{claim}

\begin{proof}
Assume, for the sake of contradiction, that there exist a fork $w_1$
and a $1$-vertex $w_3$ such that $w_1w_3\in E(H)$.
Moreover, let $w_2=w'$ and $w_4=w''$.
By the choices of $w'$ and $w''$,
we have $w_2w_4\in E(H)$, while $w_1w_2,w_3w_4\notin E(H)$.
Furthermore, $d_H\!(w_1)>d_H\!(w_4)$ and $d_H\!(w_2)>d_H\!(w_3)$,
which contradicts the conclusion of Claim \ref{claim5.2}.
\end{proof}

We now consider four cases based on the distinct degree sequences.
\vspace{1mm}

{\bf (i) $\pi(H)\!=\!(4,4,3,3,2,\ldots,2,1,1)$.}

We first prove that $V_3\cup V_4$ forms a $4$-clique.
Suppose, to the contrary, that there exist $w_1,w_2\in V_3\cup V_4$ such that $w_1w_2\notin E(H)$.
By Claim \ref{claim5.3}, we can find two 2-vertices $w_3$ and $w_4$ such that $w_3\in N_H\!(w_1)$ and $w_4\in N_H\!(w_2)$.
However, $d_H\!(w_4)=2<d_H\!(w_1)$,
which contradicts Claim \ref{claim5.1}.
Hence, $V_3\cup V_4$ is a clique.
Recall that $w^{(3)}\!(H)=\sum_{uv\in E(H)}\!2d_H\!(u)d_H\!(v)$ and $w^{(3)}\!(H)$ is maximal.
A straightforward calculation yields $w^{(3)}\!(H)=8n+106$.

{\bf (ii) $\pi(H)\!=\!(5,5,4,4,4,2,\ldots,2,1,1)$.}

We similarly show that $V_4\cup V_5$ forms a clique.
Indeed, if there exist $w_1,w_2\in V_4\cup V_5$ such that $w_1w_2\notin E(H)$,
then by the same argument as above, we can arrive at a contradiction with Claim \ref{claim5.1}.
Since $V_4\cup V_5$ is a clique and $w^{(3)}\!(H)$ is maximal, a straightforward calculation shows that $w^{(3)}\!(H)=8n+346$.

{\bf (iii) $\pi(H)\!=\!(5,5,5,3,3,3,2,\ldots,2,1,1)$.}

Firstly, $V_5$ is a clique.
If not, there would exist $w_1,w_2\in V_5$ such that $w_1w_2\notin E(H)$.
In this case, we could find two 2-vertices $w_3$ and $w_4$ such that $w_3\in N_H\!(w_1)$ and $w_4\in N_H\!(w_2)$.
However, since $d_H\!(w_4)=2<d_H\!(w_1)$,
this contradicts Claim \ref{claim5.1}.

Secondly, $w_1w_2\in E(H)$ for all $w_1\in V_5$ and $w_2\in V_3$.
If not, there exist $w_1\in V_5$ and $w_2\in V_3$ such that $w_1w_2\notin E(H)$.
By Claim \ref{claim5.3}, we can find a 2-vertex $w_3\in N_H\!(w_1)$ and a vertex $w_4\in N_H\!(w_2)$ that has a degree of less than four.
Again, we have $d_H\!(w_4)<d_H\!(w_1)$, which contradicts Claim \ref{claim5.1}.

Therefore, the subgraph induced by $V_3\cup V_5$ must be a connected component that is isomorphic to $K_3\nabla 3K_1$.
A straightforward calculation reveals that $w^{(3)}\!(H)=8n+340$.

{\bf (iv) $\pi(H)\!=\!(6,4,4,4,3,3,2,\ldots,2,1,1)$.}

Firstly, the unique 6-vertex, say $w_0$, must be adjacent to all vertices in $V_3\cup V_4$.
Otherwise, there would exist a vertex $w_2\in V_3\cup V_4$ such that $w_0w_2\notin E(H)$.
Moreover, by Claim \ref{claim5.3}, there exists a 2-vertex $w_3\in N_H\!(w_0)$.
Then, by Claim \ref{claim5.1}, $d_H\!(w)\geq d_H\!(w_0)=6$ for any $w\in N_H\!(w_2)$.
This is clearly impossible.

Secondly, $V_4$ induces a triangle.
Otherwise, there would exist a pair of non-adjacent $4$-vertices, $w_1$ and $w_2$.
By Claim \ref{claim5.3}, both $w_1$ and $w_2$ have no neighbors in $V_1$.
Hence, each must have neighbors in $V_2\cup V_3$.
Furthermore, by Claim \ref{claim5.1}, we conclude that neither $w_1$ nor $w_2$ have neighbors of degree two,
which implies that $N_H\!(w_1)=N_H\!(w_2)\supseteq V_3$.
Assume that $V_3=\{w_3,w_4\}$. Then, $N_H\!(w_3)=N_H\!(w_4)=\{w_0,w_1,w_2\}$, and thus, $w_3w_4\notin E(H)$.
However, $d_H\!(w_1)>d_H\!(w_4)$ and $d_H\!(w_2)>d_H\!(w_3)$, which contradicts Claim \ref{claim5.2}.

Based on the previous two statements, we have $d_{V_3}\!(w)\leq 1$ for each $w\in V_4$,
and thus $e(V_4,V_3)\leq3.$
We now state that $e(V_4,V_3)=3.$
Otherwise, there would exist a $4$-vertex $w_1$ such that $d_{V_3}(w_1)=0$.
Now, $w_1$ must have a neighbor $w_3$ of degree two.
By Claim \ref{claim5.1}, we conclude that $d_H\!(w)\geq d_H\!(w_1)=4$ for any $w_2\in V_3$ and any $w\in N_H\!(w_2)$.
This implies that every 3-vertex is adjacent to a 6-vertex and two 4-vertices.
Thus, $e(V_3,V_4)=4$, which leads to a contradiction.
Hence, $e(V_4,V_3)=3$, which further implies that $e(V_3)=0$ and $e(V_3,V_2)=1$.
A straightforward calculation shows that $w^{(3)}\!(H)=8n+332$.
\end{proof}

Let $G^*$ be an extremal graph in $SPEX(n,\gamma)$,
and let $H^*$ be the subgraph induced by $V(G^*)\setminus \{u^*,u^{**}\}$,
where $u^*$ and $u^{**}$ are the two dominating vertices of $G^*$.
By Theorem \ref{thm1.2}, $H^*$ is obtained from a spanning path by adding $3\gamma$ external edges.

%

\begin{proof}[\textbf{Proof of Theorem~\ref{thm1.4}}]

By Claim \ref{clm3.1}, we have $\Delta(H^*)\leq 2\gamma+2,$ where $\gamma=1$.
Since $H^*$ is obtained from $u_1u_2\ldots u_{n-2}$ by adding only three external edges,
it has at most three $4$-vertices.
In this extremal case, any external edge must be incident to two $4$-vertices.

We first show that there are at most two $4$-vertices in $H^*$.
Assume, for contradiction, that there are three $4$-vertices.
Then, they would induce a triangle via three external edges,
which implies that the $4$-vertices cannot be consecutive in the spanning path.
Moreover, since every external edge is now incident to two $4$-vertices,
no other forks can exist besides these three 4-vertices.
Hence, all these $4$-vertices must be separate forks, which leads to a contradiction with Claim \ref{clm3.6}.
Therefore, there are at most two $4$-vertices in $H^*$.

Note that every $4$-vertex in $H^*$ is incident to two external edges,
and by Claim \ref{clm3.2}, we have $d_{H^*}\!(u_1)=d_{H^*}\!(u_{n-2})=1$.
If $H^*$ contains at most one $4$-vertex, then its degree sequence is either $(4,3,3,3,3,2,\ldots,2,1,1)$
or $(3,3,3,3,3,3,2,\ldots,2,1,1)$. However,
$$\pi\big(K_4^{n-2}\big)=\big(4,4,3,3,2,\ldots,2,1,1\big).$$
Clearly, we have $w^{(1)}\!(H^*)=w^{(1)}\!(K_4^{n-2})$, while $w^{(2)}\!(H^*)<w^{(2)}\!(K_4^{n-2}).$
By Lemma \ref{lem3.3}, we conclude that $\rho(G^*)<\rho(K_2\nabla K_4^{n-2}).$
Moreover, by Lemma \ref{lem4.1}, $K_2\nabla K_4^{n-2}\in \mathbb{G}(n,1)$,
which leads to a contradiction.
Therefore, $H^*$ contains two $4$-vertices.

Let $u_{i_1}$ and $u_{i_2}$ be the two $4$-vertices.
Since $H^*$ contains only three external edges,
$u_{i_1}u_{i_2}$ is an external edge,
and the other two external edges must be incident to $u_{i_1}$ and $u_{i_2}$, respectively.
Hence, $\pi(H^*)=\pi(K_4^{n-2})$, and thus $w^{(\ell)}\!(H^*)=w^{(\ell)}\!(K_4^{n-2})$ for $\ell\in\{1,2\}$.

Observe that $w^{(3)}\!(K_4^{n-2})=8n+106$.
Thus, we conclude that the four forks in $H^*$ must form a clique;
otherwise, it would follow that $\rho(G^*)<\rho(K_4^{n-2})$ by Lemmas \ref{lem3.3} and \ref{lem4.2}.
Consequently, $H^*\cong K_4(a,b)$, where $a+b+4=n-2$.
Furthermore,
by Lemma \ref{lem3.4}, we conclude that $|a-b|\leq1$. It follows that
$G^*\cong K_2\nabla K_4^{n-2}.$
This completes the proof.
\end{proof}

Let $\widetilde{G^*}$ be an embedding of $G^*$ on a surface $\Sigma^*$ with Euler genus $\gamma$.
Recall that $\widetilde{G^*}$ is a triangulation of $\Sigma^*$,
and by Lemma \ref{lem3.1}, for every $u\in V(\widetilde{G^*})$,
there are exactly $d_{\widetilde{G^*}}(u)$ 3-faces incident to $u$,
which together form a wheel $W(u)$ of order $d_{\widetilde{G^*}}(u)+1$.

\begin{proof}[\textbf{Proof of Theorem \ref{thm1.5}}]

Let $(d_1,\ldots,d_{n\!-\!2})$ be the decreasing degree sequence of $H^*$.
By Claim \ref{clm3.1}, we have $d_1\leq 2\gamma+2$, where $\gamma=2$.
Since $\widetilde{G^*}$ is a triangulation of $\Sigma^*$, we can easily derive the following claim.

\begin{claim}\label{claim5.4}
Let $u$ and $v$ be two vertices in $\widetilde{G^*}$. The following statements hold.\\
(i) If $uv\in E(\widetilde{G^*})$, then exactly two 3-faces are incident to both $u$ and $v$;\\
(ii) If $uv\notin E(\widetilde{G^*})$, then no 3-faces are incident to both $u$ and $v$.
\end{claim}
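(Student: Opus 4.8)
The plan is to use two ingredients: the fact that $\widetilde{G^*}$ is a triangulation of $\Sigma^*$ whose underlying graph $G^*$ is simple, so that each $3$-face is bounded by a genuine triangle of $G^*$; and Lemma~\ref{lem3.1}, which describes the $3$-faces around a fixed vertex as the faces of a wheel. Both facts are already available at this point in the paper (via Theorem~\ref{thm1.2} and the preceding discussion that $\widetilde{G^*}$ is a cellular triangulation).

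First I would record the elementary observation underlying both parts: in the simple graph $G^*$, the boundary walk of any $3$-face is a triangle on three \emph{distinct}, pairwise adjacent vertices — a length-$3$ closed walk with a repeated vertex would make some facial edge a loop, which is impossible, and the three edges of a triangle automatically lie in $E(\widetilde{G^*})$. Consequently, if a $3$-face $F$ is incident to two vertices $u$ and $v$, then $\{u,v\}$ is a two-element subset of the vertex set of the triangle bounding $F$, so $uv\in E(\widetilde{G^*})$. This proves (ii) immediately by contraposition: if $uv\notin E(\widetilde{G^*})$, then no $3$-face is incident to both $u$ and $v$.

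For (i), assume $uv\in E(\widetilde{G^*})$, i.e.\ $v\in N_{\widetilde{G^*}}(u)$. By Lemma~\ref{lem3.1}, the $d_{\widetilde{G^*}}(u)$ three-faces incident to $u$ are exactly the faces of a wheel $W(u)$ with hub $u$ whose rim is a cycle $C$ passing through all $d_{\widetilde{G^*}}(u)$ neighbours of $u$; here $|V(C)|=d_{\widetilde{G^*}}(u)\ge 3$ because $K_2\nabla P_{n-2}\subseteq G^*$ forces $\delta(\widetilde{G^*})\ge 3$. Let $w_1,w_2$ be the two neighbours of $v$ on $C$; then the only faces of $W(u)$ incident to $v$ are the triangles $uvw_1$ and $uvw_2$, and these are distinct since $w_1\ne w_2$. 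By the observation of the previous paragraph, any $3$-face incident to both $u$ and $v$ is in particular incident to $u$, hence is a face of $W(u)$, and it contains $v$; therefore it is one of $uvw_1,uvw_2$. So exactly two $3$-faces are incident to both $u$ and $v$, which is (i).

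The whole argument is essentially bookkeeping, so I do not anticipate a genuine obstacle; the only points deserving a word of care are that every $3$-face through $u$ is a spoke-triangle of $W(u)$ (immediate from Lemma~\ref{lem3.1}) and that the two candidate faces $uvw_1,uvw_2$ are genuinely distinct, which is precisely why $\delta(\widetilde{G^*})\ge 3$ is invoked.
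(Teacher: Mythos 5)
Your proof is correct, and it fills in exactly the routine verification the paper omits (the paper merely asserts the claim "can easily be derived" from $\widetilde{G^*}$ being a triangulation). Part (ii) via simplicity of the facial triangles and part (i) via the wheel $W(u)$ from Lemma \ref{lem3.1} is the intended argument, and your care about $w_1\ne w_2$ (using $\delta(\widetilde{G^*})\ge 3$) is the right detail to check.
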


Denote by $\mathbb{F}$ the set of 3-faces in $\widetilde{G^*}$ that are incident to neither $u^*$ nor $u^{**}$.
By Claim \ref{clm3.0A}, we know that $|\mathbb{F}|=2\gamma=4$.
For each $u\in V(\widetilde{G^*})\setminus\{u^*,u^{**}\}$,
let $\mathbb{F}(u)$ denote the subset of $\mathbb{F}$ consisting of all 3-faces incident to $u$.
By Lemma \ref{lem3.1},
$\widetilde{G^*}$ contains a wheel $W(u^*)$, which is the join of $u^*$ and a cycle $u^{**}u_1\ldots u_{n-2}u^{**}$.
Thus, $u_1\ldots u_{n-2}$ is a spanning path of $H^*$,
where $d_{H^*}(u_1)=d_{H^*}(u_{n-2})=1$, as stated in Claim \ref{clm3.2}.

\begin{claim}\label{claim5.5}
For every $u_i\in V(H^*)$, $2\leq i\leq n-3$,
we have $|\mathbb{F}(u_i)|=d_{H^*}(u)-2$.
\end{claim}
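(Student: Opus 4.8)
\textbf{Proof proposal for Claim \ref{claim5.5}.}
The plan is a short counting argument that classifies the $3$-faces incident to $u_i$ according to whether or not they touch one of the two dominating vertices $u^*,u^{**}$. First I would recall that, since $u^*$ and $u^{**}$ are dominating in $G^*$, the vertex $u_i$ (with $2\le i\le n-3$) is adjacent to exactly $d_{H^*}(u_i)+2$ vertices of $\widetilde{G^*}$, namely its $d_{H^*}(u_i)$ neighbours inside $H^*$ together with $u^*$ and $u^{**}$. Hence $d_{\widetilde{G^*}}(u_i)=d_{H^*}(u_i)+2$, and by Lemma \ref{lem3.1} the number of $3$-faces incident to $u_i$ is exactly $d_{H^*}(u_i)+2$. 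It therefore suffices to show that precisely four of these $3$-faces are incident to at least one of $u^*,u^{**}$.

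Next I would use Claim \ref{claim5.4}(i): because $u^*u_i\in E(\widetilde{G^*})$, exactly two $3$-faces are incident to both $u^*$ and $u_i$, and likewise exactly two $3$-faces are incident to both $u^{**}$ and $u_i$. To conclude that these two pairs are disjoint, I would observe that a common $3$-face of $u^*$ and $u^{**}$ must be one of the two faces $u^*u^{**}u_1$ and $u^*u^{**}u_{n-2}$ identified in the proof of Claim \ref{clm3.0A} (these are the only two faces incident to both $u^*$ and $u^{**}$, since the local rotations at $u^*$ and at $u^{**}$ are the wheels $W(u^*),W(u^{**})$ with outer cycles $u^{**}u_1\ldots u_{n-2}u^{**}$ and $u^*u_1\ldots u_{n-2}u^*$, meeting only along the edges $u^*u^{**}u_1$ and $u^*u^{**}u_{n-2}$). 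Since $2\le i\le n-3$ forces $u_i\notin\{u_1,u_{n-2}\}$, none of these faces is incident to $u_i$, so no $3$-face is incident to all three of $u^*,u^{**},u_i$.

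Combining the two observations, exactly $2+2=4$ of the $d_{H^*}(u_i)+2$ faces incident to $u_i$ touch $u^*$ or $u^{**}$, and all remaining ones lie in $\mathbb{F}$ by definition; thus $|\mathbb{F}(u_i)|=\big(d_{H^*}(u_i)+2\big)-4=d_{H^*}(u_i)-2$. I do not expect a serious obstacle here: the only point requiring care is the disjointness of the two pairs of faces, which is exactly the place where the hypothesis $2\le i\le n-3$ (equivalently $u_i\notin\{u_1,u_{n-2}\}$) is used, and that follows cleanly from the structure of $W(u^*)$ and $W(u^{**})$ together with Claim \ref{claim5.4}. A minor bookkeeping point is to confirm that $d_{H^*}(u_i)\ge 2$ for $2\le i\le n-3$ (so the right-hand side is non-negative), which is immediate since $u_1u_2\ldots u_{n-2}$ is a spanning path of $H^*$.
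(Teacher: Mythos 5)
Your proposal is correct and follows essentially the same route as the paper: count the $d_{\widetilde{G^*}}(u_i)=d_{H^*}(u_i)+2$ faces incident to $u_i$ via Lemma \ref{lem3.1}, then subtract the four faces touching $u^*$ or $u^{**}$. The only cosmetic difference is in the disjointness step, where the paper reads off the two faces on the edge $u^*u_i$ as $u^*u_iu_{i-1}$ and $u^*u_iu_{i+1}$ from the wheel $W(u^*)$, whereas you invoke the fact that the only common faces of $u^*$ and $u^{**}$ are $u^*u^{**}u_1$ and $u^*u^{**}u_{n-2}$; both arguments are valid.
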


\begin{proof}
Observing the wheel $W(u^*)$,
we know that the only two 3-faces incident to $u^*u_i$ are $u^*u_iu_{i-1}$ and $u^*u_iu_{i+1}$.
Hence, the two 3-faces incident to $u^{**}u_i$ must be distinct from those incident to $u^*u_i$.
Moreover, by Lemma \ref{lem3.1}, there are exactly $d_{\widetilde{G^*}}(u_i)$ faces incident to $u_i$.
Therefore, $|\mathbb{F}(u_i)|=d_{\widetilde{G^*}}(u_i)-4=d_{H^*}(u)-2.$
\end{proof}

\begin{claim}\label{claim5.6}
we have $d_1\leq 5$.
\end{claim}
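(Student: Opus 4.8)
The plan is to argue by contradiction. Suppose $d_1=6$ and fix a vertex $u_i$ of $H^*$ with $d_{H^*}(u_i)=6$. By Claim~\ref{clm3.2} the ends of the spanning path have degree $1$, so $u_i$ is internal, and by Claim~\ref{clm3.5} all forks sit in the bounded core $H_0$, so for $n$ large both path-neighbours $u_{i-1},u_{i+1}$ of $u_i$ are non-terminal. As $u_i$ has path-degree $2$, exactly four external edges meet $u_i$; write $N_{H^*}(u_i)=\{u_{i-1},u_{i+1},c_1,c_2,c_3,c_4\}$ with $c_1,\dots,c_4$ the other ends of those edges. Each $c_k$ is internal (endpoints carry no external edge) with path-degree $2$ plus the edge $u_ic_k$, so $d_{H^*}(c_k)\ge 3$; thus $c_1,\dots,c_4$ are forks. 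In fact every end of an external edge is a fork, whence $\sum\bigl(d_{H^*}(u)-2\bigr)=2\cdot 3\gamma=12$, the sum being over all forks $u$ of $H^*$.

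Next I would exploit the triangulation. By Claim~\ref{clm3.0A}, $|\mathbb{F}|=2\gamma=4$, and Claim~\ref{claim5.5} gives $|\mathbb{F}(u_i)|=d_{H^*}(u_i)-2=4$; hence $\mathbb{F}(u_i)=\mathbb{F}$, i.e.\ every face in $\mathbb{F}$ is incident to $u_i$. For any fork $u_j\ne u_i$ (internal, so Claim~\ref{claim5.5} applies), $\varnothing\ne\mathbb{F}(u_j)\subseteq\mathbb{F}=\mathbb{F}(u_i)$, so some $3$-face is incident to both $u_i$ and $u_j$; Claim~\ref{claim5.4}(ii) then forces $u_iu_j\in E(\widetilde{G^*})$, and Claim~\ref{claim5.4}(i) shows that only two $3$-faces are incident to both, so $|\mathbb{F}(u_j)|\le 2$ and $d_{H^*}(u_j)\le 4$. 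Thus $d_2\le 4$, and every fork other than $u_i$ lies in $N_{H^*}(u_i)$.

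Now I would pin down $\pi(H^*)$. The non-$u_i$ forks all lie in $N_{H^*}(u_i)$, so there are at most $6$ of them; they include $c_1,\dots,c_4$, and by Claim~\ref{clm3.6} (as $u_i$ is a fork, it is not separate) at least one of $u_{i-1},u_{i+1}$ is a fork too, so there are at least $5$. Since each non-$u_i$ fork has degree in $\{3,4\}$ and $\sum(d_{H^*}(u)-2)=12$ over forks with $u_i$ contributing $4$, the non-$u_i$ fork degrees are either three $4$'s and two $3$'s, or two $4$'s and four $3$'s; equivalently $\pi(H^*)=(6,4,4,4,3,3,2,\dots,2,1,1)$ or $\pi(H^*)=(6,4,4,3,3,3,3,2,\dots,2,1,1)$.

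Finally I would compare walk counts with $K_2\nabla K_5^{n-2}$, which belongs to $\mathbb{G}(n,2)\cap\mathbb{H}(n,2)$ by Lemma~\ref{lem4.1} (and to $\mathbb{H}(n,2)$ trivially), so that $\rho(G^*)\ge\rho(K_2\nabla K_5^{n-2})$. Both $H^*$ and $K_5^{n-2}$ come from a spanning path by adding $6$ edges, so $w^{(1)}(H^*)=w^{(1)}(K_5^{n-2})=2n+6$. In the second case $w^{(2)}(H^*)=4n+62<4n+64=w^{(2)}(K_5^{n-2})$, so Lemma~\ref{lem3.3} with $k=2$ gives $\rho(K_2\nabla K_5^{n-2})>\rho(K_2\nabla H^*)=\rho(G^*)$, a contradiction. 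In the first case $w^{(2)}(H^*)=4n+64=w^{(2)}(K_5^{n-2})$, while Lemma~\ref{lem4.2}(iv) gives $w^{(3)}(H^*)\le 8n+332$ and Lemma~\ref{lem4.2}(ii) gives $w^{(3)}(K_5^{n-2})=8n+346$ (the equality because the forks of $K_5^{n-2}$ form a clique); so $w^{(3)}(H^*)<w^{(3)}(K_5^{n-2})$ and Lemma~\ref{lem3.3} with $k=3$ gives the same contradiction. Hence $d_1\le 5$. The main obstacle is the middle part: combining the face-count identities (Claims~\ref{clm3.0A},~\ref{claim5.4},~\ref{claim5.5}) with the no-separate-fork condition (Claim~\ref{clm3.6}) to force exactly these two degree sequences, and then computing the exact values of $w^{(2)}$ and $w^{(3)}$ so that the comparison with $K_5^{n-2}$ is tight; the walk-counting step itself is routine.
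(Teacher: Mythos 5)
Your proof is correct, and it reaches the same contradiction by the same toolkit (Claims \ref{clm3.0A}, \ref{claim5.4}, \ref{claim5.5}, \ref{clm3.6} and Lemmas \ref{lem3.3}, \ref{lem4.1}, \ref{lem4.2}), but the middle of the argument is organized differently from the paper's. You push the face-counting further: from $\mathbb{F}(u_i)=\mathbb{F}$ you deduce not only $d_2\leq 4$ but also that every other fork is adjacent to $u_i$ in $H^*$, so the number of non-$u_i$ forks is between $5$ (by Claim \ref{clm3.6} plus the four external neighbours $c_1,\dots,c_4$) and $6$ (the degree of $u_i$); the excess count $\sum(d-2)=12$ then forces $\pi(H^*)\in\{(6,4,4,4,3,3,2,\dots),(6,4,4,3,3,3,3,2,\dots)\}$, and you kill the second by a $w^{(2)}$ comparison and the first by Lemma \ref{lem4.2}(iv). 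The paper instead extracts only $d_2\leq4$ from the faces, uses a $w^{(2)}$ comparison to force $d_4=4$, arrives at the candidate sequences $(6,4,4,4,4,2,\dots)$ and $(6,4,4,4,3,3,2,\dots)$, and eliminates the first by showing the $6$-vertex would be a separate fork. Your adjacency observation lets Claim \ref{clm3.6} do its work up front and replaces the paper's separate-fork elimination with a clean counting bound; the trade-off is that you need the (easy but unstated in the paper) facts that $c_1,\dots,c_4$ are distinct from each other and from $u_{i\pm1}$, which hold because $H^*$ is simple and the external edges are non-path edges. All the walk-count values you quote ($w^{(2)}=4n+62$ vs.\ $4n+64$, $w^{(3)}\leq 8n+332$ vs.\ $8n+346$) check out, and the applications of Lemma \ref{lem3.3} are legitimate since $K_2\nabla K_5^{n-2}\in\mathbb{G}(n,2)\cap\mathbb{H}(n,2)$.
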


\begin{proof}
Assume, for contradiction, that $d_1=6$.
In this situation, we first assert that $d_2\leq 4$.
Otherwise, there would exist two vertices, say $u_i$ and $u_j$, with $d_{H^*}(u_i)=6$ and $d_{H^*}(u_j)\geq5$.
By Claim \ref{claim5.5}, we have $|\mathbb{F}(u_i)|=4$ and $|\mathbb{F}(u_j)|\geq 3$;
and by Claim \ref{claim5.4}, we know that $|\mathbb{F}(u_i)\cap \mathbb{F}(u_j)|\leq 2$.
Thus, we have $|\mathbb{F}(u_i)\cup \mathbb{F}(u_j)|=|\mathbb{F}(u_i)|\!+\!|\mathbb{F}(u_j)|\!-\!|\mathbb{F}(u_i)\cap\mathbb{F}(u_j)|\geq 5,$
which contradicts the fact $|\mathbb{F}|=4$. Therefore, we conclude that $d_2\leq 4$.

By Lemma \ref{lem4.1},
$K_2\nabla K_5^{n-2}\in \mathbb{G}(n,2)$.
Note that $\pi(K_5^{n-2})=(5,5,4,4,4,2,\ldots,2,1,1)$ and $e(H^*)=e(P_{n-2})\!+\!3\gamma=e(K_5^{n-2})$.
If $d_4\leq 3$,
it is clear that $w^{(2)}\!(H^*)<w^{(2)}\!(K_5^{n-2})$,
and by Lemma \ref{lem3.3}, $\rho(G^*)<\rho(K_2\nabla K_5^{n-2})$, a contradiction.
Hence, $d_4=4$.

Since $H^*$ is obtained from its spanning path by adding six edges within $\{u_2,\ldots,u_{n-3}\}$,
its degree sequence must be either $(6,4,4,4,4,2,\ldots,2,1,1)$ or $(6,4,4,4,3,3,2,\ldots,2,1,1)$.
If the former case occurs, then the subgraph induced by the six external edges is isomorphic to $K_1\nabla 2K_2$.
Now, none of the four 4-vertices can be consecutive with the $6$-vertex in the spanning path.
Hence, the 6-vertex is a separate fork,
contradicting Claim \ref{clm3.6}.
If the later case occurs,
then $w^{(2)}\!(H^*)=w^{(2)}\!(K_5^{n\!-\!2})$, and all forks cannot form a clique.
From Lemma \ref{lem4.2},
we know that $w^{(3)}\!(H^*)<8n+346$, while $w^{(3)}\!(K_5^{n\!-\!2})=8n+346$.
By Lemma \ref{lem3.3}, we have $\rho(G^*)<\rho(K_2\nabla K_5^{n-2})$, a contradiction.
Thus, the claim holds.
\end{proof}

\begin{claim}\label{claim5.7}
We have $d_3\leq 4$.
\end{claim}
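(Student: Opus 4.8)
The plan is to argue by contradiction, exactly in the spirit of the proof of Claim~\ref{claim5.6}. Suppose $d_3\geq 5$; combined with Claim~\ref{claim5.6} (so $d_1,d_2,d_3\leq 5$) this forces $d_1=d_2=d_3=5$, i.e. $H^*$ has (at least) three $5$-vertices. Since $H^*$ is obtained from its spanning path $u_1\ldots u_{n-2}$ by adding exactly $3\gamma=6$ external edges, and every fork $u_i$ satisfies $d_{H^*}(u_i)-1$ extra edge-ends at $u_i$, a simple count of edge-ends shows that three $5$-vertices already account for $3\cdot 3=9$ of the $12$ available external edge-ends, leaving at most $3$ further external edge-ends to distribute; hence $d_4\leq 4$ and in fact $d_4,\dots$ are tightly constrained. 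First I would enumerate the possible degree sequences of $H^*$ with $d_1=d_2=d_3=5$: they are $(5,5,5,3,3,3,2,\dots,2,1,1)$ and $(5,5,5,4,2,\dots,2,1,1)$ (the latter having only one further fork of degree $4$, using $1$ extra end, but then one end is unaccounted — so actually the only admissible sequence is $(5,5,5,3,3,3,2,\dots,2,1,1)$, since the $12$ external edge-ends must pair up into $6$ edges and the residual degrees above $2$ must sum to $12$; I will double-check this parity/counting step carefully).

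Next, I would compare $H^*$ against $K_5^{n-2}$, for which $\pi(K_5^{n-2})=(5,5,4,4,4,2,\dots,2,1,1)$ by Lemma~\ref{lem4.2}(ii)'s hypothesis list, and $w^{(2)}(K_5^{n-2})=\sum d_i^2$. For the candidate sequence $(5,5,5,3,3,3,2,\dots,2,1,1)$ one computes $\sum d_i^2 = 3\cdot 25 + 3\cdot 9 + (\text{tail}) = 102 + (\text{tail})$, whereas for $K_5^{n-2}$ one gets $2\cdot 25 + 3\cdot 16 + (\text{same tail}) = 98 + (\text{same tail})$; the tails ($2$'s and $1$'s) agree because $w^{(1)}$ is fixed by $e(H^*)=e(K_5^{n-2})$ and the number of vertices is fixed. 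Thus $w^{(2)}(H^*) > w^{(2)}(K_5^{n-2})$ whenever $d_3=5$. Since $w^{(1)}(H^*)=w^{(1)}(K_5^{n-2})$, Lemma~\ref{lem3.3} (with $k=2$) yields $\rho(G^*)=\rho(K_2\nabla H^*) > \rho(K_2\nabla K_5^{n-2})$, and by Lemma~\ref{lem4.1} $K_2\nabla K_5^{n-2}\in\mathbb{G}(n,2)$, contradicting $G^*\in SPEX(n,2)$. Hence $d_3\leq 4$.

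The main obstacle I anticipate is not the spectral comparison — that is a short $w^{(2)}$ calculation once the degree sequence is pinned down — but rather the \emph{enumeration of admissible degree sequences}: one must carefully use the parity constraint that the $6$ external edges contribute exactly $12$ edge-ends among the interior vertices $u_2,\dots,u_{n-3}$, that $d_{H^*}(u_1)=d_{H^*}(u_{n-2})=1$ by Claim~\ref{clm3.2}, and that $\Delta(H^*)\leq 5$ by Claim~\ref{claim5.6}, to conclude that $d_1=d_2=d_3=5$ forces the tail to be all $2$'s (with two $1$'s) and forces exactly the sequence $(5,5,5,3,3,3,2,\dots,2,1,1)$ — so that $w^{(2)}$ is unambiguously determined and strictly exceeds that of $K_5^{n-2}$. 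One subtlety to handle: if there were a degree sequence with $d_1=d_2=d_3=5$ but $w^{(2)}$ merely \emph{equal} to $w^{(2)}(K_5^{n-2})$, one would have to pass to $w^{(3)}$ and invoke Lemma~\ref{lem4.2}(iii) ($w^{(3)}\leq 8n+340 < 8n+346 = w^{(3)}(K_5^{n-2})$) together with Claim~\ref{clm3.6} to rule it out; I would include this as a fallback branch, but I expect the strict $w^{(2)}$ inequality to already close the argument.
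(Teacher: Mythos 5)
Your proposal has a genuine gap, in two places. First, the enumeration of degree sequences is incomplete. With $d_1=d_2=d_3=5$, the three $5$-vertices carry $9$ of the $12$ external edge-ends; writing $t$ for the number of external edges with both ends among them and $s$ for those with exactly one end there, $2t+s=9$ and $t+s\le 6$ force $t=3$, so the three $5$-vertices induce a triangle and the remaining three external edges each have exactly one free end. Those three free ends can land on one, two, or three further interior vertices, giving \emph{three} admissible sequences: $(5,5,5,5,2,\dots,2,1,1)$, $(5,5,5,4,3,2,\dots,2,1,1)$, and $(5,5,5,3,3,3,2,\dots,2,1,1)$. You discard all but the last (your parity objection only rules out $(5,5,5,4,2,\dots)$, not $(5,5,5,4,3,\dots)$ or $(5,5,5,5,\dots)$). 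Second, your $w^{(2)}$ computation is wrong: the ``tails'' do not agree, because sequences with different numbers of forks have different numbers of $2$-vertices. Carrying the tail correctly, $(5,5,5,3,3,3,\dots)$ gives $\sum d_i^2=102+4(n-10)+2=4n+64$, which \emph{equals} $w^{(2)}(K_5^{n-2})=98+4(n-9)+2=4n+64$; there is no strict inequality, and your main line collapses. Your fallback branch ($w^{(3)}$ via Lemma \ref{lem4.2}(iii) giving $8n+340<8n+346$) is exactly how the paper disposes of this one sequence, so that case is salvageable.

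The deeper problem is that the two sequences you missed cannot be eliminated spectrally at all: their $w^{(2)}$ values are $4n+70$ and $4n+66$, both \emph{strictly larger} than $4n+64$, so Lemma \ref{lem3.3} would point toward $\rho(G^*)>\rho(K_2\nabla K_5^{n-2})$ — no contradiction. The paper kills $(5,5,5,5,\dots)$ combinatorially: the six external edges then form a $K_4$ on the four forks, so no two forks are consecutive on the spanning path, making every fork separate and contradicting Claim \ref{clm3.6}. It kills $(5,5,5,4,3,\dots)$ with a face-counting argument in the triangulation $\widetilde{G^*}$, using $|\mathbb{F}|=2\gamma=4$, Claims \ref{claim5.4} and \ref{claim5.5}, and the wheel structure around the $5$-vertices to show the required fourth face $u_{i_2}u_{i_3}u_{j_1}$ cannot be placed. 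This topological analysis is the heart of the proof of Claim \ref{claim5.7} and is entirely absent from your proposal.
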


\begin{proof}
Assume, for contradiction, that $d_3=5$.
Recall that $H^*$ is obtained from a spanning path by adding six external edges within $\{u_2,\ldots,u_{n-3}\}$.
Then the three 5-vertices say, $u_{i_1}$, $u_{i_2}$, and $u_{i_3}$, must induce a triangle through three external edges.
Furthermore, these three 5-vertices each must be incident to exactly one of the remaining three external edges.
Hence, we may assume that $u_{i_k}u_{j_k}$, for $k\in\{1,2,3\}$, are the remaining three external edges,
where $2\leq j_k\leq n-3$.
Based on the choices of $u_{j_1}$, $u_{j_2}$, and $u_{j_3}$,
we conclude that $\pi(H^*)$ can only be one of the following degree sequences:
$$(5,5,5,5,2,\ldots,2,1,1), (5,5,5,4,3,2,\ldots,2,1,1), ~\mbox{or}~ (5,5,5,3,3,3,2,\ldots,2,1,1).$$

If the first case occurs, then each of these six external edges must be incident to two of the four 5-vertices,
which implies that all four forks are mutually separate in the spanning path, contradicting Claim \ref{clm3.6}.
If the third case occurs,
then $w^{(2)}\!(H^*)=w^{(2)}\!(K_5^{n\!-\!2})$ and all forks cannot form a clique.
By Lemma \ref{lem4.2}, we have $w^{(3)}\!(H^*)<8n+346=w^{(3)}\!(K_5^{n\!-\!2})$,
and by Lemma \ref{lem3.3}, we get $\rho(G^*)<\rho(K_2\nabla K_5^{n-2})$, which also leads to a contradiction.

It remains the case that $\pi(H^*)=(5,5,5,4,3,2,\ldots,2,1,1)$.
Now, $u_{j_1}=u_{j_2}$, and it is a 4-vertex.
By Claim \ref{claim5.5}, $|\mathbb{F}(u_{j_1})|=2$, and $|\mathbb{F}(u_{i_k})|=3$ for $k\in \{1,2,3\}$.
Clearly,
$$4\!=\!|\mathbb{F}|\!\geq\!|\mathbb{F}(u_{i_1})\cup \mathbb{F}(u_{i_2})|\!=\!|\mathbb{F}(u_{i_1})|\!+\!|\mathbb{F}(u_{i_2})|\!-\!|\mathbb{F}(u_{i_1})\cap \mathbb{F}(u_{i_2})|,$$
which implies that $|\mathbb{F}(u_{i_1})\cap \mathbb{F}(u_{i_2})|\geq 2$.
Combining Claim \ref{claim5.4} gives $|\mathbb{F}(u_{i_1})\cap \mathbb{F}(u_{i_2})|= 2$.
Similarly, we also have
$|\mathbb{F}(u_{i_k})\cap \mathbb{F}(u_{i_3})|=2$ and $|\mathbb{F}(u_{i_k})\cap \mathbb{F}(u_{j_1})|\geq1$ for $k\in\{1,2\}$.

\begin{figure}
\centering
\begin{tikzpicture}[scale=1, x=1.00mm, y=0.75mm, inner xsep=0pt, inner ysep=0pt, outer xsep=0pt, outer ysep=0pt]
\definecolor{L}{rgb}{0,0,0}
\definecolor{F}{rgb}{0,0,0}
\node[circle,fill=green,draw=green,inner sep=0pt,minimum size=2mm] (u0) at (0,0) {};
\draw(3.5,6.5) node[anchor=center]{\fontsize{14.38}{8.65}\selectfont $u_{i_1}$};
\node[circle,fill=green,draw=green,inner sep=0pt,minimum size=2mm] (u1) at (0,20) {};
\node[circle,fill=green,draw=green,inner sep=0pt,minimum size=2mm] (u2) at (15.62,12.48) {};
\draw(20.62,13.48) node[anchor=center]{\fontsize{14.38}{8.65}\selectfont $u^{**}$};
\node[circle,fill=green,draw=green,inner sep=0pt,minimum size=2mm] (u7) at (-15.62,12.48) {};
\draw(-19.12,13.48) node[anchor=center]{\fontsize{14.38}{8.65}\selectfont $u^{*}$};
\node[circle,fill=green,draw=green,inner sep=0pt,minimum size=2mm] (u3) at (19.48,-4.44) {};
\draw(23.48,-5) node[anchor=center]{\fontsize{14.38}{8.65}\selectfont $u_{j_1}$};
\node[circle,fill=green,draw=green,inner sep=0pt,minimum size=2mm] (u6) at (-19.48,-4.44) {};
\node[circle,fill=green,draw=green,inner sep=0pt,minimum size=2mm] (u4) at (8.68,-18) {};
\draw(12.68,-21) node[anchor=center]{\fontsize{14.38}{8.65}\selectfont $u_{i_2}$};
\node[circle,fill=green,draw=green,inner sep=0pt,minimum size=2mm] (u5) at (-8.68,-18) {};
\draw(-11.68,-21) node[anchor=center]{\fontsize{14.38}{8.65}\selectfont $u_{i_3}$};
 \draw[line width=1.5pt,color=green]  (u0) -- (u1);
 \draw[line width=1.4pt,color=green]  (u0) -- (u2);
 \draw[line width=1.4pt,color=green]  (u0) -- (u3);
 \draw[line width=1.4pt,color=green]  (u0) -- (u4);
 \draw[line width=1.4pt,color=green]  (u0) -- (u4);
 \draw[line width=1.4pt,color=green]  (u0) -- (u5);
 \draw[line width=1.4pt,color=green]  (u0) -- (u6);
 \draw[line width=1.4pt,color=green]  (u0) -- (u7);
 \draw[line width=1.4pt,color=green]  (u1) -- (u2);
 \draw[line width=1.4pt,color=green]  (u2) -- (u3);
 \draw[line width=1.4pt,color=green]  (u3) -- (u4);
 \draw[line width=1.4pt,color=green]  (u4) -- (u5);
 \draw[line width=1.4pt,color=green]  (u5) -- (u6);
 \draw[line width=1.4pt,color=green]  (u6) -- (u7);
 \draw[line width=1.4pt,color=green]  (u7) -- (u1);
\draw(0,-12) node[anchor=center,color=blue]{\fontsize{14.38}{8.65}\selectfont $f_{1}$};
\draw(10,-8) node[anchor=center,color=blue]{\fontsize{14.38}{8.65}\selectfont $f_{2}$};
\draw(-10,-8) node[anchor=center,color=blue]{\fontsize{14.38}{8.65}\selectfont $f_{3}$};
 \node[circle,fill=green,draw=green,inner sep=0pt,minimum size=2mm] (w0) at (60,0) {};
\draw(61,-7) node[anchor=center]{\fontsize{14.38}{8.65}\selectfont $u_{i_2}$};
\node[circle,fill=green,draw=green,inner sep=0pt,minimum size=2mm] (w1) at (60,20) {};
\draw(60,24) node[anchor=center]{\fontsize{14.38}{8.65}\selectfont $u_{j_1}$};
\node[circle,fill=green,draw=green,inner sep=0pt,minimum size=2mm] (w2) at (75.62,12.48) {};
\node[circle,fill=green,draw=green,inner sep=0pt,minimum size=2mm] (w7) at (44.38,12.48) {};
\draw(40.88,13.48) node[anchor=center]{\fontsize{14.38}{8.65}\selectfont $u_{i_1}$};
\node[circle,fill=green,draw=green,inner sep=0pt,minimum size=2mm] (w3) at (79.48,-4.44) {};
\node[circle,fill=green,draw=green,inner sep=0pt,minimum size=2mm] (w6) at (40.52,-4.44) {};
\draw(37.02,-4.44) node[anchor=center]{\fontsize{14.38}{8.65}\selectfont $u_{i_3}$};
\node[circle,fill=green,draw=green,inner sep=0pt,minimum size=2mm] (w4) at (68.68,-18) {};
\node[circle,fill=green,draw=green,inner sep=0pt,minimum size=2mm] (w5) at (51.32,-18) {};
 \draw[line width=1.4pt,color=green]  (w0) -- (w1);
 \draw[line width=1.4pt,color=green]  (w0) -- (w2);
 \draw[line width=1.4pt,color=green]  (w0) -- (w3);
 \draw[line width=1.4pt,color=green]  (w0) -- (w4);
 \draw[line width=1.4pt,color=green]  (w0) -- (w4);
 \draw[line width=1.4pt,color=green]  (w0) -- (w5);
 \draw[line width=1.4pt,color=green]  (w0) -- (w6);
 \draw[line width=1.4pt,color=green]  (w0) -- (w7);
 \draw[line width=1.4pt,color=green]  (w1) -- (w2);
 \draw[line width=1.4pt,color=green]  (w2) -- (w3);
 \draw[line width=1.4pt,color=green]  (w3) -- (w4);
 \draw[line width=1.4pt,color=green]  (w4) -- (w5);
 \draw[line width=1.4pt,color=green]  (w5) -- (w6);
 \draw[line width=1.4pt,color=green]  (w6) -- (w7);
 \draw[line width=1.4pt,color=green]  (w7) -- (w1);
\draw(49,2) node[anchor=center,color=blue]{\fontsize{14.38}{8.65}\selectfont $f_{1}$};
\draw(55,11) node[anchor=center,color=blue]{\fontsize{14.38}{8.65}\selectfont $f_{2}$};
\draw(50,-8) node[anchor=center,color=red]{\fontsize{14.38}{8.65}\selectfont $f_{4}$};
\draw(65,11) node[anchor=center,color=red]{\fontsize{14.38}{8.65}\selectfont $f_{4}$};
\end{tikzpicture}
\caption{Local embeddings of $\widetilde{G^*}$: $W(u_{i_1})$ and $W(u_{i_2})$.}{\label{fig.08B}}
\end{figure}
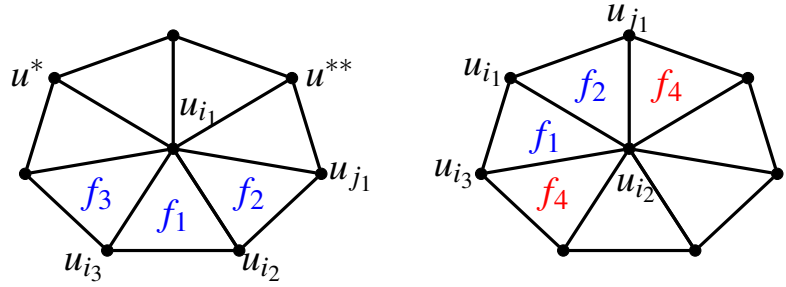

Note that $d_{\widetilde{G^*}}(u_{i_1})=d_{\widetilde{G^*}}(u_{i_2})=7$.
By Lemma \ref{lem3.1}, both $W(u_{i_1})$ and $W(u_{i_2})$ are wheels of order $8$ (see Figure \ref{fig.08B}).
Since $|\mathbb{F}(u_{i_1})|=3$, we conclude that
$u^*$ and $u^{**}$ must be in the external 7-cycle of $W(u_{i_1})$, but cannot be consecutive.
Moreover, for $k\in\{2,3\}$, $u_{i_k}$ cannot be consecutive with $u^*$ or $u^{**}$, since
$|\mathbb{F}(u_{i_1})\cap \mathbb{F}(u_{i_k})|= 2$.
Furthermore,
$u_{i_2}$ must be consecutive with $u_{i_3}$,
as otherwise we would have
$$|\mathbb{F}(u_{i_1})|\geq|\mathbb{F}(u_{i_1})\cap \mathbb{F}(u_{i_2})|+|\mathbb{F}(u_{i_1})\cap \mathbb{F}(u_{i_3})|=4.$$
Recall that $|\mathbb{F}(u_{i_1})\cap \mathbb{F}(u_{j_1})|\geq1$.
Then, $u_{j_1}$ also belongs to this 7-cycle, but cannot be placed between $u^{*}$ and $u^{**}$.
Without loss of generality, we assume that $u_{j_1}$ is positioned between $u_{i_2}$ and $u^{**}$ (see Figure \ref{fig.08B}).

Since $|\mathbb{F}(u_{j_1})|=2$ and $|\mathbb{F}(u_{i_k})|=3$ for $k\in \{2,3\}$,
there exist three faces, say $f_4,f_5,f_6$, with $f_4\in\mathbb{F}(u_{j_1})\setminus \{f_2\}$,
$f_5\in\mathbb{F}(u_{i_2})\setminus \{f_1,f_2\}$, and $f_6\in\mathbb{F}(u_{i_3})\setminus \{f_1,f_3\}$.
However, since $|\mathbb{F}|=4$, there exists only one 3-face in $\mathbb{F}\setminus\{f_1,f_2,f_3\}$.
Thus, $f_4=f_5=f_6$, and this 3-face must be $u_{i_2}u_{i_3}u_{j_1}$.
This implies that $f_4$ belongs to the wheel $W(u_{i_2})$.
But now, we cannot find a position for $f_4$ (see Figure \ref{fig.08B}).
Therefore, we conclude that $d_3\leq4$.
\end{proof}

Now, by Claims \ref{claim5.6} and \ref{claim5.7},
we know that $d_1\leq5$ and $d_3\leq4$.
Recall that $\pi(K_5^{n\!-\!2})=(5,5,4,4,4,2,\ldots,2,1,1)$ and $e(H^*)=e(K_5^{n\!-\!2})$.
If $d_2\leq4$ or $d_3\leq3$, then
it is clear that $w^{(2)}\!(H^*)\!<\!w^{(2)}\!(K_5^{n\!-\!2})$.
By Lemma \ref{lem3.3}, we have $\rho(G^*)\!<\!\rho(K_2\nabla K_5^{n\!-\!2})$, a contradiction.
Hence, we have $d_1=d_2=5$ and $d_3=4$.
By a similar argument, we can further deduce that $d_4=d_5=4$.
Therefore,
$\pi(H^*)=\pi(K_5^{n\!-\!2})$, and thus
 $w^{(2)}\!(H^*)=w^{(2)}\!(K_5^{n\!-\!2})$.
Moreover, by Lemma \ref{lem4.2}, we have $w^{(3)}\!(H^*)\leq 8n+346=w^{(3)}\!(K_5^{n\!-\!2})$,
with equality holding only if the five forks in $H^*$ form a clique.
By Lemma \ref{lem3.3},
we conclude that $w^{(3)}\!(H^*)=w^{(3)}\!(K_5^{n\!-\!2})$.
Consequently, $H^*\cong K_5(a,b)$, where $a\!+\!b\!+\!5=n\!-\!2$.
By Lemma \ref{lem3.4}, we derive that $|a-b|\leq1$. It follows that
$G^*\cong K_2\nabla K_5^{n\!-\!2}.$
This completes the proof.
\end{proof}

\section{Concluding remarks}

For a given graph family $\mathcal{F}$, we denote by $ex(n,\mathcal{F})$ the maximum size of an $n$-vertex $\mathcal{F}$-free graph, 
and by $\mathrm{SPEX}(n,\mathcal{F})$ the set of $n$-vertex $\mathcal{F}$-free graphs attaining the maximum spectral radius. 
We also write $\mathrm{EX}^{G}(n,\mathcal{F})$ for the family of edge-extremal $\mathcal{F}$-free graphs containing a fixed spanning subgraph $G$.

Recently, Byrne, Desai, and Tait \cite{Byrne} established a structural framework for spectral extremal problems 
under the assumption $ex(n,\mathcal{F})=O(n)$. 

\begin{thm}[\cite{Byrne}]\label{thm6.1}
Suppose that $ex(n, \mathcal{F}) = O(n)$, $K_{k+1, \infty}$ is not $\mathcal{F}$-free, and for sufficiently large $n$, there exists
$H\in\text{EX}^{K_{k,n-k}}(n, \mathcal{F})$ satisfying one of the following conditions:\vspace{1mm}

\noindent
(a) $H = K_{k,n-k}$;

\noindent
(b) $H = K_k\nabla \overline{K_{n-k}}$;

\noindent
(c) $H = K_k \nabla (K_2 \cup \overline{K_{n-k-2}})$;

\noindent
(d) $H = K_k \nabla X$ where $e(X) \leq Qn + O(1)$ for some $Q \in [0, 3/4)$ and $\mathcal{F}$ is finite;

\noindent
(e) $H = K_k \nabla X$ where $e(X) \leq Qn + O(1)$ for some $Q \in [3/4, \infty)$ and $\mathcal{F}$ is finite;

\noindent
(f) The condition (e) holds, all but a bounded number of vertices of $X$ have constant degree $d$,
and $K_k \nabla (\infty \cdot K_{1,d+1})$ is not $\mathcal{F}$-free.

\vspace{1mm}
\noindent
Then, if (a), (b), or (c) holds, for sufficiently large $n$, we have $\text{SPEX}(n, \mathcal{F}) = H$;\\
If (d) or (f) holds, then for sufficiently large $n$, $\text{SPEX}(n, \mathcal{F}) \subseteq \text{EX}^{K_{k}\nabla\overline{K_{n-k}}}(n, \mathcal{F})$;\\
If (e) holds, then for sufficiently large $n$, all graphs in $\text{SPEX}(n, \mathcal{F})$ are of the form $K_k + Y$,
where $e(Y) \geq e(X) - O(n^{1/2})$ and $\Delta(Y)$ is bounded in terms of $\mathcal{F}$.
\end{thm}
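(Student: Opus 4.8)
The plan is to adapt the local-structure-plus-symmetrization method used throughout this paper --- the weight-concentration analysis behind Claims \ref{cl2.1}--\ref{cl2.6} and the walk-counting machinery of Lemmas \ref{lem3.2}--\ref{lem3.4}, going back to Nikiforov \cite{NIKI} and to Tait--Tobin \cite{Tait1} --- to the abstract hypotheses. Fix $G\in SPEX(n,\mathcal F)$ with spectral radius $\rho=\rho(G)$ and Perron vector $\mathbf x$, normalized so that $\max_v x_v=x_z=1$. I would first record the two a priori estimates on $\rho$: since the extremal graph $H$ of the hypothesis contains $K_{k,n-k}$, every $\mathcal F$-free graph of maximum size contains $K_{k,n-k}$, so $\rho\geq\rho(K_{k,n-k})=\sqrt{k(n-k)}$; on the other hand $ex(n,\mathcal F)=O(n)$ forces $e(G)=O(n)$, and $\rho^2x_z=\sum_{u\sim z}\sum_{w\sim u}x_w\leq\sum_{u\sim z}d_G(u)\leq2e(G)$ gives $\rho=\Theta(\sqrt n)$. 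Finally, ``$K_{k+1,\infty}$ not $\mathcal F$-free'' means some $F\in\mathcal F$ embeds into $K_{k+1,m}$ for a fixed $m=|V(F)|$, hence every $\mathcal F$-free graph is $K_{k+1,m}$-free.

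Next comes the localization step. Put $B=\{v:x_v\geq\varepsilon\}$ for a suitable small constant $\varepsilon$. Expanding $\rho^2x_v=\sum_w w^{(2)}(v,w)x_w$ and using $e(G)=O(n)$, one shows $|B|$ is bounded and, after isolating the return term $w^{(2)}(v,v)=d_G(v)$, that every $v\in B$ has degree $n-o(n)$. The $K_{k+1,m}$-freeness then caps the number of vertices of near-linear degree at $k$: if $k+1$ vertices all had degree at least $n-\frac{n}{2(k+1)}$, their common neighborhood would still contain at least $\frac n2\geq m$ vertices, producing a $K_{k+1,m}$. Combining this with $\rho\geq\sqrt{k(n-k)}$ in the eigenvalue equation at $z$ forces $|B|=k$, shows all but $O(\sqrt n)$ vertices are adjacent to every vertex of $B$, and --- by the usual edge-switching, which replaces a missing $B$--$B$ edge at the cost of a non-$B$ incidence and strictly raises $\rho$ because the $B$-entries dominate all others --- upgrades this to: $B$ induces $K_k$ and is adjacent to all of $V(G)$. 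Thus $G=K_k\nabla X$ with $X=G[V(G)\setminus B]$; moreover $K_{k+1,m}$-freeness forces $\Delta(X)<m$ (a vertex of large $X$-degree together with $B$ spans a $K_{k+1,m}$), so $e(X)=O(n)$ automatically, and the equation $\rho x_u=k+\sum_{w\in N_X(u)}x_w$ shows $x_u=\tfrac k\rho+O(\Delta(X)\rho^{-2})$ is essentially uniform on $V(X)$.

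It remains to identify $X$, and here I would run the walk-counting argument exactly as in Lemmas \ref{lem3.3} and \ref{lem3.4}. Call $X$ \emph{admissible} if $K_k\nabla X$ is $\mathcal F$-free and $|V(X)|=n-k$. Since $\rho(X)\leq\Delta(X)<\sqrt n\leq\rho$, the series $\sum_{\ell\geq1}w^{(\ell)}(X)\rho^{-\ell-1}$ converges, and by Lemma \ref{lem3.2} the increasing function $f(\rho)=\frac{k}{1+1/\rho}+\frac1{1+(n-k)/\rho+\sum_{\ell\geq1}w^{(\ell)}(X)\rho^{-\ell-1}}-k$ vanishes at $\rho=\rho(K_k\nabla X)$; hence maximizing $\rho$ over admissible $X$ amounts to maximizing, in lexicographic order, $e(X)=\tfrac12 w^{(1)}(X)$ and then the higher moments $w^{(\ell)}(X)$. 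In cases (a), (b), (c) the prescribed $H$ strictly beats every other admissible $X$ already at the first moment (or, at a tie in $e(X)$, at the next moment), so $\rho(K_k\nabla H)>\rho(K_k\nabla X)$ for all competitors and $SPEX(n,\mathcal F)=\{H\}$. In case (d), $Q<3/4$, one shows the spectral-extremal $X$ must be edge-maximal among admissible $X$, so $SPEX(n,\mathcal F)\subseteq EX^{K_k\nabla\overline{K_{n-k}}}(n,\mathcal F)$; the value $3/4$ is precisely the threshold below which the edge-maximal $X$ still wins and above which redistributing some edges of $X$ becomes advantageous. In case (e), $Q\geq3/4$, one cannot pin $X$ exactly, but a stability argument --- any near-optimal $X$ differs from a fixed edge-maximal one in $O(\sqrt n)$ edges --- yields $e(Y)\geq e(X)-O(\sqrt n)$, and $\Delta(Y)<m$ as above. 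Case (f) eliminates the ``redistributed'' optimizers of case (e) using the extra hypothesis that $K_k\nabla(\infty\cdot K_{1,d+1})$ is not $\mathcal F$-free --- a vertex of $X$ of degree $d+1$ together with $B$, repeated $\Omega(n)$ times, forces a forbidden subgraph --- thereby reducing (f) to the conclusion of (d).

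The main obstacle is the interface between the localization step and the case analysis: one must convert ``$\rho$ is maximum'' into a lexicographic optimization of the walk moments of $X$ with enough precision to separate $O(1)$ differences in $e(X)$ from the $O(\sqrt n)$ slack that localization alone guarantees, which requires both the convergence and a sharp tail bound for $\sum_{\ell}w^{(\ell)}(X)\rho^{-\ell-1}$ in the spirit of the estimates in the proof of Lemma \ref{lem3.4}, together with a careful determination of exactly where the $Q=3/4$ transition lies. Everything else --- checking that the Perron entries sit where predicted, and verifying the six structural alternatives (a)--(f) one at a time --- is bookkeeping once these estimates are in hand.
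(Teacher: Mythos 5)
This statement is quoted from Byrne, Desai, and Tait \cite{Byrne}; the paper gives no proof of it, so there is no internal argument to compare yours against, and I can only assess the sketch on its own terms. It does follow the right family of ideas (Perron-weight localization in the style of Tait--Tobin, then a refined comparison of the attachment graphs $X$ via walk counts as in Lemmas \ref{lem3.2}--\ref{lem3.4}), but two of its steps are genuinely broken. First, the localization step proves too much: you claim to upgrade the structure unconditionally so that $B$ induces a $K_k$ joined to every other vertex, yet this is incompatible with alternative (a), where $H=K_{k,n-k}$ is edge-maximal among $\mathcal F$-free graphs containing $K_{k,n-k}$ and the asserted conclusion is $SPEX(n,\mathcal F)=\{K_{k,n-k}\}$ --- a graph in which the $k$ high-degree vertices are pairwise non-adjacent. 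The root of the problem is that your edge-switchings (inserting a missing $B$--$B$ edge, or reattaching a low-weight vertex to all of $B$) are only admissible if the modified graph remains $\mathcal F$-free; in the present paper that is secured by explicit re-embedding arguments on the surface (Claims \ref{cl2.5} and \ref{cl2.6}), but for an abstract family $\mathcal F$ it is precisely what the hypotheses (a)--(f) are designed to control, and it cannot be assumed.

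Second, the reduction of ``maximize $\rho$'' to ``maximize $(w^{(1)}(X),w^{(2)}(X),\dots)$ lexicographically'' is false in general, and the theorem itself says so: in case (e) the spectral-extremal $Y$ need not be edge-maximal (only $e(Y)\geq e(X)-O(n^{1/2})$), exactly because once $e(X)\geq\tfrac34 n+O(1)$ a competitor with fewer edges but larger higher walk moments can have larger spectral radius. Your sketch names the $Q=3/4$ threshold but never derives it; the quantitative comparison between an edge deficit of $c$ (a loss of order $c/\rho^{2}$ in the relevant series) and a surplus in the tail $\sum_{\ell\geq2}w^{(\ell)}(X)\rho^{-\ell-1}$ is the actual content of the Byrne--Desai--Tait argument, and without it cases (d), (e), and (f) --- and the dichotomy between them --- remain unproved.
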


Theorem \ref{thm6.1} offers a general and powerful framework for spectral extremal problems in sparse regimes 
where the extremal number is linear in the order of the graph.   
Roughly speaking, once the edge-extremal graphs in $\text{EX}^{K_{k,n-k}}(n, \mathcal{F})$ are known to exhibit one of several stable forms, 
the theorem either uniquely determines the spectral extremal graph or confines all spectral extremal graphs to a tightly related edge-extremal family.

The present work is guided by a related philosophy, 
but the surface-embedding setting considered here brings in topological constraints
that require a more refined, embedding-sensitive analysis beyond a direct
application of Theorem \ref{thm6.1}.
In this setting, the forbidden family $\mathcal{F}$ is infinite, 
and the target extremal graph takes the explicit form $H=K_2\nabla X$, where $e(X)=n+3\gamma-3$.

To overcome these difficulties, we first establish Theorem~\ref{thm1.2}.  
Using iterative constructions and embedding-sensitive arguments, we prove that spectral extremality forces edge-extremality 
and that any extremal graph must have the global structure \(K_2\nabla P_{n-2}\) with exactly \(3\gamma\) additional edges placed inside the path. 
In this reduction, Definition \ref{def3A} and Claim \ref{cl2.5} are crucial 
for controlling the interaction between edge-switching operations and cellular embeddings.

Next we prove Theorem~\ref{thm1.3}, which shows that all 
$3\gamma$ extra edges are confined to a bounded core of order at most $9\gamma$, 
and the remaining vertices form two pendant paths whose lengths differ by at most one.  
To obtain this structural description, Lemmas \ref{lem3.2}, \ref{lem3.3}, and \ref{lem3.4} 
replace direct eigenvector comparisons with walk-count comparisons, including walks of length $\Theta(n)$.

Once this reduction is in place, for every fixed nonnegative integer $\gamma$ and all sufficiently large $n$, 
determining $\mathrm{SPEX}(n,\gamma)$ reduces to a finite local optimization problem.  
Consequently, our work furnishes a surface-specific refinement of the general extremal-spectral perspective developed in spectral extremal graph theory.

%

%

\end{document}